\newtheorem{theorem}{Theorem}[section]
\newtheorem{lemma}[theorem]{Lemma}
\newtheorem{proposition}[theorem]{Proposition} 
\newtheorem{corollary}[theorem]{Corollary}
\newtheorem*{corollary*}{Corollary}
\newtheorem{atheorem}{Theorem}
\newtheorem{acorollary}[atheorem]{Corollary}
\theoremstyle{definition}
\newtheorem{definition}[theorem]{Definition}
\newtheorem{example}[theorem]{Example} 
\newtheorem{remark}[theorem]{Remark}
\newtheorem{observation}[theorem]{Observation}
\newtheorem{notation}[theorem]{Notation}
\newtheorem{question}[theorem]{Question}
\renewcommand{\Bbb}{\mathbb}
\author{Balarka Sen}
\title{$h$--Principle for loose Legendrian embeddings}
\begin{document}

\begin{abstract}This article provides an exposition of Emmy Murphy's work on loose Legendrian embeddings. After a brief review of the rudiments of contact topology, we state and discuss some foundational results from the theory of $h$--principles, providing many relevant examples from contact topology on the way. We then proceed to prove Murphy's $h$--principle for loose Legendrian embeddings. We also provide an accessible exposition of some background material from microlocal sheaf theory. As applications, we demonstrate the existence of non-loose Legendrian embeddings, and prove a version of Gromov's nonsqueezing theorem for loose charts.
\end{abstract}

\maketitle

\tableofcontents

\section{Introduction}\label{sec-intro}

\noindent {\bf Context.} A contact structure $\xi$ on an odd-dimensional manifold $Y^{2n+1}$ is a hyperplane field on the manifold which is totally non-integrable. In particular, a contact manifold does not admit integral submanifolds of codimension one. In fact, much more is true: the integral submanifolds of $\xi$ are necessarily of dimension at most $n$. We call integral submanifolds of the critical dimension $n$ as Legendrian submanifolds of the contact manifold. In this article, we address the question of classification of Legendrian embeddings $f : \Lambda^n \to (Y^{2n+1}, \xi)$ of a fixed manifold into a fixed contact manifold, upto isotopy through Legendrian embeddings, for high dimensions ($2n + 1\geq 5$).

This question has a long history in the case $2n + 1 = 3$, which pertains to the theory of Legendrian knots and links in contact $3$--manifolds. Apart from the underlying smooth isotopy type, nullhomologous Legendrian knots have ``classical invariants", given by the Thurston-Bennequin number and the rotation number \cite[Chapter 3.5]{geigbook}. In general, the classical invariants and the underlying smooth isotopy type do not suffice to determine the Legendrian isotopy type. To distinguish knots with the same underlying smooth isotopy type as well as same classical invariants, finer and more complicated invariants coming from pseudoholomorphic curves \cite{chek} (in which case the invariant is a noncommutative dga) and more recently, microlocal sheaf theory \cite{stz} (in which case the invariant is a dg derived category) has been developed.

In higher dimensions, the analogue of the classical invariants is the ``formal Legendrian isotopy type" of a Legendrian submanifold, which is a purely homotopy-theoretic invariant of the Legendrian embedding. In \cite{mur}, Emmy Murphy defines a large class of Legendrian submanifolds, termed ``loose", and she proves that the formal Legendrian isotopy type determines the Legendrian isotopy type for such Legendrian submanifolds. This effectively establishes an $h$--principle, in the sense of Gromov \cite{grobook}, for loose Legendrian embeddings. Looseness of a given Legendrian submanifold depends on the existence of a very specific isocontactly embedded adapted chart, termed a ``loose chart", on the submanifold. Existence of such a chart has strong consequences for the geometry of the submanifold; for instance, the higher dimensional analogues of the Legendrian invariants coming from pseudoholomorphic curves and microlocal sheaf theory both vanish for loose Legendrian submanifolds. 

The proof of Murphy's theorem uses a number of foundational tools from the theory of $h$--principles developed by Gromov \cite{grobook} and Eliashberg-Mishachev \cite{embook, empaper}. The purpose of this article is to present an accessible exposition of the proof, with a focus on the necessary prerequisites, and providing many examples on the way.

\subsection{Statement of results}
Let $(Y^{2n+1}, \xi)$ be a contact manifold of dimension $2n+1 \geq 5$, and $\Lambda^n$ be a smooth $n$--dimensional manifold. Let $U \subset Y$ be a fixed Darboux chart, and $\phi : D^n \to U$ be a fixed Legendrian immersion of a fixed disk $D^n \subset \Lambda$ in $U$ parametrizing a fixed loose chart $(U, \phi(D^n))$ (Definition \ref{def-loose}). Let us denote $\mathrm{Emb}_{\rm{Leg}}(\Lambda, Y)$ and $\mathrm{Emb}_{\rm{Leg}, \ell}(\Lambda, Y; U)$ to be the space of Legendrian embeddings of $\Lambda$ in $Y$, and the space of loose Legendrian embeddings of $\Lambda$ in $Y$ with fixed loose chart $U$ (Definition \ref{def-looseLegsub}), respectively. Let us also denote $\mathrm{Emb}^f_{\rm{Leg}}(\Lambda, Y)$ and $\mathrm{Emb}_{\rm{Leg}, \ell}^f(\Lambda, Y; U)$ to be the space of formal Legendrian embeddings of $\Lambda$ in $Y$ (Definition \ref{def-flegemb}), and the space of formal Legendrian embeddings of $\Lambda$ in $Y$ with a fixed loose chart $U$ (Definition \ref{def-fLegloose}), respectively.

The upshot of the article are the following two results by Murphy \cite[Theorem 1.3., Theorem 1.2.]{mur}, discussed in detail in the article in Theorem \ref{thm-hloose} and Corollary \ref{cor-hloosepi0}, respectively.

\begin{atheorem}\label{thm-main}
Given a $D^d$--parametric family of formal Legendrian embeddings of $\Lambda$ in $Y$ with fixed loose chart $U$ which restricts to a $\partial D^d$--parametric family of holonomic Legendrian embeddings, there is a homotopy rel $\partial D^d$ of $D^d$--parametric families of formal Legendrian embeddings (not necessarily with fixed loose chart $U$) starting at the given family and ending at a $D^d$--parametric family of holonomic Legendrian embeddings of $\Lambda$ in $Y$.

In other words, the inclusion map of pairs,
$$(\mathrm{Emb}^f_{\rm{Leg}, \ell}(\Lambda, Y; U), \mathrm{Emb}_{\rm{Leg}, \ell}(\Lambda, Y; U)) \hookrightarrow (\mathrm{Emb}^f_{\rm{Leg}}(\Lambda, Y), \mathrm{Emb}_{\rm{Leg}}(\Lambda, Y))$$
induces the zero map on all relative homotopy groups.
\end{atheorem}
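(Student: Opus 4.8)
The plan is to prove Theorem~\ref{thm-main} as a consequence of the relative, parametric form of Murphy's $h$--principle (Theorem~\ref{thm-hloose}), and the main work is to reconcile the two hypotheses: the given family has a \emph{fixed} loose chart $U$, whereas the relative homotopy we produce is allowed to move the loose chart. First I would set up the notation carefully. A class in $\pi_d$ of the pair $(\mathrm{Emb}^f_{\rm{Leg}}(\Lambda,Y),\mathrm{Emb}_{\rm{Leg}}(\Lambda,Y))$ is represented by a map $(D^d,\partial D^d)\to(\mathrm{Emb}^f_{\rm{Leg}}(\Lambda,Y),\mathrm{Emb}_{\rm{Leg}}(\Lambda,Y))$, i.e.\ a $D^d$--family of formal Legendrian embeddings whose restriction to $\partial D^d$ is holonomic (genuinely Legendrian). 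To show the inclusion of pairs induces the zero map on $\pi_d$, it suffices to show that any such family coming from a class that lifts to the subpair $(\mathrm{Emb}^f_{\rm{Leg},\ell}(\Lambda,Y;U),\mathrm{Emb}_{\rm{Leg},\ell}(\Lambda,Y;U))$ is homotopic, through maps of pairs into $(\mathrm{Emb}^f_{\rm{Leg}}(\Lambda,Y),\mathrm{Emb}_{\rm{Leg}}(\Lambda,Y))$ and rel a choice of basepoint, to a map landing entirely in the subspace $\mathrm{Emb}_{\rm{Leg}}(\Lambda,Y)$ of holonomic sections --- which represents $0$. Equivalently, and this is the cleanest formulation, I would show that the $D^d$--family of formal Legendrian embeddings with fixed loose chart $U$, holonomic over $\partial D^d$, can be deformed rel $\partial D^d$ through $D^d$--families of formal Legendrian embeddings to a family of honest Legendrian embeddings. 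That deformation is exactly the ``In other words'' sentence unpacked, and it is precisely what the $h$--principle for loose Legendrians supplies.

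Next I would invoke Theorem~\ref{thm-hloose}. The content there is that the loose condition is a flexible/ample differential relation in Gromov's sense: formal loose Legendrian embeddings satisfy a parametric, relative $h$--principle. Concretely, given a $D^d$--parametric family of \emph{formal} Legendrian embeddings each of which carries a loose chart (here we use that the fixed chart $U$ makes every member of the family formally loose), and which is already holonomic over $\partial D^d$, there is a homotopy rel $\partial D^d$ within the space of formal Legendrian embeddings --- now without the constraint of keeping $U$ fixed, since during the Holonomic Approximation/convex-integration-type argument one must be allowed to wiggle the chart --- ending at a family of holonomic Legendrian embeddings. The key geometric input powering Theorem~\ref{thm-hloose} is the loose chart, which provides the ``room to maneuver'': one can absorb the failure of holonomy into a $C^0$--small but wild modification supported near the loose chart, using the explicit model of a loose chart (zig-zag / stabilization model) together with the Legendrian Whitney trick and Holonomic Approximation on the complement of a neighborhood of the loose chart. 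Once Theorem~\ref{thm-hloose} is granted, the proof of Theorem~\ref{thm-main} is essentially a restatement: the produced homotopy is a homotopy of maps $(D^d,\partial D^d)\to(\mathrm{Emb}^f_{\rm{Leg}}(\Lambda,Y),\mathrm{Emb}_{\rm{Leg}}(\Lambda,Y))$, it is rel $\partial D^d$, and its terminal map lands in the subspace $\mathrm{Emb}_{\rm{Leg}}(\Lambda,Y)$, hence represents the trivial element of $\pi_d$ of the pair. Running this for every $d\geq 0$ and every choice of basepoint gives that the inclusion of pairs induces the zero map on all relative homotopy groups.

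The one subtlety I would spell out is the compatibility of basepoints and the passage from ``a $D^d$--family, holonomic over $\partial D^d$'' to a genuine relative homotopy group element and back. Since $\partial D^d$--families of holonomic Legendrian embeddings are exactly maps $\partial D^d\to\mathrm{Emb}_{\rm{Leg}}(\Lambda,Y)$, and we do not require the boundary family to be the constant basepoint, what we are really proving is that the relative homotopy groups of the pair map to zero for \emph{every} choice of boundary condition that arises from the $U$--fixed subpair; this is exactly what is needed because $\pi_d$ of a pair is computed with the boundary sphere mapped into the subspace, and any element of the image of $\pi_d$ of the subpair has such a representative. No further basepoint bookkeeping is needed beyond observing that the homotopy from Theorem~\ref{thm-hloose} is constructed rel $\partial D^d$, so it fixes whatever boundary condition (basepoint sphere) we started with.

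The step I expect to be the main obstacle --- and the one already absorbed into Theorem~\ref{thm-hloose} in this exposition --- is the geometric heart: showing that a single formal Legendrian embedding equipped with a loose chart can be $C^0$--approximated by a genuine Legendrian embedding, and that this can be done parametrically and rel the locus where the family is already holonomic. This is where looseness is indispensable: without a loose chart the statement is false (there exist formally trivial but genuinely knotted Legendrians, as the applications section will show), so the argument must crucially exploit the explicit loose chart model, the Holonomic Approximation Theorem on the complement of the chart, and a careful interpolation inside the chart that trades the holonomy defect for a stabilization. In the reduction above, however, this difficulty is quoted rather than re-proved, so the remaining argument for Theorem~\ref{thm-main} is the short bookkeeping of relative homotopy groups described here.
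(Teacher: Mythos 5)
There is a genuine gap, and it is one of circularity. Theorem~\ref{thm-main} is not a downstream consequence of Theorem~\ref{thm-hloose}: it \emph{is} Theorem~\ref{thm-hloose}, restated in the introduction (the paper says explicitly that it is discussed in detail in Theorem~\ref{thm-hloose}). So your strategy of ``invoke Theorem~\ref{thm-hloose} as a black box and do relative-homotopy-group bookkeeping'' proves nothing: the entire mathematical content of the statement is exactly the argument you declare to be ``quoted rather than re-proved.'' What the paper actually does, and what is entirely absent from your proposal, is the following: apply the parametric, relative $h$--principle for \emph{wrinkled} Legendrian embeddings (Theorem~\ref{thm-hwrinkleg}, which rests on graphicality of formal Legendrian embeddings, Theorem~\ref{thm-graphfLeg}, and the Eliashberg--Mishachev wrinkled embedding theorem) to deform the family, rel $\partial I^d$ and rel the fixed chart $U$, into a family of wrinkled Legendrian embeddings; use Proposition~\ref{prop-manyloose} to split the single fixed loose chart into as many disjoint loose charts as there are components of the embryo locus; replace each of these by an inside-out wrinkle (Example~\ref{eg-iowresol}); choose a family of markings joining each inside-out wrinkle to the wrinkles born at the corresponding embryos; resolve along the markings (Proposition~\ref{prop-resowrink}) to obtain a family of honest Legendrian embeddings; and finally check, via the local comparison of resolutions along the two markings (Figure~\ref{fig-loosewrinkle}), that the output is homotopic to the input through \emph{formal} Legendrian embeddings rel $\partial I^d$. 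None of these steps is supplied or even sketched in your write-up, so the proof is missing its substance, not just a technical lemma.

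Moreover, your description of what powers the quoted theorem misrepresents the mechanism. There is no step in which the holonomy defect is ``absorbed'' by Holonomic Approximation on the complement of the loose chart, no convex-integration-type absorption supported near the chart, and no ``Legendrian Whitney trick.'' Holonomic approximation and convex integration enter much earlier and for different purposes ($\varepsilon$--Legendrian approximation, Legendrian submersions and graphicality, and the wrinkled-Legendrian $h$--principle); the loose chart is used only at the last stage, as the location where inside-out wrinkles are installed so that the genuine wrinkle singularities produced by Theorem~\ref{thm-hwrinkleg} can be cancelled along markings. A correct blind proof of Theorem~\ref{thm-main} must supply this wrinkle-cancellation argument; the homotopy-group bookkeeping you carry out is the easy and essentially tautological part.
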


\begin{acorollary}\label{cor-main}Let $f_0, f_1 : \Lambda \to (Y, \xi)$ be a pair of loose Legendrian embeddings which are isotopic through formal Legendrian embeddings. Then they are also Legendrian isotopic.
\end{acorollary}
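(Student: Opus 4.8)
The plan is to deduce this from Theorem~\ref{thm-hloose} applied with $d=1$. That theorem takes a $D^1$--parametric family of formal Legendrian embeddings of $\Lambda$ all carrying the fixed loose chart $U$ and holonomic over $\partial D^1$, and produces a homotopy rel $\partial D^1$ to a family of holonomic Legendrian embeddings --- that is, a genuine Legendrian isotopy between its two boundary embeddings. So the whole task is to upgrade the given formal Legendrian isotopy between $f_0$ and $f_1$ to one with a fixed loose chart, changing the two ends only by genuine Legendrian isotopies.

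First I would normalize the ends. Take the fixed loose chart $(U,\phi(D^n))$ of Theorem~\ref{thm-hloose} to be a loose chart of $f_0$, so $f_0 \in \mathrm{Emb}_{\mathrm{Leg},\ell}(\Lambda,Y;U)$. Since $f_1$ is loose, a preliminary genuine Legendrian isotopy carries it to some $g_1 \in \mathrm{Emb}_{\mathrm{Leg},\ell}(\Lambda,Y;U)$ as well --- here one uses that any loose chart is equivalent to the standard model and can be brought, by a Legendrian isotopy of $\Lambda$, into coincidence with the fixed one. It suffices to connect $f_0$ and $g_1$ by a Legendrian isotopy. Concatenating the given formal isotopy from $f_0$ to $f_1$ with the genuine one from $f_1$ to $g_1$, we obtain a path $\{G_t\}_{t\in[0,1]}$ of formal Legendrian embeddings of $\Lambda$ with $G_0=f_0$ and $G_1=g_1$ holonomic and $G_0|_{D^n}=G_1|_{D^n}=\phi$.

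The crux is to homotope $\{G_t\}$ rel $t\in\{0,1\}$, through formal Legendrian embeddings of $\Lambda$, to a family $\{\widetilde G_t\}$ with $\widetilde G_t|_{D^n}=\phi$ for every $t$, i.e.\ a family inside $\mathrm{Emb}^f_{\mathrm{Leg},\ell}(\Lambda,Y;U)$ (possibly after shrinking $(U,\phi(D^n))$ to a smaller loose chart contained in it, in order to gain room to work). The obstruction is that $t\mapsto G_t|_{D^n}$ is a loop of formal Legendrian embeddings of the disk based at $\phi$ which a generic formal isotopy will not contract; I would kill it by passing to a concentric sub-disk to create a collar, and then using the defining flexibility of loose charts --- the same flexibility underpinning Theorem~\ref{thm-hloose} --- to absorb the loop into a homotopy of $\{G_t\}$ supported near $D^n$ and stationary at $t=0,1$. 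Granting this, Theorem~\ref{thm-hloose} with $d=1$ applies to $\{\widetilde G_t\}$ (its $\partial D^1$--family $\{f_0,g_1\}$ being already holonomic) and returns a genuine Legendrian isotopy from $f_0$ to $g_1$, hence from $f_0$ to $f_1$.

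I expect the crux step --- replacing a formal Legendrian isotopy by one that genuinely carries a fixed loose chart --- to be the main obstacle, precisely because ``carrying the fixed loose chart'' is a closed, non-generic condition that a transversality or general-position argument cannot supply; the content is that a \emph{loose} chart, unlike an arbitrary Legendrian chart, is flexible enough to swallow the discrepancy, so that this step is an application of the looseness hypothesis rather than of soft homotopy theory.
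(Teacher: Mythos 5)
Your overall skeleton agrees with the paper's proof of Corollary \ref{cor-hloosepi0}: normalize the two ends by a genuine (ambient) Legendrian isotopy so that both carry the same fixed parametrized loose chart $(U,\phi)$, upgrade the formal Legendrian isotopy to one lying in $\mathrm{Emb}^f_{\rm{Leg},\ell}(\Lambda,Y;U)$, and then apply Theorem \ref{thm-hloose} with $d=1$. The end-normalization step is fine and is essentially what the paper does via the contact isotopy extension theorem (moving the chart $U_1$ of $f_1$ onto $U_0$ and reparametrizing the domain). The problem is the step you yourself identify as the crux: you do not prove it, and the mechanism you gesture at is not the right one. ``Pass to a concentric sub-disk and use the defining flexibility of loose charts to absorb the loop'' is not an argument --- nothing in the definition of a loose chart, nor in Proposition \ref{prop-manyloose}, directly contracts a loop of formal Legendrian embeddings of the disk --- so as written the proof has a genuine gap exactly at its central point.

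Moreover, your closing diagnosis inverts what actually happens. The paper resolves this step by soft homotopy theory, not by looseness: first the underlying family of maps $\{g_t\}$ is deformed rel $t\in\{0,1\}$ to a family $\{h_t\}$ of immersions with $h_t|_{D^n}=g_0|_{D^n}$ for all $t$ (the intermediate maps are only required to be immersions, which is what creates the room you were trying to manufacture near $D^n$); then one observes that the forgetful map $(F_s,f)\mapsto f$, from formal Legendrian data with fixed chart to underlying maps restricting to $g_0|_{D^n}$ on $D^n$, is a Serre fibration, so the path $\{h_t\}$ lifts to a formal Legendrian isotopy in the fixed-loose-chart space joining $g_0$ to $g_1$. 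Looseness enters only at the two ends (so that there are loose charts to normalize) and inside Theorem \ref{thm-hloose} itself; the intermediate times of the connecting family need not be loose, holonomic, or even embeddings. So the claim that ``a transversality or general-position argument cannot supply this step'' and that the loose chart must ``swallow the discrepancy'' is the misconception that leaves your crux step unproved; replacing it with the fibration-lifting argument (after relaxing the intermediate maps as above) closes the gap and recovers the paper's proof.
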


In Section \ref{sec-microsheaf}, we discuss some foundational results in microlocal sheaf theory, which we combine with Theorem \ref{thm-main} and Corollary \ref{cor-main} to obtain the following applications. Both the results are well-known to experts, but complete proofs may be slightly difficult to locate in the existing literature. These are discussed in detail in the article in Proposition \ref{prop-saucernloose} and Proposition \ref{prop-loosenonsq}, respectively.

\begin{atheorem}The Legendrian flying saucer $\Lambda \subset (\Bbb R^{2n+1}, \xi_{\rm{std}})$ is not loose.\end{atheorem}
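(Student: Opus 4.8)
The plan is to obstruct looseness using the microlocal sheaf invariant developed in Section~\ref{sec-microsheaf}. Recall that to a closed Legendrian $\Lambda$ in a jet space $J^1(M)$ --- and $(\mathbb{R}^{2n+1},\xi_{\rm{std}})$ is itself $J^1(\mathbb{R}^n)$ --- one attaches a category $\mathrm{Sh}_\Lambda(M)$ whose objects are constructible sheaves on $M\times\mathbb{R}$ with perfect stalks, vanishing outside a compact set, and with microsupport at infinity contained in $\Lambda$. Two properties are needed: $\mathrm{Sh}_\Lambda(M)$ depends only on the Legendrian isotopy class of $\Lambda$; and $\mathrm{Sh}_\Lambda(M)$ is trivial --- every object is zero --- when $\Lambda$ is loose. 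The latter is the microlocal shadow of the vanishing of Legendrian contact homology for loose Legendrians, and its proof passes through the $h$--principle: by Theorem~\ref{thm-main} and Corollary~\ref{cor-main} a loose $\Lambda$ is Legendrian isotopic to a standard flexible model for which one verifies the vanishing by hand. Granting these, it suffices to exhibit one nonzero object of $\mathrm{Sh}_\Lambda(\mathbb{R}^n)$ when $\Lambda$ is the flying saucer.

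Next recall the shape of the flying saucer. In a Darboux chart identified with a piece of $J^1(\mathbb{R}^n)$ its front projection is a smoothly embedded $n$--sphere $S\subset\mathbb{R}^n\times\mathbb{R}$: two graphs $z=\phi_+(x)$ and $z=\phi_-(x)$ over the disk $\{|x|\le 1\}$, with $\phi_+>\phi_-$ on the interior and $\phi_+=\phi_-$ along a cuspidal edge $C\cong S^{n-1}$ over $\{|x|=1\}$ (the standard $A_2$ edge). This $S$ bounds a compact ``saucer'' region $\Omega\subset\mathbb{R}^n\times\mathbb{R}$, and $\Lambda$ is the Legendrian lift of $S$. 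Now take $F$ to be the standard sheaf quantization of the front $S$: roughly speaking the rank--one constant sheaf supported on $\Omega$, more precisely the sheaf with microlocal rank $1$ everywhere along $\Lambda$ and generic stalk $\cong k$ inside $\Omega$. It is automatic that $F$ has perfect stalks, is supported in a compact set, and is nonzero (its stalk at an interior point of $\Omega$ is $\cong k$). The one point requiring work is $SS^\infty(F)=\Lambda$: away from $C$, over $\{z>\phi_-(x)\}$ respectively $\{z<\phi_+(x)\}$, the sheaf $F$ coincides up to shift with a sheaf of the form $k_{\{z\ge\phi_\pm(x)\}}$, whose microsupport at infinity is the graphical Legendrian $j^1\phi_\pm$, and these recover the two sheets of $\Lambda$; near the cuspidal edge one passes to the normal form $\{z^2=x_1^3\}\times\mathbb{R}^{n-1}$ and checks, via the microlocal cut-off lemma and the standard front-to-sheaf dictionary (see Section~\ref{sec-microsheaf} and \cite{stz}), that $F$ produces no spurious microsupport and that $SS^\infty(F)$ contains exactly the cuspidal arc of $\Lambda$ joining the two sheets. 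Hence $F\in\mathrm{Sh}_\Lambda(\mathbb{R}^n)$ and $F\neq 0$.

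Combining the two halves, $\mathrm{Sh}_\Lambda(\mathbb{R}^n)$ contains a nonzero object, so it is not trivial, and therefore $\Lambda$ cannot be loose. Unwinding the first half, this is the same as saying: the flying saucer is formally Legendrian isotopic to a loose Legendrian $\Lambda'$ --- a routine check on the formal data --- for which $\mathrm{Sh}_{\Lambda'}(\mathbb{R}^n)=0$; were $\Lambda$ loose, Corollary~\ref{cor-main} would force $\Lambda$ and $\Lambda'$ to be Legendrian isotopic, contradicting $\mathrm{Sh}_\Lambda(\mathbb{R}^n)\neq 0$ together with the isotopy invariance of $\mathrm{Sh}$.

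The main obstacle in this plan is the microsupport computation above: showing that the constant sheaf on the saucer region is microsupported \emph{exactly} along $\Lambda$, with no extra contribution from the cuspidal edge. A secondary technical point, needed only for the reformulation through Corollary~\ref{cor-main}, is the homotopy-theoretic identification of the formal Legendrian isotopy class of the flying saucer with that of an explicit loose $S^n$. Everything else is either formal or imported directly from Section~\ref{sec-microsheaf}.
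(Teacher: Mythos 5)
Your proposal is correct and follows essentially the same route as the paper: Proposition \ref{prop-saucernloose} exhibits an explicit nonzero constructible sheaf whose microsupport is exactly the positive conormal cone $L(\Lambda)$ of the saucer front, and then invokes Proposition \ref{prop-suploose} (loose Legendrians are never microsupports), which is itself proved by the stabilization/zig-zag/invariance argument you describe importing from Section \ref{sec-microsheaf}. The one point you flag as "requiring work" is exactly where the paper is careful: the object cannot literally be the constant sheaf on the closed saucer region, since that would carry microsupport along one of the two sheets with the \emph{wrong} coorientation; instead one takes $\underline{\mathbf{k}}_\Omega$ for the locally closed region $\Omega$ that contains one sheet and omits the other, so that both sheets (and the cuspidal edge) contribute conormals on the same side, and the stalkwise check that $\mathrm{SS}(\underline{\mathbf{k}}_\Omega) = L(\Lambda)$ is then a short direct computation rather than an appeal to a general front-to-sheaf dictionary.
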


\begin{acorollary}A loose chart does not admit a contact relative embedding in a pseudo-loose chart. More precisely, let $(C' \times B_{\rho'}, \mathcal{Z}_{a'} \times J_{\rho'})$ with size parameter $\rho'^2/a' > 1/2$ be a loose chart and $(C \times B_{\rho}, \mathcal{Z}_a \times J_{\rho})$ with size parameter $\rho^2/a < 1/2$ be a pseudo-loose chart. Then, there does not exist a contact embedding 
$$\varphi : C' \times B_{\rho'} \to C \times B_\rho,$$
such that $\varphi(\mathcal{Z}_{a'} \times J_{\rho'}) \subset \mathcal{Z}_a \times J_\rho$.\end{acorollary}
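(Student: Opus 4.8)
The plan is to argue by contradiction: suppose such a contact embedding $\varphi$ exists. The key idea is that a contact embedding carrying the zig-zag $\mathcal{Z}_{a'} \times J_{\rho'}$ into $\mathcal{Z}_a \times J_\rho$ would, after a suitable completion and capping-off argument, produce a closed (or properly embedded) Legendrian submanifold containing a loose chart of the prescribed (large) size parameter sitting inside a region that only supports a pseudo-loose chart. First I would recall from Section \ref{sec-microsheaf} the two facts that do the heavy lifting: (i) a Legendrian with a loose chart (of \emph{any} size) is loose, hence has vanishing microlocal sheaf invariant (its category of microlocal sheaves is trivial, equivalently the Legendrian admits no nontrivial such sheaves); and (ii) a pseudo-loose chart with size parameter $\rho^2/a < 1/2$, while looking locally like a zig-zag, is \emph{not} loose and in fact carries a distinguished nontrivial microlocal sheaf (the analogue of the sheaf on the flying saucer from the previous Theorem), which is precisely the obstruction detected there. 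The size threshold $1/2$ is exactly the Gromov-width-type numerical invariant separating the two regimes.

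Next I would set up the contradiction carefully. Embed both charts as standard neighborhoods inside $(\Bbb R^{2n+1},\xi_{\rm std})$ — or rather, build a closed contact manifold and a closed Legendrian $\Lambda$ realizing the pseudo-loose chart $(C\times B_\rho,\,\mathcal{Z}_a\times J_\rho)$ as a genuine chart on $\Lambda$, arranged so that $\Lambda$ itself is \emph{not} loose (this is where fact (ii), and concretely the flying-saucer-type construction, is invoked: one glues the pseudo-loose chart into a Legendrian whose microlocal sheaf category is nontrivial). Now feed $\varphi$ in backwards: the composition $\varphi^{-1}$ (restricted appropriately) exhibits, inside the image of $\varphi(\mathcal{Z}_{a'}\times J_{\rho'})$, a loose chart of the large size parameter $\rho'^2/a' > 1/2$ that lies entirely within $\mathcal{Z}_a\times J_\rho$. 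I want to conclude that $\Lambda$, which contains $\mathcal{Z}_a\times J_\rho$ as a chart, therefore contains a loose chart, hence is loose — contradicting its nontrivial microlocal sheaf invariant. The delicate point here is the direction of the containment: $\varphi$ maps the big chart \emph{into} the small one, so one does not directly get "$\Lambda$ contains a loose chart". The resolution is to use $\varphi$ to transport looseness \emph{of a chart within $\Lambda$}: the image $\varphi(\mathcal{Z}_{a'}\times J_{\rho'})\subset \mathcal{Z}_a\times J_\rho$ is a loose chart (being the contact-diffeomorphic image of the loose chart $\mathcal{Z}_{a'}\times J_{\rho'}$), and it is a chart on the Legendrian piece $\mathcal{Z}_a \times J_\rho \subset \Lambda$ restricted to $\varphi(C'\times B_{\rho'})$ — so $\Lambda$ acquires a loose chart and Murphy's $h$-principle (or just the definition of loose plus fact (i)) forces all its invariants to vanish.

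The main obstacle I anticipate is verifying that $\varphi(\mathcal{Z}_{a'}\times J_{\rho'})$ is genuinely a \emph{loose chart on $\Lambda$} in the technical sense of Definition \ref{def-loose}, not merely an embedded copy of a loose chart — one must check that the ambient Darboux coordinates on $\mathcal{Z}_a\times J_\rho$ restrict to the right normal form near $\varphi(\mathcal{Z}_{a'})$, i.e.\ that looseness is detected intrinsically and is stable under the contact embedding. This should follow from the coordinate-free characterization of loose charts (looseness depends only on the germ of the contact structure along the Legendrian and the size parameter, both of which are preserved by a contact embedding), together with the monotonicity of the size parameter: since $\rho'^2/a' > 1/2$, the transported chart is loose regardless of how it sits inside the pseudo-loose chart. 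Once that is in place, the contradiction is immediate: a non-loose Legendrian cannot contain a loose chart. I would also remark that this is the exact mechanism by which the flying-saucer computation of the preceding Theorem gets leveraged — the pseudo-loose chart is, by design, a local model for the non-loose behavior witnessed there.
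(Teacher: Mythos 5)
Your overall strategy is the same as the paper's: argue by contradiction, realize the pseudo-loose chart as an adapted chart on a Legendrian that is provably non-loose via the microlocal sheaf invariant (the flying saucer of Proposition \ref{prop-saucernloose}), and then use the hypothetical embedding $\varphi$ to plant a loose chart on that Legendrian, contradicting Proposition \ref{prop-suploose}. But the step you dispose of in one clause --- ``one glues the pseudo-loose chart into a Legendrian whose microlocal sheaf category is nontrivial'' --- is precisely the substantive content of the paper's proof, and it does not follow from your ``fact (ii)''. A chart is not an object to which the invariant of Definition \ref{def-microcat} is attached (that requires a compact Legendrian in a cosphere bundle), so there is no ``distinguished nontrivial microlocal sheaf carried by the pseudo-loose chart'' to invoke; and, more importantly, the naive ways of completing a zig-zag chart to a closed Legendrian produce \emph{loose} Legendrians, not non-loose ones: spinning or capping off a zig-zag with nothing else of the Legendrian nearby gives exactly the situation of Example \ref{eg-zigspin}, where one can compress and find a wide adapted chart. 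The whole difficulty is to exhibit a zig-zag chart of a \emph{prescribed} size parameter $\rho^2/a<1/2$ sitting as an adapted chart (equality $U\cap\Lambda=\mathcal{Z}_a\times J_\rho$, not just containment) on a Legendrian that remains non-loose; asserting that $1/2$ is ``exactly the Gromov-width-type threshold'' is restating the proposition, not proving it.

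The paper fills this gap with a concrete construction you would need to supply: take the two-component front $F$ consisting of the zig-zag $\pi(\mathcal{Z}_a)$ together with a Reeb push-off whose $z$-offset grows like $\tfrac{a}{2\rho}|q|$. Along the push-off sheet one has $|p|=a/(2\rho)>\rho$ precisely because $\rho^2/a<1/2$, so the box $C\times B_\rho$ meets the lift only in $\mathcal{Z}_a\times J_\rho$; the nearby ``chaperone'' sheet is what forces the adapted chart to be narrow and hence only pseudo-loose. This doubled front is then embedded, via two applications of the toroidal Legendrian Reidemeister move I (Definition \ref{def-reid1torus}) and a front isotopy, into the front of a Legendrian \emph{Legendrian-isotopic} to the flying saucer, so non-looseness is guaranteed by Proposition \ref{prop-saucernloose} and Theorem \ref{thm-gks} rather than by an unconstructed gluing. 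By contrast, the point you flag as the main obstacle --- that $\varphi(\mathcal{Z}_{a'}\times J_{\rho'})$ is a bona fide loose chart in the sense of Definition \ref{def-looseLegsub}, since contact embeddings preserve the model and the size parameter --- is the easy part of the argument. Without the push-off construction (or some substitute producing a non-loose Legendrian with an adapted chart of the given small size parameter), your proof does not go through.
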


\subsection{Outline of the article} In Section \ref{sec-prel} we provide a lightning tour of the foundations of contact and symplectic topology. The key takeways are Example \ref{eg-unic}, which appears throughout the article as a guiding example, and Moser's lemma (Lemma \ref{lem-moser}), which is used in subsequent sections to prove local integrability and microflexibility of various differential relations in contact topology. In Section \ref{sec-legknots}, we discuss the $h$--principle for approximation of a smooth knot by Legendrian knots. Here we introduce the Legendrian zig-zag (Observation \ref{obs-intpzigzag}) as an object playing a crucial role in establishing the $h$--principle, which appears later in the $h$--principle for wrinkled Legendrians (Theorem \ref{thm-hwrinkleg}), as well as in the very definition of loose charts (Definition \ref{def-loose}) themselves. This also sets up the groundwork for the $h$--principle for general Legendrian immersions discussed later in Section \ref{sec-hLegimm}.

Section \ref{sec-holapp} discusses the two cornerstone results in the theory of $h$--principles, namely the Holonomic Approximation Theorem (Theorem \ref{thm-holapp}) and the Convex Integration Theorem (Theorem \ref{thm-hample}). In Section \ref{sec-hLegimm}, we prove the $h$--principle for Legendrian immersions as a corollary of the Holonomic Approximation Theorem, and pose the key question of classification of Legendrian immersions upto Legendrian isotopies (see, Question \ref{que-htpylegemb}). Section \ref{sec-dirimmemb} uses the Convex Integration Theorem to deduce the $h$--principle for embeddings and immersions directed by ample differential relations. This is later used in Section \ref{sec-flegemb} to isotope formal Legendrian embeddings to more geometrically tractable embeddings, called $\varepsilon$--Legendrian embeddings (see, Definition \ref{defn-epsLeg} and Proposition \ref{prop-epsLeg}). 

In Section \ref{sec-wrinkemb} we discuss the $h$--principle for wrinkled embeddings (Theorem \ref{thm-hwrink}), and in Section \ref{sec-flegemb} we construct fronts for formal Legendrian embeddings over which they are graphical (Theorem \ref{thm-graphfLeg}). In Section \ref{sec-wrinkleg} we combine techniques from Section \ref{sec-legknots}, Section \ref{sec-flegemb} and Section \ref{sec-wrinkemb} to prove that $h$--principle for Legendrian embeddings holds with a caveat: one must allow mild singularities, given by Legendrian lifts of wrinkle singularities appearing in the front projection (Theorem \ref{thm-hwrinkleg}, see also Definition \ref{def-wrinkleg}). Finally, we introduce loose Legendrians in Section \ref{sec-loose} and prove the ``without caveats" version of the $h$--principle for loose Legendrian embeddings in Theorem \ref{thm-main}. This is done by applying Theorem \ref{thm-hwrinkleg} and removing the wrinkle singularities using the loose charts. Corollary \ref{cor-main} is proved as a consequence.

In Section \ref{sec-microsheaf}, we discuss the problem of detecting non-loose Legendrian submanifolds and give examples of such as well. We focus on Legendrian isotopy invariants coming from microlocal sheaf theory, explicating a comment in \cite{mur} observing that loose Legendrians submanifolds in cosphere bundles are not microsupport of constructible sheaves. The exposition is carefully crafted with an audience of geometrically-minded topologists in mind. As an application, we prove a contact analogue of Gromov's nonsqueezing theorem for loose charts (Proposition \ref{prop-loosenonsq}). \newline

\noindent {\bf Acknowledgements.} I would like to thank Professor Mahan Mj for his guidance during this project as well as his interest regarding $h$--principles in general, and Professor Mahuya Datta for introducing me to Murphy's paper and discussing it with me in detail during a summer project in 2022. I would also like to thank Professor Emmy Murphy and Professor Dishant Pancholi for their kind and generous feedback on a draft of this article. Finally, I would like to thank my colleagues and friends Biswajit Nag, Ritwik Chakraborty, Supravat Sarkar, Shivansh Tomar, Ritvik Radhakrishnan, Abhishek Khannur, Bhaswar Bhattacharya, Bhishek Garg, as well as the participants of the TIFR Mathematics Students’ Seminar for patiently and critically listening to my thoughts on contact topology and microlocal sheaf theory.

\section{Preliminaries from contact and symplectic topology}\label{sec-prel}

In this section, we review some basic notions from symplectic and contact topology. For a detailed introduction, we refer the reader to \cite{msbook} and \cite{geigbook}. A concise discussion leading up to Moser's lemma can be found in \cite[Chapter 9]{embook}.

\begin{definition}\label{def-contmfld}A \emph{contact manifold} is a pair $(Y, \xi)$ consisting of a smooth manifold $Y$ of odd dimension $\dim Y = 2n+1$, and a smooth, codimension--$1$ vector subbundle $\xi \subset TY$ which is \emph{totally non--integrable}, in the following sense: for any point $p \in Y$, and a choice of a $1$--form $\alpha$ on a neighborhood $U \subset Y$ of $p$ such that $\ker \alpha = \xi|_U$, the $(2n+1)$--form 
$$\alpha \wedge (d\alpha)^{\wedge n} = \alpha \wedge d\alpha \wedge \cdots \wedge d\alpha$$
is nowhere--vanishing on $U$. We shall refer to the hyperplane field $\xi \subset TY$ as a \emph{contact structure} on $Y$. A contact structure is said to be \emph{co--orientable} if there exists a globally defined $1$--form $\alpha$ on $Y$ such that $\ker \alpha = \xi$. Henceforth, we shall implicitly assume all the relevant contact structures are co--orientable. \end{definition}

We start with a physically motivating example of a contact manifold.

\begin{example}[Phase space of a unicycle]\label{eg-unic}Consider a unicycle on a $2$--dimensional cartesian plane. We will choose coordinates $(x, y)$ for the position of the unicycle, and angle $\theta$ which the steering wheel makes with the $x$--axis. In other words, our model of the unicycle has three degrees of freedom: two of them given by position in $\mathbb{R}^2$ and the other given by the angle of the steering wheel in $S^1$. Thus, the configuration space of the unicycle is $Y = \mathbb{R}^2 \times S^1$. 

The trajectory of such a unicycle in motion $\sigma(t) = (x(t), y(t))$ on $\mathbb{R}^2$ must have tangent vector $\sigma'(t) = (\dot{x}(t), \dot{y}(t))$ parallel to the direction vector of the steering wheel $(\cos \theta(t), \sin \theta(t))$, at all times. In other words, the motion of the unicycle is constrained by the following ``differential relation" on $Y$, which is presently an underdetermined ODE:
$$\dot{x} \cos(\theta) - \dot{y} \sin(\theta) = 0.$$
The relation can be described as a rank $2$ subbundle $\xi \subset TY$ given by 
$$\xi = \ker \alpha,$$ 
where $\alpha = \cos(\theta) dx - \sin(\theta) dy$ is a differential $1$--form on $Y$. Trajectories of motion of the unicycle are therefore paths $\gamma$ in $Y$ which are parallel to the distribution $\xi$, i.e., $\gamma' \in \xi$.

$(Y, \xi)$ is an example of a contact manifold. Indeed, 
\begin{align*}\alpha \wedge d\alpha 
&= (\cos(\theta) dx - \sin(\theta) dy) \wedge (-\sin(\theta) d\theta \wedge dx - \cos(\theta) d\theta \wedge dy)\\
&= \sin^2(\theta) dy \wedge d\theta \wedge dx - \cos^2(\theta) dx \wedge d\theta \wedge dy\\
&= dx \wedge dy \wedge d\theta,\end{align*}
which is a nonzero volume form on $Y$.
\end{example}

Contact manifolds are odd dimensional analogues of \emph{symplectic manifolds}. Recall that a pair $(V, \omega)$ consisting of a vector space $V$ of even dimension $\dim V = 2n$, equipped with an alternating bilinear form $\omega \in \Lambda^2 V^*$ is a \emph{symplectic vector space} if $\omega$ is nondegenerate, or equivalently, if $\omega^{\wedge n} \neq 0$.

\begin{definition}\label{def-sympmfldbdl}A real vector bundle $(E, B, \pi)$ of even rank $\mathrm{rk}(E) = 2n$, equipped with a fiberwise $2$--form $\omega \in \Gamma(B; \Lambda^2 E^*)$ is a \emph{symplectic vector bundle} if for all $p \in B$, the fiber $E_p := \pi^{-1}(p)$ equipped with the alternating bilinear form $\omega_p$ is a symplectic vector space.

A \emph{symplectic manifold} is a pair $(X, \omega)$ consisting of a smooth manifold $X$ of even dimension $\dim X = 2n$, and a $2$--form $\omega \in \Omega^2(X)$, such that
\begin{enumerate}
    \item The tangent bundle $(TX, X, \pi)$ equipped with the fiberwise symplectic form $\omega \in \Omega^2(X) = \Gamma(X; \Lambda^2 T^*X)$ is a symplectic vector bundle, and
    \item $\omega$ is a closed form.
\end{enumerate}
\end{definition}

Suppose $(Y^{2n+1}, \xi)$ is a contact manifold. Let $\alpha$ be a $1$--form such that $\ker \alpha = \xi$. Then the subbundle $\xi \subset TY$ equipped with the $2$--form $\omega := d\alpha$ is a symplectic vector bundle. Indeed, for all $p \in Y$, $\omega^{\wedge n} = (d\alpha)^{\wedge n}$ must be nonzero on $\xi_p$, as otherwise $\alpha \wedge (d\alpha)^{\wedge n}$ would vanish on $T_p Y \cong \xi_p \oplus T_pY/\xi_p$. Note that while there is no canonical choice for such a $1$--form $\alpha$, for any other $\alpha'$ satisfying $\ker \alpha' = \xi$, we must have $\alpha' = f \alpha$ for some nowhere--vanishing smooth function $f : Y \to \mathbb{R}$. Then the corresponding fiberwise symplectic form on $\xi$ are related by
$$d\alpha' = df \wedge \alpha + f d\alpha = f d\alpha.$$
Here, the first term vanishes as $\alpha \equiv 0$ on $\xi$. Hence, there is a well--defined conformal class of a fiberwise symplectic structure on the hyperplane field $\xi$, which we denote as $\mathrm{CS}(\xi)$. 

\begin{definition}\label{def-sycomor}\leavevmode
\begin{enumerate}
    	\item Given a pair of symplectic manifolds $(X, \omega), (X', \omega')$, an immersion $f : X \to X'$ is called \emph{isosymplectic} if $f^*\omega' = \omega$. A diffeomorphism $f : X \to X'$ that is also an isosymplectic immersion is called a \emph{symplectomorphism}.
    	\item Given a pair of contact manifolds $(Y, \xi), (Y', \xi')$ with $\xi = \ker \alpha$ and $\xi' = \ker \alpha'$, an immersion $f : Y \to Y'$ is called \emph{isocontact} if $f^*\alpha' = h \alpha$ for some positive function $h > 0$ on $Y$. A diffeomorphism $f : Y \to Y'$ that is also an isocontact immersion is called a \emph{contactomorphism}.
\end{enumerate}
\end{definition}

\begin{remark}\label{rmk-sycomor}Note that in Definition \ref{def-sycomor}, a contactomorphism $f : Y \to Y'$ may be equivalently defined as a diffeomorphism satisfying $f^*\xi' = \xi$. This suggests that while studying contact manifolds, the hyperplane distribution should be prioritized over any particular $1$--form that may have it as its kernel. This is in contrast to the case of symplectic manifolds, where the symplectic $2$--form is synonymous with the symplectic structure. These two notions are related by the following observation. For a contactomorphism $f : (Y, \xi) \to (Y', \xi')$, we have $f^* \mathrm{CS}(\xi') = \mathrm{CS}(\xi)$. Hence, the conformal symplectic structures on the respective hyperplane fields are preserved under contactomorphisms.
\end{remark}

For any contact manifold $(Y, \xi)$, there is a canonical choice of a vector field transverse to the contact distribution $\xi \subset TY$.

\begin{definition}\label{def-reeb}Let $(Y, \xi)$ be a contact manifold, and suppose $\xi = \ker \alpha$. We define the \emph{Reeb vector field} $R$ on $Y$ to be the unique vector field satisfying $\alpha(R) = 1$ and $i_R d\alpha = 0$. \end{definition}

\begin{remark}\label{rmk-reeb}We note that the Reeb vector field $R$ is uniquely determined by the choice of the contact form $\alpha$, therefore regardless of choices the \emph{Reeb direction} spanned by $R$ only depends on the contact structure $\xi$. Observe also that, by Cartan's magic formula, 
$$\mathcal{L}_R\alpha = i_R d\alpha + d i_R \alpha = 0$$
Let $\{\phi^t_R : t \geq 0\}$ be the flow defined by the vector field $R$, known as the \emph{Reeb flow}. The Lie derivative computation above shows $(\phi^t_R)^*\alpha = \alpha$, hence $\{\phi^t_R : t \geq 0\}$ is a flow of contactomorphisms. In general, vector fields on a contact manifold generating a flow of contactomorphisms are known as \emph{contact vector fields}.\end{remark}

The canonical examples of symplectic and contact manifolds are the so-called standard symplectic and contact structures on the Euclidean space, respectively:

\begin{example}\label{eg-stansympcont}\leavevmode
\begin{enumerate}
	\item The \emph{standard symplectic structure} on $\Bbb R^{2n}$ is given by the $2$--form: 
			$$\omega_{\rm{std}} := \sum_{i = 1}^n dx_i \wedge dy_i.$$ 
		Here, $(x_1, \cdots, x_n, y_1, \cdots, y_n)$ are a chosen coordinate system for $\Bbb R^{2n}$.
	\item The \emph{standard contact structure} on $\Bbb R^{2n+1}$ is given by the distribution:
			\begin{gather*}
			\xi_{\rm{std}} = \ker \alpha_{\rm{std}},\\
			\alpha_{\rm{std}} := dz - \sum_{i = 1}^n y_i dx_i.
			\end{gather*}
    		Here, $(x_1, \cdots, x_n, y_1, \cdots, y_n, z)$ are a chosen coordinate system for $\Bbb R^{2n+1}$.
\end{enumerate}
\end{example}

Both of these examples are special cases of \emph{phase spaces} appearing in physics. Let $Q$ be an $n$--dimensional smooth manifold, called the \emph{configuration space}. Let $\pi : T^* Q \to Q$ denote the cotangent bundle projection, and $(q_1, \cdots, q_n)$ be a choice of a local coordinate system on $Q$. The associated holonomic coordinate system on $T^*Q $ is $(q_1, \cdots, q_n, p^1, \cdots, p^n)$ where $p^i = \partial/\partial q_i$. In classical mechanics, these are known as the generalized position-momentum coordinates.

\begin{example}\label{eg-phase}\leavevmode
\begin{enumerate}
	\item The \emph{symplectic phase space} is the symplectic manifold $(T^* Q, \omega)$ where $\omega = -d\theta$ is exterior derivative of the \emph{tautological $1$--form} $\theta$ on $T^*Q$, defined as follows. For any $(q, \zeta) \in T^* Q$, $v \in T_{(q, \zeta)}T^*Q$, we define 
$$\theta(v) = \zeta(\pi_*(v)).$$
In the generalized position-momentum coordinate system, we obtain:
\begin{gather*}\theta = \sum_{i = 1}^n p^i dq_i, \;
\omega = \sum_{i = 1}^n dq_i \wedge dp^i.
\end{gather*}
	\item The \emph{contact phase space} is the contact manifold $(\Bbb R \times T^* Q, \xi)$ where $\xi = \ker(\alpha)$, and $\alpha$ is a $1$--form on $\Bbb R \times T^* Q$ defined as follows. Let $z$ denote the coordinate for the first factor $\Bbb R$. Then we define,
$$\alpha = dz - \theta,$$
where $\theta$ is the tautological $1$--form as defined above. In the generalized position-momentum coordinate system, we obtain:
$$\alpha = dz - \sum_{i = 1}^n p^i dq_i$$
\end{enumerate}
\end{example}

\begin{remark}Example \ref{eg-stansympcont} is the special case of Example \ref{eg-phase} with $Q = \Bbb R^n$.\end{remark}

Recall that, given a symplectic vector space $(V, \omega)$ and a subspace $W \subset V$, the \emph{symplectic complement} is defined by:
$$W^{\perp \omega} := \{v \in V : \omega(v, w) = 0, \ \forall w \in W\}.$$
The following terminologies are standard:
\begin{enumerate}
	\item $W$ is called \emph{isotropic} if $W \subset W^{\perp \omega}$,
	\item $W$ is called \emph{co-isotropic} if $W^{\perp \omega} \subset W$,
	\item $W$ is called \emph{Lagrangian} if it is isotropic and co-isotropic, i.e., $W = W^{\perp \omega}$,
	\item $W$ is called \emph{symplectic} if $(W, \omega|_W)$ is a symplectic vector space.
\end{enumerate}
Observe that we have a short exact sequence:
$$0 \to W^{\perp \omega} \to V \stackrel{\phi}{\to} W^* \to 0,$$
where $\phi(v) = \omega(v, -)|_W$. The kernel of $\phi$ is exactly $W^{\perp \omega}$ by definition. From this, we deduce that if $W$ is a isotropic subspace, $\dim W \leq \dim V/2$. Here, the equality holds if and only if $W$ is a Lagrangian subspace. 

We record a lemma from linear algebra for future use.

\begin{lemma}\label{lem-lagcomp}Let $(V, \omega)$ be a symplectic vector space of rank $n$, and $L \subset V$ be a Lagrangian subspace. Then there exists a Lagrangian subspace $S \subset V$ such that $S \oplus L = V$. Moreover, $S$ is canonically isomorphic to $L^*$.
\end{lemma}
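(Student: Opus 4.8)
The statement has two halves: existence of a Lagrangian complement $S$, and the canonical identification $S \cong L^*$. The second half is essentially already available from the short exact sequence displayed just above the lemma. Since $L$ is Lagrangian, $L^{\perp\omega}=L$, so
$$0 \to L \to V \xrightarrow{\phi} L^* \to 0, \qquad \phi(v) = \omega(v,-)|_L,$$
is exact, and $\phi$ descends to a canonical isomorphism $V/L \cong L^*$. Any vector-space complement $S$ of $L$ projects isomorphically onto $V/L$, so $\phi|_S : S \to L^*$ is an isomorphism, canonically determined once $S$ is fixed. Thus the only real content is to produce a complement of $L$ that happens to be Lagrangian, and the plan is to start with an arbitrary complement and correct it.

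Concretely, I would first choose \emph{any} linear complement $W$ of $L$ in $V$ (this exists by elementary linear algebra, e.g.\ extend a basis of $L$ to a basis of $V$), noting $\dim W = n$. Every linear complement of $L$ is the graph $S_T := \{\, w + Tw : w \in W \,\}$ of a unique linear map $T : W \to L$, with $S_0 = W$; since $\dim S_T = n$, such $S_T$ is Lagrangian exactly when it is isotropic. Expanding $\omega(w_1 + Tw_1,\, w_2 + Tw_2)$ and using $\omega|_L \equiv 0$ to kill the $\omega(Tw_1, Tw_2)$ term, isotropy of $S_T$ is equivalent to
$$\omega(w_1, w_2) + \omega(w_1, Tw_2) - \omega(w_2, Tw_1) = 0 \qquad \text{for all } w_1, w_2 \in W.$$
So the task reduces to solving this linear equation for $T$.

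To solve it, observe that $\psi : L \to W^*$, $\psi(\ell) = \omega(\ell,-)|_W$, is an isomorphism: if $\psi(\ell)=0$ then $\omega(\ell,-)$ vanishes on $W$ and (as $L$ is isotropic) on $L$, hence on $V = L\oplus W$, forcing $\ell = 0$ by nondegeneracy, and $\dim L = n = \dim W^*$. Setting $A := \psi\circ T \in \mathrm{Hom}(W,W^*)$ and $\beta(w_1,w_2) := \omega(w_1,w_2)$ (an \emph{antisymmetric} bilinear form on $W$), the displayed condition rewrites as $A(w_1)(w_2) - A(w_2)(w_1) = -\beta(w_1,w_2)$, and since $\beta$ is already antisymmetric the choice $A(w) := -\tfrac12\,\beta(w,-)$ works; then $T := \psi^{-1}\circ A$ yields a Lagrangian complement $S := S_T$, after which $\phi|_S : S \to L^*$ finishes the proof. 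I do not expect a genuine obstacle here — the argument is linear algebra — the only mild care needed is bookkeeping with the (anti)symmetry of the forms, and the single honest choice (the initial complement $W$) is exactly what makes $S$ non-canonical even though the isomorphism $S\cong L^*$ is canonical. (Alternatively one can build a symplectic basis $e_1,\dots,e_n,f_1,\dots,f_n$ with the $e_i$ spanning $L$ by the usual induction on $n$ and take $S=\mathrm{span}(f_1,\dots,f_n)$, or fix a compatible inner product and take $S=L^{\perp}$.)
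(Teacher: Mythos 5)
Your proof is correct, but the existence half takes a genuinely different route from the paper. The paper picks an almost complex structure $J$ compatible with $\omega$ (citing \cite[Section 2.5]{msbook} for its existence) and takes $S = JL$, checking isotropy from $J$--invariance of $\omega$ and $S \cap L = \{0\}$ from positivity of $\omega(\cdot, J\cdot)$; you instead start from an arbitrary complement $W$ and correct it, parametrizing complements as graphs $S_T = \{w + Tw\}$ of maps $T : W \to L$, reducing isotropy to the linear equation $A(w_1)(w_2) - A(w_2)(w_1) = -\beta(w_1,w_2)$ with $A = \psi \circ T$, and solving it by the symmetrization trick $A(w) = -\tfrac12\beta(w,-)$ — your algebra here checks out, including the verification that $\psi : L \to W^*$ is an isomorphism and that each $S_T$ is automatically a complement. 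What your route buys is self-containedness: it is pure linear algebra and does not import the (nontrivial) existence of a compatible $J$. What the paper's route buys is that the specific complement $S = JL$ is exactly what gets reused later in bundle form — the proofs of Theorem \ref{thm-hprincleg} and Lemma \ref{lem-loctrivlegs} choose a fiberwise compatible $J$ on $\xi$ and invoke ``the proof of Lemma \ref{lem-lagcomp}'' to get the Lagrangian complement $J(F(T\Lambda))$ varying smoothly in families, so the $J$--construction is the one that globalizes on the nose. For the second half, your argument via the exact sequence $V/L \cong L^*$ is slightly more general than the paper's (it identifies $\phi|_S$ as an isomorphism for \emph{any} complement $S$, Lagrangian or not, whereas the paper's kernel argument uses that $S$ is Lagrangian), and your remark that the isomorphism is canonical only after $S$ is fixed is accurate. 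One small caution: your parenthetical alternative ``fix a compatible inner product and take $S = L^{\perp}$'' is only correct with the word \emph{compatible} doing real work (i.e.\ $g = \omega(\cdot, J\cdot)$, so that $L^{\perp_g} = JL$); for a generic inner product the orthogonal complement of a Lagrangian need not be Lagrangian.
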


\begin{proof}
For any such subspace $S$, we have a map $\phi : S \to L^*$, $\phi(v) = \omega(v, -)|_L$. If this is not an isomorphism, then $\phi$ must have kernel as $\dim S = \dim L^*$. Thus, there exists $v \in S$ such that $\omega(v, w) = 0$ for all $w \in L$. But, as $S \subset V$ is Lagrangian, $\omega(v, w) = 0$ for all $w \in S$ as well. Therefore, $\omega(v, w) = 0$ for all $w \in S \oplus W = V$, contradicting nondegeneracy of $\omega$. Thus, $\phi$ is necessarily an isomorphism, and so $S \cong L^*$.

We now show existence of such a subspace. Choose an almost complex structure $J$ on $V$ \emph{compatible} with $\omega$, i.e., an isomorphism $J : V \to V$ such that,
\begin{enumerate}
\item $J^2 = -\mathrm{id}$, 
\item $\omega(Jv, Jw) = \omega(v, w)$ for all $v, w \in V$, and 
\item $\omega(v, Jv) \geq 0$ for all $v \in V$, with equality iff $v = 0$. 
\end{enumerate}
See \cite[Section 2.5]{msbook} for the proof of existence of such an almost complex structure. We claim $S = JL$ is the desired subspace. Indeed, for all $v, w \in S$, $\omega(v, w) = \omega(Jv, Jw) = 0$ as $Jv, Jw \in L$ and $L$ is Lagrangian. Moreover, if $v \in S \cap L$, then $v \in L$ as well as $v = Jw$ for some $w \in L$. Hence, $\omega(w, Jw) = \omega(v, w) = 0$. This forces $w = 0$ and thus $v = 0$, by compatibility of $J$ and $\omega$. Therefore, $S \cap L = \{0\}$. As $S, L$ are both Lagrangian, they are half-dimensional subspaces of $V$. Since they intersect trivially, $S \oplus L$ is forced to be $V$. This concludes the proof of the lemma.
\end{proof}

\begin{definition}\label{def-lagleg}\leavevmode
\begin{enumerate}
	\item Let $(X, \omega)$ be a symplectic manifold. An immersion $f : L \to (X, \omega)$ is said to be \emph{isotropic} (resp. \emph{Lagrangian}) if for every $p \in L$ with $q := f(p) \in X$, $f_*(T_p L) \subset T_q X$ is an isotropic (resp. Lagrangian) subspace.
	\item Let $(Y, \xi)$ be a contact manifold. An immersion $f : \Lambda \to (Y, \xi)$ is said to be \emph{isotropic} (resp. \emph{Legendrian}) if for every $p \in \Lambda$ with $q := f(p) \in Y$, $f_*(T_p \Lambda) \subset \xi_q$ and $f_*(T_p \Lambda)$ is an isotropic (resp. Lagrangian) subspace of $(\xi_q, \mathrm{CS}(\xi)_q)$. 
\end{enumerate}
We say a submanifold $L \subset X$ (resp. $\Lambda \subset Y)$ is a Lagrangian (resp. Legendrian) submanifold if the inclusion map is a Lagrangian (resp. Legendrian) immersion.
\end{definition}

\begin{example}Consider the trajectories of motion $\gamma : I \to (Y = \Bbb R^2 \times S^1, \xi)$ of a unicycle in Example \ref{eg-unic}. Since $\gamma' \in \xi$, $\gamma$ is isotropic. Moreover, as $\dim Y = 3 = 2 \dim I + 1$, $\gamma$ must be a Legendrian immersion.
\end{example}

\begin{example}\label{eg-phaselag}
In Example \ref{eg-phase}, let $f : Q \to \Bbb R$ be a smooth function. Consider the embedding, 
$$df : Q \to (T^* Q, \omega),$$
defined by $df(x) := (x, df_x)$. This is an isotropic immersion, since:
$$(df)^*\omega = \sum_{i = 1}^n dq_i \wedge d \left ( \frac{\partial f}{\partial q_i} \right ) = \sum_{i = 1}^n \sum_{j = 1}^n \frac{\partial^2 f}{\partial q_i \partial q_j} dq_i \wedge dq_j = 0.$$
As $\dim Q = (\dim T^*Q)/2$, $df$ is in fact a Lagrangian immersion.
Likewise, the embedding 
$$j^1 f : Q \to (\Bbb R \times T^* Q, \xi),$$
defined by $j^1 f(x) := (f(x), x, df_x)$ is an example of a Legendrian immersion. Indeed,
$$(df)^*\alpha = df - \sum_{i = 1}^n \frac{\partial f}{\partial q_i} dq_i = 0.$$
\end{example}

The importance of Example \ref{eg-stansympcont} and Example \ref{eg-phase} lie in the following foundational theorems in symplectic and contact topology. The first theorem states that symplectic and contact manifolds are modelled on Example \ref{eg-stansympcont} in a neighborhood of any point, and the second theorem states that they are modelled on Example \ref{eg-phase} in a neighborhood of any Lagrangian (resp. Legendrian) submanifold. This stands in stark contrast to, e.g., Riemannian geometry: certainly not all Riemannian manifolds are isometric to the flat Euclidean space near a point; indeed, the local obstruction is given by the Riemann curvature tensor.

\begin{theorem}[Darboux's Theorem]\label{thm-darboux}
Let $(X^{2n}, \omega)$ be a symplectic manifold and $(Y^{2n+1}, \xi)$ be a contact manifold. Let $x \in X$ and $y \in Y$ be a point chosen from each.
\begin{enumerate}[label=(\arabic*), font=\normalfont]
\item There exists a neighborhood $U \subset X$ of $x$ and a map $\varphi : (U, \omega) \to (\Bbb R^{2n}, \omega_{\rm{std}})$ such that $\varphi$ is a symplectomorphism onto image, and $\varphi(x) = 0$.
\item There exists a neighborhood $V \subset Y$ of $y$ and a map  $\psi : (V, \xi) \to (\Bbb R^{2n+1}, \xi_{\rm{std}})$ such that $\psi$ is a contactomorphism onto image, and $\psi(y) = 0$.
\end{enumerate}
\end{theorem}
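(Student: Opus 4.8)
The plan is to prove Darboux's theorem for contact manifolds by combining two ingredients: a linear algebra normal form at a single point, followed by Moser's lemma (Lemma \ref{lem-moser}) to upgrade the pointwise match to a local contactomorphism. The symplectic case is entirely analogous (and slightly simpler, being the ``one dimension down'' shadow), so I would prove one carefully and indicate the modifications for the other.

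First I would choose a contact form $\alpha$ with $\ker\alpha = \xi$ near $y$. The $2$--form $d\alpha$ restricts to a symplectic form on the hyperplane $\xi_y$, so by the linear algebra of symplectic vector spaces (and Lemma \ref{lem-lagcomp} to pick a Lagrangian splitting $\xi_y = L \oplus S$ with $S \cong L^*$) together with the Reeb direction spanning a complement to $\xi_y$, I can choose linear coordinates $(x_1,\dots,x_n,y_1,\dots,y_n,z)$ on $T_yY$ in which $\alpha_y = dz$ and $(d\alpha)_y = \sum dx_i\wedge dy_i$; concretely, pull back the standard coordinates on $\Bbb R^{2n+1}$ by some linear iso, then compose with a chart of $Y$ at $y$ to get an initial diffeomorphism $\psi_0$ from a neighborhood $V_0$ of $y$ onto a neighborhood of $0$ in $\Bbb R^{2n+1}$ with $\psi_0(y) = 0$ and $(\psi_0)_*\alpha$ agreeing with $\alpha_{\rm{std}}$ \emph{to first order at the origin}. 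So we are reduced to: two contact forms $\alpha_0, \alpha_1$ on a neighborhood of $0$ in $\Bbb R^{2n+1}$ that agree at $0$ together with their derivatives there, and we want a local contactomorphism fixing $0$ carrying $\ker\alpha_0$ to $\ker\alpha_1$.

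Next I would run the Moser deformation argument. Set $\alpha_t = (1-t)\alpha_0 + t\alpha_1$; since $\alpha_0 = \alpha_1$ up to first order at $0$, each $\alpha_t$ is still a contact form on a possibly smaller neighborhood of $0$ (the contact condition $\alpha_t\wedge(d\alpha_t)^{\wedge n}\neq 0$ is open and holds at $0$ for all $t$). One then seeks a time--dependent vector field $X_t$, vanishing at $0$, whose flow $\phi_t$ satisfies $\phi_t^*\alpha_t = h_t\,\phi_t^*\alpha_0$ for positive functions $h_t$ — exactly the conformal condition required to preserve the contact distribution. Differentiating and invoking Cartan's formula reduces this to an algebraic equation for $X_t$ solved by using the Reeb vector field of $\alpha_t$ to handle the transverse component and the nondegeneracy of $d\alpha_t$ on $\xi_t$ to invert for the component in $\xi_t$; this is precisely the content of Moser's lemma as set up in Lemma \ref{lem-moser}, so I would simply cite it once the data is in the required form. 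Integrating $X_t$ from $0$ to $1$ on a small enough neighborhood (where the flow exists for all $t\in[0,1]$, using $X_t(0) = 0$) gives the contactomorphism $\phi_1$ with $\phi_1(0) = 0$, and $\psi := \psi_0 \circ \phi_1^{-1}$ (composed appropriately) is the desired map. The symplectic statement follows the same script, deforming $\omega_t = (1-t)\omega_0 + t\omega_1$ with $\omega_0,\omega_1$ agreeing at $0$, using closedness and the Poincaré lemma to write $\omega_1 - \omega_0 = d\beta$ with $\beta$ vanishing to second order at $0$, and solving $\iota_{X_t}\omega_t = -\beta$.

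The main obstacle is bookkeeping rather than conceptual: one must be careful that all the neighborhoods shrink consistently so that (a) every $\alpha_t$ stays contact, and (b) the flow of $X_t$ is defined on a common neighborhood of $0$ for the whole interval $t\in[0,1]$ — this is where $X_t(0)=0$ is essential, since it pins $0$ as a fixed point and lets one find an invariant small ball by continuity of the flow. A secondary subtlety is ensuring the initial normalization genuinely achieves first--order agreement of the contact \emph{forms} (not merely the distributions) at $y$, which requires rescaling $\alpha$ by a positive constant and choosing the linear coordinates to simultaneously normalize $\alpha_y$ and $(d\alpha)_y$; Lemma \ref{lem-lagcomp} is exactly what makes the symplectic normal form on $\xi_y$ available. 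Everything else is routine once Moser's lemma is in hand.
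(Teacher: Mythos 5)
Your proposal is correct and follows essentially the same route as the paper: a pointwise linear normalization of $\alpha_y$ and $(d\alpha)_y$ (resp.\ $\omega_x$) in a chart, linear interpolation with the standard model, openness of the contact/nondegeneracy condition, and Moser's trick (Lemma \ref{lem-moser}) with $K$ a single point. The only addition is your explicit appeal to the Poincar\'e lemma to write $\omega_1-\omega_0=d\beta$, which the paper leaves implicit since everything happens on a contractible chart.
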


\begin{theorem}[Weinstein's Tubular Neighborhood Theorem]\label{thm-weinstein}
Let $(X^{2n}, \omega)$ be a symplectic manifold and $(Y^{2n+1}, \xi)$ be a contact manifold. Let $L \subset X$ be a Lagrangian submanifold and $\Lambda \subset Y$ be a Legendrian submanifold. Let $\omega_0, \xi_0$ denote the symplectic and contact structures on the phase spaces constructed in Example \ref{eg-phase}.
\begin{enumerate}[label=(\arabic*), font=\normalfont]
\item There exists a neighborhood $U \subset X$ of $L$ and a map $\varphi : (U, \omega) \to (T^*L, \omega_0)$ such that $\varphi$ is a symplectomorphism onto image, and $\varphi(L) = 0_L \subset T^*L$ is the zero section.
\item There exists a neighborhood $V \subset Y$ of $\Lambda$ and a map $\psi : (V, \xi) \to (\Bbb R \times T^*L, \xi_0)$ such that $\psi$ is a contactomorphism onto image, and $\psi(L) = \{0\} \times 0_L \subset T^*L$.
\end{enumerate}
\end{theorem}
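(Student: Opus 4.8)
The plan is to prove both statements together via the standard Moser-type deformation argument, treating Darboux's theorem as the special case of the Weinstein neighborhood theorem where the Lagrangian/Legendrian is a point (or rather, running the same machinery in both settings). I would first establish the symplectic statements (1), then explain the modifications needed for the contact statements (2).

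\textbf{Setup and linear normal form.} First I would fix a point $x \in X$ (resp.\ a Lagrangian $L \subset X$) and choose an auxiliary coordinate chart/tubular neighborhood identifying a neighborhood of $x$ with $(\Bbb R^{2n}, 0)$ (resp.\ a neighborhood of $L$ with a neighborhood of the zero section $0_L \subset T^*L$, using an ordinary tubular neighborhood theorem plus the identification of the normal bundle of a Lagrangian with its cotangent bundle, which follows from Lemma \ref{lem-lagcomp}). Under this identification we get two symplectic forms on the same space: the given $\omega$ and the standard $\omega_{\mathrm{std}}$ (resp.\ $\omega_0$). The first key step is to arrange that these two forms agree \emph{along} the submanifold, i.e.\ $\omega_x = (\omega_{\mathrm{std}})_x$ as bilinear forms on $T_x X$ (resp.\ $\omega_p = (\omega_0)_p$ for all $p \in L$). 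For the point case this is just the linear symplectic normal form: any two symplectic bilinear forms on a vector space are related by a linear isomorphism, so postcompose with that isomorphism. For the Lagrangian case one needs a fiberwise version of this, choosing a Lagrangian complement to $TL$ inside $T_pX|_L$ varying smoothly in $p$ (again via Lemma \ref{lem-lagcomp} and a compatible almost complex structure) to identify $T_pX \cong T_pL \oplus T_p^*L$ symplectically.

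\textbf{Moser's trick.} With $\omega$ and $\omega_{\mathrm{std}}$ (henceforth $\omega_0$ and $\omega_1$) agreeing along the submanifold $A$ (a point, resp.\ $L$), consider the linear interpolation $\omega_t = (1-t)\omega_0 + t\omega_1$ for $t \in [0,1]$. Shrinking the neighborhood, each $\omega_t$ is nondegenerate (nondegeneracy is open and they agree along $A$, which is compact in the Lagrangian case — here one should note the noncompact case needs only a local statement so this is fine). Each $\omega_t$ is closed. Since $\omega_1 - \omega_0$ is closed and vanishes along $A$, a relative Poincar\'e lemma (deformation retracting the neighborhood onto $A$) produces a $1$--form $\sigma$ with $d\sigma = \omega_1 - \omega_0$ and $\sigma$ vanishing along $A$ (to first order at $A$). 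Define the time-dependent vector field $v_t$ by $i_{v_t}\omega_t = -\sigma$; it vanishes along $A$, so its flow $\phi_t$ fixes $A$ and is defined for $t \in [0,1]$ on a possibly smaller neighborhood. The Moser computation $\frac{d}{dt}(\phi_t^*\omega_t) = \phi_t^*(\mathcal{L}_{v_t}\omega_t + \dot\omega_t) = \phi_t^*(d\,i_{v_t}\omega_t + (\omega_1 - \omega_0)) = \phi_t^*(-d\sigma + d\sigma) = 0$ shows $\phi_1^*\omega_1 = \omega_0$, so $\varphi := \phi_1$ composed with the initial chart is the desired symplectomorphism onto its image, fixing $x$ (resp.\ $L$).

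\textbf{The contact case.} For (2), I would work at the level of contact forms. Choose contact forms $\alpha$ for $\xi$ and $\alpha_{\mathrm{std}}$ (resp.\ $\alpha_0 = dz - \theta$) and an initial chart/tubular neighborhood identifying the two; rescale $\alpha$ so that it agrees with the standard form along $A$ (along a point this is linear algebra for the contact hyperplane plus Reeb direction; along $\Lambda$ one uses that $T\Lambda$ is Legendrian and picks a complement, identifying a neighborhood with a neighborhood of $\{0\}\times 0_\Lambda$ in $\Bbb R \times T^*\Lambda$). Then interpolate $\alpha_t = (1-t)\alpha_0 + t\alpha_1$, which is contact near $A$ after shrinking. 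Now run Moser for contact forms: seek $v_t$ generating contactomorphisms with $\phi_t^*\alpha_t = g_t \alpha_0$ for positive functions $g_t$; differentiating gives $\frac{d}{dt}(\phi_t^*\alpha_t) = \phi_t^*(\mathcal{L}_{v_t}\alpha_t + \dot\alpha_t)$, and one solves $\mathcal{L}_{v_t}\alpha_t + (\alpha_1 - \alpha_0) = \mu_t \alpha_t$ for a suitable function $\mu_t$ by decomposing $v_t$ into its Reeb component (for $\alpha_t$) and its $\xi_t$--component and using nondegeneracy of $d\alpha_t|_{\xi_t}$; this is exactly the content of what the excerpt calls Moser's lemma (Lemma \ref{lem-moser}), which I would simply cite. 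The vector field vanishes along $A$, its flow fixes $A$, and $\phi_1$ is the required contactomorphism onto image.

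\textbf{Main obstacle.} The routine part is the Moser computation itself; the genuinely delicate steps are (a) the \emph{relative} linear normal form along the Lagrangian/Legendrian — getting the forms to agree along all of $A$, not just at one point, which requires the fiberwise choice of Lagrangian complement from Lemma \ref{lem-lagcomp} done smoothly and, in the contact case, compatibly with the contact hyperplane/Reeb splitting — and (b) the relative Poincar\'e lemma giving a primitive that vanishes \emph{along} $A$ (not merely closed), since this vanishing is what makes the Moser vector field vanish on $A$ and hence its flow fix $A$. I expect (a), the fiberwise Weinstein linear reduction, to be the main thing to get right; everything downstream is the standard flow argument.
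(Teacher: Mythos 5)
Your proposal follows essentially the same route as the paper: identify a tubular neighborhood of $L$ (resp.\ $\Lambda$) with a neighborhood of the zero section of $T^*L$ (resp.\ $\Bbb R \times T^*\Lambda$) using the Lagrangian-complement linear algebra of Lemma \ref{lem-lagcomp}, arrange the two structures to agree along the submanifold, and run the Moser interpolation of Lemma \ref{lem-moser} with $K = L$ (resp.\ $K = \Lambda$). The only divergence is cosmetic: you produce a primitive of $\omega_1 - \omega_0$ vanishing along $L$ via a relative Poincar\'e lemma, whereas the paper deduces exactness from the de Rham isomorphism induced by $0_L \hookrightarrow T^*L$ and then invokes its Moser lemma — your version is, if anything, slightly more careful about why the Moser vector field vanishes on $L$, but it is not a different argument.
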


The cornerstone result that is going to be used to prove these theorems is stated in the following lemma, colloquially known as \emph{Moser's trick}. The lemma enables us to extend certain homotopies of symplectic and contact structures to ambient isotopies of the manifold. Let $X, Y$ continue to denote manifolds of even and odd dimensions, respectively.

\begin{lemma}[Moser's trick]\label{lem-moser}\leavevmode
\begin{enumerate}[label=(\arabic*), font=\normalfont, leftmargin=*]
\item Let $K \subset X$ be a compact subset, and $U \subset X$ be an open neighborhood of $K$. Let 
$$\{\omega_t = \omega_0 + d\alpha_t : 0 \leq t \leq 1\}$$
be a homotopy of symplectic forms on $U$ such that $\omega_t = \omega_0$ on $TX|_K$. Then there exists an isotopy $\varphi_t : U \to X$ such that $\varphi_t(x) = x$, $(d\varphi_t)_x= \mathrm{id}$ for all $x \in K$, and $\varphi_t^*\omega_t = \omega_0$.
\item Let $K \subset Y$ be a compact subset, and $U \subset Y$ be an open neighborhood of $K$. Let 
$$\{\xi_t : 0 \leq t \leq 1\}$$ 
be a homotopy of contact structures on $U$ such that $\xi_t = \xi_0$ on $TY|_K$. Then there exists an isotopy $\psi_t : U \to Y$ such that $\psi_t(x) = x$, $(d\psi_t)_x = \mathrm{id}$ for all $x \in K$, and $\psi_t^*\xi_t = \xi_0$.
\end{enumerate}
\end{lemma}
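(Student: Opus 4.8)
The plan is to prove both statements by the same mechanism — the so-called Moser trick — which reduces an equivalence problem to solving a time-dependent ODE. In each case we will look for the isotopy $\varphi_t$ (resp. $\psi_t$) as the flow of a time-dependent vector field $X_t$ (resp. $Y_t$) which vanishes on $K$ together with its first derivative, so that integrating it produces an isotopy fixing $K$ to first order. Differentiating the desired identity in $t$ will turn it into a pointwise-linear-algebra equation for $X_t$, whose solvability is guaranteed by nondegeneracy of $\omega_t$ (resp. by the contact condition), and the support/fixed-point conditions on $K$ will come for free from the hypothesis that the structures are constant on $TX|_K$ (resp. $TY|_K$).

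For part (1): differentiating $\varphi_t^*\omega_t = \omega_0$ gives $\varphi_t^*\big(\mathcal{L}_{X_t}\omega_t + \dot\omega_t\big) = 0$, i.e. $\mathcal{L}_{X_t}\omega_t = -\dot\omega_t = -d\dot\alpha_t$. By Cartan's formula and closedness of $\omega_t$ this is $d\big(i_{X_t}\omega_t\big) = -d\dot\alpha_t$, so it suffices to solve $i_{X_t}\omega_t = -\dot\alpha_t$, which has a unique solution $X_t$ because $\omega_t$ is nondegenerate (fiberwise), depending smoothly on $t$. Since $\omega_t = \omega_0$ and, after adjusting the primitives, $\dot\alpha_t$ vanishes on $TX|_K$, we get $X_t = 0$ on $K$; one arranges in addition that $\dot\alpha_t$ vanishes to second order along $K$ so that $(dX_t)_x = 0$ for $x \in K$, giving $(d\varphi_t)_x = \mathrm{id}$. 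Shrinking $U$ to a relatively compact neighborhood of $K$ on which $X_t$ is integrable for all $t \in [0,1]$, the flow $\varphi_t$ is the required isotopy. The only subtlety here is making the primitives $\alpha_t$ — hence $\dot\alpha_t$ — vanish to the appropriate order on $K$; this is a standard application of the relative Poincar\'e lemma.

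For part (2): a homotopy of contact structures $\xi_t$ can be written (co-orientably, shrinking $U$) as $\xi_t = \ker\alpha_t$ for a smooth family of contact forms $\alpha_t$ with $\alpha_t = \alpha_0$ on $TY|_K$. Here we want $\psi_t^*\xi_t = \xi_0$, equivalently $\psi_t^*\alpha_t = \lambda_t\,\alpha_0$ for some positive function $\lambda_t$; differentiating, $\mathcal{L}_{Y_t}\alpha_t + \dot\alpha_t = \mu_t\,\alpha_t$ for some function $\mu_t$ (absorbing $\dot\lambda_t/\lambda_t$), i.e. $i_{Y_t}d\alpha_t + d(i_{Y_t}\alpha_t) + \dot\alpha_t = \mu_t\,\alpha_t$. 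Decompose $Y_t = h_t R_t + Z_t$ with $R_t$ the Reeb field of $\alpha_t$ and $Z_t \in \xi_t$; evaluating the equation on $R_t$ determines $\mu_t$ in terms of $h_t$ and $\dot\alpha_t(R_t)$, and then restricting the equation to $\xi_t$ gives $i_{Z_t}d\alpha_t|_{\xi_t} = (\mu_t\alpha_t - \dot\alpha_t - dh_t)|_{\xi_t}$. The choice $h_t := -\dot\alpha_t(R_t)$ (or any convenient normalization) makes the right-hand side lie in the image of the fiberwise-nondegenerate form $d\alpha_t|_{\xi_t}$, so $Z_t$ — and hence $Y_t$ — is uniquely and smoothly determined. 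As before $\alpha_t = \alpha_0$ on $TY|_K$ forces $\dot\alpha_t = 0$ there, hence $Y_t = 0$ on $K$, and arranging vanishing to second order gives $(d\psi_t)_x = \mathrm{id}$; integrating $Y_t$ on a relatively compact neighborhood of $K$ yields $\psi_t$.

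The main obstacle, and the place where the two cases genuinely diverge, is the algebraic step of showing the relevant equation is solvable. In the symplectic case nondegeneracy of $\omega_t$ makes $i_{(-)}\omega_t$ an isomorphism $TX \to T^*X$ and the solution is immediate. In the contact case $i_{(-)}d\alpha_t$ is \emph{not} invertible on all of $TY$ — it annihilates the Reeb direction — so one must first peel off the Reeb component by the correct choice of the conformal factor (the function $\mu_t$, equivalently $h_t$) before inverting $d\alpha_t$ on $\xi_t$; getting this bookkeeping right, and checking the resulting $Y_t$ is smooth in $(t,y)$, is the crux. I would present the symplectic case in full and then indicate the modifications for the contact case, emphasizing that co-orientability (assumed throughout the paper) is what lets us pass from $\xi_t$ to a smooth family $\alpha_t$.
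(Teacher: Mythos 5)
Your proposal is correct and follows essentially the same route as the paper: differentiate the desired identity, use Cartan's formula to reduce to a pointwise linear-algebra problem solvable by nondegeneracy of $\omega_t$ (resp. of $d\alpha_t|_{\xi_t}$ after absorbing the conformal factor), and integrate the resulting time-dependent vector field, with the vanishing along $K$ coming from the choice of primitives/contact forms. The only difference is cosmetic: in the contact case the paper takes the Moser vector field inside $\xi_t$ outright (the case $h_t \equiv 0$ of your ansatz $Y_t = h_t R_t + Z_t$), which is the more convenient normalization since then $\dot\alpha_t = 0$ on $TY|_K$ immediately gives $Y_t = 0$ on $K$ without having to worry about $dh_t$ along $K$.
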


\begin{proof}\leavevmode
\begin{enumerate}[leftmargin=*]
\item We wish to find an isotopy of diffeomorphisms $\{\varphi_t : U \to X : 0 \leq t \leq 1\}$, such that $\varphi_t^* \omega_t = \omega_0$. By differentiating both sides with respect to $t$, one obtains,
$$\mathcal{L}_{V_t} \omega_t = \frac{d\omega_t}{dt},$$
where $V_t$ is the $1$--parameter family of vector fields generating $\varphi_t$. By Cartan's magic formula, this equation is equivalent to,
$$d i_{V_t} \omega_t = \frac{d\omega_t}{dt}.$$
Since $d\omega_t/dt = d(\omega_0 + td\alpha)/dt = d\alpha$, it suffices to solve for $i_{V_t}\omega_t = \alpha$. But $\omega_t$ is a nondegenerate $2$--form on $U$, hence there exists such a solution $V_t$ for every $0 \leq t \leq 1$. Now, by solving the nonautonomous initial value problem
$$\frac{d\varphi_t}{dt} = V_t \circ \varphi_t, \; \varphi_0 = \mathrm{id},$$
we obtain the desired isotopy. Notice that $V_t = 0$ on $K$ by construction, as $\omega_t = \omega_0$ along $K$ for all $0 \leq t \leq 1$. In summary, we have $\varphi_t^*\omega_t = \omega_0$ and for all $x \in K$, $\varphi_t(x) = x$ and $(d\varphi_t)_x = \mathrm{id}$.   

\item We may choose a homotopy of $1$--forms $\{\alpha_t : 0 \leq t \leq 1\}$ such that $\xi_t = \ker \alpha_t$. As before, we wish to find an isotopy of diffeomorphisms $\{\psi_t : U \to Y : 0 \leq t \leq 1\}$, such that $\psi_t^* \alpha_t = e^{f_t} \alpha_0$. Note that in this case we allow a conformal scale by a time-dependent family of positive functions in virtue of Remark \ref{rmk-sycomor}. By differentiating both sides with respect to $t$, one obtains,
$$\mathcal{L}_{V_t} \alpha_t = \frac{d\alpha_t}{dt} + f_t\alpha_t,$$
where $V_t$ is the $1$--parameter family of vector fields generating $\psi_t$. By Cartan's magic formula, this equation is equivalent to,
$$i_{V_t} d\alpha_t + d i_{V_t} \alpha_t = \frac{d\alpha_t}{dt} + f_t \alpha_t.$$
Let us choose $V_t$ such that $i_{V_t} d\alpha_t = (d\alpha_t/dt)|_{\xi_t}$. This is possible, as $d\alpha_t$ defines a fiberwise nondegenerate $2$--form on $\xi_t$. Thus, $V_t \in \xi_t$ lies along the contact distribution, hence $i_{V_t}\alpha_t = 0$. Consequently, we have
$$\left (\mathcal{L}_{V_t} \alpha_t - \frac{d\alpha_t}{dt}\right)\bigg{\vert}_{\xi_t} = \left (i_{V_t} d\alpha_t - \frac{d\alpha_t}{dt}\right )\bigg{\vert}_{\xi_t} = 0.$$
Hence, there exists $f_t$ such that,
$$\mathcal{L}_{V_t} \alpha_t - \frac{d\alpha_t}{dt} = f_t \alpha_t.$$
Once again, by solving the nonautonomous initial value problem,
$$\frac{d\psi_t}{dt} = V_t \circ \psi_t, \ \psi_0 = \mathrm{id},$$
we obtain the desired isotopy. We have $V_t = 0$ on $K$ by construction, as $\xi_t = \xi_0$ along $K$ for all $0 \leq t \leq 1$. In summary, we have $\psi_t^*\xi_t = \xi_0$, and for all $x \in K$, $\psi_t(x) = x$ and $(d\psi_t)_x = \mathrm{id}$.\qedhere
\end{enumerate}
\end{proof}

Finally, we give a proof of Darboux's Theorem \ref{thm-darboux} and Weinstein's Theorem \ref{thm-weinstein} using Moser's trick.

\begin{proof}[Proof of Theorem \ref{thm-darboux}]\leavevmode
\begin{enumerate}[leftmargin=*]
\item Let $(X^{2n}, \omega)$ be a symplectic manifold and $x \in X$ be a point. The symplectic vector space $(T_x X, \omega_x)$ admits a linear symplectic basis, which we may extend to coordinates $(q_1, p^1, \cdots, q_n, p^n)$ on a chart $U \cong \Bbb R^{2n}$ around $x$. Let $\omega_{\rm{std}}$ denote the standard symplectic form on $U$, as in Example \ref{eg-stansympcont}, with respect to these coordinates. 
Consider the homotopy of closed $2$--forms,
$$\omega_t := t\omega + (1- t)\omega_{\rm{std}}, \ 0 \leq t \leq 1.$$
As $\omega_{\rm{std}}, \omega$ agree at $x$, we have $\omega_t = \omega_{\rm{std}}$ at $x$ for all $0 \leq t \leq 1$. Since nondegeneracy of $2$--forms is an open condition, we may shrink $U$ further to assume $\omega_t$ is symplectic on $U$ for all $t$. Thus, we may apply Lemma \ref{lem-moser} with $K = \{x\}$ to furnish the desired symplectomorphism.
\item Let $(Y^{2n+1}, \xi)$ be a contact manifold with $\xi = \ker \alpha$ and $y \in X$ be a point. Consider a coordinate system $(q_1, p^1, \cdots, q_n, p^n, z)$ on a chart $U \cong \Bbb R^{2n+1}$ around $y$ such that $\xi_y = \{t = 0\}$, and $(d\alpha)_y = dq_1 \wedge dp^1 + \cdots + dq_n \wedge dp^n$. Let $\xi_{\rm{std}} = \ker \alpha_{\rm{std}}$ denote the standard symplectic form on $U$, as in Example \ref{eg-stansympcont}, with respect to these coordinates. We may consider the homotopy of $1$--forms,
$$\alpha_t := t\alpha + (1- t)\alpha_{\rm{std}}, \ 0 \leq t \leq 1.$$
Observe, $\alpha, d\alpha$ agrees with $\alpha_{\rm{std}}, d\alpha_{\rm{std}}$, respectiely, at $y$. Thus, we have $\alpha_t = \alpha_{\rm{std}}$ and $d\alpha_t = d\alpha_{\rm{std}}$ at $y$, for all $0 \leq t \leq 1$. Since complete nonintegrability of $1$--forms is an open condition, we may shrink $U$ further to assume $\alpha_t$ is completely nonintegrable on $U$ for all $t$. Thus, we may apply Lemma \ref{lem-moser} with $K = \{y\}$ to furnish the desired contactomorphism.\qedhere
\end{enumerate}
\end{proof}

\begin{proof}[Proof of Theorem \ref{thm-weinstein}]\leavevmode
\begin{enumerate}[leftmargin=*]
\item Let $L \subset (X^{2n}, \omega)$ be a Lagrangian submanifold. Thus, we have a short exact sequence 
$$0 \to TL \to TX \to T^*L \to 0,$$
Therefore, $T^*L \cong TX/TL \cong \nu_L$ is the normal bundle of $L$ in $X$. By the tubular neighborhood theorem, there is a diffeomorphism onto image $f : T^*L \cong \nu_L \to X$ such that $f(0_L) = L$. The differential of $f$ along the zero section $0_L$ maps $T(T^*L)|0_L \cong TL \oplus T^*L$ isomorphically to $TX$, hence $f$ is a symplectomorphism along $L$. Let $\omega_1 := f^*\omega$. Since $\omega_1, \omega_0$ agree along $0_L$, and the embedding $0_L \to T^*L$ is an isomorphism in de Rham cohomology, these $2$--forms must be cohomologous. Thus, there is a $1$--form $\alpha$ on $T^*L$ such that $\omega_1 = \omega_0 + d\alpha$. We define,
$$\omega_t := \omega_0 + d(t\alpha).$$
Evidently, $\omega_t$ agrees with the symplectic form $\omega_0$ along the tangent spaces to $0_L$, for all $t$. Therefore, after further shrinking $U$, we may assyme $\omega_t$ is symplectic on $U$ for all $t$. Thus, we may apply Lemma \ref{lem-moser} with $K = L$ to furnish the desired symplectomorphism.
\item Let $\Lambda \subset (Y^{2n+1}, \xi)$ be a Legendrian submanifold. In this case, we have a short exact sequence,
$$0 \to T\Lambda \to \xi \to T^*\Lambda \to 0.$$
Therefore, $T^*\Lambda \cong \xi/T\Lambda \subset TY/T\Lambda \cong \nu_f$ is a subbundle of the normal bundle of $L$ in $X$, of co-rank $1$. As $\xi$ is co-oriented, we may choose a complementary trivial bundle to construct a bundle-isomorphism $f : \Bbb R \times T^*\Lambda \to \nu_f$. Once again, we let $\xi_1 = f^*\xi$. By an analogous argument using Moser's trick as in Part $(1)$, and also Part $(2)$ of Proof of Theorem \ref{thm-darboux}, we may furnish the desired contactomorphism between $\xi_1$ and $\xi_0$.\qedhere
\end{enumerate}
\end{proof}

\section{Introduction to Legendrian knots in three dimensions}\label{sec-legknots}

Here we review the local theory of Legendrian knots in contact $3$--manifolds, and refer the reader to \cite[Chapter 3]{geigbook} for further discussions.

\begin{definition}Let $(Y, \xi)$ be a contact $3$--manifold. A \emph{Legendrian knot} in $Y$ is a Legendrian embedding $\gamma : S^1 \to (Y, \xi)$. By a small abuse of notation, we shall also call the submanifold $\Lambda = \gamma(S^1)$, a Legendrian knot in $Y$. \end{definition}

The local theory of Legendrian knots is the same as that of proper Legendrian arcs in $(\Bbb R^3, \xi_{\rm{std}})$, by Darboux's theorem \ref{thm-darboux}. Thus, we begin by studying Legendrian knots with $(Y, \xi) = (\Bbb R^3, \xi_{\rm{std}})$. Let us choose standard coordinates $(x, y, z)$ on $\Bbb R^3$, so that $\xi = \ker(\alpha)$, where $\alpha = dz - ydx$ is the standard contact form.

\begin{definition}\label{def-legprojknot}Let $\pi : \Bbb R^3 \to \Bbb R^2$, $\pi(x, y, z) := (x, z)$. For a Legendrian knot $\Lambda \subset \Bbb R^3$, we shall call the knot diagram $\pi(\Lambda) \subset \Bbb R^2_{xz}$, the \emph{front projection} or the \emph{Legendrian projection} of the knot.
\end{definition}

\begin{example}\label{eg-cusp}Let $\gamma : \Bbb R \to (\Bbb R^3, \xi)$, 
$$\gamma(t) := \left(\frac{t^2}{2}, \ t,\ \frac{t^3}{3}\right ).$$
Then $\gamma$ is a Legendrian submanifold, as 
$$\gamma^*(\alpha) = d(t^3/3) - t d(t^2/2) = (t^2 - t^2)dt = 0.$$
The front projection of $\gamma$ is $\pi \circ \gamma : \Bbb R \to \Bbb R^2$, $\gamma(t) = (t^2/2, t^3/3)$, which is a \emph{semicubical cusp}.
\end{example}

\begin{proposition}\label{prop-legfr}A \emph{generic} front projection of a Legendrian knot is a knot diagram $K \subset \Bbb R^3_{xz}$ characterized by the following properties:
\begin{enumerate}[label=(\arabic*), font=\normalfont]
\item $K$ does not contain any vertical tangencies, i.e., points $p \in K$ such that, for any parametrization $(x(t), y(t))$ of some arc of $K$ passing through that point $(x(t_0), y(t_0)) = p$, $\dot{x}(t_0) = 0, \dot{z}(t_0) \neq 0$.
\item The only singular points of $K$ (i.e., points where $K$ is not a submanifold of $\Bbb R^2$) are either transverse double crossings, or semi-cubical cusps.
\item At each crossing the slope of the overcrossing arc is less than that of the undercrossing arc.
\end{enumerate}
\end{proposition}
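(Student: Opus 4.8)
The plan is to show that a Legendrian knot $\Lambda \subset (\mathbb{R}^3, \xi_{\rm std})$ is, after a $C^\infty$-small Legendrian perturbation, recovered from its front projection by the reconstruction formula $y = dz/dx$, and then to argue that properties (1)--(3) hold generically. First I would record the crucial \emph{reconstruction} observation: since $\alpha_{\rm std} = dz - y\,dx$ vanishes along $\Lambda$, along any arc where $\dot{x}\neq 0$ the $y$-coordinate is determined by $y = \dot z/\dot x$, i.e.\ $y$ is the slope of the front. Consequently a front projection determines the Legendrian knot uniquely wherever it has no vertical tangency, and it can have \emph{no self-tangencies}: two strands meeting tangentially would have equal slopes, hence the same $y$-value, hence would intersect in $\mathbb{R}^3$, contradicting embeddedness. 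This also explains (3): at a transverse crossing of the front the two strands have different slopes, so different $y$-coordinates; embeddedness in $\mathbb{R}^3$ forces the strand with the smaller $y$ (smaller slope) to pass in front — with the convention $\pi(x,y,z) = (x,z)$ and the $y$-axis pointing ``into'' the page — so ``overcrossing slope $<$ undercrossing slope.''

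Next I would address genericity. The front is the image of the composition $\pi\circ\gamma : S^1 \to \mathbb{R}^2$ where $\gamma = (x,y,z)$ parametrizes $\Lambda$; because $\gamma$ is an embedding and $\gamma' \in \xi$, at a point where $\dot x = 0$ the Legendrian condition gives $\dot z = y\dot x = 0$, so $\dot y \neq 0$ there (else $\gamma' = 0$), and the front has a singular point. A local computation — writing $x$ as a function of the parameter with $\dot x$ having a simple zero, which is the generic behavior — shows the front near such a point looks like $(t^2, t^3)$ up to reparametrization, i.e.\ a semicubical cusp, exactly as in Example \ref{eg-cusp}. I would invoke a standard transversality/jet-genericity argument (Thom transversality applied to the $1$-jet of $x(t)$ and to the multijet controlling double points of $\pi\circ\gamma$) to say that after an arbitrarily $C^\infty$-small Legendrian isotopy one may assume: $\dot x$ vanishes only to first order (no higher-order vertical tangencies, ruling out property (1)'s bad points once we note a point with $\dot x = 0,\ \dot z\neq 0$ is impossible for a Legendrian), the self-intersections of $\pi\circ\gamma$ are finitely many transverse double points occurring away from the cusps, and no triple points. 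That yields (1) and (2); (3) then follows from the embeddedness argument above applied at each double point.

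The main obstacle is the genericity step, specifically verifying that the naive perturbations one would make to the \emph{front} can be realized by an honest \emph{Legendrian} isotopy of $\Lambda$ — one is not free to perturb $(x(t),z(t))$ arbitrarily, since $y$ is then forced and one must check the resulting $\gamma$ stays embedded and the perturbation extends. The clean way around this is to perturb upstairs: perturb $\gamma$ among Legendrian embeddings (equivalently, the front-with-slope data) and push the transversality conditions down, using that the map ``Legendrian embedding $\mapsto$ front'' is a submersion onto an appropriate space of fronts. I would also need the elementary but slightly fussy local normal-form lemma identifying a generic zero of $\dot x$ along a Legendrian arc with a semicubical cusp; everything else (the slope comparison, the no-tangency fact) is an immediate consequence of $\alpha_{\rm std}|_\Lambda = 0$ and is where I would spend the least effort.
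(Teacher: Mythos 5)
Your proposal is correct and follows essentially the same route as the paper: property (1) from $\dot z = y\dot x$, the cusp normal form at generic (simple) zeros of $\dot x$ obtained by perturbing the Legendrian knot itself, transverse double points by genericity, and (3) from the identification of $y$ with the slope of the front together with the orientation convention. The only point to make explicit is the converse half of the characterization — that any diagram satisfying (1)–(3) lifts to an embedded Legendrian knot — but this is exactly what your reconstruction formula $y = \dot z/\dot x$ (plus the cusp model of Example \ref{eg-cusp} and the no-self-tangency observation) provides, just as in the paper.
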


\begin{proof}First, we begin by showing that a front projection of a generic Legendrian knot is a knot diagram satisfying the properties listed above. Let $\gamma(t) = (x(t), y(t), z(t))$ be a Legendrian knot, with image $\Lambda$. Let $K = \pi(\Lambda)$ be the front projection.

\begin{enumerate}[label=(\arabic*), font=\normalfont, leftmargin=*]
\item By definition, $\gamma' \in \xi$, which is to say $\gamma$ must satisfy the differential equation:
$$\dot{z}(t) = y(t) \dot{x}(t).$$
Thus, for any $t_0 \in S^1$, $\dot{x}(t_0) = 0$ forces $\dot{z}(t_0) = 0$. Hence, there are no vertical tangencies. 

\item For a generic Legendrian knot $\gamma$, the coordinate function $x : S^1 \to \Bbb R$ is a Morse function. This implies $\dot{x} = 0$ only at an isolated set of nondegenerate critical points of $S^1$. Thus, the front projection $\pi \circ \gamma : \Bbb R \to \Bbb R^2$ has an isolated set of singular points as well, away from which it is an immersion and thus $K = \pi(\Lambda)$ is an immersed curve at such points. Again, by genericity of the Legendrian knot, we may assume that for all points $p \in K$ over the preimage of which $\pi \circ \gamma$ is an immersion, the cardinality of $(\pi \circ \gamma)^{-1}(p)$ is at most $2$. This ensures that all the crossings are double crossings.

Now assume by translation and a reparametrization that a singular point of $\pi \circ \gamma$ is attained at 
$$(x(0), y(0), z(0)) = (0, 0, 0).$$
At this point, $\pi \circ \gamma$ fails to be an immersion, hence $\dot{x}(0) = \dot{z}(0) = 0$. As $\gamma$ is a smooth embedding, we must necessarily have $\dot{y}(0) \neq 0$. We may choose coordinates around $0 \in S^1$ so that $y(t) = t$ on a small arc around $0$. As $0 \in S^1$ is a Morse singularity of $x$, we may choose coordinates around $0 \in S^1$ so that $x(t) = t^2/2$. We can achieve this simultaneous change of coordinates by considering the jointly defined map $(x, y) : S^1 \to \Bbb R^2$ and changing coordinates on the target $\Bbb R^2$ near $(0, 0)$. Finally, this uniquely specifies the $z$-coordinate  near $0 \in S^1$ as,
$$z(t) = \int_{0}^t y(t) \dot{x}(t) dt = \int_{0}^t t^2 dt = t^3/3.$$
This gives us the required cuspidal normal form near $0$.

\item Note that standard orientation $\{x, y, z\}$ on $\Bbb R^3$ is equivalent to that of $\{x, z, -y\}$. Thus, at a crossing in the knot diagram $K \subset \Bbb R^2_{xz}$, the overcrossing has a \emph{smaller} $y$--value than the undercrossing. Therefore,
$$\left ( \frac{dz}{dx} \right )_{\rm{over}} = y_{\rm{over}} \leq y_{\rm{under}} = \left ( \frac{dz}{dx} \right )_{\rm{under}}$$
This proves that the slope of the overcrossing is less than that of the undercrossing.
\end{enumerate}

Conversely, given a knot diagram drawn on the $xz$-plane satisfying the three properties listed above, we may choose a Legendrian lift by defining $y(t) = \dot{z}(t)/\dot{x}(t)$ for all $t \in S^1$, whenever the right hand side of the expression is well-defined. At the points where it is not defined, we may use Example \ref{eg-cusp} as a local model for a lift. Since Condition $(2)$ ensures that the knot diagram does not contain any self-tangencies, this lift is in fact an embedding.\end{proof}

\begin{theorem}[$C^0$--dense $h$--principle for Legendrian knots]\label{thm-hprinclegknots}
Let $\kappa \subset \Bbb R^3$ be a smooth knot, and $\varepsilon > 0$. There exists a Legendrian knot $\Lambda \subset (\Bbb R^3, \xi_{\rm{std}})$ such that $\mathrm{dist}_{C^0}(\kappa, \Lambda) < \varepsilon$. Moreover, given finitely many marked points on $\kappa$, we can demand $\Lambda$ to pass through these points.
\end{theorem}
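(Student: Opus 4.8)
The plan is to exploit the front-projection characterization from Proposition \ref{prop-legfr}: a Legendrian knot is essentially the same data as a front diagram in the $xz$-plane with no vertical tangencies, only cusp and transverse-double-point singularities, and the over/under slope convention at crossings. So the strategy is to build a front diagram $C^0$-close to the projected knot, then lift. Concretely, first I would project $\kappa$ to the $xz$-plane to get a generic immersed curve $\kappa_0 = \pi(\kappa)$ (a knot diagram, after a small perturbation), and observe that the only obstruction to lifting $\kappa_0$ to a Legendrian is (a) the presence of vertical tangencies, where $\dot z \neq 0$ but $\dot x = 0$, and (b) the slope condition at crossings. Note that lifting a front diagram is \emph{not} a $C^0$-small operation in the $y$-direction, but it \emph{is} $C^0$-small in the $xz$-plane — and this suffices, because we also need to make sure the lift stays $C^0$-close to $\kappa$ in all three coordinates; this is the one genuinely delicate point, addressed below.

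The key device is the Legendrian zig-zag (Observation \ref{obs-intpzigzag} in the paper's outline): near any point where we need to either (i) create a cusp to reverse the $x$-direction, or (ii) adjust the $y$-value (equivalently the local slope of the front), we can insert a small "zig-zag" — a sawtooth-like detour in the front with two extra cusps — which in the front projection stays within an $\varepsilon$-ball but whose Legendrian lift has $y$-value sweeping through a controlled range. So the steps are: (1) perturb $\kappa$ so that $x|_\kappa : S^1 \to \Bbb R$ is Morse and $\pi|_\kappa$ is a generic immersion with only transverse double points; (2) at each point of $\kappa$, compute the "desired $y$-value" $\dot z / \dot x$ of the projected curve, and note this is already the correct $y$-coordinate wherever $\dot x \neq 0$; (3) the actual $y$-coordinate of $\kappa$ need not equal $\dot z/\dot x$, so replace $\kappa$ first by a $C^0$-close curve whose $y$-coordinate \emph{does} agree with the slope of its own $xz$-projection — this is exactly the classical $C^0$-approximation of an arbitrary curve by a Legendrian one, done by subdividing into short arcs and inserting zig-zags so that over each short subinterval the front wiggles with the right average slope; (4) at vertical tangencies and at crossings violating the slope convention, insert further zig-zags (adding pairs of cusps) to fix these, all within $\varepsilon$-balls in $\Bbb R^3$; (5) finally, to force $\Lambda$ through the finitely many marked points, note each marked point $p_i \in \kappa$ already lies on $\kappa$, and we can perform all the zig-zag surgeries in the complement of small neighborhoods of the $p_i$ — near each $p_i$ we instead just ensure the (already Legendrian, or easily-made-Legendrian) local picture passes through $p_i$ exactly; since there are only finitely many marked points this costs nothing.

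The main obstacle is step (3)–(4): controlling that the zig-zag insertions are simultaneously (a) $C^0$-small in $\Bbb R^3$ — not just in the $xz$-plane — and (b) compatible with each other and with keeping the total curve embedded. For (a), the point is that a zig-zag occupying a front-region of diameter $\delta$ with $N$ teeth has $y$-excursion bounded by roughly the maximum slope of the teeth, which we must bound; the trick is to first reparametrize/approximate $\kappa$ so that its \emph{own} $y$-coordinate is already within $\varepsilon$ of the slope of its projection (possible because we may choose the approximating curve with $y$ nearly constant over very short arcs and let the front oscillate finely), so that the lift's $y$-coordinate never strays far from $\kappa$'s $y$-coordinate. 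For embeddedness, genericity of the diagram plus the fact that all surgeries happen in disjoint small balls keeps the lift embedded, using Proposition \ref{prop-legfr}'s converse direction which guarantees the lift of a valid front diagram is an embedding. I would organize the write-up as: state the zig-zag normal form as a lemma, reduce to the front-diagram statement, then assemble.
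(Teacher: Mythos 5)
Your proposal takes essentially the same route as the paper's proof: subdivide $\kappa$ finely, build a front over $\pi(\kappa)$ out of interpolating zig-zags whose slopes track the $y$--coordinate of $\kappa$ (which is what gives $C^0$--closeness in all three coordinates after lifting via Proposition \ref{prop-legfr}), and handle the marked points by placing them among the subdivision points. Two cosmetic remarks: you need the zig-zag slopes \emph{pointwise} close to the local $y$--value of $\kappa$, not merely the right average slope (your later paragraph does say this correctly), and the crossing convention in Proposition \ref{prop-legfr} requires no separate repair, since the over/under data of the lift is determined by the slopes rather than being a condition to arrange.
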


\begin{proof}
Let $N > 1$ be a sufficiently large natural number. Choose points $\{p_0, \cdots, p_N\}$ on $\kappa \subset \Bbb R^3$ with spacing (in the Euclidean norm) less than $1/N$. Let $s_i$ be the value of the $y$--coordinate of $\kappa$ at $p_i$, for all $0 \leq i \leq N$. We will construct a front diagram $K \subset \Bbb R^2_{xz}$ which interpolates between the points $\pi(p_0), \cdots, \pi(p_N)$, such that moreover the slope of $K$ interpolates between the values $s_0, \cdots, s_N$. The basic local model we shall use for this is the following:

\begin{observation}[Interpolation zig-zags]\label{obs-intpzigzag} For any $\varepsilon > 0$, an embedded arc $\sigma : [0, 1] \to \Bbb R^2$, and a slope-value $s > 0$, there exists a topologically embedded arc $\gamma = (\gamma_1, \gamma_2) : [0, 1] \to \Bbb R^2$ with no vertical tangencies and only semicubical cusp singularities such that $\gamma(0) = \sigma(0)$, $\gamma(1) = \sigma(1)$, $\mathrm{dist}_{C^0}(\gamma, \sigma) < \varepsilon$ and $\|\gamma_2'(t)/\gamma_1'(t) - s\| < \varepsilon$ for all $t \in [0, 1]$.
\end{observation}

\begin{figure}[h]
\centering
\includegraphics[scale=0.6]{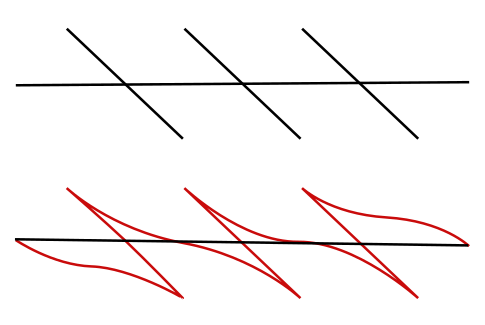}
\caption{The arc $\sigma$ with the slope-values drawn as a line-field along $\sigma$ (top). The interpolation zig-zags are constructed by using the line-field as `guides' (bottom).}
\label{fig-intpzigzag}
\end{figure}

By considering interpolating zig-zags for the arcs of $\pi(\kappa)$ joining $\pi(p_i)$ and $\pi(p_{i+1})$ for $0 \leq i \leq N-1$, and composing these in sequence, we obtain a front diagram. Lifting it to a Legendrian knot $\Lambda$ by Proposition \ref{prop-legfr} gives the desired conclusion. The statement of the theorem also holds true relative to finitely many marked points on $\kappa$ by including these to be in the set of points $\{p_0, \cdots, p_N\}$ chosen in the beginning of the proof.
\end{proof}

\begin{remark}Theorem \ref{thm-hprinclegknots} has the following physically motivating corollary. We saw in Example \ref{eg-unic} that unicycle trajectories gives rise to Legendrian arcs in $(\Bbb R^2 \times S^1, \xi)$. By covering $\Bbb R^2 \times S^1$ with finitely many Darboux charts and using Theorem \ref{thm-hprinclegknots}, we conclude that every embedded path in $\Bbb R^2 \times S^1$ can be $C^0$--approximated by unicycle trajectories with the same initial and terminal points. Thus, a unicycle can manuevre through any arbitrarily contrieved obstacle course. This is known, in a related model where the unicycle is replaced by a car and the obstacle course is replaced by a saturated parking lot, as the \emph{car-parking problem}.
\end{remark}

\begin{definition}\label{def-legisotopy}Two Legendrian knots $\Lambda_0, \Lambda_1 \subset (Y, \xi)$ in a contact $3$--manifold are \emph{Legendrian isotopic} if there is a smooth isotopy between them through Legendrian knots $\{\Lambda_t \subset (Y, \xi): 0 \leq t \leq 1\}$. We shall call such an isotopy a \emph{Legendrian isotopy}.
\end{definition}

\begin{example}[Legendrian Reidemeister moves]\label{eg-reidmoves}There are Legendrian isotopies in $(\Bbb R^3, \xi_{\mathrm{std}})$ which relates each of front diagrams depicted in the left column of Figure \ref{fig-reid}, to the corresponding front diagram on the right column. 

\begin{figure}[h]
\centering
\includegraphics[scale=0.4]{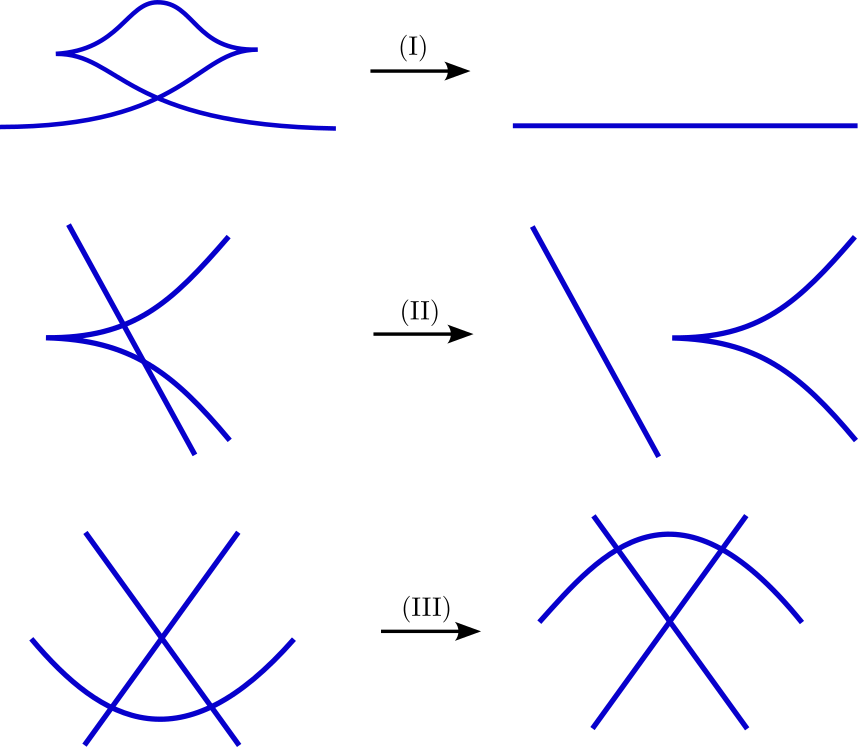}
\caption{Reidemeister moves $\rm{I}, \rm{II}$ and $\rm{III}$.}
\label{fig-reid}
\end{figure}

\noindent
Moves $\rm{II}$ and $\rm{III}$ are straightforward to verify, as they can be achieved by a movie of front diagrams $\{\pi(\Lambda_t) : t \neq t_0\}$ in $\Bbb R^2_{xz}$ which are legitimate except at a single exceptional time $t = t_0$: for move $\rm{II}$, this occurs when the cusp crosses the strand, and for move $\rm{III}$, this occurs at the triple point. The slopes at the crossings are chosen in a way that the Legendrian lift of the movie $\{\Lambda_t : t \neq t_0\}$ define an isotopy which extends to $t = t_0$ as well, i.e. the Legendrian isotopies $\{t < t_0\}$ and $\{t > t_0\}$ glue at $t = t_0$.

To see Move $\rm{I}$, let us consider $(\Bbb R^3, \xi_{\mathrm{std}})$ as a Darboux chart in the phase space of a unicycle $(\Bbb R^2 \times S^1, \xi)$ where $\xi = \cos(\theta)dx - \sin(\theta) dy$, as in Example \ref{eg-unic}. Then the Legendrian front $\Bbb R^2_{xz}$ is simply the configuration space $\Bbb R^2_{xy}$ of unicycle trajectories. It is a straightforward calculation to verify that the vector field,
$$R = \cos(\theta) \frac{\partial}{\partial x} - \sin(\theta) \frac{\partial}{\partial y},$$ 
defines a Reeb vector field (cf. Remark \ref{rmk-reeb}) for $(\Bbb R^2 \times S^1, \xi)$. Along any particular Legendrian curve (or \emph{on-shell}) in $\Bbb R^2 \times S^1$, given by a lift of a unicycle trajectory $(x(t), y(t))$ in $\Bbb R^2_{xz}$,
\begin{align*}
\cos(\theta) &= \frac{\dot{x}}{\|(\dot{x}, \dot{y})\|},\\
\sin(\theta) &= \frac{\dot{y}}{\|(\dot{x}, \dot{y})\|}.
\end{align*}
Thus, along any particular Legendrian curve in $\Bbb R^2 \times S^1$, $R$ restricts to the normalization of the vector field $\dot{x} \partial_x - \dot{y} \partial_y$. In the configuration space $\Bbb R^2_{xy}$, this corresponds to the unit normal vector field to the unicycle trajectory. The unicycle trajectory need not be smooth, but generically the singularities are cuspidal, hence the unit normal vector field admits a canonical extension to these singular points as well. 

Thus, flowing a particular front diagram or unicycle trajectory $C \subset \Bbb R^2_{xy}$ along the Reeb flow then produces the family of parallel curves $\{C + t\mathbf{n} : t \in \Bbb R\}$ where $\mathbf{n}$ denotes the unit normal to $C$. We demonstrate Move $\rm{I}$ as a Reeb flow in Figure \ref{fig-reidflow}

\begin{figure}[h]
\centering
\includegraphics[scale=0.4]{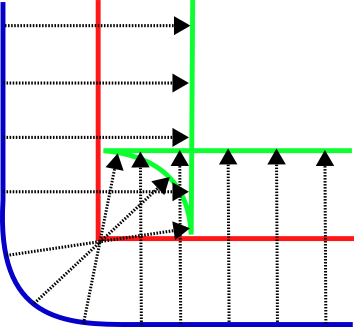}
\caption{Legendrian Reidemeister move $\rm{I}$ as a Reeb flow from the blue curve to the green curve.}
\label{fig-reidflow}
\end{figure}
\end{example}

\section{Holonomic approximation and convex integration}\label{sec-holapp}

In this section, we review fundamentals of the theory of $h$--principles developed by Gromov \cite{grobook} and Eliashberg-Mishachev \cite{embook}. As the nature of the material presented is quite general and abstract, we elucidate the ideas by providing many examples, primarily from contact topology. 

\subsection{Preliminaries on jet bundles}

Let us begin by introducing a convenient notation introduced by Gromov \cite[Section 1.4.1]{grobook}, which we shall use subsequently throughout.

\begin{notation}[Germinal neighborhoods]\label{not-op}Let $M$ be a smooth manifold. Given a subset $K \subset M$, an unspecified open neighborhood of $K$ will be denoted as $\mathrm{Op}_M(K)$, or simply $\mathrm{Op}(K)$ when the ambient manifold $M$ is understood. Given another manifold $N$, germs of mappings from $M$ to $N$ defined on some open neighborhood of $K$ will be denoted as $f : \mathrm{Op}(K) \to N$. Thus, $\mathrm{Op}(K)$ shall stand as a placeholder for an unspecified open neighborhood of $K$ in $M$ which might shrink in the size during the course of a proof or discussion.\end{notation}

\begin{definition}[$r$--jet equivalence]\label{def-rjeteq}Let $E, B$ be smooth manifolds, and let $\pi : E \to B$ be a smooth fiber bundle. Let $p \in B$ be a point, and $r \geq 1$ be a natural number. Two germs of sections $s_1, s_2 : \mathrm{Op}(\{p\}) \to E$ are said to be \emph{$r$--jet equivalent at $p$} if all the derivatives of $s_1$ and $s_2$ of order upto $r$ agree at $p$. More precisely, let $(x^1, \cdots, x^n)$ be a local coordinate system around $p$ on $B$. Then sections $s_1, s_2$ are $r$--jet equivalent at $p$ if for all multi-sets $I = \{i_1, \cdots, i_k\} \subset \{1, \cdots, n\}$ with $|I| \leq r$, 
$$\frac{\partial^{|I|} s_1(p)}{\partial x^{i_1} \cdots \partial x^{i_k}} = \frac{\partial^{|I|} s_2(p)}{\partial x^{i_1} \cdots \partial x^{i_k}}.$$
This notion is independent of the choice of local coordinates by the chain-rule. We shall denote the $r$--jet equivalence class of a section $s : \mathrm{Op}(\{p\}) \to E$ as $j^r_p s$. \end{definition}

Let us denote the set of $r$--jet equivalence classes of sections at an arbitrary point in $B$ as:
$$J^r E := \{j^r_p s :  p \in B, \ s : \mathrm{Op}(\{p\}) \to E, \ \pi \circ s = \mathrm{id}\}$$

\begin{example}[Trivial Euclidean bundles]\label{eg-trivjet}
Consider the case $B = \Bbb R^n, E = \Bbb R^n \times \Bbb R^k$ and $\pi : E \to B$ given by projection to the first $n$ coordinates. Let $\mathcal{P}_r(n, k)$ denote the space of $k$--tuples of multivariate polynomials with $n$ variables, each polynomial of total degree at most $r$. We have a set-theoretic bijection,
\begin{gather*}\varphi : J^r E \to B \times \mathcal{P}_r(n, k),\\
\varphi(j^r_p s) = (p, (T_r s)_p).\end{gather*}
Here, $(T_r s)_p$ denotes the $r$--th order Taylor expansion of $s$ around $p$. We equip $J^r E$ with the topology given by pulling back the Euclidean topology on 
$$B \times \mathcal{P}_r(n, k) \cong \Bbb R^n \times \Bbb R^{k \binom{n+r}{r}} = \Bbb R^{n+k\binom{n+r}{r}}.$$ 
\end{example}

As any smooth fiber bundle of smooth manifolds $(E, B, \pi)$ is locally isomorphic to $(\Bbb R^n \times \Bbb R^k, \Bbb R^n, \pi)$ by choice of adapted coordinate charts, we may use the Example \ref{eg-trivjet} to give a basis for a topology on $J^r E$ in general. This topology is evidently locally Euclidean, and in fact forms a smooth atlas for $J^r E$. 

More precisely, given $\pi$--adapted coordinate chart $U \subset E$ with coordinate system $(x^1, \cdots, x^n, y^1, \cdots, y^k)$, we define the coordinate chart $J^r U \subset J^r E$ with coordinate system $(x^1, \cdots, x^n, y^1, \cdots, y^k, z^{j, I})$, where for any $j \in \{1, \cdots, k\}$ and multiset $I = \{i_1, \cdots, i_\ell\}\subset \{1, \cdots, n\}$ of size $|I| \leq r$, the coordinates 
$$z^{j, I} := \frac{\partial^{|I|}}{\partial x^{i_1}\cdots\partial x^{i_\ell}} \circ y^j$$ 
indicate the mixed partial derivatives of order $I$, of the $j$--th coordinate. Note that if we denote $\pi^{(r)} : J^r E \to B$ to be the natural projection $\pi^{(r)}(j^r_p s) := p$, then the coordinate chart $\{J^r U, (x^1, \cdots, x^n, y^1, \cdots, y^k, z^{j, I})\}$ on $E$ is $\pi^{(r)}$--adapted.

We consolidate this discussion in the following definition:

\begin{definition}[Jet bundle]\label{def-jetbdl}Let $E, B$ be smooth manifolds, and let $\pi : E \to B$ be a smooth fiber bundle. We define the associated \emph{$r$--jet bundle} $(J^r E, B, \pi^{(r)})$ by:
\begin{gather*}J^r E := \{j^r_p s :  p \in B, \ s : \mathrm{Op}(\{p\}) \to E, \ \pi \circ s = \mathrm{id}\},\\
\pi^{(r)}(j^r_p s) := p,
\end{gather*}
where $J^r E$ is equipped with the smooth atlas described above.
\end{definition}

Observe that for any section $s$ of $(E, B, \pi)$, one obtains a natural section $j^r s$ of the jet bundle $(J^r E, B, \pi^{(r)})$, defined by $(j^r s)(p) := j^r_p s$. We call $j^r s$ the \emph{$r$--jet prolongation} of $s$. Not all sections of $J^r E$ are $r$--jet prolongations of sections of $E$. Thus, to distinguish these, we call a general section of $J^r E$ a \emph{formal section} and an $r$--jet prolongation of a section of $E$ a \emph{holonomic section}. Nevertheless, there is a natural section of $E$ underlying every formal section of $J^r E$. Consider the map, 
$$\pi_{\rm{front}} : J^r E \to E, \; \pi_{\rm{front}}(j^r_p s) := s(p).$$
Then, for any formal section $f : B \to J^r E$ of $(J^r E, B, \pi^{(r)})$, we define the \emph{base} $\mathrm{bs}(f)$, which is a section of $(E, B, \pi)$ defined by $\mathrm{bs}(f) := \pi_{\rm{front}} \circ f$.

\begin{example}[Contact phase space revisited]\label{eg-1jet}Let $Q^n$ be a smooth manifold, and $\pi : Q \times \Bbb R \to Q$ be the trivial line bundle over $Q$. Note that sections of $\pi$ are of the form $s : Q \to Q \times \Bbb R$, $s(q) = (q, f(q))$ for smooth functions $f : Q \to \Bbb R$. Therefore, under the adapted coordinate charts described above,
$$j^1_q s = \left (q, f(q) + \frac{\partial f(q)}{\partial x^1} z^1 + \cdots + \frac{\partial f(q)}{\partial x^n} z^n \right ).$$
Note that the coordinates $z^i = \partial/\partial x^i$ on the associated $1$--jet bundle transform as pullback of the $1$--forms $dx^i$ under $\pi^{(1)}$, in virtue of the chain rule. Therefore, we may equivalently write,
$$j^1_q s = (q, f(q), df_q),$$
where $df_q$ is the exact $1$--form $df$ restricted to $T_q Q$. This gives an identification between the $1$--jet bundle of $(Q \times \Bbb R, Q, \pi)$ and $\Bbb R \times T^*Q$, which is the contact phase space from Example \ref{eg-phase}. We shall henceforth use the notation $J^1 Q$ to denote this particular jet bundle \end{example}

\begin{remark}[Jet-intepretation of the contact structure]\label{rmk-1jetcont}Example \ref{eg-phase} gives a contact structure $\xi$ on the total space $J^1 Q$. From Example \ref{eg-phaselag}, we obtain that a holonomic section of $(J^1 Q, Q, \pi^{(1)})$ is a Legendrian submanifold with respect to $\xi$. Moreover, for any $p \in J^1 Q$, the contact plane $\xi_p$ is spanned by the collection of tangent spaces at $p$ to all possible germs of holonomic sections of $(J^1 Q, Q, \pi^{(1)})$ passing through $p$.\end{remark}

\begin{example}[Jets of smooth maps]\label{eg-mapjet}
We record a slight generalization of Example \ref{eg-1jet} that we shall frequently encounter. Let $M$, $N$ be smooth manifolds, and $\pi : M \times N \to M$ be the trivial $N$--bundle over $M$. We shall henceforth denote the $1$--jet bundle of $(M \times N, M, \pi)$ as $J^1(M, N)$. In this case, the map
$$\pi_{\rm{front}} : J^1(M, N) \to M \times N,$$
is a fiber bundle with fiber over $(p, q) \in M \times N$ being $\mathrm{Hom}(T_p M, T_q N)$. Thus, a formal section of the $1$--jet bundle $(J^1(M, N), M, \pi^{(1)})$ can be equivalently described as a pair of maps $(F, f)$ where $F : TM \to TN$ is smooth map that is fiberwise linear and $f : M \to N$ is a smooth map such that the following diagram commutes:
\begin{equation*}
\begin{CD}
TM @>F>> TN \\
@VVV @VVV \\
M @>f>> N
\end{CD}
\end{equation*}
where the vertical maps are tangent bundle projections. We will use the terminology ``\emph{$F$ covers $f$}" to indicate the commutativity of this square.
We shall often call formal sections of $J^1(M, N)$, presented as a pair of maps $(F, f) : (TM, M) \to (TN, N)$ where $F$ covers $f$, as \emph{$1$--jets of maps} from $M$ to $N$.\end{example}

\subsection{Preliminaries on differential relations}

Let us begin by recalling Example \ref{eg-unic}, where we had the configuration space of a unicycle with its canonical contact structure
$$Y = \Bbb R^2 \times S^1, \; \xi = \ker(\cos(\theta) dx - \sin(\theta) dy).$$ 
We had described the equation for a trajectory of motion of a unicycle $\gamma : I \to Y$, given by $\gamma' \in \xi$, as a differential relation. In general, we shall define a ``differential relation" on sections of a bundle as any arbitrary constraint on the $r$--jets of sections on that bundle. We give a precise definition below, following Eliashberg-Mishachev \cite[Chapter 5]{embook}.

\begin{definition}[Partial differential relations]\label{def-diffrel}Given a smooth fiber bundle of smooth manifolds $(E, B, \pi)$, a subset $\mathcal{R} \subset J^r E$ of the total space of the associated $r$--jet bundle is called a \emph{partial differential relation of order $r$}, or simply a \emph{differential relation of order $r$}.

A formal section $\sigma : B \to J^r E$ of $(J^r E, B, \pi^{(r)})$ will be called a \emph{formal section} or \emph{formal solution of $\mathcal{R}$} if $\sigma(B) \subset \mathcal{R}$. If a formal section $\sigma$ of $\mathcal{R}$ can be written as an $r$--jet prolongation $\sigma = j^r s$ of some sections $s$ of $E$, we say $\sigma$ is a \emph{holonomic section} or \emph{holonomic solution of $\mathcal{R}$}.
\end{definition}

\begin{example}[Differential relation of Legendrian immersions]\label{eg-legimmrel}Let $\Lambda, Y$ be smooth manifolds and $\xi$ be a contact structure on $Y$. Let $\mathcal{R}_{\rm{Leg}} \subset J^1(\Lambda, Y)$ denote the subset consisting of $1$--jets of (graphs of) Legendrian immersions $f : \mathrm{Op}(\{p\}) \to (Y, \xi)$ near points $p \in \Lambda$. We call $\mathcal{R}_{\rm{Leg}}$ the \emph{differential relation of Legendrian immersions}.\end{example}

It is evident that the $\mathcal{R}_{\rm{Leg}} \subset J^1(\Bbb R, Y)$ is the relevant differential relation defining the trajectories of motions of a unicycle in Example \ref{eg-unic}. Example \ref{eg-legimmrel} will be the central object of study in Section \ref{sec-hLegimm}.

Let $(E, B, \pi)$ be a smooth fiber bundle of smooth manifolds, $(J^r E, B, \pi^{(r)})$ be the associated $r$--jet bundle and $\mathcal{R} \subset J^r E$ be a differential relation. We introduce the following definition for ease of notation.

\begin{definition}[Continuity of families]\label{def-conti}Suppose $\phi : A \times I^d \to B$ is a $d$--parameter homotopy through embeddings. Let us denote $A_t := \phi(A \times \{t\}) \subset B$. A family of formal or holonomic sections $\{f_t : A_t \to \mathcal{R} : t \in I^d\}$ shall be said to be \emph{continuous} if there exists a section $F : \mathrm{Op}(\phi(A \times I^d)) \to J^r E$ such that $F|{A_t} = f_t$. Likewise, a family of germs of formal or holonomic sections $\{f_t : \mathrm{Op}(A_t) \to \mathcal{R}\}$ shall be said to be \emph{continuous} if there exists a section $F : \mathrm{Op}(\phi(A \times I^d)) \to J^r E$ such that $F|\mathrm{Op}(A_t) = f_t$. \end{definition}

In the following, we introduce three important kinds of differential relations:

\begin{definition}[Locally integrable differential relations]\label{def-locintrel}
Suppose we are given,
\begin{enumerate}
\item A continuous map $\phi : I^d \to B$,
\item A continuous family of formal sections $\{f_t : \{\phi(t)\} \to \mathcal{R} : t \in I^d\}$,
\item A continuous family of germs of holonomic sections $\{\widetilde{f}_t : \mathrm{Op}(\{\phi(t)\}) \to \mathcal{R} : t \in \partial I^d\}$ such that $\widetilde{f}_t$ extends $f_t$ for all $t \in \partial I^d$, i.e., $\widetilde{f}_t(\phi(t)) = f_t(\phi(t))$ for all $t \in \partial I^d$.
\end{enumerate}
We say $\mathcal{R}$ is \emph{(parametrically) locally integrable} if there exists a continuous family of holonomic sections $\{\widetilde{f}_t : \mathrm{Op}(\phi(t)) \to E : t \in I^k\}$ agreeing with the family in $(3)$ for $t \in \partial I^k$ and extending the family in $(2)$, i.e., $\widetilde{f}_t(\phi(t)) = f_t(\phi(t))$ for all $t \in \partial I^d$. 
\end{definition}

Note that for $d = 0$, this simply says every jet in $\mathcal{R}$ lying over $p \in B$ is the value of some holonomic section of $\mathcal{R}$ defined near $p$. For the next definition, let $H := D^m \times D^{n-m}$ denote an $m$--handle and let $C := D^m \times \{0\} \subset H$ denote the core.

\begin{definition}[Microflexible differential relations]\label{def-micflexrel}
Suppose we are given,
\begin{enumerate}
\item An isotopy $\phi : H \times I^d \to B$ of handles $H_t := \phi(H \times \{t\})$ with core $C_t := \phi(C \times \{t\})$,
\item A continuous family of germs of holonomic sections $\{f_t : \mathrm{Op}(H_t) \to \mathcal{R} : t \in I^d\}$,
\item A continuous family-homotopy of germs of holonomic sections 
$$\{f_{t, s} : \mathrm{Op}(\partial H_t \cup C_t) \to \mathcal{R} : t \in I^d, s \in I\},$$
such that $f_{t, 0}|\mathrm{Op}(H_t) = f_t$ for all $t \in I^d$ and $f_{t, s}|\mathrm{Op}(\partial H_t)$ is constant in $s$.
\end{enumerate}
We say $\mathcal{R}$ is \emph{(parametrically) microflexible} if there exists $\varepsilon=\varepsilon(\phi, \{f_t\}, \{f_{t, s}\}) > 0$ and a continuous family-homotopy of germs of holonomic sections,
$$\{\widetilde{f}_{t, s} : \mathrm{Op}(H_t) \to \mathcal{R} : t \in I^d, s \in [0, \varepsilon]\},$$
such that $\widetilde{f}_{t, s} = f_{t, s}|\mathrm{Op}(\partial H_t \cup C_t)$ for all $t \in I^d, s \in [0, \varepsilon]$, and $\widetilde{f}_{t, s}|\mathrm{Op}(\partial H_t)$ is constant in $s$. We call such a family-homotopy $\{\widetilde{f}_{t, s} : t \in I^d, s \in [0, \varepsilon]\}$ a \emph{microextension} of the family of germs of formal sections $\{f_t : t \in I^d\}$.
\end{definition}

We illustrate the microflexibility condition for $d = 0$. Let $H$ be an $m$--handle and $C \subset H$ be the core. Suppose $\psi : \mathrm{Op}(H) \to \mathcal{R}$ is a holonomic section. Let $\{\psi_s : \mathrm{Op}(\partial H \cup C) \to \mathcal{R}\}$ be a homotopy of holonomic sections starting at $\psi_0 = \psi|\mathrm{Op}(\partial H \cup C)$, which is compactly supported on the interior of a neighborhood $\mathrm{Op}(C)$ of the core. We give a schematic of the image of $\psi_s$ in Figure \ref{fig-microflex} below. The condition of microflexibility of $\mathcal{R}$ demands the existence of some $\varepsilon > 0$ such that for all $s \leq \varepsilon$, the unshaded regions in Figure \ref{fig-microflex} can be ``filled in" to obtain a holonomic section of $\mathcal{R}$ over $\mathrm{Op}(H)$. 

\begin{figure}[h]
\centering
\includegraphics[scale=0.2]{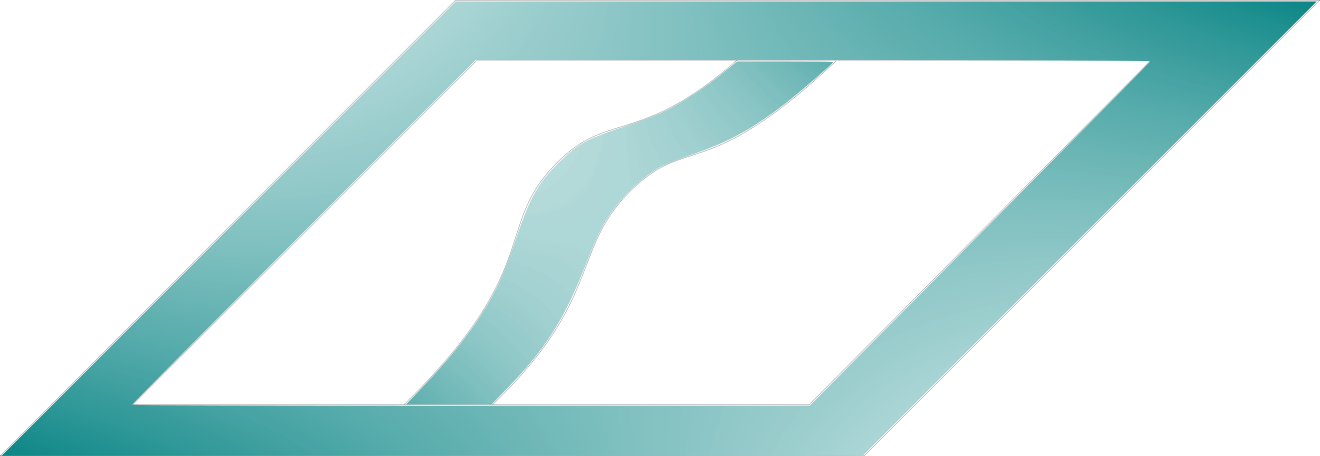}
\caption{Shaded region indicating the graph of $\psi_s$ over $\mathrm{Op}(\partial H \cup C)$.}
\label{fig-microflex}
\end{figure}

\begin{example}[Open implies locally integrable and microflexible]\label{eg-openrel}Let $\mathcal{R} \subset J^r E$ be an \emph{open} differential relation, i.e., $\mathcal{R}$ is an open subset of $J^r E$. Then $\mathcal{R}$ is both parametrically locally integrable and parametrically microflexible.\end{example}

We shall study another example which will be of importance in later sections. First, let us briefly recall Example \ref{eg-mapjet}. Let $M, N$ be smooth manifolds, $E = M \times N$ be the trivial $N$--bundle over $M$, and $J^1(M, N)$ be the associated $1$--jet bundle. We have the map,
$$\pi_{\rm{front}} : J^1(M, N) \to M \times N, \ \pi_f([j^1_p s]) = s(p).$$ 
Then $\pi_{\rm{front}}$ defines a fiber bundle, with fiber over $(p, q) \in M \times N$ being $\mathrm{Hom}(T_pM, T_q N)$.

\begin{example}[Differential relation of isocontact immersions]\label{eg-isoconrel}
Let $(Y, \xi)$, $(Y', \xi')$ be contact manifolds. Consider the differential relation $\mathcal{R}_{\rm{isocon}} \subset J^1(Y, Y')$ defined as follows: for every $y \in Y, y' \in Y'$, let $\pi_{\rm{front}}^{-1}(y, y') \cap \mathcal{R}_{\rm{isocon}}$ consist of monomorphisms $\ell \in \mathrm{Hom}(T_y Y, T_{y'} Y')$ such that $\ell^{-1}(\xi'_{y'}) = \xi_y$, and $\ell^*\mathrm{CS}(\xi')_{y'} = \mathrm{CS}(\xi)_y$. We observe that the holonomic sections of $\mathcal{R}_{\rm{isocon}}$ consist of $1$--jets of isocontact immersions $f : Y \to Y'$. 
\begin{enumerate}[leftmargin=*]
\item $\mathcal{R}_{\rm{isocon}}$ is locally integrable. Indeed, choose any jet in $\mathcal{R}_{\rm{isocon}}$ lying over $y \in Y$. This is simply a monomorphism $\ell \in \mathrm{Hom}(T_y Y, T_{y'} Y')$ such that $\ell^{-1}(\xi'_{y'}) = \xi_y$, and $\ell^*\mathrm{CS}(\xi')_{y'} = \mathrm{CS}(\xi)_y$. We choose ambient Riemannian metrics on $Y, Y'$. Consider the map,
$$g := \mathrm{exp}_{y'} \circ \ell \circ \mathrm{exp}_y^{-1} : \mathrm{Op}(y) \to Y', $$ 
where $\mathrm{exp}_y : T_y Y \to Y$ and $\mathrm{exp}_{y'} : T_{y'} Y' \to Y'$ are the Riemannian exponential maps, which are local diffeomorphisms in a neighborhood of the origin.
Let $\alpha_1 := g^* \alpha'$. By hypothesis, at the point $y$, the forms $\alpha_1, d\alpha_1$ are positive multiples of the forms $\alpha, d\alpha$, respectively. Thus, $\alpha_1 \wedge (d\alpha_1)^{\wedge n} > 0$ at $y$ and hence on $\mathrm{Op}(y)$ by continuity. Therefore, $\xi_1 := \ker \alpha_1$ is a contact structure on $\mathrm{Op}(y)$ agreeing with $\xi$ at $y$. Using Moser's trick (Lemma \ref{lem-moser}) and following the proof of Darboux's theorem (Theorem \ref{thm-darboux}), we obtain a germinal contactomorphism $h : (\mathrm{Op}(y), \xi) \to (\mathrm{Op}(y), \xi_1)$ with $h(y) = y$, $dh_y = \mathrm{id}$. Then,
$$f := g \circ h : \mathrm{Op}(y) \to Y'$$
gives an isocontact immersion with $j^1_y f = \phi$, proving local integrability. The same argument goes through mutatis mutandis for the parametric case as well.
\item $\mathcal{R}_{\rm{isocon}}$ is microflexible. To see this, start with an embedded handle $H \subset Y$ with core $C$, an isocontact embedding $f : (\mathrm{Op}(H), \xi) \to (Y, \xi)$ and an isotopy through isocontact embeddings $f_s : (\mathrm{Op}(\partial H \cup C), \xi) \to (Y', \xi')$ fixed near $\mathrm{Op}(\partial H)$ such that $f_0 = f$. We define a vector field on $f_s(\mathrm{Op}(\partial H \cup C))$ by,
$$V_s(f_s(p)) := \frac{df_s(p)}{ds}, \ p \in \mathrm{Op}(\partial H \cup C).$$
As $f_s^*\alpha' = h_s \alpha$ for some $1$--parameter family of positive functions $h_s > 0$, differentiating with respect to $s$, we get $\mathcal{L}_{V_s} \alpha' = \lambda_s \alpha$ for some $1$--parameter family of functions $\lambda_s$. Hence, $V_s$ is a contact vector field (see, \cite[Definition 1.5.7]{geigbook}) on its domain of definition. Cutting-off by an appropriate bump function, we may extend $V_s$ to a globally defined contact vector field on $Y'$. The nonautonomous flow defined by $V_s$ gives an isotopy through contactomorphisms $\psi_s : Y' \to Y'$ such that $\psi_s \circ f_0 = f_s$ on $\mathrm{Op}(\partial H \cup C)$. Thus, $\widetilde{f}_s := \psi_s \circ f : \mathrm{Op}(\partial H) \to Y'$ provides the desired (micro)extension. Once again, the same argument goes through parametrically. 
\end{enumerate}
We shall call $\mathcal{R}_{\rm{isocon}}$ the \emph{differential relation of isocontact immersions}. Note that by Example \ref{eg-mapjet}, a formal section of $\mathcal{R}_{\rm{isocon}}$ may be equivalently described as a pair of maps $(F, f)$, where $f : Y \to Y'$ is a smooth map and $F : TY \to TY'$ is a bundle-monomorphism, such that the following diagram commutes:
\begin{equation*}
\begin{CD}
TY @>F>> TY' \\
@VVV @VVV \\
Y @>f>> Y'
\end{CD}
\end{equation*}
and for all $p \in Y$, $q = f(p) \in Y'$, we have $F(\xi_p) \subseteq \xi'_q$ and $F^*\mathrm{CS}(\xi')_q = \mathrm{CS}(\xi)_p$. Such pairs of maps $(F, f) : (TY, Y) \to (TY', Y')$ shall be called \emph{formal isocontact immersions}. Such a pair $(F, f)$ corresponds to a holonomic section of $\mathcal{R}_{\rm{isocon}}$ if and only if $F = df$. In this situation, we will call the pair $(df, f)$ a \emph{holonomic isocontact immersion}.
\end{example}

\begin{remark}In literature, Property $(1)$ in Example \ref{eg-isoconrel} is known as \emph{Gray's stability theorem}, while Property $(2)$ is a version of the \emph{contact isotopy extension theorem}. We refer the reader to \cite[Theorem 2.2.2]{geigbook} and \cite[Theorem 2.6.12]{geigbook} for further details.\end{remark}

For the final definition, we introduce a new notion. A smooth fiber bundle of smooth manifolds $(E, B, \pi)$ is called \emph{natural} (see, \cite[Section 7.1]{embook}) if the action of the diffeomorphism group $\mathrm{Diff}(B)$ on $B$ lifts to an action on $E$ by fiber-preserving diffeomorphisms.

\begin{example}\label{eg-natural}The following are examples of natural bundles:
\begin{enumerate}
\item The trivial bundle $E = B \times F$. We define the lift of the diffeomorphism $f : B \to B$ to the fiber-preserving diffeomorphism $f \times \mathrm{id} : E \to E$. 
\item The tangent bundle $E = TB$. We define the lift of the diffeomorphism $f : B \to B$ to the fiber-preserving diffeomorphism $df : E \to E$. 
\item If $(E, B, \pi), (E', B', \pi')$ are natural bundles, then their direct sum $E \oplus E'$, tensor product $E \otimes E'$, exterior power $\Lambda^k E$ and jet bundle $J^r E$ are all natural.
\end{enumerate}
\end{example}

Suppose $(E, B, \pi)$ is natural, then from the above example, we have naturality of $(J^r E, B, \pi^{(r)})$ as well. In this case, we shall denote the action of a diffeomorphism $f \in \mathrm{Diff}(B)$ on an element $s \in J^r E$ by $f_* s$. 

$\mathrm{Diff}(B)$ admits the structure of an (infinite-dimensional) Fr\'echet Lie group. Let $\mathfrak{A} \subset \mathrm{Diff}(B)$ be a Lie subgroup, and let $\mathfrak{a} \subset \mathfrak{X}(M)$ be the corresponding Lie subalgebra.

\begin{definition}[Invariant differential relations]\label{def-diffinvrel}Let $(E, B, \pi)$ be a natural bundle. A differential relation $\mathcal{R} \subset J^r E$ is called \emph{$\mathfrak{A}$--invariant}, if for every $s \in \mathcal{R}$ and $f \in \mathfrak{A}$, $f_* s \in \mathcal{R}$ as well.\end{definition}

\begin{definition}[Capacious subgroups]\label{def-capac}A Lie subgroup $\mathfrak{A} \subset \mathrm{Diff}(B)$ with Lie algebra $\mathfrak{a} \subset \mathfrak{X}(M)$ is \emph{capacious} if 
\begin{enumerate}
\item For any vector field $V \in \mathfrak{a}$, any compact subset $K \subset B$ and any neighborhood $U \supset K$, there exists a vector field $\widetilde{V}_{K, U} \in \mathfrak{a}$ supported in $U$ and coinciding with $V$ on $K$.
\item For any point $x \in B$ and any tangent hyperplane $\tau \subset T_x B$, there exists a vector field $V \in \mathfrak{a}$ transverse to $\tau$ at $x$.
\end{enumerate}
\end{definition}

\begin{example}\label{def-contcap}Let $(Y, \xi)$ be a contact manifold. Due to existence of contact Hamiltonian vector fields (see, \cite[Section 2.3]{geigbook}), the subgroup $\mathrm{Cont}_0(Y, \xi) \subset \mathrm{Diff}(Y)$ of contactomorphisms isotopic to identity, is capacious. 
\end{example}

\subsection{The holonomic approximation theorem}

We are now prepared to discuss the holonomic approximation theorem for locally integrable and microflexible differential relations, due to Eliashberg and Mishachev \cite[Theorem 13.4.1]{embook}.

\begin{theorem}[Holonomic approximation theorem]\label{thm-holapp}
Let $E, B$ be smooth manifolds equipped with ambient Riemannian metrics and $(E, B, \pi)$ be a smooth fiber bundle. Let $\mathcal{R} \subset J^r E$ be a locally integrable and microflexible differential relation. Let $K \subset B$ be a embedded simplicial complex of positive codimension, and $f : \mathrm{Op}(K) \to \mathcal{R}$ be germ of a formal section. Then, for arbitrarily small $\delta, \varepsilon > 0$, there exists
\begin{enumerate}[label=(\arabic*), font=\normalfont, leftmargin=*]
\item A diffeotopy $\{h_t : B \to B : t \in I\}$ such that $h_0 = \mathrm{id}$ and $\mathrm{dist}_{C^0}(h_t, h_0) < \varepsilon$ for all $t \in I$,
\item A holonomic section $\widetilde{f} : \mathrm{Op}(h_1(K)) \to \mathcal{R}$ such that $\mathrm{dist}_{C^0}(\widetilde{f}, f) < \varepsilon$ on $\mathrm{Op}(h_1(K)) \subset \mathrm{Op}(K)$. 
\end{enumerate}
 Moreover, the result is true \emph{parametrically} and \emph{relatively}, that is, suppose
$$f_s : \mathrm{Op}(K) \to \mathcal{R}, \ s \in I^d,$$ 
is a continuous family of germs of formal sections such that $f_s$ is holonomic for all $s \in \partial I^d$. Then for arbitrarily small $\delta, \varepsilon > 0$, there exists
\begin{enumerate}[label=(\arabic*), font=\normalfont, leftmargin=*]
\item A diffeotopy $\{h_{s, t} : B \to B : s \in I^d, t \in I\}$ such that $h_{s, 0} = \mathrm{id}$, $h_{s, t} = \mathrm{id}$ for all $s \in \partial I^d$ and $\mathrm{dist}_{C^0}(h_{s, t}, h_{s, 0}) < \delta$ for all $s \in I^d, t \in I$,
\item A continuous family of holonomic sections 
$$\widetilde{f}_s : \mathrm{Op}(h_{s, 1}(K)) \to \mathcal{R}, \ s \in I^d$$ 
such that $\widetilde{f}_s = f_s$ for all $s \in \partial I^d$, and $\mathrm{dist}_{C^0}(\widetilde{f}_s, f_s) < \varepsilon$ on $\mathrm{Op}(h_{s, 1}(K)) \subset \mathrm{Op}(K)$. 
\end{enumerate}
\end{theorem}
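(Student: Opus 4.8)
The plan is to reduce the general statement to its prototype — the case where $B = \mathbb{R}^n$, $K = \{x_n = 0\}$ is a coordinate hyperplane (or rather a slightly shrunk cube inside it), and $E$ is a trivial bundle — and then prove that prototype by an inductive ``staircase'' argument over the cells of a fine triangulation of $K$. First I would set up the reduction: since $K$ is an embedded simplicial complex of positive codimension, I cover it by finitely many charts of $B$ in which a neighborhood of (a piece of) $K$ looks like a neighborhood of a coordinate subspace of $\mathbb{R}^n$; by a standard finite-patching argument (doing the charts one at a time, keeping track of the already-holonomic region as part of the boundary data via the relative form of the statement) it suffices to treat a single such chart. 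Within that chart I further reduce to $K$ being a top cell of the triangulation, i.e.\ essentially a small cube $Q^{n-1}\times\{0\}\subset \mathbb{R}^n$, with the relative data being a germ of holonomic section near $\partial Q^{n-1}\times\{0\}$.

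The heart of the proof is the inductive construction over skeleta of a sufficiently fine triangulation of $Q^{n-1}$. Over each simplex one wants to wiggle $K$ slightly in the transverse direction — this is the diffeotopy $h_t$ — so that the wiggled copy $h_1(K)$ runs mostly ``parallel'' to the fibers over a tiny neighborhood and only rarely crosses into the fiber direction, on a set one can make small; on the parallel part one copies the holonomic section already built over the lower-dimensional skeleton, and on the short transverse passages one invokes \emph{microflexibility} of $\mathcal{R}$ to extend the holonomic section across the cell, and \emph{local integrability} to start the induction at a single point (the $0$-skeleton). Microflexibility is exactly the hypothesis that lets one ``fill in'' the holonomic section over a handle $H_t$ given its values over $\partial H_t \cup C_t$, provided the deformation parameter (here the size $\delta$ of the $C^0$-wiggle, which controls how far the section must be perturbed) is smaller than the $\varepsilon(\phi,\{f_t\},\{f_{t,s}\})$ supplied by the definition — so one chooses the triangulation fine enough and $\delta$ small enough that every cell's contribution stays within the allowed $\varepsilon$. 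Iterating over $\dim Q^{n-1}+1$ skeleta and composing the finitely many diffeotopies and the finitely many micro-extensions yields the holonomic section $\widetilde f$ on $\mathrm{Op}(h_1(K))$ with $\mathrm{dist}_{C^0}(\widetilde f, f)<\varepsilon$.

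For the parametric and relative refinements I would run the same argument with $I^d$ as an extra block of parameters: treat $K \times I^d \subset B \times I^d$ as the complex, but be careful that the diffeotopy is required to act on $B$ alone (not move the parameter), and that it is the identity for $s\in\partial I^d$ where $f_s$ is already holonomic — this is arranged by performing the wiggling cellwise in a way that depends continuously on $s$ and is switched off near $\partial I^d$ using the relative form of the cellwise step, which is legitimate since microflexibility and local integrability were assumed in their parametric versions. The main obstacle I expect is bookkeeping rather than a single hard idea: one must verify that the ``amount of transverse crossing'' can genuinely be made arbitrarily small in a controlled, uniform way over all cells and all parameters $s$, so that the cumulative perturbation stays below the microflexibility threshold $\varepsilon(\phi,\dots)$ — this threshold depends on the data, which itself is being modified at each inductive step, so one has to fix the triangulation first, then extract a single $\varepsilon$ working for the whole finite process, and only then choose $\delta$. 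Managing this order of quantifiers, together with the finite-patching over charts of $B$ at the outermost level, is where the real care lies.
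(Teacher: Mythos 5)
The paper does not prove this theorem: it is imported from Eliashberg--Mishachev \cite[Theorem 13.4.1]{embook}, and everything downstream (Theorem \ref{thm-lochprinc}, Corollary \ref{cor-heqholapp}) treats it as a black box, so there is no in-paper argument to compare yours against --- the relevant benchmark is the proof in \cite{embook}, and your outline is essentially that proof: reduction to a cube in a coordinate chart, induction over the skeleta of a fine triangulation, a transverse wiggle of $K$ realized by the diffeotopy, the division of labour between local integrability and microflexibility, and the quantifier order (fix the triangulation, extract one microflexibility threshold for the whole finite process, then choose $\delta$) correctly identified as the delicate point. Two caveats if you were to write this out in full: local integrability is invoked at every stage of the induction, not only at the $0$--skeleton --- over each cell one must first produce local holonomic solutions of $\mathcal{R}$ realizing the prescribed formal jets, and only then does microflexibility glue and deform them along the short transverse passages of the wiggled cell --- and the construction of the holonomic section near a single wiggled cell (the Inductive Lemma of \cite{embook}, where the interpolation along the stretches parallel to $K$ is carried out) is the analytic heart of the argument, which your sketch gestures at ("copy the section built over the lower skeleton") rather than supplies; as stated that step is not literally available, since the previously built section is defined only near the lower skeleton and must itself be extended by the same local-integrability-plus-microflexibility mechanism.
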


\noindent
A consequence of Theorem \ref{thm-holapp} is the following \cite[Theorem 13.5.1, Theorem 15.2.1]{embook}:

\begin{theorem}[Local $h$--principle for locally integrable, microflexible, invariant relations]\label{thm-lochprinc}Let $(E, B, \pi)$ be a natural bundle {such that the $\mathrm{Diff}(B)$--action on $E$ is continuous with respect to the $C^0$--topology on $\mathrm{Diff}(B)$}. Let $K \subset B$ be a simplicial complex of positive codimension, $\mathfrak{A} \subset \mathrm{Diff}(B)$ be a capacious subgroup and $\mathcal{R} \subset J^r E$ be a locally integrable, microflexible, $\mathfrak{A}$--invariant differential relation. Suppose,
$$\{f_s : \mathrm{Op}(K) \to \mathcal{R} : s \in I^d\}$$
is a continuous family of germs of formal sections such that $f_s$ is holonomic for all $s \in \partial I^d$. Then for arbitrarily small $\varepsilon > 0$, there exists a continuous family of holonomic sections,
$$\{\widehat{f}_s : \mathrm{Op}(K) \to \mathcal{R} : s \in I^d\}$$
such that $\widehat{f}_s = f_s$ for all $s \in \partial I^d$, and $\mathrm{dist}_{C^0}(\mathrm{bs}(\widehat{f}_s), \mathrm{bs}(f_s)) < \varepsilon$ for all $s \in I^d$. Recall that $\mathrm{bs}(f)$ denotes the \emph{base} of a formal section $f$, as in the discussion below Definition \ref{def-jetbdl}.
\end{theorem}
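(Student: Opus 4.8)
The plan is to deduce the local $h$--principle from the Holonomic Approximation Theorem (Theorem~\ref{thm-holapp}) by the standard trick of trading a holonomic section \emph{near a perturbed copy of $K$} for a holonomic section \emph{near the original $K$}, using the capacious subgroup $\mathfrak{A}$ to absorb the perturbation diffeotopy. The key point enabling this is that $K$ has positive codimension, so Theorem~\ref{thm-holapp} is applicable, and that the perturbation $h_{s,1}$ can be taken $C^0$--small; then a suitable isotopy in $\mathfrak{A}$ carries $h_{s,1}(K)$ back onto $K$, and $\mathfrak{A}$--invariance of $\mathcal{R}$ guarantees the pushed-forward section still solves $\mathcal{R}$.

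\medskip\noindent\textbf{Step 1 (Apply holonomic approximation).} Given the family $\{f_s : \mathrm{Op}(K) \to \mathcal{R}\}$ with $f_s$ holonomic for $s \in \partial I^d$, apply the parametric, relative form of Theorem~\ref{thm-holapp} with constants $\delta, \varepsilon' > 0$ to be fixed later. This produces a diffeotopy $\{h_{s,t} : B \to B\}$ with $h_{s,0} = \mathrm{id}$, $h_{s,t} = \mathrm{id}$ for $s \in \partial I^d$, $\mathrm{dist}_{C^0}(h_{s,t}, \mathrm{id}) < \delta$, together with a continuous family of holonomic sections $\widetilde{f}_s : \mathrm{Op}(h_{s,1}(K)) \to \mathcal{R}$ with $\widetilde{f}_s = f_s$ for $s \in \partial I^d$ and $\mathrm{dist}_{C^0}(\widetilde{f}_s, f_s) < \varepsilon'$ on $\mathrm{Op}(h_{s,1}(K))$.

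\medskip\noindent\textbf{Step 2 (Approximate the perturbation by an ambient isotopy in $\mathfrak{A}$).} The core of the argument: using condition~(2) in the definition of capacious (Definition~\ref{def-capac}), which provides vector fields in $\mathfrak{a}$ transverse to any given hyperplane, together with condition~(1) (cutting off in $\mathfrak{a}$ near a compact set), we can build a continuous family of isotopies $\{g_{s,t}\} \subset \mathfrak{A}$, $g_{s,0} = \mathrm{id}$, $g_{s,t} = \mathrm{id}$ for $s \in \partial I^d$, $C^0$--close to the identity, such that $g_{s,1}\big(h_{s,1}(K)\big) = K$ — i.e., $g_{s,1}$ undoes the perturbation of $K$. (Here one uses that $K$ is a compact simplicial complex of positive codimension: the perturbed copy $h_{s,1}(K)$ is $C^0$--close to $K$ and normally isotopic to it, and the transverse vector fields in $\mathfrak{a}$ suffice to realize this normal isotopy inside $\mathfrak{A}$, since $\mathfrak{a}$ spans a transverse direction at every point of every facet. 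One does this facet by facet, or more carefully via a tubular-neighborhood argument, keeping track of the simplicial structure.) This is the step I expect to be the main obstacle: realizing the \emph{geometric} normal displacement $h_{s,1}(K) \rightsquigarrow K$ by a \emph{group-theoretic} isotopy inside the prescribed capacious subgroup $\mathfrak{A}$, uniformly and continuously in the parameter $s$ and relative to $\partial I^d$, requires some care with the simplicial structure of $K$ and with patching local transverse fields into a global isotopy.

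\medskip\noindent\textbf{Step 3 (Push forward and conclude).} Set $\widehat{f}_s := (g_{s,1})_* \widetilde{f}_s$, defined on $\mathrm{Op}(g_{s,1}(h_{s,1}(K))) = \mathrm{Op}(K)$. Since $\mathcal{R}$ is $\mathfrak{A}$--invariant and $g_{s,1} \in \mathfrak{A}$, and since $\mathfrak{A}$ acts on $J^r E$ by the natural lift (carrying holonomic sections to holonomic sections, as these are $r$--jet prolongations and the action is natural), $\widehat{f}_s$ is a holonomic section of $\mathcal{R}$ over $\mathrm{Op}(K)$. For $s \in \partial I^d$ we have $g_{s,1} = \mathrm{id}$ and $h_{s,1} = \mathrm{id}$, hence $\widehat{f}_s = \widetilde{f}_s = f_s$, so the family is rel $\partial I^d$. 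Finally, $\mathrm{bs}(\widehat{f}_s) = g_{s,1} \circ \mathrm{bs}(\widetilde{f}_s) \circ g_{s,1}^{-1}$ (in the trivial-bundle picture; in general, the natural lift of $g_{s,1}$ acts on $\mathrm{bs}$ covering $g_{s,1}$), which is $C^0$--close to $\mathrm{bs}(\widetilde{f}_s)$ because $g_{s,1}$ is $C^0$--small, and $\mathrm{bs}(\widetilde{f}_s)$ is $C^0$--close to $\mathrm{bs}(f_s)$ since $\mathrm{dist}_{C^0}(\widetilde{f}_s, f_s) < \varepsilon'$ and $\pi_{\mathrm{front}}$ is $1$--Lipschitz up to constants. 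Choosing $\delta$ and $\varepsilon'$ small enough at the outset relative to the desired $\varepsilon$, using the continuity of the $\mathrm{Diff}(B)$--action on $E$ with respect to the $C^0$--topology (hypothesis of the theorem) to control the effect of $g_{s,1}$, we obtain $\mathrm{dist}_{C^0}(\mathrm{bs}(\widehat{f}_s), \mathrm{bs}(f_s)) < \varepsilon$ for all $s \in I^d$, as required.
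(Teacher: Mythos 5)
Your Steps 1 and 3 are fine, and for $\mathfrak{A}=\mathrm{Diff}(B)$ your scheme collapses to the paper's argument (there one simply takes your correcting diffeomorphism to be $h_{s,1}^{-1}$ itself and pushes $\widetilde{f}_s$ forward, using invariance and the $C^0$--continuity of the action exactly as you do). The genuine gap is Step 2, which you yourself flag as the main obstacle but then assert can be handled by the two capaciousness axioms. It cannot, in general. The diffeotopy produced by Theorem \ref{thm-holapp} is a $C^0$--small but $C^1$--large wiggling, and the holonomic section $\widetilde{f}_s$ lives only on a \emph{thin} neighborhood of the wiggled complex $h_{s,1}(K)$ (thinner than the amplitude of the wiggles), so you genuinely need $g_{s,1}\in\mathfrak{A}$ whose inverse carries $K$ into that thin neighborhood, i.e.\ $g_{s,1}$ must ``unwiggle''. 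But elements of a capacious subgroup typically preserve auxiliary geometric structure: for the motivating example $\mathfrak{A}=\mathrm{Cont}_0(Y,\xi)$, any $g\in\mathfrak{A}$ preserves $\xi$, hence preserves (up to diffeomorphism) invariants of a submanifold such as its tangency locus with $\xi$ and the distribution $TK\cap\xi$. A wiggling produced by the black-box Holonomic Approximation Theorem is an arbitrary ambient diffeotopy and will in general change these invariants, so no contactomorphism at all carries $h_{s,1}(K)$ onto $K$ (let alone a $C^0$--small one, continuously in $s$ and rel $\partial I^d$). Capaciousness only supplies, at each point, \emph{some} vector field of $\mathfrak{a}$ transverse to a given hyperplane, together with cut-offs; it does not let you realize a prescribed normal displacement field of a wiggled complex by an isotopy inside $\mathfrak{A}$.

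The correct route, which is the one the paper takes (following Eliashberg--Mishachev, Theorem 15.2.1), is not to post-compose an arbitrary wiggle with a correcting element of $\mathfrak{A}$, but to go back into the proof of the Holonomic Approximation Theorem and arrange that the wiggling diffeotopy $\{h_{s,t}\}$ is itself generated by flows of vector fields in $\mathfrak{a}$: the wiggling only requires directions transverse to the (positive-codimension) complex $K$, and capaciousness is exactly what guarantees such transverse directions exist in $\mathfrak{a}$ and can be cut off to be supported near $K$. Once $h_{s,t}\in\mathfrak{A}$, one pushes forward by $h_{s,1}^{-1}$ as in the $\mathrm{Diff}(B)$ case and concludes with the $C^0$--continuity of the action. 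So your proposal needs to replace Step 2 by this modification of the holonomic approximation argument; as written, the step where you transport $h_{s,1}(K)$ back onto $K$ inside $\mathfrak{A}$ does not go through.
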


\begin{proof}
For simplicity, consider the case $\mathfrak{A} = \mathrm{Diff}(B)$. Let 
$$\{f_s : \mathrm{Op}(K) \to \mathcal{R} : s \in I^d\}$$ 
be a continuous family of formal sections such that $f_s$ is holonomic for all $s \in \partial I^d$. By Theorem \ref{thm-holapp}, we can find a $C^0$--small parametric diffeotopy $\{h_{s, t} : s \in I^d, t \in I\}$ of $B$ starting at the identity $h_{s, 0} = \mathrm{id}$, and a continuous family of holonomic sections 
$$\{\widetilde{f}_s : \mathrm{Op}(h_{s,1}(K)) \to \mathcal{R} : s \in I^d\}$$
such that $\widetilde{f}_s$ agrees with $f_s$ for all $s \in \partial I^d$, and $\widetilde{f}_s$ is $\varepsilon$--close to $f_s$ on $\mathrm{Op}(h_{s, 1}(K)) \subset \mathrm{Op}(K)$. Using $\mathrm{Diff}(B)$--the invariance of $\mathcal{R}$, we obtain a continuous family of holonomic sections,
$$\{\widehat{f}_s := (h_{s, 1}^{-1})_* \widetilde{f}_s : \mathrm{Op}(K) \to \mathcal{R} : s \in I^d\}.$$
Certainly, $\widehat{f}_s = f_s$ for all $s \in \partial I^d$. Since $\widetilde{f}_s$ and $f_s$ were $\varepsilon$--close, we have,
$$\mathrm{dist}_{C^0}(\widetilde{f}_s, f_s)< \varepsilon.$$
The base section of $\widehat{f}_s$ is obtained from $\mathrm{bs}(\widetilde{f}_s)$ after acting by $h_{s, 1}^{-1}$. Now, the action of $\mathrm{Diff}(B)$ on $E$ is continuous with respect to the $C^0$--topology on $\mathrm{Diff}(B)$. Therefore, as $h_{s, 1}$ is a $C^0$--small diffeomorphism, 
$$\mathrm{dist}_{C^0}(\mathrm{bs}(\widehat{f}_s), \mathrm{bs}(\widetilde{f}_s)) < \varepsilon.$$
By triangle inequality, we obtain $\mathrm{bs}(\widehat{f}_s)$ and $\mathrm{bs}(f_s)$ are $2\varepsilon$--close in $C^0$--topology, as desired. 

Now, let us suppose $\mathfrak{A}$ is a general capcious subgroup. The argument above goes through if the diffeotopy $\{h_{s, t}\}$ can be chosen to belong to $\mathfrak{A}$. This can be achieved by a detailed analysis of the nature of the diffeotopy $\{h_{s, t}\}$ arising from the proof of Theorem \ref{thm-holapp}. We refer the read to \cite[Theorem 15.2.1, p. 134]{embook} for details.
\end{proof}

\begin{remark}In Theorem \ref{thm-lochprinc}, the conclusion $\mathrm{dist}_{C^0}(\mathrm{bs}(\widehat{f}_s), \mathrm{bs}(f_s)) < \varepsilon$ cannot be improved to $\mathrm{dist}_{C^0}(\widehat{f}_s, f_s) < \varepsilon$. This is because in the proof we crucially used that the $\mathrm{Diff}(B)$--action on $E$ is continuous with respect to the $C^0$--topology on $\mathrm{Diff}(B)$. For instance, same is not true for the $\mathrm{Diff}(B)$--action on $J^r E$ for $r > 0$, since a $C^0$--small diffeomorphism can uncontrollably increase the $C^0$--norms of higher order jets. 
\end{remark}

\begin{corollary}\label{cor-heqholapp}
Let $(E, B, \pi)$ be a natural bundle, $K \subset B$ be a simplicial complex of positive codimension, $\mathfrak{A} \subset \mathrm{Diff}(B)$ be a capacious subgroup and $\mathcal{R} \subset J^r E$ be a locally integrable, microflexible, $\mathfrak{A}$--invariant differential relation. Let $\mathrm{Sec}(\mathrm{Op}(K); \mathcal{R}) \subset \Gamma(B; J^r E)$ denote the space of germs of formal sections of $\mathcal{R}$ near $K$, and let $\mathrm{Hol}(\mathrm{Op}(K); \mathcal{R}) \subset \mathrm{Sec}(\mathrm{Op}(K); \mathcal{R})$ denote the subspace of germs of holonomic sections of $\mathcal{R}$ near $K$. The inclusion map 
$$\mathrm{Hol}(\mathrm{Op}(K); \mathcal{R}) \to \mathrm{Sec}(\mathrm{Op}(K); \mathcal{R}),$$
is a weak homotopy equivalence.
\end{corollary}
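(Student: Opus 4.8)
The plan is to deduce this weak homotopy equivalence formally from the parametric statement of Theorem \ref{thm-lochprinc}, using the standard criterion that a map $i : X \to Y$ is a weak homotopy equivalence if and only if, for every $d \geq 0$, every pair of maps $(D^d, \partial D^d) \to (Y, X)$ is homotopic rel $\partial D^d$ (as a map of pairs) to a map $D^d \to X$. Here $X = \mathrm{Hol}(\mathrm{Op}(K); \mathcal{R})$, $Y = \mathrm{Sec}(\mathrm{Op}(K); \mathcal{R})$, and $i$ is the inclusion. Concretely, I would unwind what such a map of pairs is: a $D^d$--family of germs of formal sections $\{f_s : \mathrm{Op}(K) \to \mathcal{R}\}_{s \in D^d}$ which is holonomic for $s \in \partial D^d$. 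After identifying $D^d$ with $I^d$, this is exactly the input data of Theorem \ref{thm-lochprinc}.

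The core step is then to invoke Theorem \ref{thm-lochprinc}: it produces a continuous family of \emph{holonomic} sections $\{\widehat{f}_s : \mathrm{Op}(K) \to \mathcal{R}\}_{s \in I^d}$ with $\widehat{f}_s = f_s$ for all $s \in \partial I^d$. This gives the required representative landing in $X$ and agreeing with the original family on $\partial D^d$. The remaining point is to produce the homotopy rel $\partial D^d$ between the original family $\{f_s\}$ (viewed in $Y$) and the new family $\{\widehat{f}_s\}$ (viewed in $X$, then included into $Y$). For this I would apply Theorem \ref{thm-lochprinc} once more, in dimension $d+1$: consider the family over $I^{d} \times I$ which on $I^d \times \{1\}$ is $\{f_s\}$, on $I^d \times \{0\}$ is... — more carefully, the cleanest route is to observe that Theorem \ref{thm-holapp}, and hence Theorem \ref{thm-lochprinc}, is proved by an explicit construction that comes with a homotopy: the family $\{\widehat{f}_s\}$ is connected to $\{f_s\}$ through formal sections of $\mathcal{R}$ (the holonomic approximation is obtained by a homotopy in $\mathrm{Sec}$, not by a discontinuous jump), and this homotopy is constant in $s$ over $\partial I^d$ since there $f_s$ was already holonomic and the diffeotopy $h_{s,t}$ is the identity. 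Thus the homotopy rel $\partial D^d$ is built into the conclusion of the holonomic approximation theorem. I would phrase this by noting that the parametric-relative form of Theorem \ref{thm-lochprinc}, applied with parameter space $I^d \times I$ and boundary data given by $\{f_s\}$ on $I^d \times \{0\}$, $\{\widehat f_s\}$ on $I^d \times \{1\}$, and the constant (holonomic) families over $\partial I^d \times I$, yields precisely such a homotopy of families of formal sections — and since $\widehat f_s$ is already holonomic, the "holonomize" output over the top face requires no further change.

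Having established both the surjectivity-type condition ($\pi_d(Y, X) \to$ realized in $X$) and the injectivity built from the homotopy, I would conclude that $i_*$ is an isomorphism on all homotopy groups, i.e. $i$ is a weak homotopy equivalence. I would also remark that one should fix a basepoint (a germ of a holonomic section of $\mathcal{R}$ near $K$, which exists by local integrability of $\mathcal{R}$ applied to a point, provided $\mathcal{R}$ is nonempty over each point of $K$) so that the homotopy groups are defined, and that the argument applies verbatim to every choice of basepoint, giving the statement for all path components.

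The main obstacle I anticipate is purely bookkeeping: carefully matching the "map of pairs $(D^d,\partial D^d) \to (Y,X)$" formulation of weak homotopy equivalence with the precise hypotheses of Theorem \ref{thm-lochprinc} (parametric over $I^d$, relative over $\partial I^d$, with the $C^0$--closeness of base sections), and ensuring that the homotopy connecting $\{f_s\}$ to $\{\widehat f_s\}$ can genuinely be taken rel $\partial D^d$ within the space of formal sections. There is no new geometric input; the content is entirely absorbed into Theorem \ref{thm-lochprinc} (and through it, Theorem \ref{thm-holapp}), and the corollary is the routine translation of that parametric $h$--principle into the language of weak homotopy equivalence.
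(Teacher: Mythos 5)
Your overall scheme is the same as the paper's: reduce to showing that a $D^d$--family of formal sections, holonomic over $\partial D^d$, can be connected \emph{within the space of formal sections, rel $\partial D^d$,} to the holonomic family $\{\widehat f_s\}$ produced by Theorem \ref{thm-lochprinc}, and then conclude via relative homotopy groups. But the one step that carries actual content is exactly the one you do not supply: the homotopy from $\{f_s\}$ to $\{\widehat f_s\}$. You assert it is ``built into the conclusion of the holonomic approximation theorem,'' but it is not. As stated, Theorem \ref{thm-holapp} and Theorem \ref{thm-lochprinc} give you a diffeotopy $h_{s,t}$, a holonomic section $\widetilde f_s$ over the displaced neighborhood, and only $C^0$--closeness (of bases, in the invariant version) between $\widehat f_s$ and $f_s$; they do not give a path in $\mathrm{Sec}(\mathrm{Op}(K);\mathcal{R})$ joining them, and since $\mathcal{R}$ is merely locally integrable and microflexible (not open), $C^0$--closeness alone does not let you connect two formal sections inside $\mathcal{R}$ by a naive interpolation of jets. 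Your fallback — applying the parametric relative form of Theorem \ref{thm-lochprinc} over $I^d \times I$ with prescribed top, bottom and side faces — is circular: the theorem takes as \emph{input} a continuous family of formal sections over the whole parameter cube, and producing such a family restricting to $f_s$, $\widehat f_s$ and the constant boundary data is precisely the homotopy you are trying to construct.

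The paper closes this gap by going back into the proof of Theorem \ref{thm-lochprinc} and using its specific ingredients: first the homotopy $\{(h_{s,t}^{-1})_* f_s\}_{t\in[0,1]}$, which stays in $\mathcal{R}$ by $\mathfrak{A}$--invariance and moves $\{f_s\}$ to $\{(h_{s,1}^{-1})_* f_s\}$; then the pushforward by $(h_{s,1}^{-1})_*$ of an interpolation $F_{s,t}$ between $f_s$ and the holonomic approximation $\widetilde f_s$ over the shrunk neighborhood $\mathrm{Op}(h_{s,1}(K))$ (where the two are $C^0$--close), landing at $\widehat f_s = (h_{s,1}^{-1})_*\widetilde f_s$. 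Concatenating gives $H_{s,t}$ with $H_{s,0}=f_s$, $H_{s,1}=\widehat f_s$, constant in $t$ over $\partial I^d$ (since there $h_{s,t}=\mathrm{id}$ and $\widetilde f_s=f_s$), whence $\pi_d(\mathrm{Sec},\mathrm{Hol})=0$ and the long exact sequence finishes the argument. So your reduction and endgame are fine, but you must either cite a version of the holonomic approximation theorem whose conclusion explicitly includes a homotopy through formal sections, or reproduce this two-stage construction; as written, the proposal has a genuine gap at its central step.
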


\begin{proof}
Let us begin with a family of formal sections
$$\{f_s : \mathrm{Op}(K) \to \mathcal{R} : s \in I^d\}$$
such that $f_s$ is holonomic for all $s \in \partial I^d$. From the proof of Theorem \ref{thm-lochprinc}, we obtain a family of holonomic sections,
$$\{\widehat{f}_s = (h_{s, 1}^{-1})_* \widetilde{f}_s : \mathrm{Op}(K) \to \mathcal{R} : s \in I^d\}$$
agreeing with $f_s$ for all $s \in \partial I^d$. We may linearly interpolate $f_s$ and $\widetilde{f}_s$ on $\mathrm{Op}(\varphi_1(K)) \subset \mathrm{Op}(K)$ to get a family-homotopy
$$\{F_{s, t} : \mathrm{Op}(h_{s, 1}(K)) \to \mathcal{R} : s \in I^d, 0 \leq t \leq 1\}$$
of formal sections. We now interpolate the two families $\{f_s : s \in I^d\}$ and $\{\widehat{f}_s : s \in I^d\}$ by a family-homotopy of formal sections, by concaternating the following family-homotopies:
\begin{enumerate}
\item $\{(h_{s, t}^{-1})_* f_s : s \in I^d\}$ for $0 \leq t \leq 1$, interpolating between $\{f_s\}$ and $\{(h_{s, 1}^{-1})_* f_s\}$,
\item $\{(h_{s, 1}^{-1})_* F_{s, t} : s \in I^d\}$ for $0 \leq t \leq 1$, interpolating between $\{(h_{s, 1}^{-1})_* f_s\}$ and $\{\widehat{f}_s\}$.
\end{enumerate}
This gives a new family-homotopy $\{H_{s, t} : \mathrm{Op}(K) \to \mathcal{R}\}$ through formal sections such that $H_{s, 0} = f_s$, $H_{s, 1} = \widetilde{f}_s$ for all $s \in I^d$, and $H_{s, t}$ is constant in $t$ for $s \in \partial I^d$. Thus, for all $d \geq 0$,
$$\pi_d(\mathrm{Sec}(\mathrm{Op}(K); \mathcal{R}), \mathrm{Hol}(\mathrm{Op}(K); \mathcal{R})) = 0$$
The relative homotopy long exact sequence concludes the proof of the corollary.
\end{proof}

\subsection{The convex integration theorem}

In this section, we state a result due to Gromov \cite[Chapter 2.4]{grobook} (see also, \cite[Part 4]{embook}) that can be considered as an ally of the holonomic approximation theorem (Theorem \ref{thm-holapp}), in that it also ensures an $h$--principle for differential relations under certain conditions. However, unlike holonomic approximation, convex integration yields a \emph{global} $h$--principle, for sections defined over the entirety of the base manifold. To proceed, we first introduce some notation.

Let $(E, B, \pi)$ be a smooth fiber bundle of smooth manifolds. For a point $p \in B$, let us denote $E_p := \pi^{-1}(p)$ to be the fiber over $p$, and let $q \in E_p$ be a point in this fiber.

\begin{definition}[Principal subspaces]\label{def-princsub}Let $\tau \subset T_p B$ be a subspace of corank $1$, and $\ell \in \mathrm{Hom}(\tau, T_q E_p)$ be a linear map. The \emph{principal subspace} associated to $\tau, \ell$ is defined as,
$$P(\tau, \ell) = \{j^1_p s \in J^1 E : s(p) = q, df_q|\tau = \ell\}.$$
If $\tau \subset T_p B$ is a coordinate hyperplane with respect to some chosen local coordinate system near $p \in B$, we call $P(\tau, \ell)$ the \emph{coordinate principal subspaces}.
\end{definition}

To illustrate the geometric meaning behind this definition, let us choose a coordinate system $(x^1, \cdots, x^n)$ near $p \in B$, and let $\tau = \{\partial/\partial x^1 = 0\} \subset T_p B$. Let us also choose an adapted coordinates system $(x^1, \cdots, x^n, y^1, \cdots, y^n)$ near $q \in E$. This gives rise to an adapted coordinate system for $J^1 E$,
$$(x^1, \cdots, x^n, y^1, \cdots, y^k, z^{j, i}),$$
as in the discussion above Definition \ref{def-jetbdl}. Here, $z^{j, i} := \partial/\partial x^i \circ y^j$ denotes the $i$--th "formal derivative" of the $j$--th component of the section. Then, the principal coordinate hyperplane $P(\tau, \ell)$ consists of $1$--jets of germs of sections of $E$, taking value $q$ at $p$, such that all the except the first formal derivative components $z^{j, 1}$ $(1 \leq j \leq k)$ of the jet is fixed by $\ell$. 

\begin{remark}[Jet bundle is affine]\label{rmk-jetaffine}We remark that the bundle $(J^1 E, B, \pi^{(1)})$ has the structure of an affine bundle: the fibers admit a natural vector space structure upto a choice of the origin, and there is no canonical choice of origin in general. Thus, the subspace $P(\tau, \ell) \subset (\pi^{(1)})^{-1}(p)$ also admits a structure of an affine subspace.\end{remark}

Next, we shall introduce a new property of differential relations. We begin with a definition from convex geometry.

\begin{definition}[Ample subsets]\label{def-ampleset}
Given an affine space $V$, a subset $\Omega \subset V$, and a point $v \in \Omega$, we shall denote by $\mathrm{Conn}_v(\Omega)$ the path-component of $\Omega$ containing $v$ and by $\mathrm{Conv}_v(\Omega)$ the convex hull of $\mathrm{Conn}_v(\Omega)$. A subset $\Omega \subset V$ shall be called \emph{ample} if $\mathrm{Conv}_v(\Omega) = V$ for all $v \in \Omega$.\end{definition}

\begin{definition}[Ample differential relations]\label{def-amplerel}A \emph{first order} differential relation $\mathcal{R} \subset J^1 E$ will be called \emph{ample} if for all points $p \in B$, $q \in E$ such that $\pi(q) = p$, and any corank $1$ subspace $\tau \subset T_p B$ and linear map $\ell \in \mathrm{Hom}(\tau, T_q E_p)$, $\mathcal{R} \cap P(\tau, \ell)$ is an ample subset of $P(\tau, \ell)$. \end{definition}

We record a useful criterion for verifying ampleness of a differential relation.

\begin{proposition}\label{prop-coordample}Let $(E, B, \pi)$ be a natural bundle, and $\mathcal{R} \subset J^1 E$ be a $\mathrm{Diff}(B)$--invariant first order differential relation. If $\mathcal{R} \cap P(\tau, \ell)$ is ample in $P(\tau, \ell)$ for all coordinate principal subspaces, then $\mathcal{R}$ is an ample differential relation.\end{proposition}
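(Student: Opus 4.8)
The plan is to reduce the ampleness condition at an arbitrary corank--$1$ subspace $\tau \subset T_p B$ and arbitrary $\ell \in \mathrm{Hom}(\tau, T_q E_p)$ to the special case where $\tau$ is a coordinate hyperplane, by exploiting the $\mathrm{Diff}(B)$--invariance of $\mathcal{R}$. First I would fix $p \in B$ and $q \in E_p$, and a corank--$1$ subspace $\tau \subset T_p B$. Choosing any local coordinate system $(x^1, \dots, x^n)$ on $\mathrm{Op}(p) \subset B$ and any linear change of coordinates on $T_pB$ carrying $\tau$ to the coordinate hyperplane $\tau_0 := \{\partial/\partial x^1 = 0\}$, one produces a local diffeomorphism $\varphi$ of $B$ fixing $p$ with $d\varphi_p(\tau_0) = \tau$; after cutting off by a bump function (using that the statement is local near $p$) this extends to an element $\varphi \in \mathrm{Diff}(B)$.

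Next I would track how the action of $\varphi$ on $J^1 E$ interacts with principal subspaces. Since $(E,B,\pi)$ is natural, $\varphi$ lifts to a fiber-preserving diffeomorphism $\Phi$ of $E$ (hence of $J^1 E$); write $\Phi_* : J^1 E \to J^1 E$ for the induced map. The key observation is that $\Phi_*$ carries the principal subspace $P(\tau_0, \ell_0)$ over $p$ affinely onto a principal subspace $P(\tau, \ell)$ over $p$, for a suitable $\ell_0 \in \mathrm{Hom}(\tau_0, T_{q}E_p)$ determined by $\ell$ and the linear data of $\Phi$ at $q$ (here one uses that $\Phi$ fixes the fiber $E_p$ setwise, so $d\Phi$ restricts to an isomorphism $T_q E_p \to T_{\Phi(q)}E_p$, and that $\Phi_*$ acts affinely on $J^1$--fibers by Remark \ref{rmk-jetaffine}). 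Conversely, given $\tau$ and $\ell$, one recovers $\ell_0$ so that $\Phi_*(P(\tau_0,\ell_0)) = P(\tau,\ell)$. Because $\mathcal{R}$ is $\mathrm{Diff}(B)$--invariant, $\Phi_*(\mathcal{R}) = \mathcal{R}$, so $\Phi_*$ restricts to an affine bijection
$$\mathcal{R} \cap P(\tau_0,\ell_0) \;\xrightarrow{\ \cong\ }\; \mathcal{R} \cap P(\tau,\ell).$$

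Finally I would conclude by noting that ampleness (Definition \ref{def-ampleset}) is preserved by affine isomorphisms: if $A : V \to V'$ is an affine bijection of affine spaces and $\Omega \subset V$ is ample, then $A(\Omega) \subset V'$ is ample, since $A$ takes path-components to path-components and convex hulls to convex hulls, so $\mathrm{Conv}_{A(v)}(A(\Omega)) = A(\mathrm{Conv}_v(\Omega)) = A(V) = V'$. Applying this with $A = \Phi_*|_{P(\tau_0,\ell_0)}$ and the hypothesis that $\mathcal{R} \cap P(\tau_0,\ell_0)$ is ample in $P(\tau_0,\ell_0)$ gives that $\mathcal{R} \cap P(\tau,\ell)$ is ample in $P(\tau,\ell)$. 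Since $p$, $q$, $\tau$, $\ell$ were arbitrary, $\mathcal{R}$ is ample.

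I expect the main obstacle to be the bookkeeping in the middle step: verifying precisely that the lift $\Phi_*$ sends coordinate principal subspaces to general principal subspaces over the same base point and acts affinely on them, and identifying the correspondence $\ell \leftrightarrow \ell_0$ — this requires unwinding the definition of the natural lift on $J^1 E$ and checking compatibility with the affine structure, rather than any deep idea. The genuinely geometric input (transitivity of $\mathrm{GL}$ on hyperplanes, realized by diffeomorphisms fixing $p$) and the convex-geometry input (affine invariance of ampleness) are both routine once set up.
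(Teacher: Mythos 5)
Your proposal is correct and takes essentially the same route as the paper: conjugate an arbitrary principal subspace to a coordinate one by a (germ of a) diffeomorphism fixing $p$ and carrying the coordinate hyperplane to $\tau$, invoke the $\mathrm{Diff}(B)$--invariance $f_*\mathcal{R} = \mathcal{R}$, and use that ampleness is preserved under affine isomorphisms of principal subspaces. Your additional bookkeeping (the lift acting affinely on jet fibers and possibly moving $q$ within $E_p$) is harmless, since the hypothesis quantifies over all coordinate principal subspaces.
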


\begin{proof}Let $(x^1, \cdots, x^n)$ be a choice of local coordinates around a point $p \in B$. If $\tau \subset T_p B$ is an arbitrary hyperplane, we can find a germ of a diffeomorphism $f$ of $\mathrm{Op}(\{p\})$ with $f(p) = p$, such that $f_*(\tau_0) = \tau$ where $\tau_0 := \{\partial/\partial x^1 = 0\}$ is a coordinate hyperplane. Let $\ell_0 := \ell \circ (f_*)^{-1}$. Since $f_* : T_p B \to T_p B$ is a vector space isomorphism taking $\tau_0$ to $\tau$, ampleness of $P(\tau_0, \ell_0) \cap \mathcal{R}$ in $P(\tau_0, \ell_0)$ implies ampleness of $P(\tau, \ell) \cap f_*\mathcal{R}$ in $P(\tau, \ell)$. By invariance, we have $f_*\mathcal{R} = \mathcal{R}$. This proves the result.\end{proof}

We are now ready to state the convex integration theorem \cite[Theorem A, Section 2.4.3]{grobook} (see also, \cite[Theorem 18.4.1]{embook}) for first order, open, ample differential relations.

\begin{theorem}[Convex integration theorem]\label{thm-hample}Let $(E, B, \pi)$ be a natural bundle, $\mathcal{R} \subset J^1 E$ be an open ample differential relation, and $f : B \to \mathcal{R}$ be a formal section of $\mathcal{R}$. Then, for arbitrarily small $\varepsilon > 0$, there exists
\begin{enumerate}[label=(\arabic*), font=\normalfont, leftmargin=*]
\item A homotopy of formal sections,
$\{f_t : B \to \mathcal{R} : 0 \leq t \leq 1\},$
such that $f_0 = f$ and $f_1$ is holonomic, and
\item $\mathrm{dist}_{C^0}(\mathrm{bs}(f_t), \mathrm{bs}(f)) < \varepsilon$ for all $0 \leq t \leq 1$.
\end{enumerate}
Moreover, the result is also true parametrically and relatively, that is, suppose
$$\{f_s : V \to \mathcal{R} : s \in I^d\},$$
is a continuous family of formal sections such that $f_s$ is holonomic for all $s \in \partial I^d$. Then for arbitrarily small $\varepsilon > 0$, there exists 
\begin{enumerate}[label=(\arabic*), font=\normalfont, leftmargin=*]
\item A family-homotopy of formal sections,
$$\{f_{s, t} : B \to \mathcal{R} : s \in I^d, 0 \leq t \leq 1\},$$
such that $f_{s, 0} = f_s$ and $f_{s, 1}$ is holonomic for all $s \in I^d$ and $f_{s, t}$ is constant in $t$ for all $s \in \partial I^d$,
\item $\mathrm{dist}_{C^0}(f_{s, t}, f_{s, 0}) < \varepsilon$ for all $0 \leq t \leq 1.$
\end{enumerate}
\end{theorem}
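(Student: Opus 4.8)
The plan is to reduce the theorem to its one–dimensional core — the classical convex integration lemma over an interval — and then pass to general base manifolds by an induction over coordinate directions, followed by a patching argument over a triangulation and, finally, the insertion of parameters.

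First I would record the one–dimensional lemma: given an affine space $W$, an open subset $\Omega \subset W$, a path–component $\Omega' \subseteq \Omega$, a $C^1$ path $u : [0,1] \to W$ with $u'(t) \in \mathrm{Conv}(\Omega')$ for all $t$, and $\delta > 0$, there is a $C^1$ path $\widetilde u$ with $\widetilde u(0) = u(0)$, $\widetilde u(1) = u(1)$, $\mathrm{dist}_{C^0}(\widetilde u, u) < \delta$, and $\widetilde u'(t) \in \Omega$ for all $t$. The construction expresses $u'(t)$ (continuously in $t$) as a convex combination of points of $\Omega'$, joins those points by a loop inside $\Omega'$, and lets $\widetilde u$ follow a high–frequency traversal of that loop; the derivative stays in $\Omega$ by construction, while the integral averages out to track $u$, which forces $C^0$–closeness once the oscillation is fast enough. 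Openness of $\mathcal R$ is what makes $\Omega'$ an honest open neighbourhood of the loop, and ampleness is exactly the hypothesis that will make the required convex combinations available regardless of which partial–derivative data have been pinned down.

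Next comes the heart of the argument, the chart–local induction. In an adapted chart $U \cong \mathbb R^n$ with $E|_U \cong \mathbb R^n \times \mathbb R^q$, a formal section is a pair $(f, F)$ with $F$ a bundle map covering $f$; call it \emph{holonomic in the first $i$ directions} if $F(\partial/\partial x^j) = \partial f/\partial x^j$ for $j \le i$. Assuming this, I apply the one–dimensional lemma along every line parallel to the $x^{i+1}$–axis, taking $W$ to be the fibre direction and $\Omega = \mathcal R \cap P(\tau_{i+1}, \ell)$, where $\tau_{i+1} = \{\partial/\partial x^{i+1} = 0\}$ and $\ell$ records the already–fixed derivatives $F(\partial/\partial x^j)$ with $j \ne i+1$; this is ample in the affine space $P(\tau_{i+1},\ell)$ precisely by Definition \ref{def-amplerel} (and Proposition \ref{prop-coordample} lets us only check coordinate hyperplanes after a chart change). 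The effect is to replace $F(\partial/\partial x^{i+1})$ by a genuine derivative while perturbing $f$ only $C^0$–slightly, so that holonomy in directions $1,\dots,i$ is preserved up to an error that one drives to zero by performing the step relative to the already–achieved directions (or by a limiting argument). After $n$ steps the section is holonomic and $C^0$–close to the original on the chart, and the interpolating homotopy of formal sections is recorded en route.

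Finally I would globalize and add parameters. Cover $B$ by a locally finite family of adapted charts subordinate to a triangulation, and use the relative form of the chart–local statement to extend the holonomic modification skeleton by skeleton, each time leaving untouched a neighbourhood of the closed region where the section is already holonomic — here $\mathrm{Diff}(B)$–invariance of $\mathcal R$ guarantees the chart transitions do not disturb ampleness. The parametric and relative versions follow by treating $s \in I^d$ as inert extra variables: convex integration is applied only in the $B$–directions, continuously in $s$, and the entire construction is carried out relative to a neighbourhood of $\partial I^d$ on which $f_s$ is already holonomic. The step I expect to be the main obstacle is the bookkeeping inside the chart–local induction: one must arrange the integration in the $x^{i+1}$–direction so as not to spoil the holonomy already gained in directions $1,\dots,i$, while simultaneously keeping the $C^0$ estimates under control as the oscillation frequency is forced upward. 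This is exactly the "one–dimensional convex integration along fibrations" machinery of Gromov \cite{grobook} and Eliashberg–Mishachev \cite{embook}, whose quantitative form I would cite rather than re–derive.
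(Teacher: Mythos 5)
The paper does not prove Theorem \ref{thm-hample} at all: it is quoted from Gromov \cite[Theorem A, Section 2.4.3]{grobook} and Eliashberg--Mishachev \cite[Theorem 18.4.1]{embook}, so there is no in-text argument to compare against. Your sketch is essentially the standard proof from those references: the parametric one--dimensional convex integration lemma, iteration over the coordinate (principal) directions in an adapted chart, with ampleness of $\mathcal{R}\cap P(\tau,\ell)$ (via Proposition \ref{prop-coordample}) supplying the convex hulls, followed by globalization over a triangulation and the observation that the $I^d$--parameters ride along inertly for the parametric/relative statements. In that sense your route is the intended one, and deferring the quantitative one--dimensional lemma to \cite{grobook,embook} is no worse than what the paper itself does.

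One point in your chart--local induction is stated in a way that would not literally work, and since you flag it as the main obstacle it is worth being precise. You cannot perform the convex integration in the $x^{i+1}$--direction ``relative to the already--achieved directions'': the inserted oscillation $f\mapsto f+\tfrac1N h(x,Nx^{i+1})$ necessarily perturbs \emph{all} pure derivatives, so exact holonomy $F(\partial/\partial x^j)=\partial f/\partial x^j$ for $j\le i$ is destroyed at every later step. The actual mechanism is different: the perturbation of the derivatives in the directions $j\neq i+1$ is only of size $O(1/N)$, so the correct inductive statement keeps the genuine derivatives in the treated directions merely \emph{close} to the (continually updated) formal data, and the membership of the final $1$--jet in $\mathcal{R}$ is recovered at the last step from \emph{openness} of $\mathcal{R}$, not from exact direction--by--direction holonomy; the homotopy of formal sections is obtained by interpolating within the path components $\mathrm{Conn}_v$ used in the one--dimensional lemma (so your $\widetilde u'$ should be required to lie in $\Omega'$, not just $\Omega$, to stay connected to the original formal datum inside $\mathcal{R}$). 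With that correction the bookkeeping closes, and this is exactly how the cited proofs organize it; a genuine ``limiting argument'' is neither needed nor available, since repeated exact corrections would force $N\to\infty$ without convergence of the sections in $C^1$.
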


\section{The $h$--principle for Legendrian immersions}\label{sec-hLegimm}

In this section, we generalize Theorem \ref{thm-hprinclegknots} to Legendrian immersions in contact manifolds of arbitrary dimensions. We begin by stating a definition that gives a framework for speaking of the auxilliary data of slopes appearing in the proof of Theorem \ref{thm-hprinclegknots}, in higher dimensions (see, Figure \ref{fig-intpzigzag}).

\begin{definition}\label{def-flegimm}Let $(Y^{2n+1}, \xi)$ be a contact manifold, and $\Lambda^n$ be a smooth manifold. Let $F : T\Lambda \to TY$ and $f : \Lambda \to Y$ be a pair of smooth maps such that the following diagram commutes:
\begin{equation*}
\begin{CD}
T\Lambda @>F>> TY \\
@VVV @VVV \\
\Lambda @>f>> Y
\end{CD}
\end{equation*}
The pair $(F, f)$ is called a \emph{formal Legendrian immersion} if $F$ is a fiberwise monomorphism, and $F(T_p\Lambda) \subseteq (\xi_{q}, \mathrm{CS}(\xi_q))$ is a Lagrangian subspace, for all $p \in \Lambda, q := f(p) \in Y$. We say a formal Legendrian immersion $(F, f)$ is \emph{holonomic} if $F = df$.\end{definition}

Let us equip the set of smooth mappings $C^\infty(\Lambda, Y)$ and $C^\infty(T\Lambda, TY)$ with the weak Whitney topology. We equip the set of holonomic Legendrian immersions $\mathrm{Imm}_{\rm{Leg}}(\Lambda, Y)$ with the induced topology as a subspace of $C^\infty(\Lambda, Y)$, and the set of formal Legendrian immersions $\mathrm{Imm}^f_{\rm{Leg}}(\Lambda, Y)$ with the induced topology as a subspace of $C^\infty(\Lambda, Y) \times C^\infty(T\Lambda, TY)$. Henceforth, homotopy of formal Legendrian immersions will mean paths in $\mathrm{Imm}^f_{\rm{Leg}}(\Lambda, Y)$.

\begin{theorem}[Parametric relative $C^0$--dense $h$--principle for Legendrian immersions]\label{thm-hprincleg}Let $(F, f) : (T\Lambda, \Lambda) \to (TY, Y)$ be a formal Legendrian immersion, and let $\varepsilon > 0$. There is a homotopy through formal Legendrian immersions, 
$$(F_t, f_t) : (T\Lambda, \Lambda) \to (TY, Y), \; 0 \leq t \leq 1,$$
to a holonomic Legendrian immersion $(F_1 = df_1, f_1)$ such that $\mathrm{dist}_{C^0}(f_t, f_0) < \varepsilon$ for all $0 \leq t \leq 1$. Moreover, the result is true \emph{parametrically} and \emph{relatively}, that is, given a smooth family of formal Legendrian immersions,
$$(F_s, f_s) : (T\Lambda, \Lambda) \to (TY, Y), \;\;  s \in D^n,$$ 
such that for all $s \in \partial D^n$, $(F_s, f_s)$ is holonomic restricted to an open set $U \subset \Lambda$, there exists a homotopy through formal Legendrian immersions,
$$(F_{s, t}, f_{s, t}) : (T\Lambda, \Lambda) \to (TY, Y), \;\; 0 \leq t \leq 1, s \in D^n,$$ 
such that 
\begin{enumerate}[label=(\arabic*), font=\normalfont]
\item $F_{s, 1} = df_{s, 1}$, for all $s \in D^n$,
\item $\mathrm{dist}_{C^0}(f_{s, t}, f_{s, 0}) < \varepsilon$, for all $s \in D^n$ and $0 \leq t \leq 1$,
\item $F_{s, t} = F_{s, 0}$ and $f_{s, t} = f_{s, 0}$ restricted to $U$, for all $s \in \partial D^n$ and $0 \leq t \leq 1$.
\end{enumerate}
\end{theorem}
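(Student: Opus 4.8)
The plan is to realize the $h$--principle for Legendrian immersions as a direct application of the local $h$--principle for locally integrable, microflexible, invariant differential relations (Theorem \ref{thm-lochprinc}), combined with the holonomic approximation theorem (Theorem \ref{thm-holapp}) to upgrade the statement from a germ near a positive-codimension simplicial complex to a global statement with $C^0$--control. The central object is the differential relation of Legendrian immersions $\mathcal{R}_{\rm{Leg}} \subset J^1(\Lambda, Y)$ from Example \ref{eg-legimmrel}, whose formal solutions are precisely the formal Legendrian immersions of Definition \ref{def-flegimm} and whose holonomic solutions are genuine Legendrian immersions.

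First I would verify that $\mathcal{R}_{\rm{Leg}}$ is locally integrable and microflexible. This is the technical heart of the matter, and it follows the same pattern as Example \ref{eg-isoconrel} for $\mathcal{R}_{\rm{isocon}}$: given a $1$--jet in $\mathcal{R}_{\rm{Leg}}$ over a point $p \in \Lambda$, one uses the Weinstein neighborhood theorem (Theorem \ref{thm-weinstein}) to model a neighborhood of the would-be image point in $Y$ on the contact phase space $J^1 L = \Bbb R \times T^*L$, where inside a Darboux-type chart the prescribed Lagrangian tangent plane in $\xi$ can be integrated to a germ of a Legendrian graph $j^1 h$ of a function $h$ with prescribed $1$--jet at $p$ (cf.\ Example \ref{eg-phaselag} and the jet interpretation of the contact structure in Remark \ref{rmk-1jetcont}); this gives local integrability, and the parametric version is obtained by carrying the construction through in families. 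For microflexibility, one repeats the contact isotopy extension argument of Example \ref{eg-isoconrel}(2): a compactly supported homotopy of holonomic Legendrian sections near $\partial H \cup C$ is generated by a contact vector field along its support, which (after cutting off) extends to a global contact vector field on $Y$ whose flow transports the given Legendrian germ over $\mathrm{Op}(H)$, producing the required microextension for small time. Alternatively, one can invoke the general principle (Example \ref{eg-openrel}) after checking that $\mathcal{R}_{\rm{Leg}}$, while not open, sits as an open subset of the natural subbundle of $J^1(\Lambda, Y)$ cut out by the pointwise conditions $F(T_p\Lambda) \subset \xi_{f(p)}$ and isotropy --- but the cleaner route is the Weinstein/Moser argument above. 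I would also note that $\mathcal{R}_{\rm{Leg}}$ is $\mathfrak{A}$--invariant for $\mathfrak{A} = \mathrm{Cont}_0(Y, \xi)$ (and more relevantly, the relevant diffeomorphism group acting on the base $\Lambda$), which is capacious by Example \ref{def-contcap}; this invariance is what lets Theorem \ref{thm-lochprinc} apply.

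Next I would deduce the theorem itself. Triangulate $\Lambda$ and work one simplex at a time, or more efficiently pass to a handle decomposition: by induction on the skeleton, one extends a holonomic Legendrian immersion from a neighborhood of the $(k-1)$--skeleton over the $k$--handles. At each stage the obstruction to extending holonomically is exactly the obstruction to extending formally (which is given), because Theorem \ref{thm-lochprinc} (equivalently Corollary \ref{cor-heqholapp}) says the inclusion of holonomic into formal sections near a positive-codimension complex is a weak homotopy equivalence --- applied here to the core $C$ of each handle, which is a complex of positive codimension in (a neighborhood diffeomorphic to) $\Lambda$. The $C^0$--smallness of the motion of the base point comes from the $C^0$--estimate in Theorem \ref{thm-holapp}, together with the fact that the correcting diffeotopies $h_{s,t}$ of $B$ produced there are $C^0$--small and $\mathcal{R}_{\rm{Leg}}$ is invariant under them so they can be undone. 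The parametric-relative enhancement is immediate from the parametric-relative forms of Theorems \ref{thm-holapp} and \ref{thm-lochprinc}: one simply feeds in the family $(F_s, f_s)_{s \in D^n}$, which is already holonomic on $U$ for $s \in \partial D^n$, treats that locus as the ``relative'' part over which nothing is changed, and reads off conditions (1)--(3) from the conclusion of the parametric statement.

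\textbf{Main obstacle.} The step I expect to require the most care is the verification of local integrability and microflexibility of $\mathcal{R}_{\rm{Leg}}$ --- specifically producing, in a parametric and relative fashion, the local holonomic Legendrian extensions with prescribed $1$--jet. Once one commits to the Weinstein-neighborhood model, the heart of the difficulty is bookkeeping: one must ensure the choices of Darboux/Weinstein charts, the cut-off functions, and the integrating functions $h$ all vary continuously in the family parameter $s$ and agree with the given data on the relative subset, and that the resulting vector fields in the microflexibility argument remain contact after cut-off (this is where Remark \ref{rmk-sycomor} and the conformal freedom in the definition of isocontact are used). The geometric content is routine given Moser's trick (Lemma \ref{lem-moser}) and the neighborhood theorems, but making the parametric statement airtight is where the real work lies; everything downstream is a formal consequence of the abstract machinery.
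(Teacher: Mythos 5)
There is a genuine gap at the heart of your reduction. Theorem \ref{thm-lochprinc} and Corollary \ref{cor-heqholapp} only produce holonomic sections on $\mathrm{Op}(K)$ for a complex $K$ of \emph{positive codimension in the base}, and in your setup the base of the relation $\mathcal{R}_{\rm{Leg}} \subset J^1(\Lambda, Y)$ is $\Lambda$ itself, whereas the theorem requires a Legendrian immersion defined on all of $\Lambda$. Your handle induction does not repair this: for a top-index handle the core is codimension zero in $\Lambda$, so the cited results simply do not apply there; and even for lower-index handles, holonomic approximation only gives a solution on a small neighborhood of a wiggled copy of the core, and you provide no mechanism for extending that solution over the rest of the handle (microflexibility only yields short-time extensions of deformations of an \emph{existing} holonomic solution, not extension of solutions to larger domains). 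As written, your argument proves the statement only when $\Lambda$ is open (so that it isotopes into a neighborhood of a positive-codimensional spine), while the theorem is asserted for all $\Lambda$, in particular closed ones. A secondary caveat: the invariance needed for Theorem \ref{thm-lochprinc} is under a capacious subgroup of $\mathrm{Diff}(B)$ for $B$ the base of the bundle (here $\mathrm{Diff}(\Lambda)$, which is fine), not $\mathrm{Cont}_0(Y,\xi)$; and your microflexibility argument via ambient contact isotopy extension is delicate for \emph{immersions}, where a compactly supported homotopy of Legendrian immersions need not be induced by an ambient contact flow.

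The missing idea is the microextension trick, which is exactly how the paper proceeds: instead of working with $\mathcal{R}_{\rm{Leg}}$ over $\Lambda$, one thickens the problem. Using a compatible almost complex structure on $\xi$ one gets the Lagrangian complement $J(F(T\Lambda))\cong T^*\Lambda$, and together with a coorienting line one assembles a bundle map $\underline{\Bbb R} \oplus T^*\Lambda \oplus T\Lambda \to TY$ covering $f$; pulling back by an Ehresmann connection on $J^1\Lambda = \Bbb R \times T^*\Lambda$ compatible with its contact structure, this becomes a \emph{formal isocontact immersion} $(G,g) : (TJ^1\Lambda, J^1\Lambda) \to (TY, Y)$ extending $(F,f)$ along the zero section. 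Now the relevant relation is $\mathcal{R}_{\rm{isocon}} \subset J^1(J^1\Lambda, Y)$, whose local integrability (Gray/Moser) and microflexibility (contact isotopy extension) are already established in Example \ref{eg-isoconrel}, and the local $h$--principle of Theorem \ref{thm-lochprinc} is applied near $K = 0_\Lambda \subset J^1\Lambda$, which \emph{is} of positive codimension even when $\Lambda$ is closed. Restricting the resulting holonomic isocontact immersion and its formal homotopy back to $0_\Lambda$ yields the homotopy of formal Legendrian immersions to a genuine one, with the $C^0$--control inherited from the $C^0$--density statement; the parametric and relative clauses follow from the corresponding clauses of Theorems \ref{thm-holapp} and \ref{thm-lochprinc}. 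If you want to keep your framework, you must incorporate this (or an equivalent) thickening step; without it the passage from germs near positive-codimensional complexes to all of a closed $\Lambda$ is unjustified.
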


An immediate corollary of the theorem above is the following.

\begin{corollary}\label{thm-heqLeg}The map $\Phi : \mathrm{Imm}_{\rm{Leg}}(\Lambda, Y) \to \mathrm{Imm}^f_{\rm{Leg}}(\Lambda, Y)$ given by $\Phi(f) = (df, f)$ is a weak homotopy equivalence. 
\end{corollary}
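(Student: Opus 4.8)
The plan is to deduce Corollary \ref{thm-heqLeg} directly from the parametric, relative statement of Theorem \ref{thm-hprincleg}, using the standard translation between a parametric-relative $h$--principle and a weak homotopy equivalence. Concretely, I would show that the inclusion-type map $\Phi$ induces isomorphisms on all homotopy groups $\pi_d$ (and a bijection on $\pi_0$) by verifying that for every $d \geq 0$, every continuous map $S^d \to \mathrm{Imm}^f_{\rm{Leg}}(\Lambda, Y)$ landing in the image of $\Phi$ on $\partial D^{d+1}$-type data is homotopic, rel boundary, into $\Phi(\mathrm{Imm}_{\rm{Leg}}(\Lambda, Y))$.

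First I would set up surjectivity on $\pi_d$: given a family of formal Legendrian immersions $(F_s, f_s)$ parametrized by $S^d$, regard $S^d$ as $\partial D^{d+1}$ (or cap off a disk) and apply the parametric form of Theorem \ref{thm-hprincleg} with $U = \emptyset$; this produces a homotopy $(F_{s,t}, f_{s,t})$ ending at a family of holonomic Legendrian immersions $(df_{s,1}, f_{s,1})$. Since $\Phi$ is by definition $f \mapsto (df, f)$, the endpoint family lies in the image of $\Phi$, and the homotopy itself witnesses $[(F_s,f_s)] = [\Phi(f_{s,1})]$ in $\pi_d(\mathrm{Imm}^f_{\rm{Leg}}(\Lambda,Y))$. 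For injectivity, suppose two families of holonomic Legendrian immersions $g_s^0, g_s^1$ (parametrized by $S^d$) become homotopic as \emph{formal} Legendrian immersions; this homotopy is a family of formal Legendrian immersions parametrized by $D^{d} \times [0,1]$ which is already holonomic on $\partial(\text{parameter space})$ — here is where the \emph{relative} clause of the theorem is essential. Applying the parametric-relative $h$--principle to this family, rel the locus where it is already holonomic, deforms it to a family of holonomic Legendrian immersions connecting $g_s^0$ and $g_s^1$ through $\mathrm{Imm}_{\rm{Leg}}(\Lambda, Y)$, establishing $[g^0] = [g^1]$ in $\pi_d(\mathrm{Imm}_{\rm{Leg}}(\Lambda,Y))$.

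There is one bookkeeping subtlety worth flagging: the theorem as stated holds with domain of the parameter $D^n$ and "holonomic on an open set $U$" as the relative condition, whereas the weak-equivalence statement needs cubes/spheres of arbitrary dimension and the relative condition "holonomic for parameters in the boundary sphere." I would note that the cube/disk versions are interchangeable up to reparametrization, that the relative condition can be taken over the boundary of the parameter domain rather than over a subset $U \subset \Lambda$ (the proof of Theorem \ref{thm-hprincleg}, via Holonomic Approximation, is insensitive to which "already-holonomic" locus is held fixed, whether in the source or the parameter), and that the $C^0$-smallness clause is not needed for the weak-equivalence conclusion — only the existence of the homotopy matters. Assembling these, the relative homotopy long exact sequence of the pair $(\mathrm{Imm}^f_{\rm{Leg}}(\Lambda,Y), \mathrm{Imm}_{\rm{Leg}}(\Lambda,Y))$ has all relative groups vanishing, so $\Phi$ is a weak homotopy equivalence.

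The main obstacle is not any hard analysis — Theorem \ref{thm-hprincleg} does all the geometric work — but rather making the translation airtight: one must be careful that a homotopy of holonomic sections \emph{through formal sections} is genuinely a family to which the parametric-relative statement applies (the endpoints are holonomic, so they sit on the "boundary" where we demand the output agree with the input), and that no circularity is introduced. This is the familiar "a parametric $h$--principle is the same as a weak homotopy equivalence" argument, and the only real care needed is matching the precise shape of the relative hypotheses in the statement of Theorem \ref{thm-hprincleg} to the shape needed for the $\pi_d$ surjectivity/injectivity dichotomy.
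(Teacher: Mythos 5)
Your argument is correct and is precisely the standard translation that the paper leaves implicit: the paper states this corollary as an ``immediate'' consequence of Theorem \ref{thm-hprincleg} and gives no separate proof, relying on exactly the parametric/relative-to-weak-equivalence dictionary you describe (vanishing of the relative homotopy groups of the pair $(\mathrm{Imm}^f_{\rm{Leg}}(\Lambda,Y),\mathrm{Imm}_{\rm{Leg}}(\Lambda,Y))$ plus $\pi_0$--surjectivity from the non-parametric case, followed by the long exact sequence). The bookkeeping subtlety you flag dissolves on simply taking $U=\Lambda$ in the relative clause of Theorem \ref{thm-hprincleg}: its hypothesis already concerns families over $D^d$ that are holonomic (on $U$) for $s\in\partial D^d$, and conclusion (3) then makes the homotopy constant on $\partial D^d$, which is exactly what the relative homotopy group computation requires.
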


\begin{proof}[Proof of Theorem \ref{thm-hprincleg}]
Let $(F, f) : (T\Lambda, \Lambda) \to (TY, Y)$ be a formal Legendrian immersion. Then $F(T\Lambda) \subset (\xi, \mathrm{CS}(\xi))$ is a fiberwise Lagrangian subbundle. Let us choose a fiberwise almost-complex structure $J$ on $\xi$, compatible with $\mathrm{CS}(\xi)$. Then, by the proof of Lemma \ref{lem-lagcomp}, $J(F(T\Lambda)) \subset (\xi, \mathrm{CS}(\xi))$ is another Lagrangian subbundle, such that $F(T\Lambda) \oplus E = \xi$. By co-orientability of $\xi$, we may pick a trivial line bundle $ \underline{\Bbb R} \subset TY$ complementary to $\xi$. Thus, we define a bundle-morphism covering $f$,
$$\Phi : \underline{\Bbb R} \oplus T^*\Lambda \oplus T\Lambda \to TY,$$
where $\Phi$ maps the first component to the chosen trivial line bundle, restricts to $J \circ F$ on the second component, and $F$ on the third component. 

At this point, recall the contact phase space (Example \ref{eg-phase}) $J^1 \Lambda = \Bbb R \times T^*\Lambda$, equipped with the fiber projection $\pi^{(1)} : J^1 \Lambda \to \Lambda$. We choose an Ehressmann connection on $(J^1\Lambda, \Lambda, \pi^{(1)})$ compatible with the contact structure, in the sense that the holonomy preserves the contact hyperplanes of $J^1\Lambda$ equipped with the fiberwise conformal symplectic structures. Then, the tangent bundle to the total space $J^1 \Lambda$ decomposes as,
$$T J^1 \Lambda \cong H \oplus V,$$
where $V = (\pi^{(1)})^* (\underline{\Bbb R} \oplus T^*\Lambda)$ is the \emph{vertical subbundle}, given by the pullback of $\pi^{(1)}$ over itself, and $H = (\pi^{(1)})^*T\Lambda$ is the \emph{horizontal subbundle}, given by the pullback of the tangent bundle of the zero section $0_{\Lambda} \cong \Lambda$ over $\pi^{(1)}$. The definition of pullback gives us two bundle maps $H \to T\Lambda$ and $V \to \underline{\Bbb R} \oplus T^*\Lambda$ covering $\pi^{(1)}$, which we consolidate into a bundle map 
$$\Pi : TJ^1{\Lambda} \to \underline{\Bbb R} \oplus T^*\Lambda \oplus T\Lambda.$$ 
We then define a bundle morphism $G : TJ^1\Lambda \to TY$ by setting $G = \Phi \circ \Pi$. Moreover, let $g : J^1\Lambda \to Y$ be defined by $g = f \circ \pi^{(1)}$. Notice that the bundle morphism $G$ covers the map $g$ on the base spaces. Since the Ehresmann connection was chosen to be compatible with the contact structure on $J^1 \Lambda$, $G$ preserves the fiberwise contact structures equipped with the fiberwise conformal symplectic structures. Therefore, $(G, g)$ is a formal isocontact immersion, as defined in Example \ref{eg-isoconrel}.

Thus, we may treat the pair $(G, g)$ as a formal section of the differential relation $\mathcal{R} \subset J^1(J^1\Lambda, Y)$ of isocontact immersions described in Example \ref{eg-isoconrel}. We have already proved $\mathcal{R}$ is locally integrable and microflexible in Example \ref{eg-isoconrel}. If $\mathfrak{A} = \mathrm{Cont}_0(Y, \xi)$ denotes the connected component of identity in the group of contactomorphisms, we know $\mathfrak{A} \subset \mathrm{Diff}(Y)$ is a capacious subgroup and $\mathcal{R}$ is certainly $\mathfrak{A}$--invariant. Thus, we may now use the local $h$--principle (Corollary \ref{thm-lochprinc}) near the positive codimensional subcomplex $K = 0_{\Lambda} \subset J^1\Lambda$. This yields a holonomic isocontact immersion $g_1 : \mathrm{Op}(0_{\Lambda}) \to Y$ such that,
\begin{enumerate}
\item $(dg_1, g_1)$ is homotopic to $(G, g)$ through formal isocontact immersions $(G_t, g_t)$ of $\mathrm{Op}(0_{\Lambda})$ in $Y$. 
\item $\{g_t : 0 \leq t \leq 1\}$ is a $C^0$--small homotopy between $g$ and $g_1$.
\end{enumerate}
We may recover $(F, f)$ by restricting $(G, g)$ to the zero section $(T0_{\Lambda}, 0_{\Lambda}) \subset (TJ^1\Lambda, J^1\Lambda)$ of the $1$--jet bundle. Thus, restricting the homotopy $(G_t, g_t)$, we obtain a homotopy 
$$\{(F_t, f_t) : (T\Lambda, \Lambda) \to (TY, Y) : 0 \leq t \leq 1\}$$
through formal Legendrian immersions, such that $F_0 = f, f_1 = f$ and $F_1 = df_1$, where $f_1 = g_1|0_{\Lambda}$ is a genuine Legendrian immersion. Moreover, the homotopy $\{f_t : 0 \leq t \leq 1\}$, being a restriction of the $C^0$--small homotopy $\{g_t : 0 \leq t \leq 1\}$, is also $C^0$--small.

The parametric argument is completely analogous, with some extra bookkeeping.\end{proof}

Note that a generic Legendrian immersion $f : \Lambda \to (Y, \xi)$ is in fact an embedding. This follows from general position arguments, as $2 \dim \Lambda < \dim Y$. However, Theorem \ref{thm-hprincleg} does \emph{not} provide any insight into the homotopy type of the space $\mathrm{Emb}_{\rm{Leg}}(\Lambda, Y)$ of Legendrian embeddings. As an illustration, suppose $(F, f) : (T\Lambda, \Lambda) \to (TY, Y)$ is a formal Legendrian immersion, where $f : \Lambda \to Y$ is an embedding. While Theorem \ref{thm-hprincleg} shows $(F, f)$ is isotopic through formal Legendrian immersions to a Legendrian embedding $(dg, g)$, the homotopy between $f, g : \Lambda \to Y$ need not be a smooth isotopy through embeddings. 

\begin{question}\label{que-htpylegemb}What is the homotopy type of $\mathrm{Emb}_{\rm{Leg}}(\Lambda, Y)$?\end{question}

The subsequent sections will develop the necessary tools to study Question \ref{que-htpylegemb}.

\section{The $h$--principle for directed immersions and embeddings}\label{sec-dirimmemb}

Let $M^m, N^n$ be smooth manifolds, and $f : M \to N$ be an immersion. Let us recall the \emph{Grassmann bundle} $\mathrm{Gr}_m(N)$ over $N$, which is a smooth fiber bundle, with fiber over $q \in N$ being the Grassmannian of $m$--dimensional subspaces of $T_q N$. We define the \emph{tangential Gauss map}, or simply \emph{Gauss map} of $f$, to be the map 
$$G(df) : M \to \mathrm{Gr}_m(N),$$ 
where $G(df)(p)$ is defined to be the point in the Grassmann bundle corresponding to the subspace $f_*(T_p M) \subseteq T_q N$, $q := f(p)$.

\begin{definition}[Differential relation of $A$--directed immersions]\label{def-adirrel}Let $M$ be a smooth manifold, and $A \subset \mathrm{Gr}_m(N)$ be an arbitrary subset. Let $\mathcal{R}_A \subset J^1(M, N)$ denote the subset consisting of $1$--jets of (sections which are graphs of) immersions $f : M \to N$ at points of $M$, such that the tangential Gauss map of $f$ has image contained in $A$, i.e., $G(df)(M) \subseteq A$. We call $\mathcal{R}_A$ the \emph{differential relation of $A$--directed immersions}.\end{definition}

In the following definition, we give an alternative description of formal and holonomic sections of the differential relation $\mathcal{R}_A$. First, let us introduce a notational convention: for a subset $A \subset \mathrm{Gr}_m(N)$ and a point $q \in N$, we shall denote $A_q := A \cap \mathrm{Gr}_n(T_q N)$. 

\begin{definition}[Formal and holonomic $A$--directed immersions]\label{def-adirimm}Let $M^m, N^n$ be smooth manifolds, and $A \subset \mathrm{Gr}_m(N)$ be a subset. Let $F : TM \to TN$ be a smooth, fiber-preserving, fiberwise-linear map and $f : M \to N$ be a smooth maps such that the following diagram commutes: 
\begin{equation*}
\begin{CD}
TM @>F>> TN \\
@VVV @VVV \\
M @>f>> N
\end{CD}
\end{equation*}
The pair $(F, f)$ is called a \emph{formal $A$--directed immersion} if $F$ is a fiberwise monomorphism, and the subspace $F(T_p M) \subset T_q N$ corresponds to a point in $A_q$, $q := f(p)$. We say a formal $A$--directed immersion $(F, f)$ is \emph{holonomic} if $F = df$. For a holonomic $A$--directed immersion $(F = df, f)$, the map on the base spaces $f$ is simply said to be an \emph{$A$--directed immersion}. \end{definition}

\begin{definition}\label{def-ample}Let $A \subset \mathrm{Gr}_m(N)$ be an open subset. We say $A$ is \emph{ample} if for all $q \in N$, for any $m$--dimensional subspace $P \subset T_q N$ such that $P \in A_q$, and for any subspace $Q \subset P$ of codimension $1$, the set
$$\Omega_Q := \{v \in T_q N : \mathrm{span}(Q, v) \in A_q\},$$ 
is an ample subset of $T_q N$ in the sense of Definition \ref{def-ampleset}.
\end{definition}

\begin{theorem}[$C^0$--dense $h$--principle for $A$--directed immersions]\label{thm-adirimm}Let $A \subset \mathrm{Gr}_m(N)$ be an open ample set. Then every formal $A$--directed immersion $(F, f) : (TM, M) \to (TN, N)$ is homotopic through formal $A$--directed immersions $\{(F_t, f_t) : 0 \leq t \leq 1\}$ to a holonomic $A$--directed immersion $(df_1, f_1)$ such that $\{f_t : M \to N : 0 \leq t \leq 1\}$ is a $C^0$--small homotopy. Moreover, this result is also true parametrically and relatively.\end{theorem}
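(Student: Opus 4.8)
The plan is to deduce Theorem~\ref{thm-adirimm} directly from the Convex Integration Theorem (Theorem~\ref{thm-hample}), applied to the differential relation $\mathcal{R}_A \subset J^1(M, N)$ of $A$--directed immersions regarded as a first--order relation on the natural bundle $(M \times N, M, \pi)$ (Example~\ref{eg-natural}). Recall from Definition~\ref{def-adirimm} that formal sections of $\mathcal{R}_A$ are precisely the formal $A$--directed immersions $(F, f)$, with holonomic ones being those satisfying $F = df$; moreover, $\mathrm{bs}$ of a section of $J^1(M,N)$ recovers the underlying map $M \to N$ through $\pi_{\rm{front}}$, so the $C^0$--estimate in Theorem~\ref{thm-hample} translates verbatim into $\mathrm{dist}_{C^0}(f_t, f_0) < \varepsilon$ for the base maps. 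Thus it suffices to verify that $\mathcal{R}_A$ is open, $\mathrm{Diff}(M)$--invariant, and ample, the invariance being needed to access the coordinate criterion for ampleness in Proposition~\ref{prop-coordample}.

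Openness is immediate: the monomorphism condition on the $1$--jet is open in $J^1(M,N)$, and, since $A \subseteq \mathrm{Gr}_m(N)$ is open and the assignment of a monomorphism $1$--jet to its image $m$--plane in the Grassmann bundle is continuous, the condition $G(df)(p) \in A$ cuts out an open subset as well. Invariance under $\mathrm{Diff}(M)$ holds because precomposing an immersion $f$ with $h \in \mathrm{Diff}(M)$ merely reparametrizes the Gauss map, $G(d(f\circ h)) = G(df)\circ h$, so its image --- hence the constraint $G(df)(M) \subseteq A$ --- is unchanged.

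The crux is ampleness of $\mathcal{R}_A$, which I would verify through Proposition~\ref{prop-coordample}, reducing to coordinate principal subspaces. Fix $p \in M$, $q \in N$, a coordinate hyperplane $\tau \subset T_p M$, a linear map $\ell \in \mathrm{Hom}(\tau, T_q N)$, and a vector $w \in T_p M$ spanning a complement to $\tau$. A $1$--jet in $P(\tau, \ell)$ is completely recorded by the value $v := d\phi_p(w) \in T_q N$ of the derivative of the section on $w$, so $P(\tau, \ell)$ is identified affinely with $T_q N$. Such a jet lies in $\mathcal{R}_A$ exactly when $d\phi_p$ is injective and $\mathrm{span}(\ell(\tau), v) \in A_q$. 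If $\ell$ is not injective, then $d\phi_p$ is injective for no choice of $v$, so $\mathcal{R}_A \cap P(\tau, \ell) = \emptyset$, which is vacuously ample. If $\ell$ is injective, set $Q := \ell(\tau)$, an $(m-1)$--plane in $T_q N$; then under the identification above $\mathcal{R}_A \cap P(\tau, \ell)$ corresponds to $\Omega_Q = \{ v \in T_q N : \mathrm{span}(Q, v) \in A_q \}$. When $\Omega_Q \neq \emptyset$, any $v \in \Omega_Q$ exhibits $Q$ as a codimension--$1$ subspace of the plane $P := \mathrm{span}(Q, v) \in A_q$, so ampleness of $A$ (Definition~\ref{def-ample}) is exactly the statement that $\Omega_Q$ is ample in $T_q N$; when $\Omega_Q = \emptyset$ this is vacuous. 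Hence $\mathcal{R}_A \cap P(\tau, \ell)$ is ample in every coordinate principal subspace, and Proposition~\ref{prop-coordample} yields that $\mathcal{R}_A$ is an ample differential relation.

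With openness, invariance, and ampleness established, Theorem~\ref{thm-hample} (in its parametric and relative form) applied to the formal section corresponding to $(F, f)$ produces a homotopy $(F_t, f_t)$ through formal $A$--directed immersions ending at a holonomic one, with $\{f_t : M \to N\}$ a $C^0$--small homotopy, and analogously for $D^n$--families rel boundary and rel an open set where the sections are already holonomic. I expect the only genuine subtlety to be the bookkeeping in the ampleness step --- matching the affine structure on $P(\tau, \ell)$ with $T_q N$ and correctly discarding the degenerate cases ($\ell$ non--injective, or $\ell(\tau)$ not a hyperplane of any plane in $A_q$) as vacuously ample --- since everything else is a direct invocation of the convex integration machinery.
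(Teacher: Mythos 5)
Your proposal is correct and follows essentially the same route as the paper: reduce to the Convex Integration Theorem by checking that $\mathcal{R}_A$ is open and ample, identifying $P(\tau,\ell)\cap\mathcal{R}_A$ with $\Omega_Q$ for $Q=\ell(\tau)$ and treating the non-injective $\ell$ case as vacuously ample. The only (harmless) difference is that you route ampleness through Proposition \ref{prop-coordample} and $\mathrm{Diff}(M)$--invariance, whereas the paper argues directly for arbitrary hyperplanes $\tau$; your explicit handling of the case $\Omega_Q=\emptyset$ is in fact slightly more careful than the paper's.
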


Here, we suppressed the elaborate meaning of ``$C^0$--small homotopy" and the theorem being true ``parametrically and relatively", as the usage is identical to that of  Theorem \ref{thm-hprincleg}. Let us denote $\mathrm{Imm}_A(M, N)$ and $\mathrm{Imm}_A^f(M, N)$ to be the spaces of $A$--directed immersions and formal $A$--directed immersions, respectively, topologized by the subspace topology inherited from $\mathrm{Imm}(M, N)$ and $\mathrm{Imm}^f(M, N)$ as in the discussion below Definition \ref{def-flegimm}. The following corollary is immediate from Theorem \ref{thm-adirimm}.

\begin{corollary}For an open ample set $A \subset \mathrm{Gr}_m(N)$, the inclusion map $\mathrm{Imm}_A(M, N) \to \mathrm{Imm}^f_A(M, N)$ is a weak homotopy equivalence.\end{corollary}

We shall derive Theorem \ref{thm-adirimm} from the Convex Integration Theorem (Theorem \ref{thm-hample}).

\begin{proof}[Proof of Theorem \ref{thm-adirimm}]In light of Theorem \ref{thm-hample}, we need only verify that the differential relation $\mathcal{R}_A \subset J^1(M, N)$ is open (Example \ref{eg-openrel}) and ample (Definition \ref{def-amplerel}). The former of these is evident, as $A \subset \mathrm{Gr}_m(M)$ is an open subset. Thus, we need only prove $\mathcal{R}_A$ is ample. 

Let $\tau \subset T_p M$ be a subspace of corank $1$, and $\ell \in \mathrm{Hom}(T_p M, T_q N)$ be a linear map. Let $P(\tau, \ell)$ denote the corresponding principal subspace (Definition \ref{def-princsub}). If $\ell$ is not injective, $P(\tau, \ell) \cap \mathcal{R}_A$ is empty as $\mathcal{R}_A \subset J^1(M, N)$ consist only of $1$--jets of immersions. Thus, in this case, $\mathcal{R}_A \cap P(\tau, \ell)$ is vacuously ample in $P(\tau, \ell)$. Otherwise, $P(\tau, \ell) \cap \mathcal{R}_A$ consists of monomorphisms $\phi : T_p M \to T_q N$ whose image corresponds to a point in $A \subset \mathrm{Gr}_m(T_p M)$, and such that $\phi|\tau = \ell$. These are determined by a choice of a vector $v \in T_q N$ such that $\mathrm{span}(Q, v) \in A_q$, where $Q = \ell(\tau) \subset T_q N$. Thus, $P(\tau, \ell) \cap \mathcal{R}_A$ is mapped to $\Omega_Q$ under the affine equivalence $P(\tau, \ell) \cong T_q N$. As $A \subset \mathrm{Gr}_m(M)$ is ample in the sense of Definition \ref{def-ample}, $\Omega_Q$ is an ample subset of $T_q N$ in the sense of Definition \ref{def-ampleset}. Consequently, $P(\tau, \ell) \cap \mathcal{R}_A$ is an ample subset of $P(\tau, \ell)$. Therefore, $\mathcal{R}_A$ is an ample differential relation.\end{proof}

Remarkably, an $h$--principle for $A$--directed \emph{embeddings} is also true, with the same hypothesis on $A$ as in Theorem \ref{thm-adirimm}. We begin with a definition.

\begin{definition}[Formal and holonomic $A$--directed embeddings]\label{def-adiremb}Let $M, N$ be smooth manifolds, and $A \subset \mathrm{Gr}_n(N)$ be a subset. Let 
$$\{F_s : TM \to TN : 0 \leq s \leq 1\},$$ 
be a family of smooth, fiber-preserving, fiberwise-linear maps and $f : M \to N$ be a smooth map such that the following diagram commutes for every $0 \leq s \leq 1$:
\begin{equation*}
\begin{CD}
TM @>F_s>> TN \\
@VVV @VVV \\
M @>f>> N
\end{CD}
\end{equation*}
The pair $(\{F_s : 0 \leq s \leq 1\}, f)$, more succintly denoted as $(F_s, f)$, is called a \emph{formal $A$--directed embedding} if $F_s$ is a homotopy of fiberwise monomorphisms, $f$ is a smooth embedding, $F_0 = df$ and $(F_1, f)$ is an $A$--directed immersion. If $F_s = df$ for all $0 \leq s \leq 1$, we call such a pair a \emph{holonomic $A$--directed embedding}. In that case, $f$ is called an \emph{$A$--directed embedding}. \end{definition}

\begin{remark}[Homotopy of formal $A$--directed embeddings]We remark that a $d$--parameter homotopy of formal $A$--directed embeddings is actually of the form 
$$\{(F_{s, t}, f_t) : (TM, M) \to (TM, M) : s \in I, t \in I^d\},$$
where $(F_{s, t}, f_t)$ is a formal $A$--directed embedding for every $t \in I^d$. \end{remark}

\begin{theorem}\label{thm-adiremb}Let $A \subset \mathrm{Gr}_m(N)$ be an open, ample set. Any formal $A$--directed embedding $(F_s, f) : (TM, M) \to (TN, N)$ can be homotoped to a holonomic $A$--directed embedding by a homotopy $(F_{s, t}, f_t)$ through $A$--directed embeddings, where $F_{s, 0} = F, f_0 = f$, $F_{s, 1} = df_1$, $f_1 : M \to N$ is an $A$--directed embedding, and the homotopy $\{f_t\}$ is $C^0$--small. The result is also true parametrically and relatively.\end{theorem}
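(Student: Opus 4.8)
The plan is to run the convex-integration scheme behind Theorem~\ref{thm-hample} and Theorem~\ref{thm-adirimm}, but carried out inside a thin tubular neighbourhood of $f(M)$ and executed carefully enough that embeddedness is never destroyed.

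\emph{Step 1 (reduction to a tube).} Since $f : M \to N$ is an embedding, the tubular neighbourhood theorem identifies an open neighbourhood $\mathcal{U}$ of $f(M)$ in $N$ with a neighbourhood of the zero section $0_M$ in the total space $E(\nu)$ of the normal bundle $\nu \to M$. As the homotopy $\{f_t\}$ we seek must be $C^0$-small, we may insist that it stay inside $\mathcal{U}$. Transporting $A|_{\mathcal U}$ through this identification to an open subset $\widehat A \subset \mathrm{Gr}_m(E(\nu))$ defined over the tube — which remains ample, since ampleness in the sense of Definition~\ref{def-ample} is a fibrewise condition — we are reduced to deforming the given formal $\widehat A$-directed embedding of $0_M \cong M$ into $E(\nu)$, whose base map is the zero section, to a genuine $\widehat A$-directed embedding through $C^0$-small formal $\widehat A$-directed embeddings.

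\emph{Step 2 (controlled convex integration).} We now imitate the proof of Theorem~\ref{thm-adirimm}. Fix a fine cubical grid on $M$; over each top-dimensional cube choose adapted coordinates on $\mathcal{U}$ and perform the elementary one-dimensional convex-integration step along one grid direction at a time, exactly as in the inductive scheme underlying Theorem~\ref{thm-hample}. Ampleness of $\widehat A$ guarantees, by the very same computation as in the proof of Theorem~\ref{thm-adirimm}, that the relevant principal subsets $\mathcal{R}_{\widehat A}\cap P(\tau,\ell)$ are ample, so each step is realised by a modification whose $C^0$-size can be made as small as we please. The new ingredient is to spend this freedom on keeping the map an embedding. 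Initially the map is the embedding $f$; inductively the current map $f^{(k)}$ is an embedding and, the region still to be modified being compact, $f^{(k)}$ has a positive normal injectivity radius $\rho_k>0$ there. Choosing, at the next step, the oscillating loop so that the resulting correction (i) stays inside the $\tfrac13\rho_k$-tube about $f^{(k)}(M)$, (ii) is supported in a single grid cube, and (iii) spirals about the direction of the convex-integration coordinate — so that, read in tubular coordinates, the perturbed map is still injective along the one-dimensional slices in that direction — one concludes, combining $C^0$-smallness against the separation of $f^{(k)}$ on the compact base with the slicewise injectivity, that the modified map is again an embedding. Iterating over all cubes and directions produces an $\widehat A$-directed embedding $f_1$; concatenating the $C^0$-small isotopies obtained along the way gives the desired $C^0$-small homotopy $\{f_t\}$ through $\widehat A$-directed embeddings, and dragging along the accompanying bundle maps as in the proof of Theorem~\ref{thm-adirimm} — so that the formal slot $F_{s,1}$ is interpolated to $df_1$ — yields the full formal homotopy $(F_{s,t},f_t)$. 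The parametric and relative versions follow by the same argument with the standard bookkeeping over $I^d$, keeping everything fixed where the family is already holonomic.

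\emph{Main obstacle.} The genuinely delicate point is the claim in Step~2 that the elementary convex-integration step can be performed while preserving injectivity of the \emph{whole} map, not merely along individual coordinate slices; this is exactly the content of convex integration for directed embeddings, carried out by Eliashberg--Mishachev \cite{embook} (cf.\ \cite[Chapter 2.4]{grobook}), where the spiralling construction and the estimate reconciling the oscillation amplitude with $\rho_k$ are the substance of the proof. Everything else — the tubular reduction, the translation of ampleness, and the parametric bookkeeping — is routine given Theorem~\ref{thm-adirimm} and Theorem~\ref{thm-hample}.
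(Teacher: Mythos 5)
Your Step 1 matches the paper, but Step 2 contains the gap: the assertion that the elementary convex-integration step can be carried out ``while preserving injectivity of the whole map'' is exactly the content of the theorem, and your proposal does not prove it — it sketches an injectivity-radius-plus-spiraling heuristic and then defers the substance to Eliashberg--Mishachev. The heuristic as stated does not close: $C^0$-smallness of the correction together with immersivity does not imply the perturbed map is an embedding (a tiny loop added to an embedded arc is the standard counterexample); injectivity along the one-dimensional slices in the integration direction, combined with separation of distinct points of the compact base, still says nothing about two \emph{nearby but distinct} slices whose images, after a rapid oscillation of amplitude comparable to their separation, may cross; and when the tangential homotopy $F_s$ rotates the tangent planes far from $T f(M)$ (e.g.\ nearly into the normal fibers), the oscillations required to realize a plane of $A$ are not small in any sense that your $\tfrac13\rho_k$-tube bookkeeping controls. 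So as written, the proof of the one claim the theorem actually adds to Theorem \ref{thm-adirimm} is missing.

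The paper's route avoids this difficulty rather than confronting it. After the tubular reduction, one first assumes the extended tangential homotopy $\{G_s\}$ is \emph{small}; then $(F_s)_*(T_pM)$ stays transverse to the normal fibers, so the formal data can be reinterpreted as a formal section of the relation $\mathcal{R}^E_A\subset J^1E$ of $A$-directed \emph{sections of the normal bundle} $E$. Theorem \ref{thm-hample} then applies verbatim to this open ample relation, and embeddedness is automatic throughout the homotopy because the base maps of (formal or holonomic) sections of $J^1E$ are graphs — no oscillation-versus-injectivity-radius estimate is ever needed. A general tangential homotopy is handled by cutting it into finitely many small pieces $[s_{i-1},s_i]$ and iterating, re-tubing at each stage (taking the normal bundle $E_i$ of the newly produced embedding $f_{s_i}$ and the transported set $A_i$) so that graphicality is restored before the next application of convex integration. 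Your proposal is missing both ingredients — the graphicality observation that makes embeddedness free, and the decomposition-with-re-tubing that makes graphicality available when $F_1$ is far from $df$ — and without them (or a genuine proof of your spiraling claim) the argument does not establish the theorem.
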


As before, we define the space of $A$--directed embeddings $\mathrm{Emb}_A(M, N)$, which inherits a topology as a subspace of $\mathrm{Imm}_A(M, N)$. We also define a space of formal $A$--directed embeddings $\mathrm{Emb}_A^f(M, N)$, which inherits a topology as a subspace of the slightly more complicated topological space $\mathrm{Maps}(I, C^\infty(TM, TN)) \times C^\infty(M, N)$.  Once again, we have as an immediate consequence of Theorem \ref{thm-adiremb},

\begin{corollary}\label{cor-hadiremb}For an open ample set $A \subset \mathrm{Gr}_m(N)$, the inclusion map 
$$\mathrm{Emb}_A(M, N) \to \mathrm{Emb}_A^f(M, N)$$
is a weak homotopy equivalence.\end{corollary}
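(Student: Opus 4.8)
The plan is to deduce the corollary formally from the parametric, relative version of Theorem~\ref{thm-adiremb}, exactly as Corollary~\ref{thm-heqLeg} and Corollary~\ref{cor-heqholapp} are deduced from their respective parametric $h$--principles. Recall that an inclusion of spaces $X \hookrightarrow Y$ is a weak homotopy equivalence if and only if the relative homotopy groups $\pi_d(Y, X)$ vanish for all $d \geq 0$ (the case $d = 0$ giving surjectivity of $\pi_0 X \to \pi_0 Y$ and the case $d = 1$ giving injectivity). Here $\mathrm{Emb}_A(M,N) \hookrightarrow \mathrm{Emb}^f_A(M,N)$ is the map $f \mapsto (F_s, f)$ with $F_s \equiv df$ the constant path, so it suffices to show
$$\pi_d\big(\mathrm{Emb}^f_A(M,N),\, \mathrm{Emb}_A(M,N)\big) = 0 \qquad \text{for all } d \geq 0,$$
where the relative homotopy long exact sequence then concludes the argument.

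First I would unwind what an element of this relative group is. By definition it is represented by a map of pairs $(D^d, \partial D^d) \to (\mathrm{Emb}^f_A(M,N), \mathrm{Emb}_A(M,N))$; unravelling the topology on $\mathrm{Emb}^f_A(M,N) \subset \mathrm{Maps}(I, C^\infty(TM,TN)) \times C^\infty(M,N)$, this is precisely a continuous family $\{(F_{s,t}, f_t) : s \in I,\ t \in D^d\}$ of formal $A$--directed embeddings (so for each $t$: $F_{0,t} = df_t$, each $F_{s,t}$ is a fiberwise monomorphism, $f_t$ is an embedding, and $(F_{1,t}, f_t)$ is a formal $A$--directed immersion) such that for $t \in \partial D^d$ the path $F_{s,t}$ is constant in $s$, i.e.\ $(df_t, f_t)$ is already holonomic. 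I would then apply Theorem~\ref{thm-adiremb} in its parametric, relative form with parameter space $D^d$, relativized over $\partial D^d$. This yields a homotopy $\{(F_{s,t,u}, f_{t,u}) : u \in I\}$ through formal $A$--directed embeddings, stationary for $t \in \partial D^d$, from the given family at $u=0$ to a family of holonomic $A$--directed embeddings $\{(df^1_t, f^1_t)\}$ at $u=1$. Such a homotopy is exactly a null-homotopy, rel $\partial D^d$, of the chosen element of $\pi_d(\mathrm{Emb}^f_A, \mathrm{Emb}_A)$, which establishes the displayed vanishing.

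The argument is essentially bookkeeping, so I do not anticipate a genuine obstacle; the only points that require a little care are (i) matching the somewhat unusual topology on $\mathrm{Emb}^f_A(M,N)$ --- built from \emph{paths} of monomorphisms rather than from single jets --- with the data fed into and produced by Theorem~\ref{thm-adiremb}, which is stated in precisely this ``formal embedding'' formalism; (ii) observing that the relativization in Theorem~\ref{thm-adiremb} is here applied over the \emph{parameter} locus $\partial D^d$ (on which the family is already holonomic), not over a subset of the source $M$, and the quoted statement supplies exactly this; and (iii) using compactness of $D^d$, as usual, to make the $C^0$--small homotopy uniform in the parameter. With these remarks all relative homotopy groups vanish, hence $\mathrm{Emb}_A(M,N) \hookrightarrow \mathrm{Emb}^f_A(M,N)$ is a weak homotopy equivalence.
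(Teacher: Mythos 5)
Your argument is correct and is exactly the route the paper intends: the corollary is stated there as an immediate consequence of the parametric, relative form of Theorem \ref{thm-adiremb}, deduced by the same vanishing-of-relative-homotopy-groups bookkeeping used for Corollary \ref{cor-heqholapp} and Corollary \ref{thm-heqLeg}. Your points (i)--(iii) are precisely the routine checks being suppressed in the paper, so there is nothing to add.
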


\begin{proof}[Proof of Theorem \ref{thm-adiremb}]
Let us identify $M$ with the embedded submanifold $f(M) \subset N$. Let $(E, M, \pi)$ denote the normal bundle to $M \cong f(M) \subset N$, and let $\iota : E \to N$ denote the embedding obtained from the tubular neighborhood theorem. We also identify $E$ with $\iota(E) \subset N$. 

Choose an Ehresmann connection on $E$, so that $TE \cong \pi^*TM \oplus \pi^*E$. We may extend the homotopy of fiberwise monomorphisms $\{F_s : TM \to TN : s \in I\}$ to a homotopy of fiberwise isomorphisms $\{G_s : TE \to TN : s \in I\}$ by defining $G_s$ as follows:
\begin{enumerate}
\item On the first factor of $TE \cong \pi^*TM \oplus \pi^*E$, $G_s|\pi^*TM = F_s \circ \pi$.
\item On the second factor of $TE \cong \pi^*TM \oplus \pi^*E$, $G_s|\pi^*E = \iota \circ \pi$.
\end{enumerate}
We equip $N$ with an ambient Riemannian metric, and let $\varepsilon > 0$ be sufficiently small. First, let us assume that $\{G_s : s \in I\}$ is a \emph{small} homotopy, in the following sense: for every rank $m$ subspace $P \in \mathrm{Gr}_m(E) \subset \mathrm{Gr}_m(N)$ of some tangent space to the total space of $E$, the subspaces $(G_s)_* P$ and $P$ are $\varepsilon$--close in $\mathrm{Gr}_m(N)$. For any $p \in M$, $g_*(T_p M)$ is transverse to the normal fibers of the tubular neighborhood $E$. From the assumption of $\{G_s : s \in I\}$ being small, we thus conclude $(G_s)_*(T_p M) = (F_s)_*(T_p M)$ is also transverse to the normal fibers of $E$, for all $s \in I$. In virtue of this transversality, we may consider $(F_s, f)$ as a formal section of the jet bundle $(J^1 E, M, \pi^{(1)})$ for every $s \in I$. 

We define a differential relation $\mathcal{R}^E_A \subset J^1 E$ consisting of $1$--jets of sections $s : \mathrm{Op}(\{p\}) \to E$ which are $A$--directed as a map to $E \cong \iota(E) \subset N$. Since $\mathcal{R}_A$ is open and ample, so is $\mathcal{R}^E_A$. As $(F_s, f)$ is an $A$--directed immersion, it must in fact correspond to a formal section of $\mathcal{R}^E_A \subset J^1 E$. By the parametric version of Theorem \ref{thm-hample}, we can find now find the required homotopy $(F_{s, t}, f_t)$ to a holonomic section of $\mathcal{R}^E_A$. 

In general, the homotopy $\{G_s : s \in I\}$ may not be small. In that case, we choose a sequence of times $0 = s_0 < s_1 < \cdots < s_N = 1$ such that the homotopies $\{G_s : s \in [s_{i-1}, s_i]\}$ are small for all $1 \leq i \leq N$. Let $A_i := \{(G_{s_i})_* P : P \in \mathrm{Gr}_m(N)\}$ be the subset of $\mathrm{Gr}_m(N)$ consisting of pushforward of the points in $A \cap \mathrm{Gr}_m(E)$ by $(G_{s_i})_* : \mathrm{Gr}_m(E) \to \mathrm{Gr}_m(N)$. The strategy next is to inductively apply the above procedure on each of the truncated homotopies $\{G_s : s \in [s_{i-1}, s_i]\}$. Suppose we inductively obtain an embedding $f_{s_i} : M \to N$, and the normal bundle $E_i$ to $f_{s_i}$. We then apply the procedure above for the small homotopy $\{G_s : s \in [s_i, s_{i+1}]\}$, with the differential relation $\mathcal{R}^{E_i}_{A_i}$, to obtain $f_{s_{i+1}} : M \to N$ (see Figure \ref{fig-adirwiggles}). The final embedding $f_1 := f_{s_N}$ has all the required properties. \end{proof}

\begin{remark}We emphasize that the crux of the proof of Theorem \ref{thm-adiremb} is to decompose the homotopy of formal $A$--directed immersions associated to a given formal $A$--directed embedding as a composition of smaller homotopies. Inductively, we reinterpret these smaller homotopies as a homotopy through formal $1$--jets of sections of the normal bundle, which can thus be approximated by a homotopy through holonomic sections using the convex integration theorem. This inductive procedure gives rise to progressively ``wiggled" sections, which requires that we shrink the tubular neighborhood before running the next stage.\end{remark}

\begin{figure}[h]
\centering
\includegraphics[scale=0.13, trim={0 15cm 0 16cm}, clip]{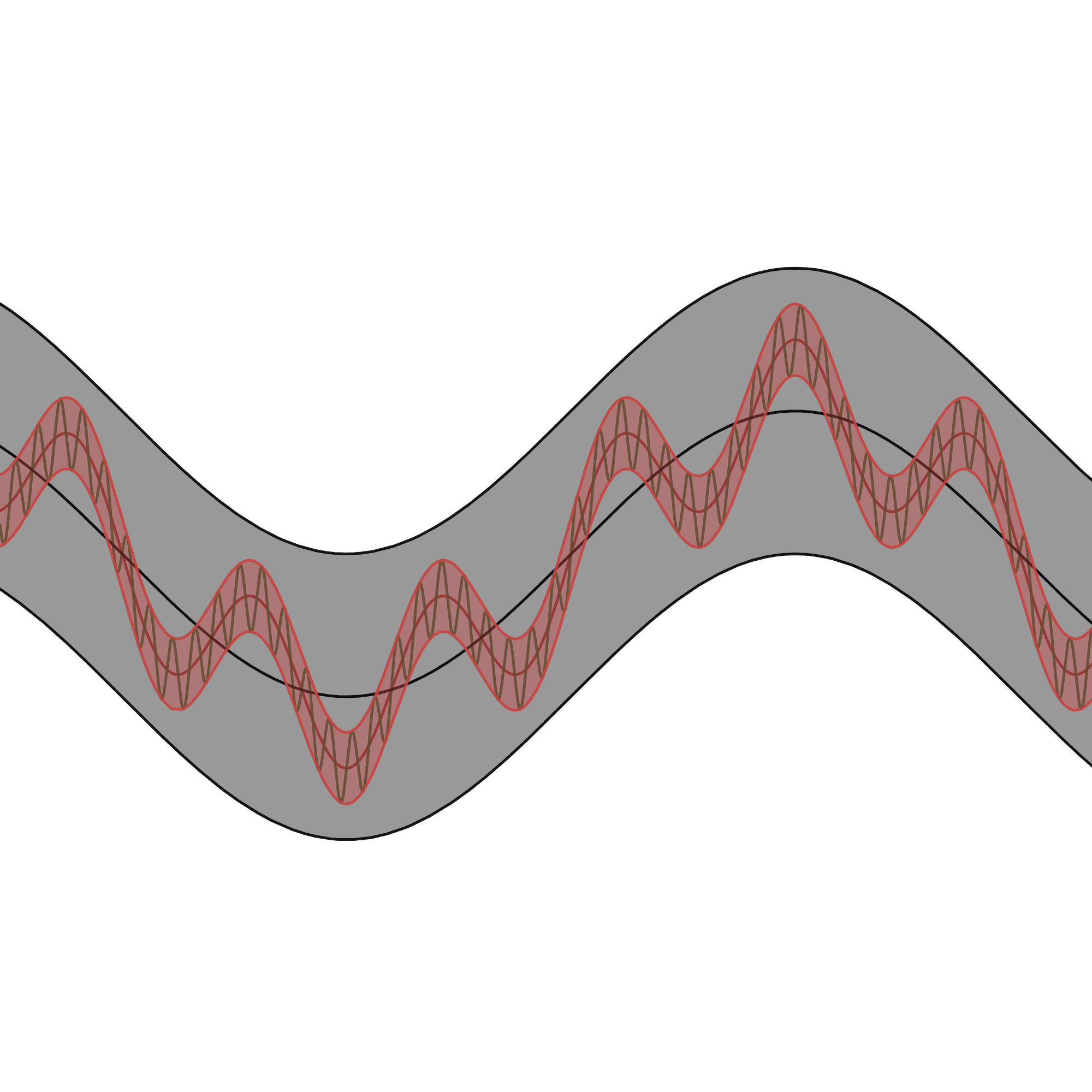}
\caption{$f_{s_1}, f_{s_2}, f_{s_3}$ (respectively, black, red, green) and the tubular neighborhoods $E_1, E_2$ (respectively, shaded in black and red).}
\label{fig-adirwiggles}
\end{figure}

\section{The $h$--principle for wrinkled embeddings}\label{sec-wrinkemb}

In this section, we discuss a generalization of Theorem \ref{thm-adiremb} by Eliashberg and Mishachev \cite{empaper}, in the case where we impose no directedness restrictions but weaken the approximating maps to be topological embeddings with \emph{mild} singularities. 

Let $\psi_\delta : \Bbb R \to \Bbb R^2$ be the map defined by,
$$\psi_\delta(t) := \left ( t^3 - 3\delta t, \int_0^t (s^2 - \delta)^2 ds \right).$$ 
The curve parametrized by $\psi_\delta$ (see, Figure \ref{fig-modelzig}) is a model case of an interpolating zig-zag discussed in Observation \ref{obs-intpzigzag}. We shall use these to construct higher-dimensional singularities. 

\begin{figure}[h]
\centering
\includegraphics[scale=0.2]{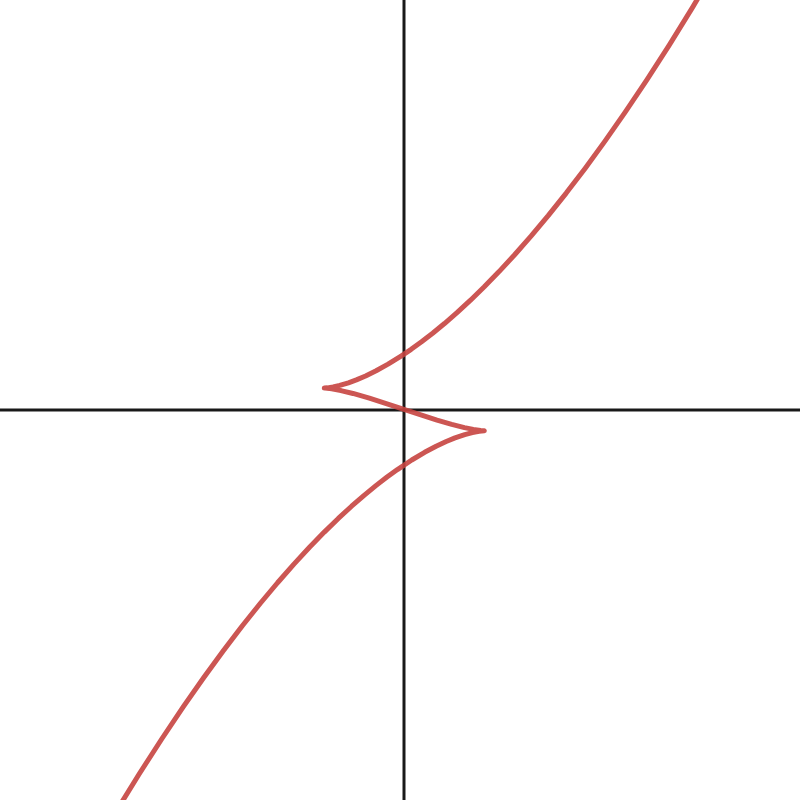}
\caption{Parametric plot of $\psi_\delta$, a model zig-zag.}
\label{fig-modelzig}
\end{figure}

\begin{definition}\label{def-wrinkledemb}Let $M^m, N^n$ be manifolds of indicated dimensions, and $m < n$. A smooth map $f : M \to N$ is a \emph{wrinkled embedding} if $f$ is a topological embedding that is a smooth embedding away from a collection of embedded spheres $\{S^{m-1}_j \subset V : j \in I\}$ such that,
\begin{enumerate}
\item Each sphere $S^{m-1}_j \subset M$ bounds an embedded $m$--ball $D^m_j \subset M$,
\item Near each sphere $S^{m-1}_j$, $f$ is equivalent to the map
\begin{gather*}
f : \mathrm{Op}_{\Bbb R^m}(S^{m-1}) \to \Bbb R^n\\
 f(x_1, \cdots, x_{m-1}, t) = (x_1, \cdots, x_{m-1}, \psi_{1-|x|^2}(t), 0, \cdots, 0)
\end{gather*}
where $x_1, \cdots, x_{m-1}, t$ are coordinates of $\Bbb R^m$, and $S^{m-1} = \{|x|^2 + t^2 = 1\}$.
\end{enumerate}
A family of maps $\{f_s : M \to N : s \in I^d\}$ is called a \emph{family of wrinkled embeddings} if there exists a codimension $1$ compact submanifold $\mathcal{E} \subset \mathrm{in}(I^d)$ such that for all $s \in I^d \setminus \mathcal{E}$, $f_s$ is a wrinkled embedding and if $\tau$ is a choice of coordinate on $I^d$ transverse to $\mathcal{E} = \{\tau = 0\}$, then the family $\{f_\tau : \tau \in (-\varepsilon, \varepsilon)\}$ is conjugate to an \emph{embryo singularity}, given by the family
\begin{gather*}f_\tau : \mathrm{Op}_{\Bbb R^m}(0) \to \Bbb R^n\\
f_\tau(x_1, \cdots, x_{m-1}, s) = (x_1, \cdots, x_{m-1}, \psi_{\tau-|x|^2}(s), 0, \cdots, 0)\end{gather*}
\end{definition}

\begin{remark}Let $f : M \to N$ be a wrinkled embedding, and $S^{m-1}_j \subset M$ be one of the singular spherical locus of $f$. The singularities of $f$ along $S^{m-1}_j$ comes in two flavours:
\begin{enumerate}
\item Along the equator $S^{m-2}_j := \{t = 0\}$ of $S^{m-1}_j$, $f$ has \emph{unfurled swallowtail singularities}.
\item Along the two hemispheres $S^{m-1}_j \setminus S^{m-1}_j$, $f$ has \emph{cuspidal singularities}.
\end{enumerate}
A picture of a two-dimensional model wrinkle $f : \Bbb R^2 \to \Bbb R^3$ is given below. 

\begin{figure}[h]
\centering
\includegraphics[scale=0.2]{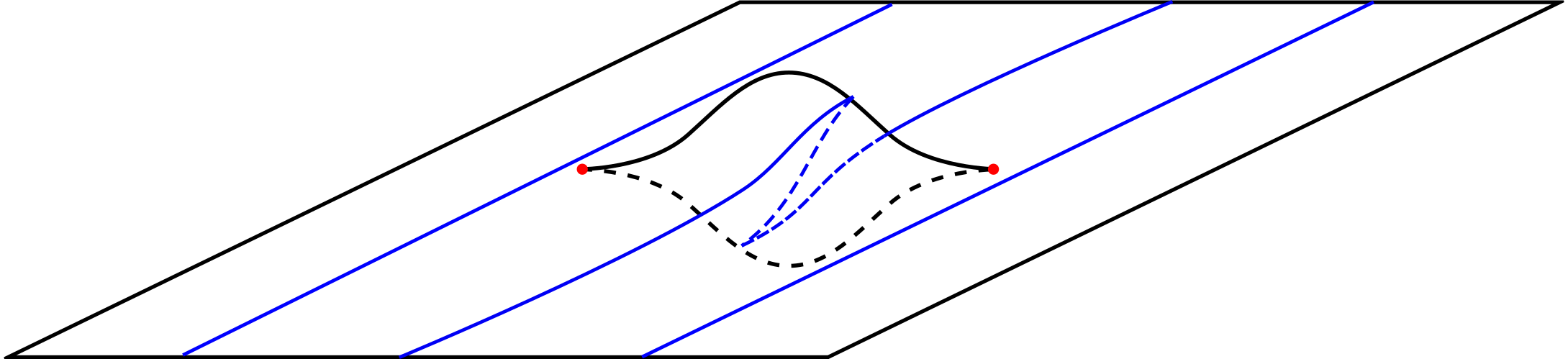}
\caption{Parametric plot of a two-dimensional model wrinkle. Cuspidal edges are drawn in black, and swallowtail points are drawn in red.}
\label{fig-wrinkle}
\end{figure}

Note that a wrinkled embedding $f$ is allowed to have further wrinkled singularities inside the disks $D^{m-1}_j$ bounded by the spherical loci $S^{m-1}_j$. That is to say, the collection of disks $\{D^{m-1}_j : j \in I\}$ may be nested. Allowing such ``wrinkles within wrinkes" (see, Figure \ref{fig-wriwrinkle}) will be crucial for the $h$--principle (Theorem \ref{thm-hwrink}) to hold.

\begin{figure}[h]
\centering
\includegraphics[scale=0.2]{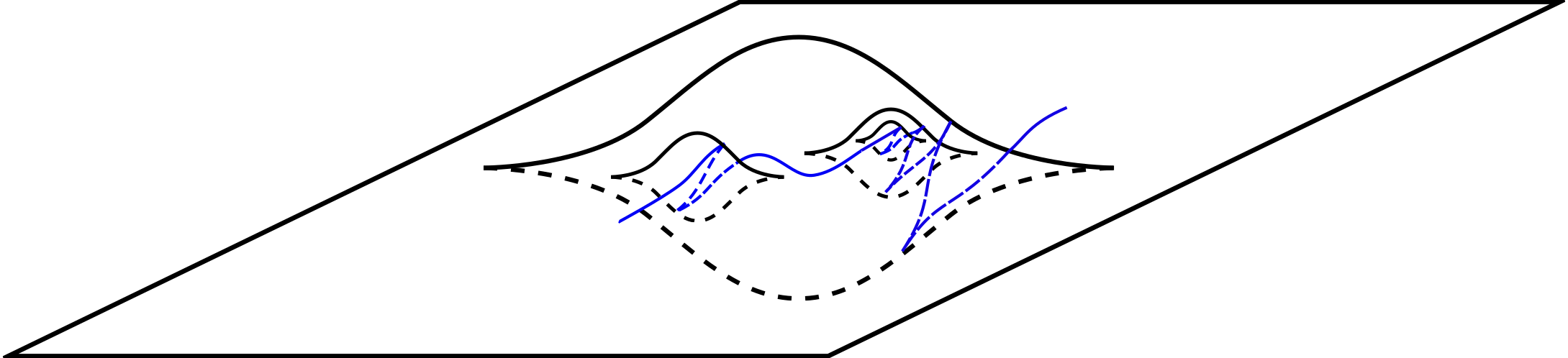}
\caption{Wrinkling within wrinkles.}
\label{fig-wriwrinkle}
\end{figure}

Embryo singularities are higher-dimensional analogues of wrinkles in the sense that while wrinkle singularities are observed in families of cuspidal embeddings where a pair of cusps are created or destroyed, embryo singularities appear when wrinkles themselves are created or destroyed.\end{remark}

If $f : M \to N$ is a wrinkled embedding, then $f_0 := f|(M \setminus \cup_j S^{m-1}_j)$ is a smooth embedding. Thus, we have a corresponding Gauss map,
$$G(df_0) : M \setminus \cup_j S^{m-1}_j \to \mathrm{Gr}_m(N).$$
Let $x \in M$ be a point at which the rank of $df_x$ drops, and let $y = f(x)$. Then $x$ is either a cuspidal singularity, or a swallowtail singularity. In the local model of such singularities, it can be checked that if $\{x_n\} \subset M$ is a sequence of points converging to $x$, then the tangent spaces to the image $f_*(T_{x_n} M)$ converge to a unique rank $m$ subspace of $T_y N$ which is independent of the chosen sequence. Therefore, we may uniquely extend $G(df_0)$ to a map,
$$G(df) : M \to \mathrm{Gr}_m(N),$$
even though $f$ is not a smooth embedding on all of $M$. 

The following theorem of Eliashberg and Mishachev \cite[Theorem 2.5.1]{empaper} establishes an $h$--principle for wrinkled embeddings.

\begin{theorem}[$C^0$--dense $h$--principle for wrinkled embeddings]\label{thm-hwrink}Let $\{G_t : M \to \mathrm{Gr}_m(N) : t \in I\}$ be a \emph{tangential homotopy} of a fixed embedding $f : M \to N$, that is,
\begin{enumerate}[label=(\arabic*), font=\normalfont]
\item For every $t \in I$, $G_t$ \emph{covers} $f : M \to N$, i.e., for all $p \in M$, $G_t(T_p M) \subset T_{f(p)} N$.
\item $G_0 = G(df)$. 
\end{enumerate}
Then, there is a homotopy of wrinkled embeddings $\{f_t : M \to N : t \in I\}$, such that $f_0 = f$, and $G(df_t)$ is $C^0$--close to $G_t$ for all $t \in I$. Moreover, the theorem is true parametrically and relatively.
\end{theorem}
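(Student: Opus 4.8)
The plan is to reduce the global statement to a purely local \emph{wrinkling lemma} around one model wrinkle, in the spirit of Eliashberg--Mishachev's wrinkling technique, and to establish that lemma by a direct computation with the model zig-zag $\psi_\delta$.

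\emph{Step 1 (reduction to $C^0$-small tangential homotopies).} First I would subdivide the parameter interval, choosing $0 = t_0 < t_1 < \cdots < t_N = 1$ fine enough that each $\{G_t : t \in [t_{i-1},t_i]\}$ is $C^0$-close to the constant homotopy at $G_{t_{i-1}}$, and then realize the stages one at a time: assuming inductively that $\{G_t : t \le t_{i-1}\}$ is realized by a homotopy of wrinkled embeddings ending at some wrinkled embedding $f_{i-1}$ with $G(df_{i-1})$ $C^0$-close to $G_{t_{i-1}}$, produce a homotopy of wrinkled embeddings from $f_{i-1}$ realizing $\{G_t : t \in [t_{i-1},t_i]\}$ up to small $C^0$-error in the Gauss map, and concatenate. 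This is the bootstrapping already used in the proof of Theorem \ref{thm-adiremb}; the only subtlety is that $f_{i-1}$ may itself be singular, but all further surgery will be supported away from its singular locus. So from now on $\{G_t\}$ is a $C^0$-small tangential homotopy of a (wrinkled) embedding $f$.

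\emph{Step 2 (localization over a handle decomposition).} Since the assertion is relative, $G_t$ already equals $G(df_t)$ near a closed set $A \subset M$. Fix a handle decomposition of $M$ rel $\mathrm{Op}(A)$ and induct on handles $H \cong D^k \times D^{m-k}$: given the homotopy of wrinkled embeddings already built over the previous handles and over $\mathrm{Op}(A)$, extend it over $H$. After inserting a cutoff in the handle coordinates, the extension problem over $H$ becomes the model problem: $M = \mathbb{R}^m$, $f$ the standard inclusion $\mathbb{R}^m \hookrightarrow \mathbb{R}^n$, and $G_t$ a $C^0$-small tangential homotopy of $f$ supported in a small ball about the core $D^k \times 0$, to be realized by a homotopy of wrinkled embeddings supported near that ball.

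\emph{Step 3 (the local wrinkling lemma).} In a chart of $\mathrm{Gr}_m(\mathbb{R}^n)$ about $G_0 = \mathbb{R}^m \times 0$, identified with $\mathrm{Hom}(\mathbb{R}^m,\mathbb{R}^{n-m})$, the target $G_1$ is a small matrix-valued function, which I would factor into finitely many \emph{elementary} rotations, each altering a single matrix entry, i.e.\ tilting the tangent plane inside a $2$-plane $\mathrm{span}(\partial_{x_a},\partial_{y_b})$. It then suffices to realize one elementary rotation over a ball by one wrinkle, and this is where the zig-zag enters: insert a wrinkle along a sphere $S^{m-1}$ enclosing the ball, modelled on $\psi_\delta$ in the $x_a$-variable pushed into the $y_b$-direction --- replacing $f$ near $S^{m-1}$ by $(x', x_a) \mapsto \bigl(x',\, \lambda^{-1}\psi_{\delta - |x'|^2}(\lambda x_a),\, 0, \dots\bigr)$ in the $(x_a,y_b)$-plane --- with $\delta$ increasing from $0$ (an embryo, so the homotopy starts at $f$) to its final value and $\lambda$ a scaling keeping the new sheet $C^0$-close to $f$. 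The defining feature of the zig-zag, read off from $\psi_\delta'(t) = \bigl(3(t^2-\delta),\,(t^2-\delta)^2\bigr)$, is that along the extra sheet the tangent line sweeps \emph{monotonically through a prescribed arc} inside $\mathrm{span}(\partial_{x_a},\partial_{y_b})$ while the map stays $C^0$-small, and that it matches $G_0$ along the outer edge and along the cuspidal edges of the wrinkle --- which is what lets neighbouring pieces agree; by rescaling the second coordinate of $\psi_\delta$ (keeping $\delta$ small) the arc can be made to contain the needed rotation. Concatenating and, where needed, \emph{nesting} such elementary wrinkles --- permitted, and indeed forced, by Definition \ref{def-wrinkledemb} --- produces over $H$ a homotopy of wrinkled embeddings realizing $G_t$ to within arbitrarily small $C^0$-error in the Gauss map, completing the nonparametric case.

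\emph{Step 4 (parametric, relative, and the main difficulty).} For a $d$-parameter family the same induction runs with the parameter carried along; the single new feature is that wrinkles must be born and die as the parameter moves, which is precisely the role of the embryo singularities in Definition \ref{def-wrinkledemb}, the birth--death locus being the codimension-$1$ submanifold $\mathcal{E} \subset \mathrm{int}(I^d)$; the relative clause is automatic, as every modification is supported off $A$. The hard part is Step 3: one must check that the Gauss map of the zig-zag wrinkle really does sweep the \emph{correct} arc of $m$-planes (not a sub-arc, not an arc in the wrong $2$-plane), and must arrange the finitely many --- possibly deeply nested --- elementary wrinkles so that their combined Gauss map is $C^0$-close to $G_t$ \emph{over the whole handle}, not merely along the wrinkle spheres, all while keeping the underlying maps $C^0$-close to $f$ (it is the ``large Gauss map, small displacement'' nature of the zig-zag, impossible for honest embeddings, that makes this feasible); the parametric bookkeeping with embryos is routine but long, and for the complete argument we refer to \cite{empaper}.
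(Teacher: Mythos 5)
The paper itself offers no proof of Theorem \ref{thm-hwrink}; it is quoted from Eliashberg--Mishachev \cite{empaper}. So the only question is whether your outline stands on its own, and it does not: the problem sits exactly where you locate the difficulty, in Step 3, and it is not just an omitted verification but a misstatement of the mechanism. For the model zig-zag, $\psi_\delta'(t) = \bigl(3(t^2-\delta),\,(t^2-\delta)^2\bigr)$ is proportional to $(3,\,t^2-\delta)$, so the unoriented tangent line has slope $(t^2-\delta)/3$: on the fold region it stays close to the fixed horizontal direction (within $O(\delta)$), and at the cusps the limiting line is horizontal. The zig-zag's defining feature is therefore the opposite of what you assert: its Gauss map barely moves while the curve doubles back -- the cusp buys direction reversal with essentially no rotation of the tangent plane. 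If you enlarge the swept arc by rescaling the second coordinate of $\psi_\delta$, then on the extra sheets the Gauss map of $f_t$ runs through that whole arc inside a small ball on which the prescribed $G_t$ is essentially constant, so $G(df_t)$ is not $C^0$-close to $G_t$: the conclusion you are proving fails for your own local model. In the genuine argument the roles are reversed: after the subdivision of your Step 1 each stage is a small rotation, and the wrinkle is inserted in coordinates adapted to the \emph{rotated} planes, so that every sheet, including the cuspidal edges, has tangent plane $\varepsilon$-close to the prescribed one -- this is the ``line-field as guide'' picture of Observation \ref{obs-intpzigzag} -- while large total rotations are achieved only by iterating and nesting, never by a single wide-arc wrinkle.

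There are two further concrete gaps. First, inserting one wrinkle along a sphere enclosing the ball leaves $f$, hence its Gauss map, unchanged on the interior of the ball away from the fold, so the elementary rotation is simply not realized there; one must in addition tilt the inner disk (or corrugate the ball with many small wrinkles) and then check Gauss-map closeness along the fold sheets and the compatibility of nested wrinkles -- and this verification, together with the parametric birth/death bookkeeping at embryos, is precisely the content of \cite{empaper}, to which you defer. Second, in Step 1 the later stages start from a \emph{wrinkled} embedding, whose Gauss map near its singular locus is pinned down by the local model of Definition \ref{def-wrinkledemb}; since $G_t$ need not be constant there, the claim that all further modifications can be ``supported away from its singular locus'' needs an argument, not a remark. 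In summary: the global skeleton (subdivision, handle induction, nesting, embryos, relative clause) is the right one, but the key local lemma is stated in a form that would violate the required $C^0$-closeness of Gauss maps, and the remaining core of the proof is outsourced to the reference.
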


\section{Formal Legendrian embeddings and their fronts}\label{sec-flegemb}

In this and subsequent sections, we begin the process of addressing Question \ref{que-htpylegemb}. Taking inspiration from ideas in Section \ref{sec-dirimmemb} and Corollary \ref{cor-hadiremb} in particular, we give the definition of a formal \emph{Legendrian} embedding, introduced by Murphy \cite[Definition 1.1]{mur}. To fix notation, let $(Y^{2n+1}, \xi)$ be a contact manifold and $\Lambda^n$ be a connected manifold.

\begin{definition}[Formal and holonomic Legendrian embeddings]\label{def-flegemb}A \emph{formal Legendrian embedding} is a tuple $(\{F_s : s \in I\}, f)$, more succintly denoted as $(F_s, f)$, where 
$$\{F_s : T\Lambda \to TY : s \in [0, 1]\}$$ 
is a $1$--parameter family of fiberwise monomorphisms covering a smooth embedding $f : \Lambda \to Y$ such that $F_0 = df$, and  $F_1(T\Lambda) \subseteq \xi$. If $F_s = df$ for all $s \in I$, we call the tuple a \emph{holonomic Legendrian embedding}. In this case, $f$ itself is a Legendrian embedding.\end{definition}

Recall from Example \ref{eg-1jet} and \ref{rmk-1jetcont} that for any manifold $Q^n$, the $1$--jet bundle $J^1 Q \cong \Bbb R \times T^* Q$ admits a natural contact structure $\xi_0 = \ker(dz - \sum_i p^i dq_i)$. We had also defined a map 
$$\pi_{\rm{front}} : J^1 Q \to Q \times \Bbb R.$$
In this particular case, it is given by $\pi_{\rm{front}}(q_1, \cdots, q_n, p_1, \cdots, p_n, z) = (q_1, \cdots, q_n, z)$. Thus, the front projection map considered in Definition \ref{def-legprojknot} is a special case of $\pi_{\rm{front}}$ for $Q = \Bbb R$. We shall call $\pi_{\rm{front}} : J^1 Q \to Q$ the \emph{front projection} from now on.

If $\Lambda \subset (Y, \xi)$ is a (holonomic) Legendrian submanifold, then by the Weinstein tubular neighborhood theorem (Theorem \ref{thm-weinstein}), we know there is a tubular neighborhood of $\Lambda$ contactomorphic to $J^1 \Lambda$. Thus, the front projection $\pi_{\rm{front}} : J^1 \Lambda \to \Lambda \times \Bbb R$ defines a projection map on such a tubular neighborhood. Legendrian submanifolds of $(Y, \xi)$ which are nearby $\Lambda$ may thus be projected via $\pi_{\rm{front}}$ to $\Lambda \times \Bbb R$ and studied using techniques from Section \ref{sec-legknots}.

The main theorem of this section is a construction of a front projection for formal Legendrian embeddings (see, \cite[Proposition 2.1]{mur}).

\begin{theorem}[Graphicality of formal Legendrian embeddings]\label{thm-graphfLeg}Let $(F_s, f) : (T\Lambda, \Lambda) \to (TY, Y)$ be a formal Legendrian embedding. There exists a smooth isotopy of the base embedding $f$ to an embedding $\widetilde{f} : \Lambda \to Y$, an open subset $U \subset Y$ containing $\widetilde{f}(\Lambda)$ and a contactomorphism onto image $\varphi : U \to J^1 \Lambda$ such that:
$$\pi_{\rm{front}} \circ \varphi \circ \widetilde{f} = \mathrm{id}_{\Lambda}.$$
We summarize this condition by saying $\varphi(\widetilde{f}(\Lambda))$ is \emph{graphical} in $J^1\Lambda$.\end{theorem}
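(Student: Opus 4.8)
The plan is to reduce the statement to a combination of the Weinstein tubular neighborhood theorem (Theorem~\ref{thm-weinstein}), the $h$--principle for Legendrian immersions (Theorem~\ref{thm-hprincleg}), and the $h$--principle for wrinkled embeddings (Theorem~\ref{thm-hwrink}), together with a local ``graphical pushdown'' argument. The key observation is that being graphical over $\Lambda$ in $J^1\Lambda$ is equivalent, via $\pi_{\rm{front}}$, to the composite $\pi_{\rm{front}}\circ\varphi\circ\widetilde f$ being a diffeomorphism of $\Lambda$, which in turn is a \emph{$C^0$--open, Gauss-map-controlled} condition on the embedding: namely that $\widetilde f(\Lambda)$ is transverse to the fibers of $\pi_{\rm{front}}$ at every point. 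So the whole statement is really about producing, after a smooth isotopy, an embedding whose tangent planes avoid the ``vertical'' Grassmannian stratum inside the model $J^1\Lambda$.

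First I would set up the ambient model. Starting from the formal Legendrian embedding $(F_s,f)$, the terminal monomorphism $F_1$ has image in $\xi$ and is fiberwise Lagrangian (after a small preliminary homotopy one may assume this, since the space of Lagrangian monomorphisms into $\xi$ is the relevant target; this is implicit in Definition~\ref{def-flegemb} together with the formal Legendrian immersion formalism of Definition~\ref{def-flegimm}). Thus $(F_1,f)$ is a formal Legendrian immersion. Apply Theorem~\ref{thm-hprincleg} to homotope $(F_1,f)$ through formal Legendrian immersions to a genuine Legendrian immersion $(dg,g)$ with $g$ $C^0$--close to $f$; by general position (as $2n<2n+1$) $g$ is an embedding, and $g(\Lambda)$ is an honest Legendrian submanifold. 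By Theorem~\ref{thm-weinstein}(2) there is an open set $U_0\supset g(\Lambda)$ and a contactomorphism $\varphi_0:U_0\to \mathrm{Op}(0_\Lambda)\subset J^1\Lambda$ with $\varphi_0(g(\Lambda))=0_\Lambda$, which is tautologically graphical. The issue is that $g$ is obtained from $f$ only through a $C^0$--small homotopy, \emph{not} a smooth isotopy, so $g$ need not be isotopic to $f$ through embeddings — this is exactly the gap flagged in the discussion after Theorem~\ref{thm-hprincleg}.

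To bridge that gap I would use the wrinkled-embedding $h$--principle. The homotopy $\{F_{s}\}$ from $df$ to $F_1$, followed by the homotopy of monomorphisms produced inside Theorem~\ref{thm-hprincleg}, gives a path in the space of monomorphisms $T\Lambda\to TY$ from $df$ (the Gauss map of $f$) to $dg$ transported back along... — more cleanly: compose everything into a tangential homotopy $\{G_t:T\Lambda\to TY\}$ covering a single fixed embedding. Concretely, first use the smooth isotopy $f\simeq g$ \emph{as topological maps} to transport, obtaining a tangential homotopy of the fixed embedding $f$ (or of $g$) whose endpoint Gauss map equals that of the graphical Legendrian $g(\Lambda)$ inside the Weinstein chart. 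Feeding this tangential homotopy into Theorem~\ref{thm-hwrink} yields a homotopy of \emph{wrinkled} embeddings $\{f_t\}$ with $f_0=f$ (resp.\ $g$) and $G(df_1)$ $C^0$--close to the graphical Gauss map. Since ``transverse to the $\pi_{\rm{front}}$--fibers'' is a $C^0$--open condition on the Gauss map and holds for the model graphical Legendrian with room to spare, $f_1$ — being $C^0$--close in the Gauss map — is itself graphical over $\Lambda$ once pushed into the Weinstein chart $J^1\Lambda$; pulling back the standard front projection gives the desired $\varphi$. The final step is to remove the wrinkles: a wrinkled embedding whose Gauss map is fiberwise transverse has cuspidal/swallowtail loci that project to ordinary wrinkles of the \emph{front} in $\Lambda\times\Bbb R$, and these can be smoothed in $C^0$ by a small perturbation of the front that keeps it a graph over $\Lambda$ — i.e.\ one first flattens the wrinkles in the $\Lambda$--direction and then the Legendrian lift is automatically a smooth embedding graphical over $\Lambda$. (Here one must be mildly careful: the smoothing is a genuine smooth isotopy of $\widetilde f$, not of the original $f$; but since $f_t$ was already a homotopy through topological embeddings issuing from a smooth isotopy, concatenating gives the required smooth isotopy from $f$ — up to $C^0$ — and one re-runs Weinstein around the final smooth Legendrian.)

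The main obstacle, and the step deserving the most care, is exactly this interface between Theorems~\ref{thm-hprincleg} and~\ref{thm-hwrink}: Theorem~\ref{thm-hprincleg} only gives a $C^0$--homotopy (not a smooth isotopy) to a holonomic Legendrian, so one cannot directly claim $f$ is smoothly isotopic to a graphical Legendrian. The wrinkled-embedding theorem is precisely the tool that converts the formal/tangential data into an actual \emph{embedding} isotopy at the cost of mild singularities, and then one must argue that those singularities are invisible to the front projection (they become wrinkles of the front) and can be absorbed while preserving graphicality. Verifying that the cuspidal and unfurled-swallowtail model maps of Definition~\ref{def-wrinkledemb}, once their Gauss maps are fiber-transverse, project to standard front zig-zags of the type in Observation~\ref{obs-intpzigzag}, and that these can be $C^0$--straightened within the class of graphs over $\Lambda$, is the technical heart; everything else is assembling the known $h$--principles and applying Weinstein's theorem.
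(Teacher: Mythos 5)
Your route is genuinely different from the paper's, and it has a real gap at its crux. The paper never leaves the class of smooth embeddings: it first isotopes $f$ through honest embeddings to an $\varepsilon$--Legendrian embedding using the open, ample relation $\mathcal{R}_{\varepsilon\rm{Leg}}$ (Proposition \ref{prop-epsLeg} together with convex integration in the directed-embedding form of Theorem \ref{thm-adiremb}), and then it does \emph{not} look for a pre-existing $J^1\Lambda$--chart in which this embedding happens to be graphical; instead it \emph{builds} the chart around the embedding, by microextending to a formal Legendrian submersion (Lemma \ref{lem-epsLagnc}) and holonomically approximating it using local integrability, microflexibility (Proposition \ref{prop-slegmflex}) and $\mathrm{Cont}_0$--invariance of $\mathcal{R}_{\rm{sLeg}}$, and finally reading off $\varphi$ from the resulting Legendrian submersion. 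Your proposal has no substitute for this chart-construction step: you borrow the Weinstein neighborhood of an auxiliary Legendrian $g(\Lambda)$ produced by Theorem \ref{thm-hprincleg}, but the thickness of that neighborhood depends on $g$, which depends on the approximation scale you chose first, so there is no argument that $f(\Lambda)$ and your subsequent $C^0$--small homotopies stay inside it; and the detour through $g$ is in any case unnecessary, since the tangential homotopy you feed into Theorem \ref{thm-hwrink} is already supplied by $(F_s,f)$.

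The more serious gap is the conversion of the wrinkled output into the statement's conclusion. Theorem \ref{thm-hwrink} gives a family of \emph{wrinkled} embeddings $\{f_t\}$: the intermediate maps and the endpoint $f_1$ are singular along spheres, so this family is not a smooth isotopy and $f_1$ cannot serve as $\widetilde f$. Your fix --- smooth the wrinkles while preserving graphicality --- is exactly the delicate point and is asserted rather than proved: desingularizing a cuspidal/swallowtail image inside $Y$ by a $C^0$--small modification forces the tangent planes near the former cusp locus to turn through roughly a half-rotation, and nothing in your argument prevents them from crossing the fibers of $J^1\Lambda\to\Lambda$ there (in the one-dimensional picture this is precisely where rounding a zig-zag creates vertical tangencies). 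Moreover, at this point you switch to ``flattening the front and taking the Legendrian lift,'' which replaces the submanifold by a different one: the lift of a smoothed front is neither the image of $f_1$ nor obviously smoothly isotopic to $f$, and no smooth isotopy from $f$ is ever actually produced anywhere in your chain --- the immersion $h$--principle gives only a $C^0$--homotopy, and the wrinkled $h$--principle gives only a homotopy through singular maps. This is the reason the paper's proof routes through convex integration for an open ample relation (which does give an isotopy through genuine embeddings) and then adapts the front projection to the embedding, rather than adapting the embedding to a fixed front projection.
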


To prove Theorem \ref{thm-graphfLeg}, we introduce the following notion.

\begin{definition}[Legendrian submersions]\label{def-legsub}Let $(Y^{2n+1}, \xi)$ be a contact manifold, and $B^{n+1}$ be a smooth manifold. A smooth submersion $\pi : Y \to B$ is called a \emph{Legendrian submersion} if $\ker d\pi \subseteq \xi$. Equivalently, the fibers of $\pi$ are Legendrian submanifolds of $(Y, \xi)$.\end{definition}

\begin{example}\label{eg-frprlegsub}The front projection $\pi_{\rm{front}} : J^1 \Lambda \to \Lambda \times \Bbb R$ is an example of a Legendrian submersion, since the fibers of $\pi_{\rm{front}}$ are given by 
$$\{z = \mathrm{const}., q_i = \mathrm{const}.\},$$
along which the contact form $dz - \sum_i p_i dq_i$ vanishes. \end{example}

\begin{definition}\label{def-legsubrel}Let $(Y^{2n+1}, \xi)$ be a contact manifold, and $B^{n+1}$ be a smooth manifold. We define the \emph{differential relation of Legendrian submanifolds} $\mathcal{R}_{\rm{sLeg}} \subset J^1(Y, B)$, consisting of $1$--jets of maps $f : Y \to B$ which are Legendrian submersions.\end{definition}

It is clear from definition that $\mathcal{R}_{\rm{sLeg}}$ is a locally integrable differential relation (Definition \ref{def-locintrel}). We will next show that it is also microflexible (Definition \ref{def-micflexrel}). We begin with some lemmas.

\begin{lemma}\label{lem-microimhtpysub} Let $M, N$ be smooth manifolds and $\{f_s : M \to N : s \in I\}$ be a homotopy of submersions onto image. Suppose there exists a compact subset $K \subset M$ such that $f_s = f_0$ on $M \setminus K$. Then, there exists an $\varepsilon > 0$ such that for all $s \in [0, \varepsilon]$, $f_s(M) = f_0(M)$. \end{lemma}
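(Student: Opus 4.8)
The statement is a local stability result for the "image" of a family of submersions-onto-image that is compactly supported in its variation. The plan is to work entirely locally near the compact set $K$, reduce to a Euclidean model, and use the implicit function theorem together with a compactness argument to produce the uniform $\varepsilon$.

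First I would observe that since $f_s = f_0$ outside the compact set $K$, the images certainly agree there: $f_s(M \setminus K) = f_0(M \setminus K)$, so the only issue is to control $f_s$ on $\mathrm{Op}(K)$. It suffices to show that for small $s$, every point $f_0(x)$ with $x \in K$ lies in $f_s(M)$ and conversely $f_s(x) \in f_0(M)$; combined with the agreement off $K$ this gives $f_s(M) = f_0(M)$. For the forward inclusion, fix $x_0 \in K$ and set $y_0 = f_0(x_0)$. Because $f_0$ is a submersion onto its image near $x_0$, in suitable coordinates $f_0$ looks like the projection $\Bbb R^m = \Bbb R^{m-n}\times \Bbb R^n \to \Bbb R^n$ onto a slice of the image submanifold; the homotopy $\{f_s\}$ perturbs this, and since being a submersion is an open condition and the homotopy is continuous in $s$, for $s$ small the map $f_s$ restricted to a small transversal disk through $x_0$ remains a submersion with image an open neighborhood of a point $y_s$ close to $y_0$. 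A direct argument via the inverse function theorem (or the parametric implicit function theorem applied to the equation $f_s(x) = y$) shows there is a neighborhood $W_{x_0}$ of $x_0$ and $\varepsilon_{x_0} > 0$ such that $f_0(W_{x_0}) \subseteq f_s(W_{x_0})$ for all $s \in [0, \varepsilon_{x_0}]$, and symmetrically $f_s(W_{x_0}) \subseteq f_0(\widetilde W_{x_0})$ for a slightly larger neighborhood. Then cover $K$ by finitely many such $W_{x_0}$, take $\varepsilon$ to be the minimum of the corresponding $\varepsilon_{x_0}$, and conclude $f_s(M) = f_0(M)$ for $s \le \varepsilon$.

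The main obstacle — and the point that needs care rather than a one-line appeal to openness — is that "submersion onto image" is genuinely weaker than "submersion onto an open set": the image $f_0(M)$ is an immersed submanifold of possibly lower dimension, and one must be sure the perturbation $f_s$ does not, for arbitrarily small $s$, push part of the image off the original submanifold (or create new sheets). The resolution is that near $K$ one can choose coordinates in the target adapting to $f_0(M)$, so that $f_0$ has the form $(x', x'') \mapsto (\phi(x', x''), 0)$ with $\phi$ a submersion in the first group of target coordinates; the homotopy $f_s$ then has components $(\phi_s, \rho_s)$ with $\rho_0 \equiv 0$, and one needs the full-rank differential of $f_s$ at each point of $K$ — which persists for small $s$ by compactness of $K$ and continuity of $s \mapsto f_s$ in $C^1$ — to force the image of $f_s$ locally to remain exactly the zero set of the appropriate functions defining $f_0(M)$. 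Once this local normal form is in hand, the finite-cover argument is routine. I would present the proof as: (i) reduce to the neighborhood of $K$; (ii) establish the local inclusion near a single point via the implicit function theorem using the persistent full-rank condition; (iii) finish by compactness.
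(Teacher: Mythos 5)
Your overall skeleton (prove the two inclusions $f_0(K)\subset f_s(M)$ and $f_s(K)\subset f_0(M)$, localize near points of $K$, apply the implicit function theorem, and finish with a finite cover and a minimum of the $\varepsilon_{x_0}$) is sound, but the place where you locate the "main obstacle" is where the proposal actually breaks. You allow $f_0(M)$ to be an immersed submanifold of positive codimension and then claim that full rank of $df_s$ along $K$ forces the image of $f_s$ to stay inside the zero set of the functions cutting out $f_0(M)$. Nothing forces this, and under that reading the lemma itself is false: take $M=\Bbb R$, $N=\Bbb R^2$, $f_0(x)=(x,0)$ and $f_s(x)=(x,s\chi(x))$ with $\chi$ a bump function supported in $K$. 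Each $f_s$ is a diffeomorphism onto its image (in particular a submersion onto its image), $f_s=f_0$ off $K$, and yet $f_s(M)\neq f_0(M)$ for every $s>0$; in your adapted coordinates one has $\rho_s=s\chi\not\equiv 0$ while $df_s$ has maximal rank everywhere, so the claimed forcing simply does not follow and cannot be repaired.

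The hypothesis is meant (and is used, in Proposition \ref{prop-slegmflex}, for Legendrian submersions $f_s:U\to B$ with $df_s$ surjective onto $TB$) in the stronger sense that each $f_s$ is a genuine submersion into $N$; then each $f_s$ is an open map and $f_0(M)$ is an open subset of $N$, and the difficulty you worried about evaporates. With that reading, the inclusion $f_s(K)\subset f_0(M)$ for small $s$ is immediate from openness of $f_0(M)$, compactness of $K$, and continuity of the homotopy, and the only substantive point is $f_0(K)\subset f_s(M)$, which your implicit-function-theorem-on-a-transversal argument does handle (note it needs the homotopy to be at least $C^1$ jointly in $(x,s)$, a regularity the paper also uses implicitly). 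The paper treats this step differently: it forms the movie map $H(x,s)=f_s(x)$, observes that the connected component of $H^{-1}(y)$ through $(x,0)$ has dimension one more than the fiber $f_0^{-1}(y)$ and hence must project onto an interval $[0,\delta(y)]\subset I$, and then extracts a uniform $\varepsilon$ by compactness of $f_0(K)$. Either route works once the hypothesis is read correctly, so the fix to your write-up is to delete the positive-codimension discussion (or restate the hypothesis as "submersions into $N$"), not to try to salvage the forcing claim.
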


\begin{proof}For convenience of notation, let us denote $M' := f_0(M)$ and $K' := f_0(K)$. Let $H : M \times I \to N$ define the homotopy map, given by $H(x, s) = f_s(x)$. As $f_0$ is a submersion, and therefore an open map, the image $M' = f_0(M) \subset N$ is open. Thus, $H^{-1}(M') \subset M \times I$ is an open subset, and $K \times \{0\} \subset H^{-1}(M')$. By compactness of $K$, we may find $\varepsilon_1 > 0$ such that $K \times [0, \varepsilon_1] \subset H^{-1}(f_0(M))$. On the other hand, $(M \setminus K) \times I \subset H^{-1}(U')$ as $H(x, s) = f_s(x) = f_0(x) \in M'$ for all $x \in M \setminus K$. Therefore, $M \times [0, \varepsilon_1] \subset H^{-1}(M')$. Thus, $f_s(M) \subset M'$ for all $s \in [0, \varepsilon_1]$.

For the reverse inclusion, observe that for any $y = f_0(x) \in M'$, $H^{-1}(y) \subset M \times I$ is a submanifold of dimension $n+1$. A connected component of $H^{-1}(y)$ containing $x$ must project under $M \times I \to I$ to a connected subset of $I$ containing $0$. If the image under projection is only $\{0\}$, then $H^{-1}(x) = f_0^{-1}(x)$ which is a contradiction as $f_0^{-1}(x)$ is a manifold of dimension $n$. Therefore, the projection must contain $[0, \delta(x)]$ for some $\delta(x) > 0$ depending on $x \in U'$. Note that this says $U' \subset f_s(U)$ for all $s \in [0, \delta(x)]$. We may choose $\delta : U' \to (0, 1]$ to be a continuous function, and hence it must admit a positive minimum on the compact subset $K' \subset U'$. Let us denote the minimum value by $\varepsilon_2 > 0$. Then $U' \subset f_s(U')$ for all $s \in [0, \varepsilon_2]$. Taking $\varepsilon := \mathrm{min}\{\varepsilon_1, \varepsilon_2\}$ proves the claim.\end{proof}

\begin{lemma}\label{lem-loctrivlegs}Any Legendrian submersion is contactomorphic to the Euclidean front projection $\pi_{\rm{front}} : J^1\Bbb R^n \to \Bbb R^n \times \Bbb R$ (cf. Example \ref{eg-frprlegsub}) in a fiber-preserving fashion. \end{lemma}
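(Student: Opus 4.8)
The plan is to use Moser's trick (Lemma \ref{lem-moser}) in the spirit of the proof of Darboux's theorem (Theorem \ref{thm-darboux}) and Weinstein's theorem (Theorem \ref{thm-weinstein}), but now adapted to respect the fibration structure of a Legendrian submersion. Let $\pi : (Y^{2n+1}, \xi) \to B^{n+1}$ be a Legendrian submersion and fix $p \in Y$, $q = \pi(p) \in B$. First I would choose adapted coordinates: since $\pi$ is a submersion, there are coordinates $(u_1, \dots, u_{n+1})$ on $\mathrm{Op}_B(q)$ and coordinates on $\mathrm{Op}_Y(p)$ in which $\pi$ becomes the standard projection $\Bbb R^{2n+1} \to \Bbb R^{n+1}$ onto the last $n+1$ coordinates. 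The fiber $\pi^{-1}(q)$ through $p$ is an $n$-dimensional Legendrian submanifold, and $\ker d\pi \subset \xi$ everywhere by hypothesis.

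The key step is to build, in these coordinates, a $1$-form $\alpha_0$ with $\ker\alpha_0 = \xi$ at $p$ that matches the model front-projection contact form $\alpha_{\mathrm{std}} = dz - \sum p_i dq_i$ in a $\pi$-compatible way at $p$, and then interpolate. Concretely: on $J^1\Bbb R^n = \Bbb R^{2n+1}_{(q,p,z)}$ the front projection $\pi_{\mathrm{front}}$ sends $(q,p,z) \mapsto (q,z)$, its vertical distribution $\ker d\pi_{\mathrm{front}}$ is $\mathrm{span}(\partial_{p_1},\dots,\partial_{p_n})$, and this is contained in $\xi_{\mathrm{std}}$. I would first arrange linear coordinates on $T_pY$ so that $\ker d\pi_p$ is the span of the first $n$ coordinate vectors, $\xi_p = \ker(\alpha_{\mathrm{std}})_p$, and $(d\alpha)_p$ restricted to $\xi_p$ agrees with $(d\alpha_{\mathrm{std}})_p|_{\xi_{\mathrm{std}}}$; this is possible because $(\xi_p, \mathrm{CS}(\xi)_p)$ is a symplectic vector space containing the Lagrangian $\ker d\pi_p$, and by Lemma \ref{lem-lagcomp} we may complete it to a Lagrangian splitting, giving a linear symplectic basis in the standard form. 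Extending these to coordinates on $\mathrm{Op}_Y(p)$ and letting $\alpha_{\mathrm{std}}$ be the model contact form in them, both $\alpha_t := t\alpha + (1-t)\alpha_{\mathrm{std}}$ and $d\alpha_t$ agree with the model at $p$, so (shrinking the neighborhood) $\alpha_t$ is contact for all $t$.

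The subtlety — and the main obstacle — is that the Moser isotopy $\psi_t$ produced by Lemma \ref{lem-moser}, part (2), must be made \emph{fiber-preserving}: we need $\psi_t$ to send fibers of $\pi$ into fibers of $\pi_{\mathrm{front}}$, i.e. $\pi_{\mathrm{front}} \circ \psi_1 = \pi$ up to the coordinate identification on the base. The point is that the Moser vector field $V_t$ is chosen inside $\xi_t$ by the condition $i_{V_t}d\alpha_t = (d\alpha_t/dt)|_{\xi_t}$; since $d\alpha_t/dt = d(\alpha - \alpha_{\mathrm{std}})$ vanishes on the vertical distribution $\ker d\pi$ when that distribution is held fixed — which we can arrange by choosing the homotopy $\alpha_t$ so that its restriction to $\ker d\pi$ is constant (both $\alpha$ and $\alpha_{\mathrm{std}}$ vanish on the respective vertical distributions, and after the coordinate normalization these distributions coincide) — the vector field $V_t$ is tangent to the fibers, or at least its flow preserves the fibration. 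I would verify this tangency claim directly in coordinates: with $\ker d\pi = \mathrm{span}(\partial_{u_1},\dots,\partial_{u_n})$ and $\alpha_t|_{\ker d\pi} = 0$ for all $t$, differentiating $\psi_t^*\alpha_t = e^{f_t}\alpha_0$ and tracking which components of $V_t$ are forced shows $d\pi(V_t) = 0$, hence $\psi_t$ is fiber-preserving. Feeding this through, $\varphi := \psi_1$ (post-composed with the coordinate chart to $J^1\Bbb R^n$) is the desired fiber-preserving contactomorphism onto image. The parametric/relative refinements, if needed elsewhere, follow from the parametric form of Moser's trick with no new ideas.
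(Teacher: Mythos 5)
Your proposal is correct, but it proves the lemma by a genuinely different route than the paper. You run a fibered Darboux argument: normalize $\alpha$ and $d\alpha$ at a point in coordinates adapted to the submersion (using Lemma \ref{lem-lagcomp} to split $\xi_p$ along the Lagrangian $\ker d\pi_p$), interpolate $\alpha_t = t\alpha + (1-t)\alpha_{\rm{std}}$, and observe that the Moser field stays tangent to the fibers. The paper instead constructs the adapted coordinates directly, with no Moser step: it takes fiber coordinates $(x^i)$, conjugate directions via a compatible almost complex structure, and a Reeb coordinate $z$ (Definition \ref{def-reeb}), so that $\alpha = dz - \sum_i h_i\, dx^i$ for functions $h_i$; the contact condition makes $\det(\partial h_i/\partial y_j)$ nonvanishing, so $(x, h, z)$ are fiber-preserving coordinates in which $\alpha$ is literally the standard form (this inverse-function-theorem normalization is spelled out where the lemma is used, in Proposition \ref{prop-slegmflex} and in the last step of Theorem \ref{thm-graphfLeg}). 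The paper's route is shorter and produces the normal form in one pass; yours buys a conceptually uniform treatment parallel to Theorems \ref{thm-darboux}--\ref{thm-weinstein} and extends to parametric statements for free. Two points in your write-up deserve tightening. First, the crux--tangency of $V_t$ to the fibers--is cleaner than ``tracking components in coordinates'': the vertical distribution $D = \ker d\pi$ satisfies $\alpha_t|_D = 0$ and $d\alpha_t|_{D\times D} = 0$ everywhere on the chart (both $\alpha$ and $\alpha_{\rm{std}}$ vanish on the common vertical distribution, and fibers are integral), so $D$ is a $d\alpha_t$-Lagrangian subbundle of $\xi_t$; since $\dot\alpha_t = \alpha - \alpha_{\rm{std}}$ also vanishes on $D$, nondegeneracy of $d\alpha_t|_{\xi_t}$ forces the Moser solution $V_t$ of $i_{V_t}d\alpha_t|_{\xi_t} = \dot\alpha_t|_{\xi_t}$ to lie in $D^{\perp d\alpha_t} = D$, whence $\pi\circ\psi_t = \pi$. (Note you must use the explicit construction inside Lemma \ref{lem-moser}, not the lemma as a black box, which you acknowledge.) Second, the pointwise normalization must be achieved by fibration-adapted coordinate changes; this is fine, since a fiberwise shear of the $p$-coordinates realizes any Lagrangian complement of $\ker d\pi_p$ and a dual linear change of base and fiber coordinates puts the pairing in standard form, but it is worth saying.
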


\begin{proof}Let $f : (Y^{2n+1}, \xi) \to B^{n+1}$ be a Legendrian submersion. We choose an almost-complex structure $J$ on the contact distribution $\xi \subset TY$ compatible with the fiberwise symplectic form $\mathrm{CS}(\xi)$. Let us choose local coordinates $(x^1, \cdots, x^n)$ for the fibers of $f$. We define $y^i := J \circ x^i$ for $1 \leq i \leq n$. As $\ker df \subset \xi$ is a fiberwise Lagrangian subbundle, by Lemma \ref{lem-lagcomp}, we obtain that $\xi$ is fiberwise spanned by the vector fields $\partial/\partial x_i, \partial/\partial y_i$, $1 \leq i \leq n$. We define $z : Y \to \Bbb R$ to be the coordinate given by flowing along a Reeb vector field (Definition \ref{def-reeb}) for $(Y, \xi)$. Then, $(x^1, \cdots, x^n, y^1, \cdots, y^n, z)$ is a local coordinate system for $Y$, and as $f : Y \to B$ is a submersion, the vector fields $f_*(\partial/\partial y_i)$ for $1 \leq i \leq n$ and $f_*(\partial/\partial z)$ fiberwise span $TB$. Thus, $(x^1, \cdots, x^n, y^1, \cdots, y^n, z)$ is an adapted coordinate chart for $f : Y \to B$. Moreover, the contact form on $Y$ is given in these coordinates as 
$$\alpha = dz - \sum_{i = 1}^n y^i dx^i.$$
This proves the lemma.\end{proof}

\begin{proposition}\label{prop-slegmflex}$\mathcal{R}_{\rm{sLeg}}$ is a microflexible differential relation.\end{proposition}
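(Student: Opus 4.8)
The plan is to realize the given deformation $\{f_{t,s}\}$ near the core by an ambient contact isotopy of the \emph{source} $Y$, and then to extend that isotopy over the whole handle. I will describe the non-parametric case ($d=0$) and only indicate the (routine) bookkeeping needed for the parametric version at the end. So fix a handle $H$ with core $C$, let $\pi\colon\mathrm{Op}(H)\to B$ be the Legendrian submersion with $j^1\pi=f$, and let $\pi_s\colon\mathrm{Op}(\partial H\cup C)\to B$ be the Legendrian submersions with $j^1\pi_s=f_s$, so that $\pi_0=\pi|_{\mathrm{Op}(\partial H\cup C)}$ and $\pi_s=\pi_0$ near $\partial H$. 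Note that holonomic sections of $\mathcal{R}_{\rm{sLeg}}$ are exactly $1$--jets of Legendrian submersions, so it suffices to produce the microextension at the level of the submersions $\pi_s$ and then take $1$--jets. Observe also that, since $\pi_s=\pi_0$ near $\partial H$, the homotopy $\{\pi_s\}$ is constant outside a compact subset $K$ of its common domain; $K$ is a compact neighbourhood of $C$ disjoint from $\partial H$.

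First I would stabilise the images. The $\pi_s$ form a homotopy of submersions onto image which is constant off the compact set $K$, so by Lemma~\ref{lem-microimhtpysub} there is $\varepsilon_1>0$ such that $\pi_s(\mathrm{Op}(\partial H\cup C))=\pi_0(\mathrm{Op}(\partial H\cup C))=:B_0$ for all $s\in[0,\varepsilon_1]$. From now on I restrict to $s\in[0,\varepsilon_1]$, so each $\pi_s$ is a Legendrian submersion \emph{onto the fixed open set} $B_0\subseteq B$. This is the point where freezing the target matters for the next step.

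Next I would straighten the family; this is the crux. Applying the normal form for Legendrian submersions (Lemma~\ref{lem-loctrivlegs}) to each $\pi_s$ produces fibre--preserving contactomorphisms onto image intertwining $\pi_s$ with $\pi_{\rm{front}}\colon J^1\Bbb R^n\to\Bbb R^n\times\Bbb R$ up to a diffeomorphism of the base; since diffeomorphisms of the base of $\pi_{\rm{front}}$ lift to fibre--preserving contactomorphisms of $J^1\Bbb R^n$, and since the proof of Lemma~\ref{lem-loctrivlegs} (built on Moser's trick, Lemma~\ref{lem-moser}) is parametric and relative, and since the target $B_0$ is independent of $s$, one obtains a continuous family of contactomorphisms $\Psi_s$ of $\mathrm{Op}(\partial H\cup C)$ with $\Psi_0=\mathrm{id}$, with $\Psi_s=\mathrm{id}$ near $\partial H$ (where $\pi_s=\pi_0$, so all coordinate choices can be taken independent of $s$), and with $\pi_0\circ\Psi_s=\pi_s$. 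I expect this to be the main obstacle: the content of the whole proof is precisely getting these trivialisations to depend continuously on $s$ (and, in the parametric case, on $t$) and to be relative to the region where the homotopy is already constant, and this is exactly what Lemmas~\ref{lem-microimhtpysub} and~\ref{lem-loctrivlegs} are set up to provide.

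Finally I would extend and conclude. By the contact isotopy extension theorem, in the form used in part~(2) of Example~\ref{eg-isoconrel}, the germ of contact isotopy $\{\Psi_s\}$ extends to a contact isotopy $\{\bar\Psi_s\}$ of $\mathrm{Op}(H)$ with $\bar\Psi_0=\mathrm{id}$, $\bar\Psi_s=\Psi_s$ near $\partial H\cup C$, and $\bar\Psi_s=\mathrm{id}$ near $\partial H$; since $\bar\Psi_s\to\mathrm{id}$ in the $C^0$--topology as $s\to0$, compactness of $H$ gives $\varepsilon_2>0$ with $\bar\Psi_s(\mathrm{Op}(H))\subseteq\mathrm{dom}(\pi)$ for $s\in[0,\varepsilon_2]$, after shrinking $\mathrm{Op}(H)$. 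Put $\varepsilon:=\min(\varepsilon_1,\varepsilon_2)$ and $\widetilde\pi_s:=\pi\circ\bar\Psi_s$ on $\mathrm{Op}(H)$ for $s\in[0,\varepsilon]$. Then $\widetilde\pi_s$ is a Legendrian submersion, being a submersion (composition of a submersion with a diffeomorphism) whose fibres $\bar\Psi_s^{-1}(\pi^{-1}(\mathrm{pt}))$ are Legendrian because $\bar\Psi_s$ preserves $\xi$; moreover $\widetilde\pi_0=\pi$, $\widetilde\pi_s=\pi\circ\Psi_s=\pi_s$ on $\mathrm{Op}(\partial H\cup C)$, and $\widetilde\pi_s=\pi$ near $\partial H$, constant in $s$. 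Taking $1$--jets, $\{j^1\widetilde\pi_s\}$ is the required microextension, so $\mathcal{R}_{\rm{sLeg}}$ is microflexible. The parametric statement follows by running the same argument with the parameter $t$ carried through Lemmas~\ref{lem-microimhtpysub}, \ref{lem-loctrivlegs} and the isotopy extension theorem, and choosing $\varepsilon$ uniformly over $t\in I^d$ by compactness.
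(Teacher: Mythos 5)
Your skeleton coincides with the paper's: stabilize the images with Lemma \ref{lem-microimhtpysub}, realize the deformation $\pi_s$ as $\pi_0$ composed with an ambient contact isotopy that is the identity near $\partial H$, extend that isotopy over $\mathrm{Op}(H)$, and set $\widetilde\pi_s=\pi\circ\bar\Psi_s$. Your first and third steps are fine. The gap is precisely the step you yourself flag as the crux: the existence of a continuous family of contactomorphisms $\Psi_s$ with $\Psi_0=\mathrm{id}$, $\Psi_s=\mathrm{id}$ near $\partial H$, and $\pi_0\circ\Psi_s=\pi_s$ does not follow from Lemma \ref{lem-loctrivlegs} as stated. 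That lemma is purely local (its proof chooses fiber coordinates near a point, a compatible almost complex structure and a Reeb flow), it is stated for a single submersion with no parameter and no relative control, and --- contrary to your description --- it is not proved via Moser's trick, so there is no ``parametric and relative'' version available to cite. Moreover $\mathrm{Op}(\partial H\cup C)$ is not a chart, so $\pi_s$ restricted there need not be globally contactomorphic to the Euclidean front projection at all; patching chart-by-chart trivializations into a single family $\Psi_s$, continuous in $s$ and equal to the identity near $\partial H$, is essentially the statement being proved. (A smaller point: diffeomorphisms of the base $Q\times\Bbb R$ prolong to fiber-preserving contactomorphisms of $J^1Q$ only where the prolonged contact elements stay non-vertical, so even the ``lift the base diffeomorphism'' step needs justification.)

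What the paper does instead, and what you would need to supply in place of your straightening paragraph: form the movie map $F(u,s)=(\pi_s(u),s)$, lift $\partial/\partial s$ to a vector field $X$ vanishing where the homotopy is constant, and let $\phi_s$ be its flow, so that $\pi_s=\pi_0\circ\phi_s$ holds automatically; the real work is then to arrange that $\phi_s$ is a \emph{contact} isotopy. This is done locally, using the normal form of Lemma \ref{lem-loctrivlegs} for $\pi_0$: one checks that $\phi_s^*\alpha$ has no $dy$-components because $\phi_s$ respects the Legendrian fibration, and then the contact condition together with the inverse function theorem yields a fiber-preserving change of coordinates after which $\phi_s^*\alpha$ is a positive multiple of $\alpha$; cutting off the generating contact vector field then gives the global isotopy you feed into your final step. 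If you replace your appeal to a parametric, relative Lemma \ref{lem-loctrivlegs} by an argument of this kind (or actually prove such a strengthening), the rest of your proof goes through.
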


\begin{proof}Let $U \subset Y$ be an open subset and $\{f_s : U \to B : s \in I\}$ be a homotopy of Legendrian submersions such that $f_s = f_0$ outside a compact set $K \subset U$. Let us denote $U' := f_0(U)$ and $K' := f_0(K)$. By Lemma \ref{lem-microimhtpysub}, we may choose $\varepsilon > 0$ such that $f_s(U) = U'$ for all $\varepsilon > 0$. Thus, $\{f_s : U \to U' : s \in [0, \varepsilon]\}$ is a homotopy through surjective submersions. Let $F : U \times [0, \varepsilon] \to U' \times [0, \varepsilon]$ denote the movie map, given by $F(u, s) = (f_s(u), s)$. Then $F$ must also be a surjective submersion. Let $X$ denote a vector field on $U \times [0, \varepsilon]$ given by a lift of the vector field $\partial/\partial s$ on $U' \times [0, \varepsilon]$, i.e., $F_*(X) = \partial/\partial s$. We extend $X$ to a vector field on $U \times I$, and continue to denote it as $X$ by a slight abuse of notation. Note that the vector field $X$ is zero outside $K \times [0, \varepsilon]$, hence it is complete. If $\phi_s$ denotes the flow of $X$, then by construction $f_s = f_0 \circ \phi_s$. 

By Lemma \ref{lem-loctrivlegs}, we may choose a Darboux chart $U \subset (Y, \xi = \ker \alpha)$ with coordinates $(x, y, z)$ where $x = (x_1, \cdots, x_n)$ and $y = (y_1, \cdots, y_n)$, so that 
$$\alpha = dz - \sum_i y_i dx_i,$$
and $f_0(x, y, z) = (x, z)$. Note that $\phi_t^*(\ker df_0) = \ker df_s \subseteq \xi$, and $\ker df_0$ is the tangent distribution to $\{y = \mathrm{const.}\}$. Thus, the form $\phi_s^*\alpha$ must vanish on $\{y = \mathrm{const.}\}$. Hence,
$$\phi_s^*\alpha = a_0(x, y, z) dz - \sum_i a_i(x, y, z) dx_i$$
for some smooth functions $a_i : U \to \Bbb R$, such that $a_i(0, 0, 0) = 1$. If $\varepsilon > 0$ is sufficiently small, then $a_0$ is $C^\infty$--close to the constant function $1$ on $U$. Dividing by $a_0$, we get
$$\phi_t^*\alpha = a_0(x, y, z) \beta, \;\; \beta := dz - \sum_i \frac{a_i(x,y,z)}{a_0(x,y,z)} dx_i.$$
The contactness condition $\beta \wedge (d\beta)^{\wedge n} \neq 0$ implies that the determinant of the matrix $(\partial/\partial y_i(a_j/a_0))_{i, j}$ is nonvanishing. Thus, by the inverse function theorem, we may find a change of coordinates on $U$ preserving the fibers $\{f_0 = \mathrm{const}.\}$ such that in the new coordinares, $a_i(x, y, z)/a_0(x, y, z) = y_i, 1 \leq i \leq n$. Thus, $\phi_s^*\alpha = a_0 \alpha$. Consequently, $\phi_s$ is a contact isotopy of $U$ such that $f_s = f_0 \circ \phi_s$ for all $s \in [0, \varepsilon]$. We may multiply by an appropriate cut-off function to extend $\phi_s$ to a global isotopy of $Y$.

Let $H \subset Y$ be an embedded handle with core $C \subset H$, $f : \mathrm{Op}(H) \to B$ be a Legendrian submersion and $f_s : \mathrm{Op}(\partial H \cup C) \to B$ be an isotopy through Legendrian submersions fixed near $\mathrm{Op}(\partial H)$ such that $f_0 = f$. We may now cover $H$ by Darboux charts and use the argument above to realize the isotopy $f_s$ by a contact isotopy $\phi_s$ of $Y$. Then the required microextension is now defined by letting $\widetilde{f}_s :=  f_0 \circ \phi_s$, $s \in [0, \varepsilon]$.\end{proof}

Though formal Legendrian embeddings are in general far from being a (holonomic) Legendrian embeddings, we shall eventually show that they are nonetheless approximable by embeddings which are \emph{close} to being a Legendrian embedding. We introduce the following definition which formalizes this notion.

\begin{definition}[$\varepsilon$--Legendrian embedding]\label{defn-epsLeg}Given $\varepsilon > 0$ and an ambient Riemannian metric on $Y$, an embedding $f : \Lambda^n \to (Y^{2n+1}, \xi)$ is said to be \emph{$\varepsilon$--Legendrian} if for any $p \in \Lambda$ and $q = f(p) \in Y$, there exists a Legendrian subspace $L_q \subset \xi_q$ such that $f_*(T_p\Lambda) \subset T_q Y$ makes an angle of size at most $\varepsilon$ with $L$.\end{definition}

The following lemma can be intuited as a version of the Weinstein tubular neighborhood theorem for $\varepsilon$--Legendrian embeddings.

\begin{lemma}\label{lem-epsLagnc}Let $f : \Lambda \to (Y, \xi)$ be a $\varepsilon$--Legendrian embedding. Then we can find a bundle monomorphism $P : T^*\Lambda \oplus \Bbb R \to f^* TY$, such that
\begin{enumerate}[label=(\arabic*), font=\normalfont]
\item $P(T^*\Lambda \oplus \Bbb R)$ is fiberwise transverse to $T\Lambda \subseteq f^* TV$.
\item $P(T^*\Lambda \oplus \{0\})$ is a Legendrian plane in $f^* \xi$.
\item $P(\{0\} \oplus \Bbb R)$ is fiberwise transverse to $f^* \xi$.
\end{enumerate}
\end{lemma}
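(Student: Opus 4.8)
The plan is to construct the bundle monomorphism $P$ explicitly by first building a Legendrian "companion" plane field along $f$, and then adjoining a transverse line using the co-orientation of $\xi$. First I would observe that since $f$ is $\varepsilon$-Legendrian, for each $p \in \Lambda$ with $q = f(p)$ there is, by hypothesis, a Legendrian subspace $L_q \subset \xi_q$ making an angle at most $\varepsilon$ with $f_*(T_p\Lambda)$. However, the family $\{L_q\}$ so chosen need not vary smoothly; to fix this, I would instead take $\pi_q : T_q Y \to \xi_q$ to be the orthogonal projection onto $\xi_q$ (with respect to the chosen ambient metric) and consider $\pi_q \circ f_* : T_p\Lambda \to \xi_q$. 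For $\varepsilon$ small enough this is a fiberwise monomorphism and its image $W_q := \pi_q(f_*(T_p\Lambda))$ is an $n$-dimensional subspace of $\xi_q$ which is $C^0$-close to being Legendrian, i.e. the restriction of $\mathrm{CS}(\xi)_q$ to $W_q$ is "small". The genuine Legendrian plane should then be produced from $W_q$ by a canonical symplectic-linear-algebra correction.

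The key step is this correction. Choosing a fiberwise almost complex structure $J$ on $\xi$ compatible with $\mathrm{CS}(\xi)$ (existence as in the proof of Lemma \ref{lem-lagcomp}), one has for any $n$-dimensional subspace $W_q \subset \xi_q$ that is $\varepsilon$-close to Legendrian a well-defined nearby Legendrian subspace: concretely, decompose $\xi_q$ using a Lagrangian splitting adapted to $W_q$, or more robustly, use that the space of Lagrangian subspaces is a smooth deformation retract of a neighborhood of it in the Grassmannian, via a retraction that is $\mathrm{Sp}$-equivariant and hence extends to a smooth bundle map. This yields a smooth Legendrian subbundle $L \subset f^*\xi$ with $L_q$ close to $W_q$, hence close to $f_*(T_p\Lambda)$, hence (for $\varepsilon$ small) still fiberwise transverse to $T_p\Lambda$ inside $f^*\xi$, and a fortiori inside $f^*TY$. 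Since $L$ is Legendrian, it is canonically identified with $T^*\Lambda$ via the conformal symplectic form $\mathrm{CS}(\xi)$ paired against $T\Lambda$: the map $L_q \to (T_p\Lambda)^*$, $v \mapsto \mathrm{CS}(\xi)_q(v, f_*(-))$ is an isomorphism precisely because $L_q \oplus f_*(T_p\Lambda)$ spans $\xi_q$ and $\mathrm{CS}(\xi)$ is nondegenerate, exactly the argument used in Lemma \ref{lem-lagcomp}. This gives the bundle monomorphism $T^*\Lambda \to f^*\xi$ onto $L$, establishing (1) and (2).

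For (3), I would use co-orientability of $\xi$: pick a globally defined contact form $\alpha$ on $Y$ and let $R$ be its Reeb vector field, which is everywhere transverse to $\xi$. Then $P(\{0\}\oplus \Bbb R) := \mathrm{span}(f^*R)$ is fiberwise transverse to $f^*\xi$ by construction, and the combined map $P : T^*\Lambda \oplus \Bbb R \to f^*TY$ sending $T^*\Lambda$ to $L$ (via the above) and $\Bbb R$ to $f^*R$ is a monomorphism whose image is transverse to $T\Lambda$: indeed $T\Lambda$ maps into $f^*\xi$ under the inclusion $\varepsilon$-close to $L$, while $P(T^*\Lambda)$ also lies in $f^*\xi$ and is transverse to (the image of) $T\Lambda$ there, and $P(\Bbb R)$ escapes $f^*\xi$ entirely, so dimensions add up to $2n+1 = \dim Y$. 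The main obstacle I anticipate is not any single step but making the "canonical correction from nearly-Legendrian to Legendrian" genuinely smooth and natural — one must be careful that the retraction onto the Lagrangian Grassmannian is chosen once and for all in a way compatible with varying $q$, which is why I would phrase it via the $J$-compatible structure and an equivariant tubular neighborhood of the Lagrangian Grassmannian inside the full Grassmannian rather than via pointwise ad hoc choices.
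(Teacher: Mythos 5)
Your smoothing step is fine and matches the paper's own device: the paper also chooses, for each $x$, a Legendrian plane $L_x \subset \xi_{f(x)}$ within angle $\varepsilon$ of $f_*(T_x\Lambda)$ and invokes the fact that the space of $\varepsilon$--Legendrian planes deformation retracts onto the Legendrian Grassmannian to make the choice smooth, which is essentially your retraction argument. The problem is what you do with $L$ afterwards. You set $P(T^*\Lambda \oplus \{0\}) = L$ and assert that $L$, being \emph{close} to $f_*(T\Lambda)$, is therefore transverse to it. This is backwards: closeness of two rank--$n$ subbundles of a rank--$(2n+1)$ (or rank--$2n$) bundle is the opposite of what produces a complement. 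At any point where $f$ happens to be honestly Legendrian (which the $\varepsilon$--Legendrian condition allows), your $W_q$ is already Lagrangian, the retraction fixes it, so $L_q = f_*(T_p\Lambda)$ exactly and $L_q \oplus \mathrm{span}(R_q)$ has rank $n+1$, nowhere near spanning a complement to $T\Lambda$; condition (1) fails. For the same reason your identification $L_q \to (T_p\Lambda)^*$, $v \mapsto \mathrm{CS}(\xi)_q(v, f_*(-))$, is not an isomorphism: since $L_q$ is Lagrangian and $f_*(T_p\Lambda)$ (or rather its projection $W_q$) is $\varepsilon$--close to $L_q$, this pairing is small, and identically zero in the degenerate case above.

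The missing idea is the $J$--rotation, which is exactly what the paper uses: with $J$ on $\xi$ compatible with $\mathrm{CS}(\xi)$, define $P$ on the first factor as $J \circ \pi \circ df$, where $\pi$ is the orthogonal projection of $f_*(T\Lambda)$ onto $L$ (an isomorphism by the $\varepsilon$--Legendrian condition). The image is then $J(L)$, which by Lemma \ref{lem-lagcomp} is a Lagrangian plane in $\xi$ \emph{complementary} to $L$, hence (for $\varepsilon$ small, since transversality with a quantitative angle bound is open) transverse to the nearby plane $f_*(T\Lambda)$; pairing $J(L)$ against $L$ via $\mathrm{CS}(\xi)$ is nondegenerate by compatibility, which is what legitimizes viewing this factor as $T^*\Lambda$. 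Your treatment of the $\Bbb R$--factor via a vector field transverse to $\xi$ (the Reeb field) is the same as the paper's and is correct; once the first factor is replaced by $J(L)$, the dimension count you wrote at the end goes through and gives (1)--(3). You already have all the ingredients ($J$, compatibility, Lemma \ref{lem-lagcomp}); you simply used the nearby Lagrangian where the construction requires its $J$--complement.
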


\begin{proof}Choose a contact form $\alpha$ such that $\xi = \ker \alpha$, an ambient Riemannian metric on $Y$ and an almost complex structure $J$ on $\xi$ compatible with $\mathrm{CS}(\xi)$. For every $x \in \Lambda$, choose a Legendrian subspace $L_x \subset \xi_{f(x)}$ which makes an angle of size at most $\varepsilon$ with $f_*(T_x\Lambda)$. Since the space of $\varepsilon$--Legendrian subspaces deformation retracts to the space of Legendrian subspaces, this choice can be made to vary smoothly with $x \in \Lambda$. Let $\pi_x : f_*(T\Lambda_x) \to L_x$ be the orthogonal projection with respect to the ambient metric, which is an isomorphism in virtue of the $\varepsilon$--Legendrian condition. We define $P|_{T^*\Lambda \oplus \{0\}} := J \circ \pi \circ df$, and $P|_{\{0\} \oplus \Bbb R}$ as scalar multiplication by a vector field transverse to $\xi$. This defines a bundle monomorphism $P : T^*\Lambda \oplus \Bbb R \to f^*TY$ satisfying all the required properties.\end{proof}

Let $\mathcal{R}_{\varepsilon\rm{Leg}} \subset J^1(\Lambda, Y)$ be the differential relation of $1$--jets of $\varepsilon$--Legendrian embeddings $f : \Lambda \to (Y, \xi)$. The key ingredient required to approximate a formal Legendrian embedding by $\varepsilon$--Legendrian embeddings is the following proposition.

\begin{proposition}\label{prop-epsLeg}$\mathcal{R}_{\varepsilon\rm{Leg}}$ is an ample differential relation (Definition \ref{def-amplerel}).\end{proposition}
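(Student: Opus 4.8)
The plan is to show that $\mathcal{R}_{\varepsilon\rm{Leg}}$ is ample by reducing to a computation in a single principal subspace and verifying the ampleness condition there. Since $\mathcal{R}_{\varepsilon\rm{Leg}}$ is the differential relation of $1$--jets of $\varepsilon$--Legendrian embeddings, and the $\varepsilon$--Legendrian condition is a pointwise condition on the tangent $n$--plane $f_*(T_p\Lambda) \subset T_q Y$ (namely, that it lies within angle $\varepsilon$ of some Legendrian subspace of $\xi_q$), the relation $\mathcal{R}_{\varepsilon\rm{Leg}}$ is precisely $\mathcal{R}_A$ for the open subset $A \subset \mathrm{Gr}_n(Y)$ consisting of $\varepsilon$--Legendrian $n$--planes. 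Thus, by the analysis in the proof of Theorem \ref{thm-adirimm}, it suffices to verify that $A$ is an ample subset of $\mathrm{Gr}_n(Y)$ in the sense of Definition \ref{def-ample}: for every $q \in Y$, every $P \in A_q$, and every codimension--$1$ subspace $Q \subset P$, the set $\Omega_Q = \{v \in T_q Y : \mathrm{span}(Q, v) \in A_q\}$ is ample in $T_q Y$.

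The key geometric step is to analyze $\Omega_Q$ explicitly. Fix a point $q \in Y$, a contact form $\alpha$ with $\xi_q = \ker\alpha_q$, and a compatible complex structure $J$ on $(\xi_q, \mathrm{CS}(\xi)_q)$. Given $P \in A_q$, by definition there is a Legendrian subspace $L \subset \xi_q$ within angle $\varepsilon$ of $P$, and given the hyperplane $Q \subset P$ of dimension $n-1$, its image under orthogonal projection to $L$ is a hyperplane $Q' \subset L$ (assuming $\varepsilon$ small enough that the projection stays injective on such subspaces). The crucial observation is that a Legendrian subspace containing a fixed $(n-1)$--dimensional isotropic subspace $Q'$ of $\xi_q$ is obtained by adding a line: the symplectic orthogonal $(Q')^{\perp\omega}$ inside $\xi_q$ has dimension $n+1$ and contains $Q'$, so the Legendrian extensions of $Q'$ are parametrized by lines in the $2$--dimensional symplectic space $(Q')^{\perp\omega}/Q'$ — an $\mathbb{RP}^1$'s worth of directions, spanning a $2$--plane $\Pi \subset \xi_q$ with $Q' \oplus \Pi$ containing all of them. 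Consequently, the set of directions $v$ which, together with $Q$, span an $\varepsilon$--Legendrian $n$--plane contains an open cone around a $2$--dimensional subspace of $T_q Y$; more precisely, $\Omega_Q$ contains the $\varepsilon$--neighborhood of this $2$--plane. An $\varepsilon$--neighborhood of a linear subspace of dimension $\geq 2$ is ample in $T_q Y$: its connected component is the whole neighborhood, and since it contains a full $2$--plane together with a ball's worth of transverse wiggle room, its convex hull is all of $T_q Y$ (an $\varepsilon$--neighborhood of a $\geq 2$--dimensional subspace spans in every direction and its convex hull omits no point). This is precisely ampleness of $\Omega_Q$.

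The main obstacle I expect is not the convexity bookkeeping but rather the careful tracking of how the $\varepsilon$--Legendrian condition interacts with the \emph{isotropic-but-not-Legendrian} subspaces that arise: when $Q \subset P$ is replaced by a nearby $Q'$ in a Legendrian plane, one must confirm that $Q'$ is still isotropic (it is, being inside $\xi_q$ up to an $\varepsilon$--error, but one needs isotropy exactly, not approximately, to count Legendrian extensions) — so the argument should instead work with $Q$ directly, noting that the conformal symplectic form restricted to the (slightly non-contained) projection of $Q$ has small rank-defect, and that the set of $\varepsilon$--Legendrian extensions is an open set containing a neighborhood of a $2$--plane regardless. Once one fixes $\varepsilon$ small enough that all the orthogonal projections and angle estimates behave uniformly over the (compact, if $\Lambda$ is compact, or locally finite) relevant Grassmannian data, the ampleness of each $\Omega_Q$ follows, and by Proposition \ref{prop-coordample} together with the openness of $\mathcal{R}_{\varepsilon\rm{Leg}}$, we conclude that $\mathcal{R}_{\varepsilon\rm{Leg}}$ is an ample differential relation.
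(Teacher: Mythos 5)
Your proposal is correct and takes essentially the same route as the paper's proof: ampleness is reduced to showing that the set $\Omega_Q$ of completing vectors contains a cone about a rank--$2$ subspace, which arises because the Legendrian extensions of a nearby isotropic hyperplane sweep out a $2$--dimensional family inside its symplectic complement, and a cone about a rank--$2$ subspace is ample. The paper phrases this by replacing the hyperplane with a nearby isotropic subspace $S$ and intersecting $S^{\perp \omega}$ with the metric orthocomplement $S^{\perp}$ (handling the remaining case by noting $\Omega_S$ is empty, hence vacuously ample), rather than projecting $Q$ onto a nearby Legendrian plane, but the underlying geometry is identical.
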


\begin{proof}Let $y \in Y$ be a point, and $S \subset \xi_y \subset T_y Y$ be an isotropic subspace of corank $1$. Then,
$$\Omega_S = \{v \in T_y Y : \mathrm{span}(S, z) \subset T_y Y \text{ is Legendrian}\}$$
is equal to the symplectic complement $S^{\perp\omega}$ of $S$ in $\xi$ with respect to the conformal symplectic structure $\mathrm{CS}(\xi)$. Let $S^{\perp}$ denote the \emph{orthocomplement} of $S$ in $TY$ with respect to some ambient Riemannian metric. Then,
\begin{enumerate}
\item $\dim S^{\perp} = (2n+1) - (n-1) = n+2$, $\dim S^{\perp\omega} = n$ and $S^{\perp} + S^{\perp\omega} \supseteq S^\perp + S = T_y Y$. Thus, $S^{\perp}$ and $S^{\perp\omega}$ are transverse in $T_y Y$ and $S^\perp \cap S^{\perp\omega}$ is rank $2$.
\item Let $\pi : S^\perp \to S^{\perp} \cap  S^{\perp \omega}$ be the orthogonal projection. If the angle between $v \in S^\perp$ and $\pi(v)$ is at most $\varepsilon$, then $\mathrm{span}(S, v)$ and $\mathrm{span}(S, \pi(v))$ make an angle of at most $\varepsilon$ as well, therefore $\mathrm{span}(S, v)$ is $\varepsilon$--Legendrian. Consequently, $v \in \Omega_S$.
\end{enumerate}
Therefore, $\Omega_S \cap S^\perp$ contains a solid $\varepsilon$--cone around the rank $2$ subspace $S^{\perp\omega} \cap S^\perp$. Since a solid cone about a rank $2$ subspace of a vector space is an ample subset, $\Omega_S$ is also ample. This finishes the proof in case $S$ is isotropic. If not, we have two cases:
\begin{enumerate}
\item $S$ makes an angle of at most $\varepsilon$ with an isotropic subspace, in which we can repeat the above with the isotropic subspace nearby $S$.
\item $S$ does not make an angle of at most $\varepsilon$ with any isotropic subspace, in which case $\Omega_S$ is empty hence vacuously ample.\qedhere
\end{enumerate}
\end{proof}

We now come to the proof of Theorem \ref{thm-graphfLeg}.

\begin{proof}[Proof of Theorem \ref{thm-graphfLeg}]

Let us start with a formal Legendrian embedding,
$$(F_s, f) : (T\Lambda, \Lambda) \to (TY, Y).$$ 
We proceed along the following steps: 
\vspace{5pt}
\begin{enumerate}[wide, labelwidth=!, labelindent=0pt, itemsep=5pt]
\item[\emph{Approximate by $\varepsilon$--Legendrian embedding}:] By Proposition \ref{prop-epsLeg} and Theorem \ref{thm-hample}, we $C^0$--approximate $f$ by a $\varepsilon$--Legendrian embedding $\widetilde{f} : \Lambda \to Y$.

\item[\emph{Microextend to Legendrian submersion}:] Let $W = \Lambda \times (-1, 1)$ and let  $U \subset Y$ be a smooth tubular neigborhood of $\widetilde{f}(\Lambda)$. We construct a \emph{formal Legendrian submersion of $U$ in $W$}, i.e., a formal section of the differential relation $\mathcal{R}_{\rm{sLeg}} \subset J^1(U, W)$ over $U$. Let $g : U \to W$ denote the tubular projection to $\Lambda \times \{0\} \subset W$. By Lemma \ref{lem-epsLagnc}, we obtain a bundle monormorphism,
\begin{gather*}
P : T^*\Lambda \oplus \Bbb R \to \widetilde{f}^*TU, \text{ such that:}\\
\widetilde{f}^*TU \cong T\Lambda \oplus P(T^*\Lambda \oplus \Bbb R) = T\Lambda \oplus P(T^*\Lambda \oplus 0) \oplus P(0 \oplus \Bbb R).
\end{gather*}
We define $G : TU \to TW$ by defining $G$ on each of the three factors above:

\begin{enumerate}[label=({\roman*}), leftmargin=2cm] 
\item Let $G$ send $T\Lambda$ fiberwise isomorphically to $T\Lambda \times 0 \subset TW$.
\item Let $G$ be fiberwise zero on $P(T^*\Lambda \oplus 0)$.
\item Let $G$ send $P(\{0\} \oplus \Bbb R)$ fiberwise isomorphically to $0 \times T(-1, 1) \subset TW$.
\end{enumerate}

\noindent
But this merely defines $G$ on $\widetilde{f}^*TU \cong TU|_{\widetilde{f}(\Lambda)}$. We then extend $G$ to all of $TU$ by choice of an Ehresmann connection compatible with the contact structure $\xi|_U$. By construction, the fiber-preserving morphism $G : TU \to TW$ covers $g : U \to W$. Therefore, the pair of maps
$$(G, g) : (TU, U) \to (TW, W),$$ 
defines a formal section of the jet bundle $J^1(U, W)$ over $U$, by the discussion in Example \ref{eg-mapjet}. Moreover, $G : TU \to TW$ sends each fiber isomorphic to $T_x \Lambda \oplus T_x^* \Lambda \oplus \Bbb R$ to $T_x\Lambda \oplus \Bbb R$. Thus, $(G, g)$ in fact defines a section of $\mathcal{R}_{\rm{sLeg}}$. 

\item[\emph{Holonomic approximation of Legendrian submersion}:] Let $R$ be the Reeb vector field (Definition \ref{def-reeb}) on $(Y, \xi)$ and let $X \subset (U, \xi|_U)$ be the union of all Reeb trajectories passing through $\widetilde{f}(\Lambda)$. Then $f(\Lambda) \subseteq X$ is codimension $1$, and $\widetilde{f}$ defines a formal section of $\mathcal{R}_{\rm{sLeg}}$ over $X$. By Proposition \ref{prop-slegmflex}, Theorem \ref{thm-holapp} and using the fact that the relation $\mathcal{R}_{\rm{sLeg}}$ is invariant under the capacious subgroup $\mathrm{Cont}_0(Y) \subset \mathrm{Diff}(Y)$ of contactomorphism isotopic to identity, we find a $C^0$--small isotopy from $f : \Lambda \to W$ to $\widetilde{f} : \Lambda \to W$ such that $\widetilde{f}(\Lambda) \subset R$, as well as a (holonomic) Legendrian submersion $\pi : U \to \Lambda \times (0, 1)$ from a neighborhood of $\widetilde{f}(\Lambda) \subset W$. 

\item[\emph{Constructing graphical coordinates}:]
In the proof of the holonomic approximation theorem (Theorem \ref{thm-holapp}), the diffeotopy $h_{s, t}|_K$ is graphical with respect to $K$, where $K \subset B$ is the codimension $1$ subcomplex (see, \cite[Theorem 13.4.1]{embook} for details). Therefore, as $f(\Lambda) \subset Y$ is transverse to the Reeb trajectories, the same is true for $\widetilde{f}(\Lambda) \subset Y$, inspite of the maps $f$ and $\widetilde{f}$ being only $C^0$--close. With this remark in mind, let $(x_1, \cdots, x_n)$ be local coordinates on $\Lambda$, $(y_1, \cdots, y_n)$ be local coordinates on the Legendrian fibers of $\pi$ and $z$ be the Reeb coordinate. Then, $(x, y, z)$ jointly forms local coordinates for the tubular neighborhood $U$. If $\alpha$ is the contact form on $Y$, we have $\alpha(\partial/\partial z) = 1$ and $\alpha(\partial/\partial y_i) = 0$. Therefore, $\alpha = dz - \sum \widetilde{y}_i dx_i$ for some smooth functions $h_i : U \to \Bbb R$. Since $\alpha$ is a contact form, the determinant $\mathrm{det}((\partial h_i/\partial y_j)_{i,j})$ is nonvanishing. Hence, the collection $(h_1, \cdots, h_n)$ forms a local coordinate system for the Legendrian fibers of $\pi$. Thus, we obtain the required contactomorphism $\varphi : U \to J^1 \Lambda$.\qedhere
\end{enumerate}
\end{proof}

\section{The $h$--principle for wrinkled Legendrian embeddings}\label{sec-wrinkleg}

In this section, we attempt to produce an $h$--principle for Legendrian embeddings by combining ideas from Section \ref{sec-hLegimm} and Section \ref{sec-wrinkemb}. The basic principle goes all the way back to Section \ref{sec-legknots}, where we demonstrated a process in Theorem \ref{thm-hprinclegknots} by which one can approximate any formal slope-field on a front diagram by interpolating zig-zags to obtain a holonomic Legendrian embedding nearby a formal one. In this section, we address a higher dimension version of this process, where zig-zags will be replaced by wrinkle singularities from Definition \ref{def-wrinkledemb}. However, this process will only give rise to topological embeddings which are Legendrian away from some mild singularities, as we shall see below.

We begin by recalling Example \ref{eg-1jet}. Let $\Lambda^n$ be a smooth manifold, $J^1 \Lambda = \Bbb R \times T^* \Lambda$ be the $1$--jet space, and $\pi : J^1 \Lambda \to \Bbb R \times \Lambda$ denote the front projection. We may identify $J^1 \Lambda$ with the subspace of $\mathrm{Gr}_n(\Bbb R \times \Lambda)$ consisting of the \emph{non-vertical planes}, i.e., the rank $n$ planes which are transverse to the rank $1$ subbundle $T \Bbb R \times 0 \subset T(\Bbb R \times \Lambda)$.

Let $f : M^n \to \Bbb R \times \Lambda$ be a smooth map which is an immersion on some dense open set $U \subseteq M$. Suppose that the tangential Gauss map $G(df) : U \to \mathrm{Gr}_n(\Lambda \times \Bbb R)$ is non-vertical at every point of $U$. Then the smooth map $(f, G(df)) : U \to J^1 \Lambda$ is a Legendrian immersion covering $f|U$, which may be thought as a higher dimensional analogue of the phenomenon in Proposition \ref{prop-legfr}. Indeed, $(f, G(df))$ can be rewritten as simply $j^1 f_0$ where $f_0 : M \to \Lambda$ is given by $f_0 := \mathrm{proj} \circ f$ where $\mathrm{proj} : \Bbb R \times \Lambda \to \Lambda$ is projection to the second factor. The fact that it is a Legendrian immersion now follows from the discussions in  Example \ref{eg-1jet} and Example \ref{eg-phaselag}.

 If, moreover, $f$ is an immersion and $G(df)$ extends to a smooth map $G : M \to \mathrm{Gr}_n(\Lambda \times \Bbb R)$ covering $f$ which is non-vertical at every point, then $(f, G) : M \to J^1 \Lambda$ is a Legendrian immersion covering $f$ \emph{provided} that $f$ is an immersion on all of $M$. However, such an immersive Legendrian lift does not exist if $f$ is assumed to be immersive only on $U \subseteq M$.

\begin{example}\label{eg-wrinklelift}Let us consider the standard wrinkle $f : \Bbb R^2 \to \Bbb R^3$, 
$$f(u, v) = \left (u, v^3 - 3(1 - u^2) v, \frac15 v^5 - \frac23 (1 - u^2) v^3 + (1 - u^2)^2 v \right )$$
Let us use coordinates $(x_1, x_2, z)$ on $\Bbb R^3$ and $(x_1, x_2, y_1, y_2, z)$ on $J^1 \Bbb R^3 = \Bbb R^5$, so that the projection $\pi : \Bbb R^5 \to \Bbb R^3$ is the Legendrian front projection with respect to the standard contact structure $\alpha_{\mathrm{std}} = dz - y_1 dx_1 - y_2 dx_2$ on $J^1\Bbb R^2 = \Bbb R^5$. We compute the Jacobian of $f$:
$$Df(u, v) = \begin{pmatrix}1 & 6uv & 4 u v^3/3 - 4(1 - u^2)uv \\ 0 & 3v^2 - 3(1 - u^2)& v^4 - 2(1 - u^2)v^2 + (1 - u^2)^2 \end{pmatrix}$$
To obtain a Legendrian lift of $f$ to $g : \Bbb R^2 \to \Bbb R^5$, we must solve the equation,
$$\begin{pmatrix}1 & 6uv \\ 0 & 3v^2 - 3(1 - u^2)\end{pmatrix} \begin{pmatrix}y_1 \\ y_2\end{pmatrix} = \begin{pmatrix} \frac{4}{3} u v^3 - 4(1 - u^2)uv \\ v^4 - 2(1 - u^2)v^2 + (1 - u^2)^2 \end{pmatrix}$$
Formally inverting the matrix, we find the solution,
$$(y_1, y_2) = \frac{1}{3}(2 u v(3u^2 - v^2 - 3), u^2 + v^2 - 1)$$
Therefore, we find the candidate Legendrian lift $g : \Bbb R^2 \to \Bbb R^5$ where $g(u, v)$ is
$$\left (u, v^3 - 3(1 - u^2) v, \frac15 v^5 - \frac23 (1 - u^2) v^3 + (1 - u^2)^2 v, \frac23 u v(3u^2 - v^2 - 3), \frac13 (u^2 + v^2 - 1) \right )$$
Observe that the Jacobian $Dg(u, v)$ is
$$\begin{pmatrix}1 & 6uv & 4 u v^3/3 - 4(1 - u^2)uv & v (6 u^2 - 2v^2/3 - 2) & 2u/3 \\ 0 & 3v^2 - 3(1 - u^2)& v^4 - 2(1 - u^2)v^2 + (1 - u^2)^2 & 2u (u^2 - v^2 - 1) & 2v/3 \end{pmatrix}$$
Thus, $Dg$ has rank $1$ on the equator $\{(\pm 1, 0)\} \subset S^1 \subset \Bbb R^2$ of the singular locus of the wrinkle. \end{example}

The takeaways from this discussion are:
\begin{enumerate}
\item Even though the standard wrinkle is not immersive along a sphere $S^{n-1}$, it has a well-defined Gauss map and therefore admits a smooth lift which is Legendrian away from the equator $S^{n-2} \subset S^{n-1}$.
\item However, lift is not immersive at the swallowtail locus given by the equator $S^{n-2} \subseteq S^{n-1}$, where the rank drops by $1$.
\end{enumerate}
However, the use of such singular Legendrian lifts of wrinkled embeddings will be ubiquitous in what follows, so we record this notion as a definition.

\begin{definition}\cite[Definition 3.3]{mur}\label{def-wrinkleg} Let $\Lambda^n$ be a smooth manifold and $(Y^{2n+1}, \xi)$ be a contact manifold. A \emph{wrinkled Legendrian embedding} is a smooth map $f : \Lambda \to Y$ together with a collection of (not necessarily disjoint) Darboux charts $\{W_j \cong \Bbb R^{2n+1}\}$ in $Y$, such that
\begin{enumerate}
\item $f$ is a topological embedding,
\item $f$ is an immersion outside a subset of codimension $2$ that is a union of embedded spheres $S^{n-2}_j$, each of which is contained in some $W_j \subset Y$ such that $f^{-1}(W_j)$ is diffeomorphic to $\Bbb R^n$,
\item Let $\pi_{\rm{front}} : J^1\Bbb R^n = \Bbb R^{2n+1} \to \Bbb R^{n+1}$ denote the front projection. Then, the image of $f|f^{-1}(W_j)$ under the front projection,
$$f_j := \pi_{\rm{front}} \circ f|_{f^{-1}(W_j)} : \Bbb R^n \to \Bbb R^{n+1},$$
is a wrinkled embedding with swallowtail locus containing $S^{n-2}_j$.
\end{enumerate}

\noindent 
A \emph{family of wrinkled Legendrians} $\{f_s : \Lambda \to Y : s \in I^d\}$ is a continuous homotopy of smooth maps where for each $s \in I^d$, $f_s$ is a wrinkled Legendrian embedding and $F : \Lambda \times I \to Y$ is smooth except on (singular) Legendrian lifts of embryo singularities. 

We also demand that the associated collection of Darboux charts in the collection $\{W_j\}$ vary continuously throughout the homotopy containing their respective wrinkle singularities $S_j^{n-2}$ throughout their individual lifetime, being allowed only to continuously appear or disappear at the embryo singularities.
\end{definition}

Given a wrinkled Legendrian embedding $f : \Lambda \to (Y, \xi)$, we shall construct a formal Legendrian embedding corresponding to it in a nearly canonical way, depending only on a contractible space of choices. Let us first prove a simple lemma. We fix notation by letting $f_0 : \Bbb R^n \to \Bbb R^{n+1}$ be a model wrinkle Defintion \ref{def-wrinkledemb} (see also, Figure \ref{fig-wrinkle}), such that
$$f_0(x, t) = \left (x, t^3 - 3(1-|x|^2)t, \int_0^t (s^2 - \delta)^2 ds \right), \; (x, t) \in \mathrm{Op}_{\Bbb R^n}(S^{n-1}),$$
where $(x, t)$ denotes the spherical coordinates on $\mathrm{Op}_{\Bbb R^n}(S^{n-1}) \cong S^{n-1} \times \Bbb R$.

\begin{lemma}\label{lem-modelreg}Let us denote $f : \Bbb R^n \to \Bbb R^{2n+1}$ to be the Legendrian wrinkle obtained from lifting $f_0$ along the front projection $\pi_{\rm{front}} : \Bbb R^{2n+1} = J^1\Bbb R^n \to \Bbb R^{n+1}$. There exists a $C^\infty$--small isotopy from $f$ to a smooth embedding $\widetilde{f} : \Bbb R^n \to \Bbb R^{2n+1}$, and a formal Legendrian embedding 
$$(F_s, \widetilde{f}) : (T\Bbb R^n, \Bbb R^n) \to (T\Bbb R^{2n+1}, \Bbb R^{2n+1})$$
such that $F_0 = d\widetilde{f}$. \end{lemma}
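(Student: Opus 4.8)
The statement asks us to show that the Legendrian lift of a model wrinkle can be $C^\infty$-perturbed to an honest embedding which underlies a formal Legendrian embedding with base $\widetilde f$. The natural strategy is to \emph{remove the singularities by hand}, using the explicit parametrization, since we have complete control over the local model. First I would observe that the singular locus of the Legendrian wrinkle $f : \Bbb R^n \to \Bbb R^{2n+1}$ is the sphere $S^{n-1} = \{|x|^2 + t^2 = 1\}$, along which $df$ drops rank by $1$ (and by $2$ on the equator $S^{n-2} = \{t = 0\}$, as computed in Example \ref{eg-wrinklelift}), but that the Gauss map $G(df)$ extends smoothly across this locus — this is precisely the content of the paragraph preceding Definition \ref{def-wrinkleg}. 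The image $f(\Bbb R^n)$ is a topological submanifold, and since $2n < 2n+1$, in codimension $\geq 1$ there is enough room to $C^0$-perturb $f$ to a genuine smooth embedding; the issue is controlling the $C^\infty$-size of the perturbation and ensuring it remains Legendrian-compatible in the formal sense.

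\textbf{Key steps.} (1) Perturb to an embedding: Write the wrinkle as a graph over its front projection $f_0$; since $f_0$ is a wrinkled embedding, the only failure of $f$ to be a smooth embedding is the rank-drop along $S^{n-1}$, not a self-intersection (the ``overcrossing slope'' convention, i.e. the ordering of sheets in the $p$-direction as in Proposition \ref{prop-legfr}, guarantees embeddedness of the lift even where the front self-crosses). A small modification of the defining functions $t^3 - 3(1-|x|^2)t$ and $\int_0^t(s^2-\delta)^2\,ds$ near $S^{n-1}$ — e.g. replacing the degenerate cusp/swallowtail profile by a genuinely immersed profile supported in an arbitrarily small neighborhood — yields a $C^\infty$-small isotopy to an embedding $\widetilde f$. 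Because the perturbation is supported near the compact sphere $S^{n-1}$ and can be taken as small as we like in any fixed $C^k$-norm on that compact region, the isotopy is $C^\infty$-small. (2) Build the formal data: The Gauss map $G := G(df) : \Bbb R^n \to \mathrm{Gr}_n(\Bbb R^{n+1}) \subset \mathrm{Gr}_n(\Bbb R^{2n+1})$, composed with the canonical Legendrian-lift construction (sending a non-vertical $n$-plane in $\Bbb R^{n+1}$ to the corresponding contact plane in $\Bbb R^{2n+1}$), gives a bundle monomorphism $T\Bbb R^n \to T\Bbb R^{2n+1}$ covering $\widetilde f$ whose image lies in $\xi_{\mathrm{std}}$; call it $F_1$. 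This is where the fact that the Gauss map of a wrinkle extends continuously across the singular locus is used crucially. (3) Connect $d\widetilde f$ to $F_1$ through fiberwise monomorphisms: near $S^{n-1}$, both $d\widetilde f$ and $F_1$ are $C^0$-close to the Gauss-map data of the original wrinkle, and away from $S^{n-1}$ the wrinkle is already a Legendrian immersion so $d\widetilde f$ and $F_1$ differ only by the small perturbation of step (1); a convexity/straight-line argument in the space of monomorphisms (possibly after a preliminary homotopy supported near $S^{n-1}$) produces the path $F_s$ with $F_0 = d\widetilde f$. Since $F_1(T\Bbb R^n) \subseteq \xi_{\mathrm{std}}$, the pair $(F_s, \widetilde f)$ is a formal Legendrian embedding in the sense of Definition \ref{def-flegemb}.

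\textbf{Main obstacle.} The delicate point is step (3): arranging that the interpolation $\{F_s\}$ stays within \emph{fiberwise monomorphisms} (not merely linear maps) at every point, particularly along the equator $S^{n-2}$ where $d\widetilde f$ has been modified and the original $df$ had a rank-$2$ drop. Away from the singular locus this is immediate since everything is close to the honest Legendrian immersion $j^1 f_0$, but near $S^{n-1}$ one must check that the ``correction'' from $d\widetilde f$ to the Gauss-map lift $F_1$ can be performed through injective maps — this amounts to verifying that the perturbation in step (1) was chosen compatibly with the extended Gauss map, i.e. that $d\widetilde f$ and $F_1$ land in the same connected component of the monomorphism space over each point, which (the monomorphism space having connected fibers for $n < 2n+1$, in fact contractible fibers being $V_n(\Bbb R^{2n+1})$ with its low-dimensional skeleton vanishing) is automatic once they are sufficiently $C^0$-close. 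So the real work is bookkeeping the supports and sizes of two perturbations — the embedding perturbation of step (1) and the monomorphism homotopy of step (3) — so that both remain $C^\infty$-small and confined near $S^{n-1}$; the underlying topology is unobstructed.
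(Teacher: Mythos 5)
Your overall strategy matches the paper's: perturb $f$ near its singular locus to an embedding $\widetilde f$, then use the softness/contractibility of the relevant spaces of $n$--planes to produce the homotopy $F_s$ with $F_0 = d\widetilde f$ and Legendrian endpoint. The implementations differ in the second half. The paper smooths only the front coordinates $(f_x, f_z)$ while keeping $f_y$, applies a $C^\infty$--small rotation so that $d\widetilde f$ is everywhere transverse to $\mathrm{span}(\partial_{y_1}, \dots, \partial_{y_n}, \partial_z)$, and then uses that the space of graphical $n$--planes ($\cong n \times (n+1)$ matrices) deformation retracts onto the graphical Lagrangian planes ($\cong$ symmetric matrices); this yields a \emph{canonical} homotopy, which is what makes the regularization of Definition \ref{def-reg} well defined up to contractible choices in families. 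You instead build an explicit Legendrian target $F_1$ from the extended Gauss map of the front and connect it to $d\widetilde f$ by a connectivity/obstruction argument (fiber $V_n(\Bbb R^{2n+1})$ is $n$--connected, base $\Bbb R^n$ is contractible and $n$--dimensional). That gives existence, which is all the lemma literally asks, but not the canonicity the paper wants downstream.

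A few inaccuracies you should repair. First, the Legendrian lift $f$ fails to be an immersion only along the equator $S^{n-2}$ (swallowtail locus), with rank drop $1$; it is immersive along the cuspidal hemispheres (it is the front $f_0$ that degenerates along all of $S^{n-1}$) — see the computation in Example \ref{eg-wrinklelift}. Second, ``replacing the cusp/swallowtail profile by a genuinely immersed profile'' of the \emph{front} is hazardous: rounding a cusp in the front creates vertical tangencies, and if you then re-lift, the $y$--coordinates blow up, so the perturbation of $f$ is not $C^\infty$--small; either perturb the map into $\Bbb R^{2n+1}$ directly (genericity suffices since $2n+1 > 2n$), or do as the paper does and smooth only $(f_x, f_z)$ while retaining $f_y$. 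Third, $V_n(\Bbb R^{2n+1})$ is $n$--connected, not contractible — enough here, but as stated it is wrong; also note that pointwise ``same connected component'' is not by itself a homotopy of sections, so you do need the contractible-base (or $n$--connectivity vs.\ $n$--dimensional base) form of the argument. Finally, your claim that away from the wrinkle $d\widetilde f$ and $F_1$ differ only by the small perturbation is false for the horizontal-lift choice of $F_1$: the actual tangent planes of the Legendrian wrinkle are graphs of nontrivial symmetric matrices (second-order data of the front), not the horizontal planes. So the straight-line step should be justified by graphicality of both plane fields after the small rotation (the paper's route), or abandoned in favor of your soft connectivity argument, which does carry the proof.
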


\begin{proof}Let $f_x, f_y : \Bbb R^n \to \Bbb R^n$ denote the $x$--coordinate and $y$--coordinate of $f$, respectively, and $f_z : \Bbb R^n \to \Bbb R$ be the $z$--coordinate. We modify the front projection $f_0 = (f_x, f_z) : \Bbb R^n \to \Bbb R^{n+1}$ by rounding out the cuspidal edges and swallowtail singularities using an appropriate smoothing function. Let $\widetilde{f}_0 = (\widetilde{f}_x, \widetilde{f}_z) : \Bbb R^n \to \Bbb R^{n+1}$ be the resulting map. Let us define,
$$\widetilde{f} := (\widetilde{f}_x, f_y, \widetilde{f}_z) : \Bbb R^n \to \Bbb R^{2n+1}.$$
We rotate the graph of $\widetilde{f}$ while fixing the coordinate $f_y$ by a $C^\infty$--small amount, to ensure $\widetilde{f}$ is transverse to the vector field $\partial/\partial z$. Let us continue to call the resulting map as $\widetilde{f}$ by a slight abuse of notation.

Thus, the image of the tangent spaces under $d\widetilde{f} : T\Bbb R^n \to T\Bbb R^{2n+1}$ are everywhere transverse to the planes spanned by $\partial/\partial y_1, \cdots, \partial/\partial y_n, \partial/\partial z$. As every rank $n$ subspace of $T_0\Bbb R^{n+1} \cong \Bbb R^{n+1}$ transverse to these coordinates is graph of a linear map $\Bbb R^n \to \Bbb R^{n+1}$, we deduce that the Gauss map $G(\widetilde{f})$ has image contained in $M_{n, n+1} \subset \mathrm{Gr}_n(\Bbb R^{n+1})$, where $M_{n, n+1}$ consist of subspaces parametrized by $n \times (n+1)$ matrices. Among these, the Lagrangian subspaces of $\mathrm{CS}(\xi)_0 = \{\partial/\partial z = 0\}$ are parametrized by the space $S_n$ of symmetric $n \times n$ matrices. As $M_{n, n+1}, S_n$ are both contractible, the former deformation retracts to the latter. Thus, we obtain the required canonical homotopy $\{F_s\}$ of bundle monomorphisms.
\end{proof}

By globalizing the model case in Lemma \ref{lem-modelreg}, we make the following definition.

\begin{definition}[Regularization]\label{def-reg} Let $f : \Lambda \to (Y, \xi)$ be a wrinkled Legendrian embedding. On every chart $W_j \subset Y$ containing a wrinkle, we modify $f$ using Lemma \ref{lem-modelreg} to obtain a smooth embedding $\widetilde{f} : \Lambda \to (Y, \xi)$ which is $C^\infty$--close to $f$, and a formal Legendrian embedding $(F_s, \widetilde{f}) : (T\Lambda, \Lambda) \to (TY, Y)$. We shall call the formal Legendrian embedding $(F_s, \widetilde{f})$ as the \emph{regularization} of the wrinkled Legendrian embedding $f$. \end{definition}

\begin{remark}The regularization map from the space of $d$--parametric family of wrinkled embeddings to the space of $d$--parametric family of formal Legendrian embeddings furnished by Definition \ref{def-reg} is canonically defined upto a contractible space of choices, for any $d$.\end{remark}

We state and prove below an $h$--principle for wrinkled Legendrian embeddings (see, \cite[Proposition 3.4]{mur}). 

\begin{theorem}[$h$--principle for wrinkled Legendrian embeddings]\label{thm-hwrinkleg} Let $(Y^{2n+1}, \xi)$ be a contact manifold, and $\Lambda^n$ be a smooth manifold. Let $(F_s, f) : (T\Lambda, \Lambda) \to (TY, Y)$ be a formal Legendrian embedding. Then for any $\varepsilon > 0$, there exists a homotopy 
$$\{(F_{s, t}, f_t) : (T\Lambda, \Lambda) \to (TY, Y) : t \in I\}$$
through formal Legendrian embeddings such that $F_{s, 0} = F_s$, $f_0 = f$, and $f_1 : \Lambda \to Y$ is a wrinkled Legendrian embedding. Moreover, the result is also true parametrically and relatively.\end{theorem}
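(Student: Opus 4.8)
The plan is to assemble this $h$-principle from the three ingredients developed in the preceding sections: the graphicality theorem (Theorem \ref{thm-graphfLeg}), the $h$-principle for wrinkled embeddings (Theorem \ref{thm-hwrink}), and the Legendrian-lifting discussion of Example \ref{eg-wrinklelift} and Definition \ref{def-wrinkleg}. First I would apply Theorem \ref{thm-graphfLeg} to the given formal Legendrian embedding $(F_s, f)$: after a $C^0$-small isotopy of the base $f$ to some $\widetilde{f}$, there is a contactomorphism $\varphi$ from a neighborhood $U \supset \widetilde{f}(\Lambda)$ onto its image in $J^1\Lambda$ over which $\varphi(\widetilde{f}(\Lambda))$ is graphical, i.e. $\pi_{\rm{front}} \circ \varphi \circ \widetilde{f} = \mathrm{id}_\Lambda$. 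This reduces the problem, after transporting everything through $\varphi$, to the model situation inside $J^1\Lambda$ with its front projection $\pi_{\rm{front}} : J^1\Lambda \to \Lambda \times \Bbb R$. I should track the formal data: pushing $(F_s, f)$ through $\varphi$ and composing with the isotopy gives a formal Legendrian embedding whose base is graphical, so the composite front map $\pi_{\rm{front}} \circ \varphi \circ \widetilde{f}$ is (isotopic to) the identity section of $\Lambda \hookrightarrow \Lambda \times \Bbb R$, but the \emph{tangential} data $F_1(T\Lambda) \subset \xi$ records a genuinely different field of Lagrangian (hence non-vertical, in the sense of Section \ref{sec-wrinkleg}) $n$-planes along $\Lambda$ than the tangent planes to the graph.

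Next I would feed this into the wrinkled-embedding $h$-principle. The Lagrangian plane field $F_1(T\Lambda) \subset \xi \subset T(J^1\Lambda)$ projects, under $d\pi_{\rm{front}}$, to a field of non-vertical $n$-planes in $T(\Lambda \times \Bbb R)$ along the graph of $\widetilde{f}$; together with the holonomic tangential data $G(d(\pi_{\rm{front}} \circ \varphi \circ \widetilde{f}))$ at time $s=0$, the family $s \mapsto d\pi_{\rm{front}}(F_s(T\Lambda))$ gives a tangential homotopy, in the sense of Theorem \ref{thm-hwrink}, of the embedding $\pi_{\rm{front}} \circ \varphi \circ \widetilde{f} : \Lambda \to \Lambda \times \Bbb R$. (One must check the planes stay non-vertical throughout; since the endpoint is Lagrangian in $\xi$ and $\xi$ is transverse to the Reeb direction, and non-verticality is an open condition, a small perturbation of the homotopy arranges this — this is exactly the point where $\varepsilon$-Legendrian approximation, Proposition \ref{prop-epsLeg}, could be invoked if needed.) Theorem \ref{thm-hwrink} then produces a homotopy of \emph{wrinkled embeddings} $\{g_t : \Lambda \to \Lambda \times \Bbb R\}$ with $g_0 = \pi_{\rm{front}} \circ \varphi \circ \widetilde{f}$ and $G(dg_1)$ $C^0$-close to the target non-vertical plane field. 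Now take the Legendrian lift: as in Example \ref{eg-wrinklelift} and the discussion preceding Definition \ref{def-wrinkleg}, a wrinkled embedding into $\Lambda \times \Bbb R$ with non-vertical Gauss map admits a canonical lift $(g_1, G(dg_1)) : \Lambda \to J^1\Lambda$ that is a Legendrian immersion away from the swallowtail loci $S^{n-2}_j$ and is a topological embedding there — precisely a wrinkled Legendrian embedding in the sense of Definition \ref{def-wrinkleg}. Transporting back by $\varphi^{-1}$ and concatenating with the initial isotopy of $f$ gives the desired $f_1 : \Lambda \to Y$.

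It remains to supply the formal data $\{F_{s,t}\}$ interpolating from $F_s$ to a family whose time-$1$ member matches $f_1$. For this I would use the lift of the tangential homotopy: the family $(g_t, G_t)$ of formal $1$-jets over $\Lambda$ (where $G_t$ interpolates from $G(dg_0)$ to $G(dg_1)$, lifting the Gromov–Eliashberg–Mishachev homotopy) is a path of formal Legendrian embeddings in $J^1\Lambda$, and its regularization near the wrinkle charts (Definition \ref{def-reg}, via Lemma \ref{lem-modelreg}) handles the loci where $dg_1$ drops rank — there the canonical contractible choice of $\{F_s\}$ from the model wrinkle provides the tangential part. Pulling back by $\varphi^{-1}$ and reparametrizing in $t$ to concatenate with (i) the graphicality isotopy and (ii) the homotopy $F_s \rightsquigarrow$ (lift of $G_0$) gives the full homotopy $(F_{s,t}, f_t)$. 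The parametric and relative statements follow because Theorem \ref{thm-graphfLeg}, Theorem \ref{thm-hwrink}, and the regularization construction are all parametric and relative.

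The main obstacle I anticipate is \textbf{bookkeeping the formal/tangential data through the wrinkle and embryo singularities}: at the swallowtail loci $S^{n-2}_j$ the lift $(g_1, G(dg_1))$ fails to be an immersion, so $F_{s,1}$ cannot literally equal $df_1$ there, and one must verify that the regularization of Definition \ref{def-reg} glues consistently with the convex-integration homotopy $\{G_t\}$ coming out of Theorem \ref{thm-hwrink} — i.e. that the two descriptions of the time-$1$ tangential data (the global convex-integration one and the local model-wrinkle one) agree up to homotopy rel the boundary of the wrinkle charts. In the parametric case one has the additional headache of embryo singularities, where wrinkles are born or die; here one needs the continuity clause in Definition \ref{def-wrinkleg} together with the family version of Lemma \ref{lem-modelreg}, and checking that the regularization varies continuously across an embryo moment is the delicate point. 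Everything else is either a direct appeal to an already-proven theorem or a routine openness/transversality perturbation.
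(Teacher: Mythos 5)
Your overall route is exactly the paper's: make the base graphical in $J^1\Lambda$ via Theorem \ref{thm-graphfLeg}, extract a tangential homotopy of the Gauss map of the front, apply Theorem \ref{thm-hwrink}, lift the resulting wrinkled embeddings along $\pi_{\rm{front}}$ as in Example \ref{eg-wrinklelift}, and regularize (Definition \ref{def-reg}), with the parametric/relative case handled by the parametric versions of the cited results. So the architecture is the paper's proof.

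There is, however, one concrete step that would fail as written: defining the tangential homotopy by $G_t := d\pi_{\rm{front}}(F_t(T\Lambda))$ and claiming that non-verticality (really, injectivity of $d\pi_{\rm{front}}$ on $F_t(T_x\Lambda)$) can be arranged by a small perturbation. The $n$-planes in $T(J^1\Lambda)$ meeting the vertical distribution $\ker d\pi_{\rm{front}}$ nontrivially form a codimension-two stratum of the Grassmannian (and, at $t=1$, the Lagrangian planes meeting the vertical Lagrangian form the codimension-one Maslov cycle), so over the $(n+1)$-dimensional parameter space $\Lambda \times I$ --- indeed already over $\Lambda$ at $t=1$ when $n \geq 1$ --- a generic perturbation does not avoid this locus; at such points $d\pi_{\rm{front}}(F_t(T\Lambda))$ is not an $n$-plane at all, and $\varepsilon$-Legendrian approximation does not help, since the defect is in the plane field rather than the base embedding. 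The repair is what "projecting to the front" means in the paper: under the identification of $J^1\Lambda$ with the non-vertical planes in $\mathrm{Gr}_n(\Lambda \times \Bbb R)$, the graphical embedding $\varphi \circ \widetilde{f}$ tautologically determines a non-vertical plane field $G_1$ along the front, namely $G_1(x) = d\pi_{\rm{front}}\bigl(\xi_{\varphi(\widetilde{f}(x))}\bigr)$, which contains your candidate plane and coincides with it wherever the latter has full rank; and since the non-vertical planes at a fixed front point form an affine, hence convex, set (graphs of linear maps $T_x\Lambda \to \Bbb R$), the straight-line interpolation from the Gauss map $G_0$ of the graphical front to $G_1$ is a legitimate tangential homotopy. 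The formal data $F_s$ is then needed only where you in fact use it, to certify that the regularized lift lies in the original formal class. With this replacement the rest of your argument, including the bookkeeping through wrinkles and embryos, is exactly the paper's proof.
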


\begin{proof}By Theorem \ref{thm-graphfLeg}, after an isotopy we can find a neighborhood $U$ of $f(\Lambda)$ contactomorphic to $J^1 \Lambda$ such that $f$ is graphical with respect to the front projection $\pi_{\rm{front}} : J^1\Lambda \to \Lambda \times \Bbb R$. By projecting to the front, we obtain a smooth embedding $\pi_{\rm{front}} \circ f : \Lambda \to \Lambda \times \Bbb R$ as well as a tangential homotopy $\{G_t : \Lambda \to \mathrm{Gr}_n(\Lambda \times \Bbb R) : t \in I\}$ of the Gauss map of $\pi_{\rm{front}} \circ f$. By Theorem \ref{thm-hwrink}, we can find a homotopy of wrinkled embeddings $\{g_t : \Lambda \to \Lambda \times \Bbb R : t \in I\}$ such that $G(dg_t)$ is $C^0$--close to $G_t$. We conclude the proof by lifting $g_t$ to wrinkled Legendrian embeddings $\widetilde{g}_t : \Lambda \to J^1 \Lambda$ and regularizing (Definition \ref{def-reg}). \end{proof}

\section{Loose Legendrian embeddings and removal of wrinkles}\label{sec-loose}

In general, Theorem \ref{thm-hwrinkleg} cannot be improved. That is, the wrinkle singularities appearing in the singular Legendrian embeddings (obtained by an isotopy from the given formal Legendrian embedding) cannot be removed by a further Legendrian isotopy. In this section, we introduce a large class of Legendrian embeddings called \emph{loose Legendrians} where this removal is possible, as observed and demonstrated by Murphy \cite{mur}. 

\subsection{Definition and properties of loose charts} We shall define a loose Legendrian as a Legendrian submanifold which contain a very specific adapted chart, called a loose chart. These charts are high-dimensional generalizations of zigzags, which were discussed in Observation \ref{obs-intpzigzag} and the beginning of Section \ref{sec-wrinkemb}. Let us start with some preliminaries regarding these one-dimensional zigzags.

\begin{definition}[Legendrian zig-zag]\label{def-legzigzag}Consider the model zig-zag $\psi_\delta$ (cf. Figure \ref{fig-modelzig}), appropriately cut-off so that $\psi_\delta$ is compactly supported on an interval around the origin containing the semicubical cusp singularities. This defines a front diagram $Z \subset \Bbb R^2_{xz}$ satisfying the properties in Proposition \ref{prop-legfr}. Thus, it admits a lift to a Legendrian arc. We denote $\mathcal{Z} \subset (\Bbb R^3, \xi_{\rm{std}})$ to be this Legendrian lift and call it the \emph{Legendrian zig-zag}.\end{definition}

\begin{figure}[h]
\centering
\includegraphics[scale=0.55]{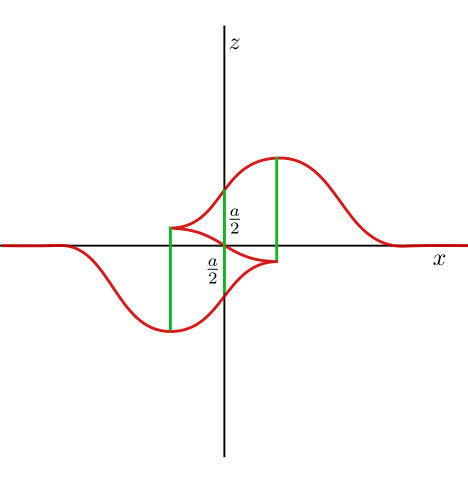}
\caption{A Legendrian zig-zag, with Reeb chords (green) of size $a$.}
\label{fig-legzig}
\end{figure}

\noindent
Note that for any Legendrian zig-zag, there must exist a pair of times $t, t' \in \Bbb R$ such that
\begin{enumerate}
\item The $x$--coordinates of $\psi_\delta(t)$ and $\psi_\delta(t')$ agree, and 
\item The slopes of the parametric curve given by $\psi_\delta$ at $\psi_\delta(t)$ and $\psi_\delta(t')$ agree. 
\end{enumerate}
Therefore, there must exist a flowline of the Reeb vector field $\partial/\partial z$ for $(\Bbb R^3, \xi_{\rm{std}})$ which intersects $\mathcal{Z}$ at a pair of points. In light of this, we give the following definition.

\begin{definition}A $z$--segment joining two points of $\mathcal{Z} \subset \Bbb R^3$ is called a \emph{Reeb chord} of $\mathcal{Z}$. The (Euclidean) length of the smallest Reeb chord is called the \emph{action} of $\mathcal{Z}$. We shall define $\mathcal{Z}_a$ to be the Legendrian zig-zag of action $a$ given by lifting the front diagram in Figure \ref{fig-legzig}. \end{definition}

\begin{remark}We have drawn $\mathcal{Z}_a \subset \Bbb R^2_{xz}$ so that there is a Reeb chord of length $a$ joining the heighest and the lowest points along every $\{x = \mathrm{const}.\}$ line intersecting $\mathcal{Z}_a$. Moreover, the Reeb chord passing through the origin has the origin as its midpoint. These arrangements are made precise for convenience of discussion later; the precise model of the zig-zag is mostly unimportant for validity of the results to come, as long as one fixes a choice beforehand.\end{remark}

Let $n \geq 2$. Let us write $\Bbb R^{2n+1} = \Bbb R^3 \times \Bbb R^{2n-2}$ and equip it with coordinates $(x, y, z, p, q)$ where $p = (p_1, \cdots, p_{n-1})$ and $q = (q_1, \cdots, q_{n-1})$. Let $\alpha_{\mathrm{std}} := \alpha_0 - \lambda$ be the standard contact form on $\Bbb R^{2n+1}$, where $\alpha_0 = dz - ydx$ is the standard contact form on $\Bbb R^3$ and $\lambda = \sum p_i dq_i$ is the tautological $1$--form (Example \ref{eg-phase}) on $\Bbb R^{2n-2} = T^* \Bbb R^{n-1}$. 

\begin{definition}[Loose charts]\label{def-loose}\leavevmode
\begin{enumerate}
\item $\mathcal{Z}_a \subset (\Bbb R^3, \alpha_0)$ be a Legendrian zig-zag of action $a$,
\item $C \subset \Bbb R^3$ be interior of a compact cube containing a Legendrian zig-zag $\mathcal{Z}_a$,
\item Let $B_\rho = \{(p, q) : |p| < \rho, |q| < \rho\} \subset \Bbb R^{2n-2}$, and
\item Let $J_\rho := \{p = 0\} \subset B_\rho$. 
\end{enumerate}
The pair $(C \times B_\rho, \mathcal{Z}_a \times J_\rho)$ is called a \emph{loose chart} if the \emph{size parameter} $\rho^2/a$ satisfies the quantitative constraint $\rho^2/ a > 1/2$.
\end{definition}

\begin{figure}[h]
\centering
\includegraphics[scale=0.2]{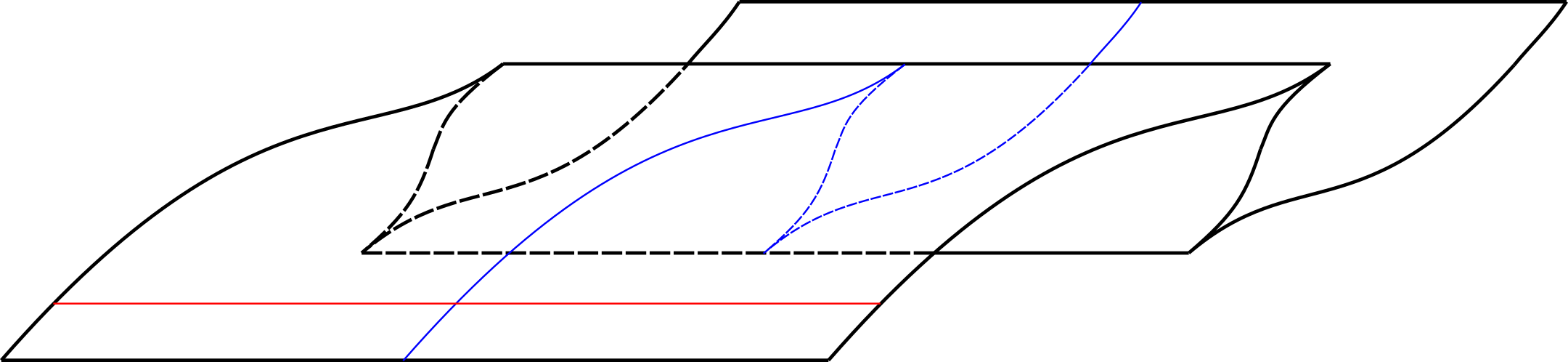}
\caption{A loose chart, with width (red) $2\rho$ and a vertical slice (blue) a zig-zag of action $a$. The condition $\rho^2/a > 1/2$ suggests the chart is sufficiently wide.}
\label{fig-loose}
\end{figure}

\begin{remark}[Size parameter is scale-invariant]\label{rmk-sizescale}Observe that for any constant $c > 0$, 
$$\varphi_c : \Bbb R^{2n+1} \to \Bbb R^{2n+1}, \;\; \varphi_c(x, y, z, p, q) = (cx, cy, c^2z, cp, cq),$$
is a contactomorphism of $(\Bbb R^{2n+1}, \xi_{\rm{std}})$ which scales $\alpha_{\rm{std}}$ by $c$. We call $\varphi_c$ the \emph{contact scaling} of $\Bbb R^{2n+1}$ by $c > 0$. If $(C \times B_\rho, \mathcal{Z}_a \times J_\rho)$ is as above (not necessarily a loose chart), then 
$$\varphi_c(C \times B_\rho, \mathcal{Z}_a \times J_\rho) = (c \cdot C \times B_{c\rho}, \mathcal{Z}_{c^2 a} \times J_{c\rho})$$
has size parameter $(c\rho)^2/(c^2 a) = \rho^2/a$ unchanged. In particular, loose charts are preserved under contact scaling.

For a Legendrian submanifold $\Lambda \subset \Bbb R^{2n+1}$, we shall use the convenient notation $c \cdot \Lambda := \varphi_c(\Lambda) \subset \Bbb R^{2n+1}$ to denote the contact scaling of $\Lambda$. \end{remark}

\begin{definition}[Loose Legendrian submanifolds]\label{def-looseLegsub}
A Legendrian submanifold $\Lambda \subset (V, \xi)$ in a contact manifold is called a \emph{loose Legendrian} if there is a Darboux chart $U \subset V$, a loose chart $(C \times B_\rho, \mathcal{Z}_a \times J_\rho)$ and a contactomorphism 
$$\varphi : U \to C \times B_\rho,$$
where $\varphi(U \cap \Lambda) = \mathcal{Z}_a \times J_\rho$. In this case, we shall say $(U, U \cap \Lambda)$ is a \emph{loose chart in $\Lambda$.}\end{definition}

We give a preliminary example of a loose Legendrian, obtained from spinning fronts of Legendrian knots containing Legendrian zig-zags. 

\begin{example}[Spun zig-zags]\label{eg-zigspin} Let $\Lambda_0 \subset (\Bbb R^3, \xi_{\mathrm{std}})$ be a Legendrian knot containing a Legendrian zig-zag as an arc. By spinning its front projection in $\Bbb R^2_{xz}$ along an appropriate axis, we produce a front diagram in $\Bbb R^3$. This has only cuspidal singularities, therefore admits a Legendrian lift $\Lambda \subset (\Bbb R^5, \xi_{\mathrm{std}})$. By construction, we have an open set $U \subset \Bbb R^5$ and a contactomorphism,
$$\varphi : (U, U \cap \Lambda) \stackrel{\cong}{\to} (C \times T^*S^1, \mathcal{Z}_a \times S^1),$$
where the contact structure on $C \times T^*S^1$ is given by the $1$--form $\alpha = dz - ydx - p_\theta d\theta$, $(x,y,z)$ being Euclidean coordinates on $C$ and $(\theta, p_\theta)$ being the generalized position-momentum coordinates for $T^*S^1$. We model $S^1 = \Bbb R/\Bbb Z$ as a circle of unit arclength. 

To find a loose chart, our idea is to take an arc $J_\rho \subset S^1$ of fixed size on the circle, and consider $\mathcal{Z}_a \times J_\rho$ inside $C \times T^*J_\rho$. The difficulty is that the action $a$ of the zig-zag may be large, which may cause the failure of this chart to be loose. We shall rememdy this by applying a contact isotopy which uniformly compresses the circle-parametric family of zigzags. To this end, apply the contact isotopy,
\begin{gather*}\psi : C \times T^*S^1 \to C \times T^*S^1,\\
\psi(x, y, z, \theta, p_\theta) = \left (x, y, \frac{z}{400a},\theta,\frac{p_\theta}{400a}\right ).
\end{gather*}
Notice $\psi$ takes $\mathcal{Z}_a \times S^1$ to $\mathcal{Z}_{1/400} \times S^1$. Let $\rho \in (1/20, 1)$ and $J_\rho \subset S^1$ be an arc of length $2\rho$. Then, $C \times B_{\rho}$ contactly embeds in $C \times T^*J_\rho \subset C \times T^*S^1$. Let $V = (\psi \circ \rho)^{-1}(C \times B_\rho)$. Therefore, we have a contactomorphism,
$$\psi \circ \rho : (V, V \cap \Lambda) \stackrel{\cong}{\to} (C \times B_\rho, \mathcal{Z}_{1/400} \times J_\rho).$$
This is the required loose chart, as the size parameter $\rho^2/(1/400) = (40\rho)^2 > 1 > 1/2$.
\end{example}

\begin{remark}\label{rmk-zigspin}The construction above generalizes to Legendrian knots in $(\Bbb R^{2n+1}, \xi_{\rm{std}})$ produced by iterated spinning constructions applied to a Legendrian front diagram in $\Bbb R^2$ containing a zig-zag. Instead of a circle-parametric family of zig-zags, this will produce a torus-parametric family of zig-zags. Hence, such a knot will contain a contactly embedded pair $(C \times T^* T^{n-1}, \mathcal{Z}_a \times T^{n-1})$. The argument for finding a loose chart in Example \ref{eg-zigspin} now goes through mutatis mutandis.\end{remark}

The following proposition is a generalization of Example \ref{eg-zigspin}, and gives a criterion for looseness of certain Legendrians submanifolds in $1$--jet bundles, which can be applied without needing to establish any quantiative estimates, as for instance in Definition \ref{def-loose}. In a nutshell, the result is that if one can isolate a Legendrian zig-zag in the front projection by a smoothly embedded transverse disk, then the Legendrian is automatically loose.

\begin{proposition}Let $Q^n$ be a smooth $n$--dimensional manifold with $n \geq 2$, and $\Lambda \subset (J^1 Q, \xi)$ be a Legendrian submanifold. Let $\pi : J^1 Q \to Q \times \Bbb R$ denote the front projection. Suppose there exists a smoothly embedded disk $D \subset Q \times \Bbb R$ such that
\begin{enumerate}[label=(\arabic*), font=\normalfont]
\item $D$ transversely intersects the front projection $\pi(\Lambda) \subset Q \times \Bbb R$, i.e. $D$ transversely intersects the smooth locus of $\pi(\Lambda)$ as well as the cuspidal locus of $\pi(\Lambda)$ and is disjoint from all other singularities of higher codimension,
\item There is a diffeomorphism of $D$ to the standard unit disk $D^2$ taking $D \cap \pi(\Lambda)$ to the front diagram $Z \subset D^2$ of a Legendrian zig-zag.
\end{enumerate}
Then, $\Lambda$ is a loose Legendrian.
\end{proposition}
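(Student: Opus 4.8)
The plan is to upgrade the hypothesis, which is \emph{smooth} transversality data near a $2$--disk in the front, into the \emph{contact-geometric} model $(C \times B_\rho, \mathcal Z_a \times J_\rho)$ required by Definition~\ref{def-loose}, and then to rescale to satisfy the quantitative constraint $\rho^2/a > 1/2$ using the contact scaling of Remark~\ref{rmk-sizescale}. First I would work in a neighborhood of the embedded disk $D \subset Q \times \Bbb R$. Since $D$ is a smoothly embedded $2$--disk meeting the smooth and cuspidal loci of $\pi(\Lambda)$ transversely (and nothing else), I would choose coordinates $(x,z)$ on $D$, extend them to coordinates $(x, q_1, \dots, q_{n-1})$ on $Q$ near $\pi(D)$ with $D = \{q = 0\}$ in the $(x, q, z)$--chart, and thereby identify a neighborhood of $D$ in $J^1 Q$ with a neighborhood of $\{q = 0, p = 0\}$ in $J^1(\Bbb R \times \Bbb R^{n-1}) = \Bbb R^{2n+1}$ with its standard contact form $\alpha_{\rm std} = dz - y\,dx - \sum p_i\,dq_i$, in such a way that $\pi$ becomes the Euclidean front projection. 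This is a contactomorphism onto image, using Weinstein's theorem (Theorem~\ref{thm-weinstein}) applied to the Legendrian lift of $D$, or more directly by choosing the $q$--coordinates so that the fiber Legendrian directions of $\pi$ are preserved.

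The second step is to arrange that in these coordinates $\Lambda$ really is a product $\mathcal Z_a \times \{q = 0\}$ rather than merely something whose front meets $D$ in the front $Z$ of a zig-zag. By hypothesis $(2)$, $D \cap \pi(\Lambda)$ is (after the chosen diffeomorphism) exactly the front diagram $Z$, so the portion of $\Lambda$ lying over $D$ is a Legendrian arc whose front is $Z$; by Proposition~\ref{prop-legfr} its Legendrian lift is, up to the local model of cusps, the Legendrian zig-zag $\mathcal Z_a$ for some action $a > 0$, sitting inside $\{q = 0, \text{ slice } p = \text{graph}\}$. Now I would use transversality once more: because $D$ meets $\pi(\Lambda)$ transversely, the nearby slices $\{q = \text{const}\}$ also meet $\pi(\Lambda)$ in diffeomorphic copies of $Z$, so a whole neighborhood of this arc in $\Lambda$ is a family of zig-zags parametrized by $q$. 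Applying a Moser-type argument (Lemma~\ref{lem-moser}) supported near $\{q = 0, p = 0\}$ — exactly as in the construction of Darboux/Weinstein coordinates, or as in the local normal form for Legendrian submersions in Lemma~\ref{lem-loctrivlegs} — I would straighten this $q$--family so that, in possibly smaller coordinates $(C \times B_\rho, \mathcal Z_a \times J_\rho)$ with $J_\rho = \{p = 0\}$, the intersection $\Lambda \cap (C \times B_\rho)$ is precisely $\mathcal Z_a \times J_\rho$.

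The final step handles the size parameter. The pair produced above is of the form required by Definition~\ref{def-loose} but its size parameter $\rho^2/a$ need not exceed $1/2$: $\rho$ may be forced small by the size of the original chart while $a$ is whatever it is. Here I apply the contact scaling $\varphi_c(x,y,z,p,q) = (cx, cy, c^2 z, cp, cq)$ from Remark~\ref{rmk-sizescale}. Scaling by $c$ sends $(C \times B_\rho, \mathcal Z_a \times J_\rho)$ to $(c \cdot C \times B_{c\rho}, \mathcal Z_{c^2 a} \times J_{c\rho})$, whose size parameter is unchanged — so scaling \emph{does not} help directly. Instead, I note that the zig-zag model $\mathcal Z_a$ is not rigid: one is free to choose the shape of the zig-zag (Definition~\ref{def-legzigzag} fixes a choice, but the looseness condition only depends on \emph{existence} of such a chart), and by choosing a zig-zag of smaller action inside the same cube $C$ — equivalently, applying a contact isotopy supported in $C \times B_\rho$ that compresses the $z$-- and $p$--directions, exactly as the isotopy $\psi$ in Example~\ref{eg-zigspin} — I can make $a$ as small as I like while keeping $\rho$ fixed, thus forcing $\rho^2/a > 1/2$. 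Composing all the identifications yields a Darboux chart $U \subset J^1 Q$ and a contactomorphism $\varphi : U \to C \times B_\rho$ with $\varphi(U \cap \Lambda) = \mathcal Z_a \times J_\rho$ a loose chart, so $\Lambda$ is loose by Definition~\ref{def-looseLegsub}.

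\textbf{Main obstacle.} The delicate point is the second step: passing from the purely \emph{smooth} transversality hypothesis on $D$ to a \emph{contact} product normal form for $\Lambda$ near the zig-zag arc. One must simultaneously (i) put the ambient contact structure in the standard form with $\pi$ as the Euclidean front, (ii) arrange that the $q$--family of front slices is literally constant (a product), and (iii) keep the cuspidal locus in its standard model throughout — all by isotopies that respect the contact structure. This is a relative parametric normal-form statement and is where the real work (a careful application of Moser's trick adapted to the stratified singular set of the front) lies; the scaling/action-adjustment at the end is comparatively soft.
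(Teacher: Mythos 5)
Your first two steps follow the same route as the paper (normal form near $D$, product structure of the front from transversality, straightening by a lifted front isotopy), but the final step — the only place where the quantitative condition $\rho^2/a>1/2$ is produced — is wrong. You propose to fix the size parameter of the chart $(C\times B_\rho,\mathcal{Z}_a\times J_\rho)$ by a contact isotopy \emph{supported in} $C\times B_\rho$ that compresses the $z$-- and $p$--directions, ``making $a$ as small as you like while keeping $\rho$ fixed''. (Note also that you cannot simply ``choose a zig-zag of smaller action inside $C$'': the zig-zag must be $U\cap\Lambda$ for the given $\Lambda$.) If such a compression existed, every chart of this product form, of arbitrary size parameter, would contain a loose chart; the constraint $\rho^2/a>1/2$ would be vacuous, contradicting Proposition \ref{prop-loosenonsq} (a loose chart admits no relative contact embedding into a pseudo-loose chart) and hence Proposition \ref{prop-saucernloose}. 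The quantitative obstruction is exactly the one driving Proposition \ref{prop-manyloose}: inside the box the compression must be cut off in the $q$--variable, and on the cut-off region the isotoped Legendrian acquires slopes $|p|=|\partial z/\partial q|$ of order $a/(2\rho)$; keeping it inside $B_\rho$ forces $a/(2\rho)\lesssim\rho$, i.e.\ $\rho^2/a\gtrsim 1/2$ — the very inequality you are trying to arrange. In Example \ref{eg-zigspin} no cut-off is needed only because the transverse directions form a closed circle/torus. The paper obtains the size condition by a different mechanism: it works in the full preimage $\pi^{-1}(U)\cong(D^2\times\Bbb R)\times T^*D^{n-1}(\varepsilon/2)$, where the $p$--directions are entire cotangent fibers and the relevant piece of $\Lambda$ lies in $\{p=0\}$, and applies the anisotropic contactomorphism expanding $q$ by $2\rho/\varepsilon$ while compressing $p$ by the reciprocal factor (together with a rescaling in $(y,z)$ after first shrinking the front). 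This leaves the zig-zag factor and its action untouched and makes the $q$--width, hence $\rho$, as large as desired. The available freedom is thus expansion of $q$ at the expense of the unbounded $p$, not compression of the action at fixed $\rho$; your proposal never identifies this, and as written the step fails.

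A secondary gap: in your first step you assume coordinates near $D$ in $Q\times\Bbb R$ with $D=\{q=0\}$ in which $\pi$ becomes the Euclidean front projection. This requires $D$ to be \emph{vertical} (tangent to the $\Bbb R$--factor), which is not part of the hypothesis. The paper first replaces $D$ by a vertical disk swept out by vertical segments through $D\cap\pi(\Lambda)$ (possible because the front is nowhere vertical and $D$ is transverse to it, and this does not change the intersection with the front). Without this reduction, an arbitrary tubular coordinate system around $D$ need not be compatible with the front fibration, and the identification of $\mathrm{Op}(\pi^{-1}(D))$ with the standard model $J^1\Bbb R^n$ can fail where tangent planes of the front become vertical in the new coordinates. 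This part is fixable, but it must be addressed.
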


\begin{proof}
First, we shall reduce to the case where the disk $D \subset Q \times \Bbb R$ is \emph{vertical}, i.e.
$$T_{(q, y)}(\{q\} \times \Bbb R) \subset T_{(q, y)}D \text{ for all } (q, y) \in Q \times \Bbb R.$$
To accomplish this, observe $\pi(\Lambda) \subset Q \times \Bbb R$ is nowhere vertical as it is a front projection of a Legendrian submanifold of $J^1 Q$. Let $D' \subset Q \times \Bbb R$ be the union of vertical segments of width $\epsilon > 0$ passing through every point of $D \cap \pi(\Lambda)$. As $D \cap \pi(\Lambda)$ is topologically an interval, $D'$ is a topological disk. We smoothen the corners to obtain a smooth disk. By choosing $\epsilon > 0$ sufficiently small, we can ensure $D' \subset Q \times \Bbb R$ is $C^\infty$--close to a tubular neighborhood $\mathrm{Op}_D(D \cap \pi(\Lambda))$ of $D \cap \pi(\Lambda)$ inside $D$. Therefore, $D'$ is isotopic to $\mathrm{Op}_D(D \cap \pi(\Lambda))$. By transversality of $D$ with $\pi(\Lambda)$, we obtain $D' \cap \pi(\Lambda) = D \cap \pi(\Lambda)$. Thus, $D'$ is a vertical disk with the required properties. We therefore assume, without loss of generality, that $D$ is already vertical.

Since $D$ is transverse to $\pi(\Lambda)$, we may choose a neighborhood of the disk $D \subset Q \times \Bbb R$ of the form $U = D^2 \times D^{n-1}(\varepsilon)$ for some $\varepsilon > 0$, where $D = D^2 \times \{0\}$, $D^{n-1}(\varepsilon) \subset Q$, and
$$U \cap \pi(\Lambda) = (D \cap \pi(\Lambda)) \times D^{n-1}(\varepsilon).$$
By hypothesis, there is a diffeomorphism of pairs $(D, D \cap \pi(\Lambda)) \cong (D, Z)$. We compose this with the diffeomorphism $(D, Z) \cong (D, \delta Z)$ scaling the disk by $\delta$. By the smooth isotopy extension theorem, we may apply an ambient isotopy of $Q \times \Bbb R$ modifying the front $\pi(\Lambda)$ such that,
$$U \cap \pi(\Lambda) = \delta Z \times D^{n-1}(\varepsilon/2).$$
We cut off the ambient isotopy in the radial direction of the second component inside the region $D^2 \times (D^{n-1}(\varepsilon) \setminus D^{n-1}(\varepsilon/2)) \subset U$, making it compactly supported on $U$. We also modify $\Lambda$ accordingly by lifting the isotopy of the front to a contact isotopy of $J^1 Q$.

Since $D \subset Q \times \Bbb R$ is vertical, there is a contactomorphism,
$$\varphi : \pi^{-1}(U) \stackrel{\cong}{\to}  (D^2 \times \Bbb R) \times T^*D^{n-1}(\varepsilon/2).$$ 
Here, the codomain of $\varphi$ is equipped with the contact form $\alpha_{\rm{std}} = \alpha_0 - \lambda$, with $\alpha_0 = dz - ydx$ and $\lambda = \sum p_i dq_i$, where $(x, z)$ denotes the coordinates on the first factor $D^2$, $y$ denotes the coordinates on the second factor $\Bbb R$ and $(q, p)$ denotes the generalized position-momentum coordinates on $T^*D^{n-1}(\varepsilon)$. Recall the notation from Definition \ref{def-loose}. Observe, 
$$\varphi(\Lambda \cap \pi^{-1}(U)) = \delta\mathcal{Z} \times \{0\} \times \{|q| < \varepsilon/2, p = 0\} \subset D^2 \times \Bbb R \times T^* D^{n-1}(\varepsilon/2),$$
where $\mathcal{Z}$ is the Legendrian zig-zag given by the lift of $Z \subset D^2 \subset \Bbb R^2_{xz}$ to $(\Bbb R^3, \xi_\mathrm{std})$. Suppose the action of $\mathcal{Z}$ is $a$. Fix $\rho > 0$, and consider the contactomorphism,
\begin{gather*}\psi : (D^2 \times \Bbb R) \times T^*D^{n-1} \to (D^2 \times \Bbb R) \times T^*D^{n-1}, \\
\psi(x, z, y, q, p) = \left (x, \frac{z}{\delta}, \frac{y}{\delta}, \frac{2\rho q}{\varepsilon}, \frac{\varepsilon p}{2\rho \delta} \right ).\end{gather*}
Notice $\psi$ takes $\delta \mathcal{Z} \times \{0\} \times \{|q| < \varepsilon/2, p = 0\}$ to $\mathcal{Z} \times \{0\} \times \{|q| \leq \rho, p = 0\}$. Let $C \subset D^2 \times \Bbb R$ be a cube containing $\mathcal{Z}$. Let $V := (\psi \circ \varphi)^{-1}(C \times B_\rho)$. Then, we have a contactomorphism,
$$\psi \circ \varphi: (V, \Lambda \cap V) \stackrel{\cong}{\to} (C \times B_\rho, \mathcal{Z}_a \times J_\rho).$$
If $\rho$ is chosen sufficiently large so that $\rho^2/a > 1/2$, this will be a loose chart for $\Lambda$.
\end{proof}

The following proposition records a crucial fact regarding loose charts which uses the quantitative estimate $\rho^2/a > 1/2$ on the size parameter in an essential way.

\begin{proposition}[One loose chart contains arbitrarily many]\label{prop-manyloose}For any $\sigma > 1/2$, any loose chart contains arbitrarily many disjoint, isocontactly embedded copies of loose charts of size parameter $\sigma$.\end{proposition}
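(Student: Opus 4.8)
The plan is to produce the copies inside arbitrarily small neighbourhoods of disjoint slices of the zig-zag family, and to reduce the construction of a single copy to a model statement in which the size-parameter inequality is the only quantitative input.

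Write the given loose chart as $(C'\times B_{\rho'},\mathcal{Z}_{a'}\times J_{\rho'})$ with $\rho'^2/a'>1/2$. Fix $N$. One can choose points $q_1,\dots,q_N\in\{|q|<\rho'\}\subset\mathbb{R}^{n-1}$ and a radius $r>0$ (small, depending on $N$) so that the balls $\{|q-q_i|<r\}$ are pairwise disjoint and contained in $\{|q|<\rho'\}$; then the Legendrian slices $\mathcal{Z}_{a'}\times\{|q-q_i|<r\}$, and suitably thin neighbourhoods $\mathcal{N}_i$ of them inside $C'\times B_{\rho'}$, are pairwise disjoint. Translation in the $q$-coordinates, $(x,y,z,p,q)\mapsto(x,y,z,p,q+q_i)$, is a strict contactomorphism of $(\mathbb{R}^{2n+1},\alpha_{\mathrm{std}})$, so it suffices to prove the following local statement: \emph{a loose chart $(C\times B_\rho,\mathcal{Z}_a\times J_\rho)$ of size parameter $\rho^2/a>1/2$ contains, inside an arbitrarily thin neighbourhood of the slice $\mathcal{Z}_a\times\{|q|<r\}$ of its Legendrian (any $r>0$), an isocontactly embedded loose chart of size parameter $\sigma$.} After truncating we are asking for a contact embedding of a model loose chart $(\widetilde C\times B_{\widetilde\rho},\mathcal{Z}_{\widetilde a}\times J_{\widetilde\rho})$ with $\widetilde\rho^2/\widetilde a=\sigma$ into $C\times\{|p|<\rho,\,|q|<r\}$ carrying $\mathcal{Z}_{\widetilde a}\times J_{\widetilde\rho}$ into $\mathcal{Z}_a\times\{p=0,\,|q|<r\}$. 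Note that any such embedding necessarily carries $\mathcal{Z}_{\widetilde a}$ \emph{onto} the full zig-zag $\mathcal{Z}_a$ (a proper sub-arc of a zig-zag containing both of its cusps is again a zig-zag, hence all of $\mathcal{Z}_a$ up to its cutoff tails), so the embedding must change the action of the zig-zag from $\widetilde a$ to $a$ and cannot be a product map; in particular it does not suffice simply to restrict $B_\rho$ and rescale by Remark \ref{rmk-sizescale}. The content is a genuine \emph{compression}: one produces a compactly supported contact Hamiltonian on $C\times B_\rho$ whose time-one flow compresses the action of $\mathcal{Z}_a$ while spreading the compression across the $q$-ball, and then fits the transverse $B_{\widetilde\rho}$-directions into the remaining room. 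The precise bookkeeping shows this is possible precisely when $\rho^2/a>1/2$ and $\sigma>1/2$ both hold, with $1/2$ the sharp threshold extracted from the explicit model zig-zag $\psi_\delta$ (Section \ref{sec-wrinkemb}, Definition \ref{def-legzigzag}) and the standard contact form $dz-y\,dx-\sum p_i\,dq_i$ — morally a non-squeezing-type comparison between the transverse symplectic area $\sim\rho^2$ and the action $a$.

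The main obstacle is exactly this model construction with its sharp constant: one must write down the compressing contact isotopy explicitly, check that the Legendrian family stays embedded with only zig-zag singularities (cuspidal edges and unfurled swallowtails, no worse) throughout, and verify that the only constraint that arises is the size-parameter inequality. The constant $1/2$ appears from the precise shape of $\psi_\delta$ together with the relation, forced by the Legendrian condition, between the amount the contact Hamiltonian must change — comparable to $a$, since it has to bring the two sheets of the zig-zag's front together — and the radius $\rho$ of the $q$-ball over which it is cut off, which enters quadratically. Once this lemma is in hand, the remaining steps (packing disjoint $q$-balls, the $q$-translation contactomorphisms, and passing to thin disjoint neighbourhoods $\mathcal{N}_i$ of the slices) are elementary, and assembling the $N$ copies is immediate.
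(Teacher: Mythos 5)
There is a genuine gap, and in fact your reduction step is not just incomplete but false. You propose to localize the problem to ``arbitrarily thin'' neighbourhoods $\mathcal{N}_i \approx C\times\{|p|<\rho,\ |q-q_i|<r\}$ of slices $\mathcal{Z}_a\times\{|q-q_i|<r\}$ with $r$ small (depending on $N$), and then to find a loose chart of size parameter $\sigma$ relatively embedded in each slab. But the linear map $(x,y,z,p,q)\mapsto(x,y,z,\lambda p, q/\lambda)$ is a strict contactomorphism of $(\mathbb{R}^{2n+1},\alpha_{\rm std})$ preserving $\{p=0\}$, and with $\lambda=\sqrt{r/\rho}$ it identifies $\bigl(C\times\{|p|<\rho,|q|<r\},\ \mathcal{Z}_a\times\{p=0,|q|<r\}\bigr)$ with $\bigl(C\times B_{\sqrt{r\rho}},\ \mathcal{Z}_a\times J_{\sqrt{r\rho}}\bigr)$, a chart of size parameter $r\rho/a$. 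Once $r<a/(2\rho)$ this is a \emph{pseudo-loose} chart, and Proposition \ref{prop-loosenonsq} of this very paper says that a loose chart admits no relative contact embedding into a pseudo-loose chart. In your own heuristic language the obstruction is visible directly: compressing the action by $\sim a$ while cutting off over a $q$-ball of radius $r$ forces $|p|=|\partial z/\partial q|\gtrsim a/r$, which leaves $\{|p|<\rho\}$ as soon as $r$ is small. So the compression cannot be localized in $q$ below the scale set by the size parameter; your ``local statement (any $r>0$)'' is exactly the statement that non-squeezing fails.

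The paper's proof avoids this by using the \emph{full} $q$-width for the compression. After normalizing $a=2$ (so looseness reads $\rho>1$), one scales the zig-zag fibrewise by $h_\delta(q)=m_\delta(|q|+1-\rho)$, where $m_\delta$ is a ramp function with $m_\delta'\le 1$; the transition from action $2$ down to action $2\delta$ is spread over the annulus of width $\approx 1$ near $|q|=\rho$, which yields $|p|=|\partial z/\partial q|\le(1-\delta)/\rho<\rho$ precisely because $\rho^2>1$ — this is the one place the hypothesis $\rho^2/a>1/2$ enters. The new loose chart then sits over the central ball $|q|<\rho-1+\delta/2$, whose radius is bounded below, with action $2\delta$ and hence size parameter $(\rho-1+\delta/2)^2/(2\delta)\to\infty$ as $\delta\to 0$; disjoint copies come from squeezing at disjoint subdomains (or from iterating inside one squeezed chart of enormous size parameter), not from shrinking the compression region. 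Separately, even if the localization were repaired, your write-up defers the construction of the compressing isotopy and the verification of the $|p|<\rho$ bound to ``precise bookkeeping''; that estimate is the entire quantitative content of the proposition and cannot be omitted.
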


\begin{proof}The essential idea is reminiscent of Example \ref{eg-zigspin}: Given a loose chart, we compress the disk of zig-zags on a smaller subdisk. The crucial issue now is that unlike the case of $\mathcal{Z}_a \times S^1$ in Example \ref{eg-zigspin} we do not have periodic boundary conditions on a given loose chart. Thus, the compression isotopy necessarily needs to be cut off in the radial coordinate of the disk. We produce such an isotopy by directly constructing a smooth isotopy on the front projection. The condition $\rho^2/a > 1/2$ then ensures that the cut-off function can be chosen to be relatively less steep, so as to control the sizes of the box $C \times B_\rho$ that the new loose chart will be contained in. We proceed to explicate these ideas in detail.

Let $(C \times B_\rho, \mathcal{Z}_a \times J_\rho)$ be a loose chart, with size parameter $\rho^2/a > 1/2$. We may apply a scaling as in Remark \ref{rmk-sizescale} to assume without loss of generality that $a = 2$. Thus, looseness implies $\rho > 1$. Let us denote $\Lambda_0 := \mathcal{Z}_2 \times J_\rho \subset C \times B_\rho \subset \Bbb R^{2n+1}$. For $\delta > 0$ small, we shall define a smooth function $m_{\delta} : \Bbb R \to (0, \infty)$ such that:

\begin{enumerate}[leftmargin=5cm]
\item $m_\delta(x) = x$ for $x \geq 2\delta$, 
\item $m_\delta(x) = \delta$ for $x \leq \delta/2$,
\item $\delta \leq m_\delta(x) \leq x$ for $\delta/2 \leq x \leq 2\delta$, and
\item $m_{\delta}'(x) \leq 1$ for all $x \in \Bbb R$.
\end{enumerate}

\noindent
Explicitly, we construct $m_{\delta}$ by interpolating linearly on the interval $[\delta/2, 2\delta]$ and slightly smoothing out at the endpoints. To verify that Condition $(4)$ is satisfied, observe that the slope of the linear interpolation is $(2\delta - \delta)/(2\delta - \delta/2) = 2/3 < 1$. Thus, we can produce a smoothing so that $m_\delta'(x) \leq 1$ continues to hold. Next, let 
$$h_\delta(q) := m_\delta(|q| + 1 - \rho).$$
For a fixed $q$, let $h_\delta(q) \mathcal{Z}_2 \subset \Bbb R^3$ be the Legendrian curve obtained from contact scaling (Remark \ref{rmk-sizescale}) the Legendrian zig-zag $\mathcal{Z}_2$ of action $a = 2$, by $h_\delta(q)$. We define,
$$\Omega := \{(q, x, z) \in \Bbb R^{n+1} : (x, z) \in \pi_{\rm{front}}(h_\delta(q)\mathcal{Z}_2)\} \cap \{|q| < \rho\}$$
Observe that $\Omega \subset \Bbb R^{n+1}$ is a codimension $1$ topological submanifold with boundary. We may consider $\Omega$ as a multi-graph $\{z = z(q, x)\}$ over $\Bbb R^n_{q, x}$, singular along the cuspidal edges. Defining $y = \partial z/\partial x$ and $p_i = \partial z/\partial q_i$ for $1 \leq i \leq n-1$ gives a genuinely Legendrian lift $\Lambda \subset \Bbb R^{2n+1}$ of $\Omega$, as all the pertaining singularities are cuspidal (see, Proposition \ref{prop-legfr} as well as the calculation in Example \ref{eg-wrinklelift} away from the swallowtail points).

Intuitively, the Legendrian submanifold $\Lambda \subset \Bbb R^{2n+1}$ is obtained from ``squeezing $\Lambda_0$ in the middle", see Figure \ref{fig-loosesq} for an illustration. We make the following observations:
\begin{enumerate}
\item For any $(x, y, z, p, q) \in \Lambda$, we have: 
\begin{enumerate}[label=(\roman*), leftmargin=4cm]
\item $(x, y, z) \in h_\delta(q) \mathcal{Z}_2 \subset C$,
\item $|q| < \rho$ by definition of $\Omega$, 
\item $|p| = |\partial z/\partial q| \leq (1 - \delta)/\rho < \rho$.
\end{enumerate}
Indeed, $\rm{(iii)}$ holds as the maximum $z$--height of $\mathcal{Z}_a$ from the $x$--axis is $a/2$, and the $z$--height of $\Omega$ increases from $\delta$ to $1$ from $|q| = 0$ to $|q| = \rho$. Hence, $\Lambda \subset C \times B_\rho$. 
\item $\Omega$ and $\pi_{\rm{front}}(\Lambda_0)$ are related by a compactly supported smooth isotopy in $\Bbb R^{n+1}$. Thus, $\Lambda$ and $\Lambda_0$ are related by a compactly supported contact isotopy in $\Bbb R^{2n+1}$, by lifting the aforementioned smooth isotopy. 
\end{enumerate}

\begin{figure}[h]
\centering
\includegraphics[scale=0.2]{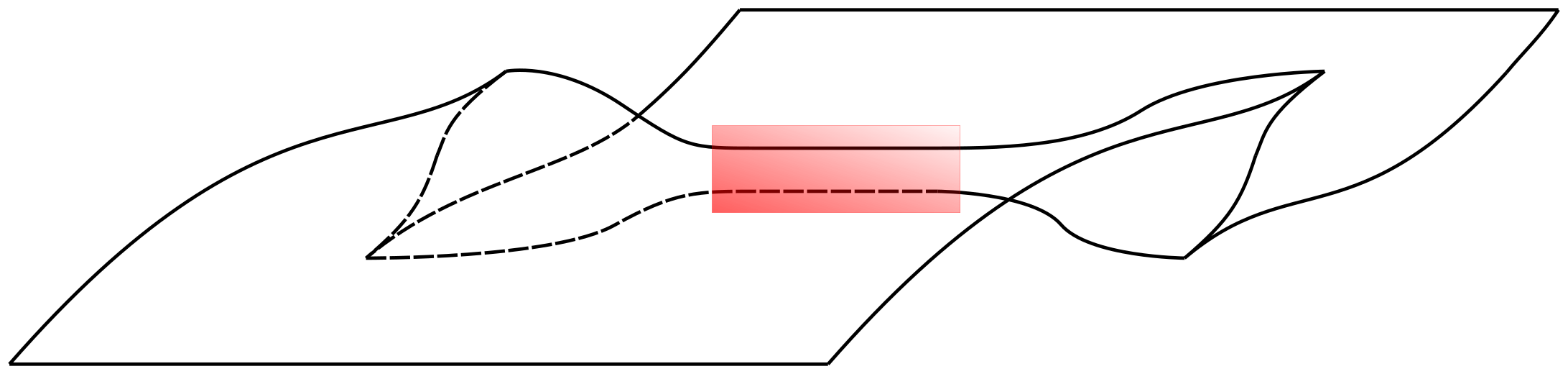}
\caption{Squeezing a loose chart in the middle to produce a new loose chart, depicted as the opaque red region.}
\label{fig-loosesq}
\end{figure}

Combining these observations, we obtain that if the cube $C \subset \Bbb R^3$ is sufficiently large, there is a compactly supported contactomorphism of $C \times B_\rho$ taking $\Lambda_0 = \mathcal{Z}_2 \times B_\rho$ to $\Lambda$. Let $\rho' := \rho - 1 + \delta/2$. Observe, 
$$(\delta C \times B_{\rho'}) \cap \Lambda = \delta \mathcal{Z}_2 \times J_{\rho'} = \mathcal{Z}_{2\delta} \times J_{\rho'}.$$
Thus, the chart $(\delta C \times B_{\rho'}, \mathcal{Z}_{2\delta} \times J_{\rho'})$ contactly embeds in $(C \times B_\rho, \Lambda)$. But note that $(\delta C \times B_{\rho'}, \mathcal{Z}_{2\delta} \times J_{\rho'})$ is a chart with size parameter $\rho'^2/(2\delta)$, which can be made arbitrarily large by choosing $\delta > 0$ sufficiently small. In particular, we can ensure $\rho'^2/\delta = \sigma > 1/2$.

Thus, $(\delta C \times B_{\rho'}, \mathcal{Z}_{2\delta} \times J_{\rho'})$ is a loose chart of size parameter $\sigma$ isocontactly embedded in the given loose chart $(C \times B_{\rho}, \mathcal{Z}_a \times J_\rho)$.  The same strategy can be employed to produce two, and hence arbitrarily many, disjoint loose charts isocontactly embedded in $(C \times B_{\rho}, \mathcal{Z}_a \times J_\rho)$ by ``squeezing down" at disjoint sub-domains in $\{|q| < \rho\}$, see Figure \ref{fig-manyloose}.
\end{proof}

\begin{figure}[h]
\centering
\includegraphics[scale=0.2]{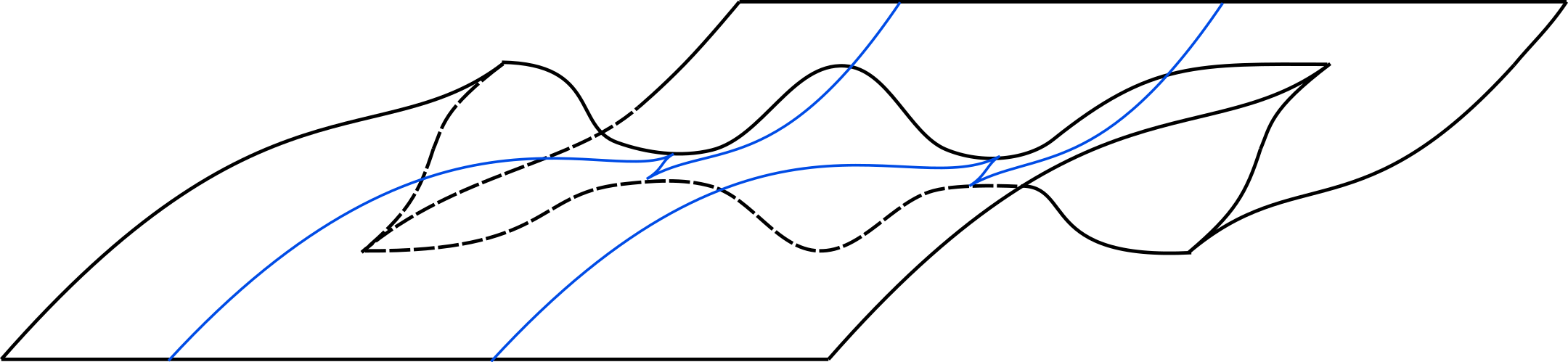}
\caption{Squeezing a loose chart to produce multiple new loose charts.}
\label{fig-manyloose}
\end{figure}

\subsection{Looseness and formal Legendrian embeddings} In this section we define what it means for a formal Legendrian embedding to have a loose chart, and set up a space of formal Legendrian embeddings with a fixed loose chart. This will be required for stating the $h$--principle for loose Legendrian embeddings. 

\begin{definition}[Space of formal Legendrian embeddings with a fixed loose chart]\label{def-fLegloose}Let $(Y^{2n+1}, \xi)$ be a contact manifold of dimension $2n+1 \geq 5$, and $\Lambda^n$ be a smooth $n$--dimensional manifold. Let $U \subset Y$ be a Darboux chart, equipped with a contactomorphism
$$\phi : (U, \xi|_U) \stackrel{\cong}{\to} (C \times B_{\rho}, \xi_{\rm{std}}).$$
Let $D^n \subset \Lambda$ be a chart, and $\varphi : D^n \to (U, \xi|_U)$ be a Legendrian embedding so that $\phi \circ \varphi$ is a parametrization of a loose chart $\mathcal{Z}_a \times J_\rho \subset C \times B_{\rho}$, with size parameter $\rho^2/a > 1/2$. 

We define $\mathrm{Emb}^f_{\rm{Leg}, \ell}(\Lambda, Y; \{U, \phi, D^n, \varphi\}) \subset \mathrm{Emb}^f_{\rm{Leg}}(\Lambda, Y)$ to be the subspace of formal Legendrian embeddings $(F_s, f) : (T\Lambda, \Lambda) \to (TY, Y)$ such that 
\begin{enumerate}
\item $f^{-1}(U) = D^n$,
\item $(F_s = df, f)$ is a holonomic Legendrian embedding on $D^n$, and
\item $f|D^n = \varphi$. 
\end{enumerate}
In other words, $\mathrm{Emb}^f_{\rm{Leg}, \ell}(\Lambda, Y; \{U, \phi, D^n, \varphi\})$ is the space of formal Legendrian embeddings with a fixed loose chart given by the data $\{U, \phi, D^n, \varphi\}$. We shall supress $\phi, D^n, \varphi$ from the notation, by denoting this space as $\mathrm{Emb}^f_{\rm{Leg}, \ell}(\Lambda, Y; U)$. We shall also denote,
$$\mathrm{Emb}_{\rm{Leg}, \ell}(\Lambda, Y; U) := \mathrm{Emb}^f_{\rm{Leg}, \ell}(\Lambda, Y; U) \cap \mathrm{Emb}_{\rm{Leg}}(\Lambda, Y)$$
to be the space of holonomic Legendrian embeddings with a fixed loose chart given by the data $\{U, \phi, D^n, \varphi\}$.
\end{definition}

\begin{remark}The subscript $\ell$ in the notation above stands for \emph{loose}.\end{remark}

We end the section with a description of a procedure called \emph{stabilization}, which produces a loose Legendrian submanifold from an arbitrary (possibly non-loose) Legendrian submanifold, by a homotopy of formal Legendrian embeddings. The procedure here differs only very slightly from the one described in \cite[Proposition 7.23]{cebook}, in that we begin with a high-dimensional version of the Legendrian Reidemeister move $\rm{I}$ (Example \ref{eg-reidmoves}) which is symmetric under the action of a torus on a local chart on the front.

\begin{definition}\label{def-reid1torus}Let $n \geq 2$, and $(\Bbb R^{2n+1}, \xi_{\rm{std}})$ be the standard contact structure on the Euclidean space, with coordinates $x = (x_1, \cdots, x_n), y = (y_1, \cdots, y_n)$ and $z$. Let $\Bbb R^n \subset \Bbb R^{2n+1}$ be the Legendrian subspace $\{y = z = 0\}$, with front projection $\Bbb R^n \subset \Bbb R^{n+1}_{xz}$ given by $\{z = 0\}$. The \emph{toroidal Legendrian Reidemeister move I} applied to the front produces an isotopic Legendrian submanifold $\Lambda \subset (\Bbb R^{2n+1}, \xi_{\rm{std}})$ with a $T^{n-1}$--symmetric front $\pi(\Lambda) \subset \Bbb R^{n+1}$ such that,
\begin{enumerate}
\item $\pi(\Lambda)$ agrees with $\{z = 0\}$ outside a compact ball around the origin,
\item $\pi(\Lambda)$ contains an embedded $K \times T^{n-1}$, where $K \subset \Bbb R^2$ is the top-left front diagram in Figure \ref{fig-reid}.
\end{enumerate}
\noindent
Explicitly, we may construct such a front as follows: let $K \times T^{n-1} \subset \Bbb R^{n+1}$ be embedded as an iterated hypersurface of revolution from $K \subset \Bbb R^2$. We translate it sufficiently in the $z$--direction to be disjoint from $\Bbb R^n = \{z = 0\} \subset \Bbb R^{n+1}$. Let $T^{n-1} \subset \Bbb R^n$ be a torus embedded in a compact ball around the origin. We glue $\Bbb R^n \setminus ([-\varepsilon, \varepsilon] \times T^{n-1})$ to $K \times T^{n-1}$ by an immersed tube $T^{n-1} \times I$ connecting $\{\pm \varepsilon\} \times T^{n-1}$ and $\partial K \times T^{n-1}$, in a way that does not produce any vertical tangencies. See Figure \ref{fig-r1surf}. 
\begin{figure}[h]
\centering
\includegraphics[scale=0.4]{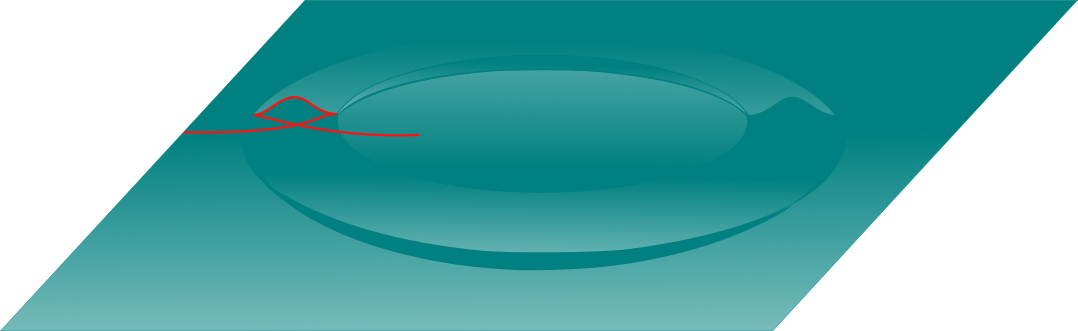}
\caption{$2$--dimensional toroidal Legendrian Reidemeister move $\rm{I}$.}
\label{fig-r1surf}
\end{figure}
\end{definition}

\begin{example}[Stabilization]\label{eg-stabloose}
Let $(Y^{2n+1}, \xi)$ be a contact manifold of dimension $2n+1 \geq 5$ and $\Lambda^n$ be a smooth $n$-dimensional manifold. Let $f : \Lambda \to (Y, \xi)$ be a Legendrian embedding. We identify $\Lambda$ with its image under $f$, and fix a point $p \in \Lambda$. Let us also fix a Darboux chart $U \subset Y$ around $p$. Thus, we have a contactomorphism of triples,
$$(U, \Lambda \cap U, p) \cong (\Bbb R^{2n+1}, \Bbb R^n, 0).$$
We apply the toroidal Legendrian Reidemeister move $\rm{I}$ around a ball containing $p$, with respect to Euclidean coordinates given by the contactomorphism above. Thus, we find an embedded $K \times T^{n-1}$ in the modified front around $p$. Consider the following sequence of moves applied to the front diagram $K$:
\begin{figure}[h]
\centering
\includegraphics[scale=0.4]{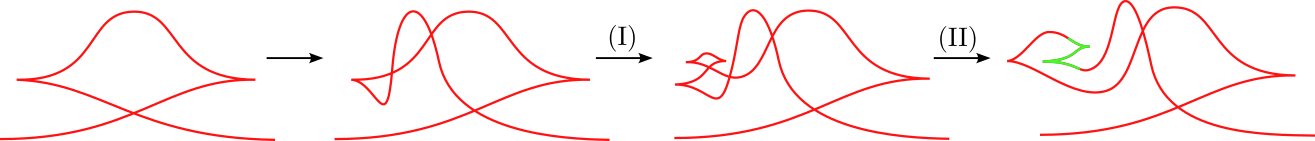}
\caption{The first arrow is a topological isotopy, and the final diagram contains a zig-zag (green)}
\label{fig-stabloose}
\end{figure}

\noindent
Here, the first move is a topological isotopy which is \emph{not} Legendrian. The second and third moves are the Legendrian Reidemeister moves $\rm{I}$ and $\rm{II}$ respectively. Let us call the final front diagram $K_1$. Then the isotopy above gives rise to a topological isotopy between the Legendrian fronts $K \times T^{n-1}$ and $K_1 \times T^{n-1}$ in $\Bbb R^{n+1}$. As the isotopy is cut-off near $\partial K$, this also gives rise to a topological isotopy of the Legendrian front $\pi(\Lambda \cap U) \subset \Bbb R^{n+1}$. Observe that $K_1$ contains an isolated zig-zag, depicted by green in Figure \ref{fig-stabloose}. Thus, $(U, \Lambda \cap U)$ contains a contactly embedded pair $(C \times T^* T^{n-1}, \mathcal{Z} \times T^{n-1})$. By Example \ref{eg-zigspin} and Remark \ref{rmk-zigspin}, this produces a loose Legendrian submanifold $\Lambda_1 \subset (Y, \xi)$ topologically isotopic to $\Lambda$.

It remains to see that this topological isotopy can be improved to a formal Legendrian isotopy. The only issue here is the first move in Figure \ref{fig-stabloose}, as the rest of the moves are Legendrian Reidemeister moves, hence in particular Legendrian isotopies. Firstly, observe that this move is already more than a topological isotopy: It lifts to an isotopy through Legendrian \emph{immersions}. Secondly, this move is entirely local near a neighborhood of the cuspidal locus: We only require the two branches of the front lying over each other, as we pass the bottom branch through the top. We state the existence of the formal Legendrian isotopy in this simple local model as Lemma \ref{lem-forlegev}.
\end{example}

\begin{lemma}\label{lem-forlegev}Let $(\Bbb R^{2n+1}, \xi_{\rm{std}})$ be the Euclidean space with the standard contact structure and coordinates $x = (x_1, \cdots, x_n)$, $y = (y_1, \cdots, y_n)$ and $z$. Define,
\begin{align*}\Sigma_- & := \{y = z = 0\},\\
\Sigma_+ & := \{y = 0, z = 1\},\\
\Sigma & := \Sigma_+ \cup \Sigma_-.\end{align*}
Let $N \subset \Sigma_-$ be a compact domain of zero Euler characteristic $\chi(N) = 0$, $U \subset \Sigma_-$ be an $\varepsilon$-neighborhood of $N$, and $\phi : \Sigma_- \to \Bbb R$ be a smooth function, with $\mathrm{supp}(\phi) \subset U$ and $\phi|_N > 2$. Define,
\begin{align*}
\Sigma_{-, \phi} & := \{(x, \nabla \phi(x), \phi(x)) \in \Bbb R^{2n+1} : x \in \Sigma_-\},\\
\Sigma_\phi & := \Sigma_+ \cup \Sigma_{-, \phi}. 
\end{align*}
The Legendrian submanifolds $\Sigma, \Sigma_\phi$ are formally Legendrian isotopic in $(\Bbb R^{2n+1}, \xi_{\rm{std}})$.
\end{lemma}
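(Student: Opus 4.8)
The plan is to construct an explicit homotopy of formal Legendrian embeddings $(F_{s,t},f_t)$ interpolating between $\Sigma$ and $\Sigma_\phi$, where the base embeddings $f_t$ realize the ``push a sheet through another sheet'' movie and the formal data $F_{s,t}$ records the necessary adjustments of tangent planes. Note that both $\Sigma$ and $\Sigma_\phi$ are honestly Legendrian, so the hard constraint is only that the \emph{intermediate} stages need not be holonomic — they are formal Legendrian embeddings, meaning a genuine embedding $f_t$ together with a path $F_{s,t}$ of fiberwise monomorphisms from $df_t$ to a bundle map landing in a Lagrangian plane field of $\xi_{\rm std}$. First I would observe that away from $U$ nothing happens: $\phi$ is compactly supported in $U$, so the homotopy will be fixed outside a neighborhood of $\Sigma_+ \cup (N \times \{\text{bump region}\})$, and we only need to work in a local model around $N$.

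The key steps, in order: (1) \textbf{Reduce to a model.} Using the contact structure's product form, reduce to understanding the push-through move for the two parallel sheets $\Sigma_\pm$ over $N$, with $\Sigma_+$ sitting at $z=1$ and $\Sigma_{-,\phi}$ being the $1$-jet graph of $\phi$ (which rises above $2$ over $N$, hence pokes \emph{through} $\Sigma_+$ if one ignores the $y$-coordinate). (2) \textbf{Build the base isotopy.} Isotope $f$ so that the graph of $\phi$ is pushed vertically (in the $z$-direction, i.e. along the Reeb direction) past $\Sigma_+$; as a map of manifolds into $\Bbb R^{2n+1}$ this is a genuine smooth isotopy through embeddings because the $y$-coordinates of $\Sigma_+$ (which is $\{y=0\}$) and of $\Sigma_{-,t\phi}$ (which is $\{y = \nabla(t\phi)\}$) generically disagree — this is exactly the ``lifts to an isotopy through Legendrian immersions'' remark in Example~\ref{eg-stabloose}, and the reason we need $\chi(N)=0$ is that it guarantees a nowhere-zero vector field on $N$ so that $\Sigma_-$ can be perturbed (before raising $\phi$) to have tangent planes transverse in the $y$-direction, keeping the two sheets disjoint throughout the vertical passage. (3) \textbf{Supply the formal data.} Along this base isotopy the tangent planes $df_t(T\Lambda)$ are never Lagrangian in $\xi_{\rm std}$ at intermediate times (indeed they are not even contained in $\xi_{\rm std}$ once we tilt), so for each $t$ we must produce a path $s \mapsto F_{s,t}$ from $df_t$ to a Lagrangian-valued monomorphism; since the space of monomorphisms $T_p\Lambda \to T_q Y$ landing in $\xi_q$ with Lagrangian image deformation retracts onto the space of \emph{all} Lagrangian $n$-planes in $(\xi_q, \mathrm{CS}(\xi))$ (contractible after choosing a compatible $J$, cf. the argument in the proof of Theorem~\ref{thm-hprincleg} via $U(n)/O(n)$ being connected and the relevant map being a fibration), the obstruction to choosing $F_{s,t}$ continuously in $(s,t,p)$ lies in homotopy groups of a space with the homotopy type of the Lagrangian Grassmannian; the crucial point is that at $t=0$ and $t=1$ we already \emph{have} the holonomic choice $F_{s,t}\equiv df_t$, so we only need the relative lift over $(I\times I, \partial)$ built from contractible pieces, which exists. (4) \textbf{Assemble and cut off.} Patch the local model to the rest of $\Sigma$ by a cutoff in the radial $U$-direction, checking the formal data extends by the constant homotopy outside $U$.

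The main obstacle I expect is step (3) made honest: the intermediate base embeddings $f_t$ tilt $\Lambda$ out of the contact distribution, so producing the auxiliary family $F_{s,t}$ requires knowing that the relevant ``space of formal Legendrian completions of a given monomorphism'' is not just nonempty but has vanishing obstruction for extending a family already defined on $\partial(I\times I)$ — equivalently that the fibration whose fiber is the Lagrangian Grassmannian $\Lambda(n)$ admits the needed section. Because $\Lambda(n)$ is connected and $\pi_1\Lambda(n)=\Bbb Z$, one must be slightly careful that the ``rotation-number''-type obstruction around the loop in the base vanishes; here it does, essentially because the whole move is supported in a contractible chart (a ball in the front) so the relevant classifying map is nullhomotopic. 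A secondary subtlety is the role of $\chi(N)=0$: it is used precisely to perturb $\Sigma_-$ so that the two sheets stay disjoint during the vertical push (a nowhere-zero normal tilt), and one should confirm no further topological hypothesis on $N$ is needed. Everything else — the explicit vertical isotopy of graphs, the cutoff near $\partial U$, the verification that $f_t$ stays an embedding — is routine once the model is set up.
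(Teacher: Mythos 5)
Your steps (1), (2) and (4) track the paper's proof: the base isotopy is built exactly as in the paper, and you correctly identify the role of $\chi(N)=0$ as providing a nowhere-vanishing vector field $X$ on $N$ (agreeing with $\nabla\phi$ near $\partial N$) that tilts the $y$-coordinates of the lower sheet so that it stays disjoint from $\Sigma_+=\{y=0,\,z=1\}$ while its $z$-coordinate is pushed up past $1$, with a final rotation of $y$ from $X$ to $\nabla\phi$. The gap is in step (3), which is precisely the step you flag as the main obstacle and then resolve only by assertion. You reduce the construction of $F_{s,t}$ to a relative extension problem whose fibers are, up to homotopy, the space of Lagrangian frames in $(\xi_q,\mathrm{CS}(\xi)_q)$, i.e.\ homotopy equivalent to $U(n)$, sitting inside the space of all monomorphisms $T_p\Sigma\to T_q\Bbb R^{2n+1}$, which is $n$-connected. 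The relevant relative homotopy groups are therefore isomorphic to $\pi_{k-1}(U(n))$ for $2\le k\le n$, and the obstructions to the extension live in cohomology of (a neighborhood of) $N$ times the parameter square, relative to the part of the boundary where the data is prescribed. Your justification for their vanishing --- that ``the whole move is supported in a contractible chart'' --- does not address this: contractibility of the ambient Darboux/front chart only trivializes the bundles, it says nothing about the specific family $t\mapsto df_t$, and $N$ itself (in the intended application $N\cong I\times T^{n-1}$) is not contractible, so the obstruction groups do not vanish for formal reasons. As written, the central step of the lemma is unproven.

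The paper avoids all of this with an elementary explicit device you did not use. Alongside the embedding family $f_t$ it introduces the family of Legendrian \emph{immersions} $g_t(x)=(x,\,t\nabla\phi(x),\,t\phi(x))$ on $\Sigma_-$ (and the identity on $\Sigma_+$) --- the $1$-jet prolongations of $t\phi$, i.e.\ exactly the naive vertical push which fails to be an embedding when the graph of $t\phi$ crosses $\Sigma_+$ --- and sets $F_{s,t}:=s\cdot df_t+(1-s)\cdot dg_t$. Because both $f_t$ and $g_t$ are graphical over the $x$-coordinates, the composition of $F_{s,t}$ with the projection to the $x$-coordinates is the identity, so $F_{s,t}$ is automatically a fiberwise monomorphism for every $s$; its Lagrangian end is $dg_t$, and at $t\in\{0,1\}$ one has $f_t=g_t$, so the path is constant there and the endpoints are holonomic. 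This convexity-of-graphical-maps observation is what actually exhibits the vanishing of the obstruction you worry about; you should either adopt it or replace your appeal to chart contractibility by a genuine computation.
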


\begin{proof}As $\chi(N) = 0$, there exists a nowhere-vanishing vector field $X$ on $N$ which agrees with $\nabla \phi$ near $\partial N$. Consider the family of smooth maps $\{f_t : \Sigma \to \Bbb R^{2n+1} : 0 \leq t \leq 1\}$, defined by $f_t|_{\Sigma_{+}} = \mathrm{id}$, and for all $x \in \Sigma_{-}$, 
\[
f_t(x)=
\begin{cases} 
(x, 3t \cdot X(x), 0) & t \in [0, 1/3] \\
(x, X(x), 3t \cdot \phi(x)) & t \in [1/3, 2/3] \\
(x, (3t - 2) \cdot \nabla\phi(x) + (3 - 3t) \cdot X(x), \phi(x)) & t \in [2/3, 1] 
\end{cases}
\]
Note that as $X \neq 0$ on $N$, the family $\{f_t : \Sigma \to \Bbb R^{2n+1} : 0 \leq t \leq 1\}$ is a homotopy through smooth embeddings. Furthermore, consider the family of smooth maps $\{g_t : \Sigma \to \Bbb R^{2n+1} : 0 \leq t \leq 1\}$, defined by $g_t|_{\Sigma_{+}} = \mathrm{id}$, and for all $x \in \Sigma_{-}$, 
$$g_t(x) := (x, t \nabla \phi(x), t\phi(x)), \ t \in [0, 1].$$
Then $\{g_t : \Sigma \to \Bbb R^{2n+1} : 0 \leq t \leq 1\}$ is a homotopy through Legendrian immersions. Define,
\begin{gather*}F_{s, t} : T \Sigma \to T \Bbb R^{2n+1}, \\
F_{s, t} := s \cdot df_t + (1 - s) \cdot dg_t, \ s \in [0, 1]
\end{gather*}
Then, $\{(F_{s, t}, f_t) : t \in [0, 1]\}$ defines the required formal Legendrian isotopy, as we have $f_0(\Sigma) = \Sigma$, $f_1(\Sigma) = \Sigma_\phi$, and also $F_{s, t} = df_t$ for $t \in \{0, 1\}$ since $f_t = g_t$ for $t \in \{0, 1\}$.
\end{proof}

Applying Lemma \ref{lem-forlegev} to the situation in Example \ref{eg-stabloose} with $N = I \times T^{n-1}$, we obtain the required formal Legendrian isotopy. Notice that $n \geq 2$ is crucial, as otherwise $\chi(N)$ would not be zero.

\subsection{$h$--principle for loose Legendrians} We are now ready to state the $h$--principle for loose Legendrian embeddings, due to Murphy \cite[Theorem 1.3]{mur}. 

\begin{theorem}[$h$--principle for loose Legendrian embeddings]\label{thm-hloose}Let $(Y^{2n+1}, \xi)$ be a contact manifold of dimension $2n+1 \geq 5$, and $\Lambda^n$ be a smooth $n$--dimensional manifold. Let $\{U, \phi, D^n, \varphi\}$ be the data of a fixed Legendrian immersion of $D^n \subset \Lambda$ in $U \subset Y$ parametrizing a loose chart, as in Definition \ref{def-fLegloose}.

Let $\{(F_{s, t}, f_t) : t \in I^d\}$ be a $d$--parameter family of formal Legendrian embeddings in $\mathrm{Emb}^f_{\rm{Leg}, \ell}(\Lambda, Y; U)$. Suppose for all $t \in \partial I^d$, $(F_{s, t} = df_t, f_t)$ is a holonomic Legendrian embedding. Then the family $\{(F_{s, t}, f_t) : t \in I^d\}$, considered as a $d$--parameter family in $\mathrm{Emb}^f_{\rm{Leg}}(\Lambda, Y)$, is family--homotopic to a $d$--parameter family of holonomic Legendrian embeddings relative to $\partial I^d$.

In other words, the inclusion map 
$$(\mathrm{Emb}^f_{\rm{Leg}, \ell}(\Lambda, Y; U), \mathrm{Emb}_{\rm{Leg}, \ell}(\Lambda, Y; U)) \hookrightarrow (\mathrm{Emb}^f_{\rm{Leg}}(\Lambda, Y), \mathrm{Emb}_{\rm{Leg}}(\Lambda, Y))$$
induces the zero map on all relative homotopy groups.
\end{theorem}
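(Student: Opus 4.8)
The plan is to combine the $h$--principle for wrinkled Legendrian embeddings (Theorem \ref{thm-hwrinkleg}) with the local flexibility provided by the loose chart, namely Proposition \ref{prop-manyloose} (``one loose chart contains arbitrarily many''). Given a $d$--parameter family $\{(F_{s,t}, f_t) : t \in I^d\}$ in $\mathrm{Emb}^f_{\rm{Leg}, \ell}(\Lambda, Y; U)$ which is holonomic over $\partial I^d$, first apply Theorem \ref{thm-hwrinkleg} \emph{parametrically and relatively}, but crucially relative to the fixed loose chart $U = \varphi(D^n)$ where the family is already holonomic. This produces a family--homotopy, rel $\partial I^d$ and rel $D^n$, from $\{(F_{s,t}, f_t)\}$ to a family of \emph{wrinkled} Legendrian embeddings $\{g_t : t \in I^d\}$ (with an accompanying family of regularizing formal data), where each $g_t$ is a genuine Legendrian embedding away from finitely many wrinkle/swallowtail loci $S^{n-2}_{j,t}$ living in Darboux charts $W_{j,t}$, with embryo births/deaths across a codimension--$1$ subset $\mathcal{E} \subset I^d$. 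Since the whole construction takes place near $f_t(\Lambda)$ and avoids $D^n$, the loose chart $(U, U \cap f_t(\Lambda))$ persists, untouched, throughout this homotopy.

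The second and main step is \emph{removal of wrinkles using the loose chart}. By Proposition \ref{prop-manyloose}, inside the single fixed loose chart $U$ we can isocontactly embed arbitrarily many disjoint loose charts of size parameter $>1/2$ — in particular, enough of them to assign one to each wrinkle locus appearing across the entire family $\{g_t : t \in I^d\}$ (the family is compact, so finitely many wrinkles suffice, with continuity handled by tracking births/deaths at $\mathcal{E}$). The key local move is: a wrinkled Legendrian embedding whose wrinkle locus lies in a chart that is connected, by an ambient contact isotopy, to a loose chart disjoint from it, can be ``merged'' with that loose chart and the wrinkle cancelled against a zig-zag there, producing a genuine Legendrian embedding formally isotopic to the regularization of the wrinkled one. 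Concretely: a wrinkle, regularized, differs formally from a Legendrian embedding only in a contractible space of bundle-monomorphism data (Lemma \ref{lem-modelreg}), and a zig-zag times a disk in a loose chart carries exactly the ``spare'' formal flexibility needed to absorb this — this is the higher-dimensional incarnation of the one-dimensional fact from Theorem \ref{thm-hprinclegknots} that zig-zags interpolate arbitrary slope data. One routes each wrinkle, by a contact isotopy supported in $U$, into its designated sub-loose-chart, performs the cancellation there (using that each sub-loose-chart again contains arbitrarily many, so the process can be iterated/made parametric), and is left with a family of honest Legendrian embeddings $\{h_t : t \in I^d\}$, together with a family--homotopy of formal Legendrian embeddings from $\{g_t\}$ (with its formal data) to $\{(dh_t, h_t)\}$. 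Concatenating the two family--homotopies, and checking that everything is rel $\partial I^d$ (where we started holonomic and never moved), yields the desired homotopy to a $d$--parameter family of holonomic Legendrian embeddings.

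Finally, the statement about relative homotopy groups is a formal reformulation: a class in $\pi_d\big(\mathrm{Emb}^f_{\rm{Leg}}(\Lambda, Y), \mathrm{Emb}_{\rm{Leg}}(\Lambda, Y)\big)$ in the image of $\pi_d\big(\mathrm{Emb}^f_{\rm{Leg}, \ell}(\Lambda, Y; U), \mathrm{Emb}_{\rm{Leg}, \ell}(\Lambda, Y; U)\big)$ is represented by exactly such a family $\{(F_{s,t}, f_t) : t \in I^d\}$, holonomic over $\partial I^d$; the homotopy just constructed exhibits it as trivial in the target.

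\medskip

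\textbf{Main obstacle.} The delicate point is the wrinkle-removal step, and specifically making it \emph{parametric} and \emph{continuous in $t$ across the embryo locus $\mathcal{E}$}: wrinkles are born and die along $\mathcal{E}$, so the assignment of sub-loose-charts to wrinkles must be organized so that a sub-loose-chart can be ``switched on'' as a wrinkle is born and ``switched off'' as it dies, without the routing isotopies colliding. This is exactly why Proposition \ref{prop-manyloose} is stated with arbitrarily many disjoint copies — it provides the room to handle all wrinkles of the family simultaneously and independently — and why the regularization of Definition \ref{def-reg} is canonical up to contractible choice, so that the formal data glues consistently over $I^d$. The honest verification that the local cancellation move (zig-zag absorbs a regularized wrinkle) is itself a formal Legendrian isotopy is a direct analogue of Lemma \ref{lem-forlegev} together with Lemma \ref{lem-modelreg}, and is where the quantitative size condition $\rho^2/a > 1/2$ ultimately gets used, via Proposition \ref{prop-manyloose}.
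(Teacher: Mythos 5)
Your overall skeleton --- apply Theorem \ref{thm-hwrinkleg} parametrically and relatively, keeping the fixed loose chart untouched, then use Proposition \ref{prop-manyloose} to produce enough disjoint sub-loose-charts and cancel the wrinkles against them --- coincides with the paper's. The genuine gap is in the cancellation step itself. You assert that each wrinkle can be ``routed, by a contact isotopy supported in $U$, into its designated sub-loose-chart'' and there ``cancelled against a zig-zag.'' A contact isotopy supported in $U$ cannot move the wrinkles at all: they sit in Darboux charts $W_j$ away from $U$ (precisely because the wrinkling homotopy was performed relative to $U$), and dragging them into $U$ by a global ambient isotopy is neither available in a parametrically controlled way nor what is needed. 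The paper's mechanism is different and is the real content of the proof: one replaces the Legendrian piece inside each sub-loose-chart $U_i$ by an \emph{inside-out wrinkle} (Example \ref{eg-iowresol}), chooses a family of \emph{markings} (Definition \ref{def-marking}) --- codimension-one submanifolds of $\Lambda$, diffeomorphic to $D^{n-1}$ or $S^{n-2}\times[0,1]$, joining the inside-out wrinkle locus in $f_t^{-1}(U_i)$ to the wrinkle loci created by the embryo component $\mathcal{E}_i$ --- and resolves along the markings via Proposition \ref{prop-resowrink}. The resolution takes place in a Weinstein neighborhood of the image of the marking and removes both singular loci simultaneously, producing honest Legendrian embeddings; no wrinkle is ever transported into the loose chart. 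Your phrase ``a zig-zag absorbs the regularized wrinkle'' is exactly what Proposition \ref{prop-resowrink} makes precise, and without that construction (or an equivalent one) the cancellation is only asserted, not proved.

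Two further points. First, the verification that the resulting family $\{\widetilde g_t\}$ is homotopic to the original through \emph{formal} Legendrian embeddings rel $\partial I^d$ is not a routine consequence of Lemma \ref{lem-modelreg} and Lemma \ref{lem-forlegev}; the paper proves it by comparing the two resolutions of the same inside-out-wrinkle configuration --- along the disk marking, which returns $f_t$, versus along the annular marking, which returns $\widetilde g_t$ --- inside the open set $O = U_i \cup V \cup W_j$, using that a model wrinkle is formally Legendrian isotopic to the zero section, and then checking the boundary data by inserting a collar $\partial I^d \times I$. Second, the sub-loose-charts are allocated one per connected component of the embryo locus $\mathcal{E}$, each carrying a family of markings consistent through births and deaths of wrinkles, rather than one per wrinkle as you propose; this is what makes the bookkeeping across $\mathcal{E}$ coherent.
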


The key strategy of the proof of Theorem \ref{thm-hloose} involves using the parametric and relative version of Theorem \ref{thm-hwrinkleg} to homotope the family $\{(F_{s, t}, f_t) : t \in I^d\}$ to a family of wrinkled Legendrian embeddings relative to $\partial I^d$, as well as relative to the fixed loose chart $f_t^{-1}(U) = D^n$ for all $t \in I^d$. Then the fixed loose chart that was left undisturbed in the aforementioned homotopy is used to ``cancel" the wrinkle singularities, resulting in a family of (nonsingular) Legendrian embeddings. We discuss the necessary prerequisites to carry out the cancellation procedure before discussing the proof of Theorem \ref{thm-hloose}.

\begin{definition}[Markings]\label{def-marking}Let $f : \Lambda^n \to (Y^{2n+1}, \xi)$ be a wrinkled Legendrian embedding. A \emph{marking} for $f$ is a compact submanifold with boundary $\Phi \subset \Lambda$ of codimension $1$, such that 
\begin{enumerate}
\item $\partial \Phi = \sqcup_j S_j^{n-2}$ is a union of some of the wrinkle loci $S_j^{n-2} \subset \Lambda$ of $f$,
\item For a Darboux chart $W_j \subset Y$ containing a wrinkle such that $f^{-1}(W_j) \cong \Bbb R^n$, recall from Definition \ref{def-wrinkleg} (cf. Definition \ref{def-wrinkledemb}) the coordinates $(x_1, \cdots, x_n, t)$ on $f^{-1}(W_j)$, as well as the front projection $f_j = \pi_{\rm{front}} \circ f|_{f^{-1}(W_j)} : \Bbb R^n \to \Bbb R^{n+1}$. Then $f_j$ is a wrinkled embedding, with swallowtail locus $S_j^{n-2} = \{t = 0, |x|^2 = 1\}$. We demand,
$$f^{-1}(W_j) \cap \Phi = \{t = 0, |x| \geq 1\}.$$
\item The interior of $\Phi$ is disjoint from the singular set of $f$.
\end{enumerate}
See Figure \ref{fig-marking} (left) for an illustration of a marking. Let $\{f_s : \Lambda \to (Y, \xi) : s \in I^d\}$ be a family of wrinkled Legendrian embeddings. Let us denote the collection of times where the family attains an embryo singularity as
$$\mathcal{E} = \{s \in I^d : f_s \text{ has an embryo singularity}\} \subset I^d.$$
Notice that $\mathcal{E}$ is a codimension $1$ submanifold of $I^d$. A \emph{family of markings} for $\{f_s\}$ is a family of compact submanifolds $\{\Phi_s \subset \Lambda\}$ of codimension $1$, such that 
\begin{enumerate}
\item For all $s \in I^d \setminus \mathcal{E}$, $\Phi_s$ is a marking of $f_s$,
\item $\{\Phi_s : s \in I^d \setminus \mathcal{E}\}$ is a smooth isotopy of embeddings, and
\item For any coordinate $\tau$ on $I^d$ transverse to $\mathcal{E}$ and coordinates $(x_1, \cdots, x_n, t)$ on the Darboux charts $W_j$ containing an embryo as in Definition \ref{def-wrinkleg} (cf. Definition \ref{def-wrinkledemb}), 
$$f^{-1}(W_j) \cap \Phi_\tau = \{t = 0, |x|^2 \geq \tau\}.$$
\end{enumerate}
\end{definition}

\begin{proposition}[Resolution of wrinkles along markings]\label{prop-resowrink}Let $\{f_s : \Lambda \to (Y, \xi) : s \in I^d\}$ be a family of wrinkled Legendrian embeddings, and $\{\Phi_s \subset \Lambda\}$ be a family of markings for $\{f_s : s \in I^d\}$. Then $\{f_s : s \in I^d\}$ is $C^0$--close to a family of wrinkled Legendrian embeddings $\{\widetilde{f}_s : \Lambda \to (Y, \xi) : s \in I^d\}$ such that $\widetilde{f}_s$ is smooth on $\mathrm{Op}(\Phi_s)$, and agrees with $f_s$ outside of $\mathrm{Op}(\Phi_t)$. See Figure \ref{fig-marking} (right) for an illustration of resolution of wrinkles along a marking.\end{proposition}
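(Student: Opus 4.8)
The statement is a parametric, relative version of the resolution of a wrinkle along its marking, so the plan is to first establish the case $d=0$ and a single wrinkle, and then globalize and parametrize. In the local model, a wrinkled Legendrian embedding near a wrinkle locus $S^{n-2}_j$ is given by the front $f_j = \pi_{\mathrm{front}}\circ f|_{f^{-1}(W_j)}$, which is (conjugate to) the standard wrinkle $(x,t)\mapsto (x,\psi_{1-|x|^2}(t),0,\dots,0)$. The marking condition demands $f^{-1}(W_j)\cap\Phi = \{t=0,\ |x|\ge 1\}$, i.e.\ the marking occupies the ``outside'' hemisphere region bounded by the swallowtail equator $S^{n-2}_j=\{t=0,|x|=1\}$. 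The key point is that the model wrinkle $\psi_\delta$ (for $\delta = 1-|x|^2 < 0$, i.e.\ outside the sphere) is a smooth embedding: $\psi_\delta(t) = (t^3-3\delta t,\ \int_0^t(s^2-\delta)^2\,ds)$ has $\dot\gamma_1(t) = 3(t^2-\delta) > 0$ whenever $\delta<0$, so the front has no vertical tangencies there and the Legendrian lift is a genuine smooth embedding. Thus \emph{outside} the sphere $S^{n-1}_j$ there is no singularity at all; the only singularities to deal with sit on $S^{n-1}_j$ itself (cuspidal on the two hemispheres, swallowtail on the equator). So the resolution amounts to pushing the cuspidal/swallowtail singularities across the marking: one replaces the family of one-dimensional zig-zag fronts $\psi_{1-|x|^2}$, for $|x|$ ranging across $1$, by a family that has been ``unbent'' to be smooth (monotone) on a neighborhood of $\{|x|\ge 1\}$, interpolating in the $|x|$-direction so as to still agree with $f_j$ for $|x|$ bounded away from $1$ (say $|x|\le 1-\varepsilon$) and to remain a wrinkled front (now with a \emph{smaller} singular sphere, or with the singular sphere simply shrunk/removed on the marked side). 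Concretely: choose a smooth family of embeddings of $\mathbb{R}$ in $\mathbb{R}^2$ interpolating between $\psi_\delta$ and a straight graph, parametrized by a cutoff in $|x|$ supported near $|x|=1$; lift to $J^1\mathbb{R}^n$. The result $\widetilde f$ is smooth on $\mathrm{Op}(\Phi)$ and equals $f$ outside $\mathrm{Op}(\Phi)$, and is $C^0$-close since the modification is supported in a small neighborhood of $S^{n-1}_j$, which can be taken thin.

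\textbf{Globalization and parametrization.} Having fixed the local move, I would carry it out simultaneously on every Darboux chart $W_j$ whose wrinkle locus $S^{n-2}_j$ is a boundary component of $\Phi$ (by definition of a marking, $\partial\Phi = \sqcup_j S^{n-2}_j$ runs over precisely these). The modifications on distinct $W_j$'s are supported in disjoint neighborhoods $\mathrm{Op}(S^{n-1}_j)$ of the singular spheres, and Condition (3) in Definition~\ref{def-marking} (the interior of $\Phi$ is disjoint from the singular set of $f$) guarantees that these neighborhoods of the modification locus sit inside $\mathrm{Op}(\Phi)$, so the global map $\widetilde f$ is well-defined, agrees with $f$ off $\mathrm{Op}(\Phi)$, and is smooth on $\mathrm{Op}(\Phi)$. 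For the parametric statement one runs the same move fiberwise over $s\in I^d\setminus\mathcal E$: the normal-form coordinates $(x_1,\dots,x_n,t)$ on $W_j$ and the markings $\Phi_s=\{t=0,|x|\ge 1\}$ vary smoothly with $s$, and the interpolating family of one-dimensional fronts can be chosen to depend smoothly on $s$ as well. At an embryo time $s\in\mathcal E$, by Definition~\ref{def-wrinkleg} and Definition~\ref{def-marking}(3), in a transverse coordinate $\tau$ the wrinkle front is $\psi_{\tau-|x|^2}$ and the marking is $\{t=0,|x|^2\ge\tau\}$; since $\tau-|x|^2<0$ on the marked region, the front is again smooth there, so the modification extends smoothly across $\mathcal E$ with the same supported-near-$S^{n-1}$ recipe (here the sphere has radius $\sqrt\tau$, degenerating to the embryo point as $\tau\to 0$). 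Smoothness of the whole family $\widetilde F\colon\Lambda\times I^d\to Y$ except along the Legendrian lifts of embryo singularities then follows because we have not introduced any new singularities — only removed the cuspidal ones on the marked side.

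\textbf{Main obstacle.} The genuine content is not the existence of the local unbending — that is an elementary one-variable construction — but rather ensuring that all the interpolation data can be chosen \emph{coherently}: smoothly in the sphere-parameter $|x|$, smoothly across the families $s\in I^d$, compatibly across the embryo times $\mathcal E$, and matching the prescribed behavior $\widetilde f = f$ outside $\mathrm{Op}(\Phi_s)$ and smooth inside. I would handle the coherence by making one universal choice: fix once and for all a smooth map $\Psi\colon [0,1]\times\mathbb R\times\mathbb R\to \mathbb R^2$ with $\Psi(\lambda,\delta,\cdot)$ a front, equal to the model zig-zag front $\psi_\delta$ for $\lambda$ near $0$, equal to a monotone (smooth-embedding) graph for $\lambda$ near $1$ whenever $\delta\le \delta_0<0$, and through embeddings in the intermediate range for $\delta$ bounded above by $\delta_0$; then set the cutoff $\lambda = \beta(|x|)$ (resp.\ $\beta_\tau(|x|)$ near an embryo) with $\beta$ a fixed bump profile. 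Because every piece of data — the normal forms, the markings, and $\Psi$ — is canonical up to contractible choice, the family assembles without further homotopy-theoretic input. The only subtlety to check carefully is that along the embryo locus the radius-$\sqrt\tau$ sphere shrinks continuously to a point and the cutoff $\beta_\tau$ degenerates consistently, so that $\widetilde F$ is continuous (indeed smooth off the embryo-lift locus) there; this is a direct but slightly fiddly verification in the embryo normal form of Definition~\ref{def-wrinkledemb}.
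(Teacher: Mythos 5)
Your local move goes in the wrong direction, and this is a genuine gap rather than a fixable detail. You propose to ``unbend'' the zig--zag slices near the swallowtail so that the front becomes monotone on a neighborhood of the marked region $\{t=0,\,|x|\ge 1\}$, interpolating back to the original wrinkle for $|x|\le 1-\varepsilon$. But any interpolation between zig--zag slices and monotone slices must contain a birth/death of the cusp pair: the cuspidal locus of a front can only terminate at a swallowtail (or worse), so your modified front necessarily acquires a \emph{new} swallowtail sphere somewhere in the annulus $1-\varepsilon\le |x|\le 1$ --- exactly as you concede when you say the result is ``a wrinkled front with a smaller singular sphere.'' This is incompatible with the conclusion of the proposition: the new swallowtail lies inside the region where you modified $f$, and that region must be contained in $\mathrm{Op}(\Phi)$ (otherwise $\widetilde f\ne f$ outside $\mathrm{Op}(\Phi)$), so $\widetilde f$ fails to be nonsingular on $\mathrm{Op}(\Phi)$; shrinking $\mathrm{Op}(\Phi)$ to exclude the new sphere instead violates ``agrees with $f_s$ outside $\mathrm{Op}(\Phi_s)$.'' It also defeats the purpose of the proposition in the proof of Theorem \ref{thm-hloose}: merely displacing the wrinkle slightly inward cancels nothing, whereas the resolution is supposed to remove all singularities adjacent to the marking so that wrinkles can be traded against the inside-out wrinkle in the loose chart.

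The correct move is the opposite one: instead of killing the cusps near the swallowtail, you prolong them, keeping a small-amplitude zig--zag over the \emph{entire} marking so that the cuspidal locus closes up with no boundary near $\Phi$. Concretely, the paper takes a normal slab $\Sigma=\Phi^{+}\times(-\varepsilon,\varepsilon)$ around the whole marking, uses a Weinstein neighborhood $V\cong J^1(\Sigma)$ to write $f|_\Sigma$ as the Legendrian lift of the graph of $\phi(x,t)=\psi_{-\rho(x)}(t)$, with $\rho$ a signed smoothing of $\mathrm{dist}^2(\cdot,\partial\Phi)$, and then replaces $-\rho$ by $m_\delta(-\rho)$, which is everywhere positive and agrees with $-\rho$ where $-\rho\ge 2\delta$. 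The cusp parameter never crosses zero, so there are no swallowtails or embryos over $\Sigma$, the lift is a genuine Legendrian there, and the change is supported in $\mathrm{Op}(\Phi)$ and $C^0$--small. Note also that this forces a construction that is global along the marking (hence the Weinstein chart over all of $\Sigma$, not disjoint local models near each $S^{n-1}_j$): if the small zig--zag died anywhere over the interior of $\Phi$, a swallowtail would reappear inside $\mathrm{Op}(\Phi)$. Your coherence/parametrization discussion is reasonable in spirit, but it cannot rescue a local model that reintroduces the singularity it is meant to remove.
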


\begin{proof}Let $f : \Lambda \to Y$ be a single wrinkled Legendrian embedding, and let $\Phi \subset \Lambda$ be a marking for $f$. Suppose $\partial \Phi = \sqcup_j S_j^{n-2}$, where the image of each singular locus is contained in a Darboux chart $f(S_j^{n-2}) \subset W_j$ as in Definition \ref{def-wrinkleg}. Let $C \cong \partial \Phi \times [0, \varepsilon) \subset \Lambda$ be a small outward-pointing embedded collar of $\Phi \subset \Lambda$. We may write $C = \bigsqcup_j C_j$ where,
$$C_j := C \cap f^{-1}(W_j) = \{t = 0, 1 \leq |x| \leq 1 + \varepsilon\} \subset f^{-1}(W_j),$$
We shall denote $\Phi^{+} := \Phi \cup_{\partial} C$ and $\Phi^\circ := \Phi \setminus \partial \Phi$. Let us also choose an ambient Riemannian metric, and define $\rho : \Phi^+ \to \Bbb R$ to be a smoothing of $\mathrm{dist}^2(\cdot, \partial \Phi)$, so that $\rho$ is negative precisely on the outward collar $C$.

Let us choose a coordinate transverse to $\Phi^{+} \subset \Lambda$. Thus, we obtain a normal neighborhood $\Sigma := \Phi^{+} \times (-\varepsilon, \varepsilon) \subset \Lambda$. The restriction $f|\Phi^\circ \times (-\varepsilon, \varepsilon)$ is a smooth Legendrian embedding. Thus, by Weinstein's tubular neighborhood theorem (Theorem \ref{thm-weinstein}), there exists a tubular neighborhood of $f(\Phi^\circ \times (-\varepsilon, \varepsilon))$ contactomorphic to $J^1(\Phi^\circ \times (-\varepsilon, \varepsilon))$. We extend the Weinstein tubular neighborhood slightly into the collar to obtain a neighborhood $V \subset Y$ of $f(\Phi^{+} \times (-\varepsilon, \varepsilon)) = f(\Sigma)$ in $Y$ such that,
\begin{enumerate}
\item $V$ is contactomorphic to $J^1(\Sigma)$,
\item The front projection $\pi_V : V \cong J^1(\Sigma) \to \Sigma \times \Bbb R$ 
satisfies, for all $j$, 
$$\pi_V|f(C_j \times (-\varepsilon, \varepsilon)) = f_j.$$
\end{enumerate}
Thus, $f|_{\Sigma}$ is the Legendrian lift along $\pi_V$ of the graph of 
$$\phi : \Sigma = \Phi^+ \times (-\varepsilon, \varepsilon) \to \Bbb R, \;\; \phi(x, t) = \psi_{-\rho(x)}(t),$$
where $\psi_\delta(t)$ is the cut-off model zig-zag in Definition \ref{def-legzigzag}, appropriately scaled so that the cusps lie over $(-\varepsilon, \varepsilon)$ (cf. Figure \ref{fig-modelzig}). We modify $f : \Lambda \to Y$ to a Legendrian embedding $\widetilde{f} : \Lambda \to Y$ by setting $\widetilde{f} = f$ outside $\Sigma$, and defining $\widetilde{f}|_{\Sigma}$ to be the Legendrian lift along $\pi_V$ of the graph of,
$$\widetilde{\phi} : \Sigma = \Phi^+ \times (-\varepsilon, \varepsilon) \to \Bbb R, \;\; \widetilde{\phi}(x, t) = \psi_{m_\delta(-\rho(x))}(t),$$
where $m_\delta : \Bbb R \to (0, \infty)$ is the function defined in the proof of Proposition \ref{prop-manyloose}, and $\delta \ll \varepsilon$.
Note that the front of $\widetilde{f}|_{\Sigma}$, which is the graph of $\widetilde{\phi}$ in $\Sigma \times \Bbb R$, has only cuspidal singularities. Therefore, $\widetilde{f}$ is a smooth Legendrian embedding. Since a family of markings are consistently defined at embryo singularities, the modification above can be defined in families as well, proving the proposition.
\end{proof}

\begin{figure}[h]
\centering
\includegraphics[scale=0.19]{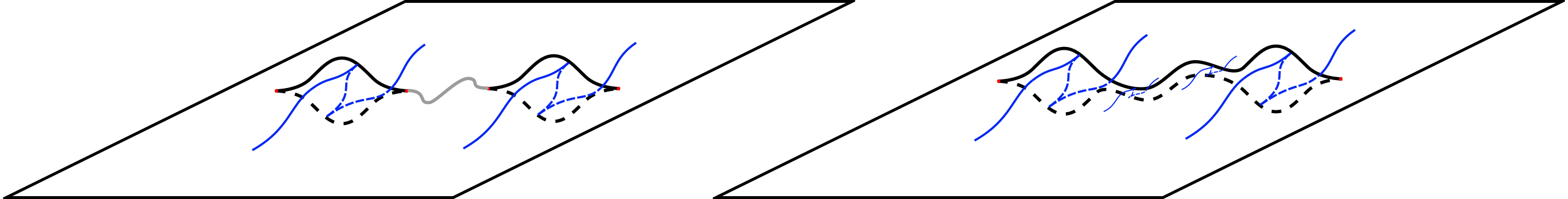}
\caption{A pair of $2$-dimensional wrinkles with a $1$-dimensional marking in grey (left) and its resolution (right).}
\label{fig-marking}
\end{figure}

The proof of Proposition \ref{prop-resowrink} may be better understood in the following model case.

\begin{example}[Resolving an inside-out wrinkle to a loose chart]\label{eg-iowresol}
Recall the cut-off model zig-zag $\psi_\delta$ in Definition \ref{def-legzigzag} (cf. Figure \ref{fig-modelzig}). We define an \emph{inside-out wrinkle} to be the map
$$w : \Bbb R^n \to \Bbb R^{n+1},\;\; w(x, t) = (x, \psi_{|x|^2 - 1}(t)).$$
The wrinkle locus of $w$ is the hyperboloid $\{|x|^2 - t^2 = 1\} \subset \Bbb R^n$ and swallowtail locus is the sphere $\{t = 0, |x| = 1\}$. Let $\widetilde{w} : \Bbb R^n \to \Bbb R^{2n+1}$ be the wrinkled Legendrian embedding with front $w$. A convenient marking (Definition \ref{def-marking}) for $\widetilde{w}$ is given by the \emph{disk marking},
$$\Phi = \{t = 0, |x|^2 \leq 1\} \subset \Bbb R^n.$$
Resolving $\widetilde{w}$ along $\Phi$ as in the proof of Proposition \ref{prop-resowrink} gives a Legendrian with front:
$$w_{\rm{res}} : \Bbb R^n \to \Bbb R^{n+1}, \;\; w_{\rm{res}}(x, t) = (x, \psi_{m_\delta(|x|^2 - 1)}(t)).$$
For $\delta > 0$ sufficiently small, $w_{\rm{res}}$ contains a loose chart given by $(\mathrm{Op}(w_{\rm{res}}(\Phi)), w_{\rm{res}}(\mathrm{Op}(\Phi)))$. 
\end{example}

\begin{proof}[Proof of Theorem \ref{thm-hloose}]
By the parametric and relative version of Theorem \ref{thm-hwrinkleg}, we may homotope the given family of formal Legendrian embeddings in $\mathrm{Emb}_{\rm{Leg},\ell}^f(\Lambda, Y; U)$ to a family 
$$\{f_t : \Lambda \to Y : t \in I^d\},$$
of wrinkled Legendrian embeddings, keeping the family-homotopy fixed on the subset $(\Lambda \times \partial I^d) \cup (U \times I^d) \subset \Lambda \times I^d$. Let us denote $\mathcal{E} = \{t \in I^d : f_t \text{ has an embryo singularity}\}$.
Then $\mathcal{E} \subset \mathrm{int}(I^d)$ is a codimension $1$ smooth submanifold. Let $k$ be the number of connected components of $\mathcal{E}$. If $k = 0$, $\{f_t : t \in I^d\}$ is already a family of smooth Legendrian embeddings and we have nothing to prove. Thus, assume $k \geq 1$ and let $\mathcal{E}_1, \cdots, \mathcal{E}_k$ be the connected components of $\mathcal{E}$. 

By hypothesis, $f_t^{-1}(U) = D^n \subset \Lambda$ is a fixed chart, $f_t = f_0$ on $D^n$ for all $t \in I^d$, and $(U, f_t(D^n))$ is a fixed loose chart in $f_t(\Lambda)$ for all $t \in I^d$. By Proposition \ref{prop-manyloose}, there exists disjoint open subsets $U_i \subseteq U$, $1 \leq i \leq k$ such that $(U_i, f_t(\Lambda) \cap U_i)$ is a fixed loose chart in $f_t(\Lambda)$ for all $t \in I^d$.

Consider a new family $\{g_t : \Lambda \to Y : t \in I^d\}$ of wrinkled Legendrian embeddings, given by setting $g_t = f_t$ on $\Lambda \setminus \bigsqcup_{i=1}^k U_i$ and defining $g_t|f_t^{-1}(U_i)$ to be a fixed inside-out wrinkle (Example \ref{eg-iowresol}), with the same boundary as the loose chart $f_t(\Lambda) \cap U_i \subset U_i$. We choose a family of markings $\{\Phi^i_t\}$ for $\{g_t : t \in I^d\}$ such that:
\begin{enumerate}
\item For all $t \in \partial I^d$, $\Phi^i_t \subset g_t^{-1}(U_i)$ is the disk marking (see, Example \ref{eg-iowresol}) of $g_t|U_i$.
\item For all $t \in I^d$, $\Phi^i_t$ is diffeomorphic to either $D^{n-1}$ or $S^{n-2} \times [0, 1]$. 
\item The embryos of $\mathcal{E}_i \subset I^d$ are all contained in $\Phi^i_t$, for each $1 \leq i \leq k$.
\item For all $t \in I^d$, $\partial \Phi^i_t$ consists only of the Legendrian wrinkles created by the embryos of $\mathcal{E}_i$, other than the inside-out wrinkle contained in $g_t^{-1}(U_i)$.
\end{enumerate}
We claim that such a choice is possible. Indeed,  the singular sets of the family $\{g_t : t \in I^d\}$ is a $d$--parameter family of submanifolds of $\Lambda$ given for $t \in I^d \setminus \mathcal{E}$ by codimension $2$ embedded contractible spheres, which are allowed to shrink to a point for $t \in \mathcal{E}$. Thus, any such family can be realized as boundary of embedded copies of $D^{n-1}$ or $S^{n-2} \times [0, 1]$.

We apply Proposition \ref{prop-resowrink} to resolve $\{g_t : t \in I^d\}$ along the markings $\{\Phi^i_t : t \in I^d\}$, one at a time for $i = 1, \cdots, k$. This leaves us with a family of smooth Legendrian embeddings,
$$\{\widetilde{g}_t : \Lambda \to Y : t \in I^d\}.$$
There is evidently a family-homotopy between $\{f_t : t \in \partial I^d\}$ and $\{\widetilde{g}_t : t \in \partial I^d\}$, supported on $\bigsqcup_{i = 1}^k U_i$. We add an annular collar $\partial I^d \times I$ carrying this homotopy to the domain $I^d$ of $\{\widetilde{g}_t : t \in I^d\}$. This gives a family of smooth Legendrian embeddings extending $\{f_t : t \in \partial I^d\}$, which we continue to denote as $\{\widetilde{g}_t : \Lambda \to Y : t \in I^d\}$ by a slight abuse of notation.

As a final step, we need only check that $\{\widetilde{g}_t : t \in I^d\}$ is homotopic to $\{f_t : t \in I^d\}$ as a family of \emph{formal Legendrian} embeddings, relative to $\partial I^d$. Let us fix a time $t \in I^d$. Let $U_i \subset Y$ be such that $(U_i, f_t(\Lambda) \cap U_i)$ is a loose chart for $f_t(\Lambda)$, and $W_j \subset \Lambda$ be a Darboux chart containing a wrinkle, generated from the embryo locus $\mathcal{E}_i$. Suppose $\Phi^i_t \cong S^{n-2} \times [0, 1]$ is a marking for $g_t$ such that 
\begin{enumerate}
\item $S^{n-2} \times \{0\} \subset f_t^{-1}(U_i)$ is the singular locus of the inside-out wrinkle $g_t|f^{-1}(U_i)$, 
\item $S^{n-2} \times \{1\} \subset W_j$ is the singular locus of the wrinkle $g_t|W_j = f_t|W_j$.
\end{enumerate}
Let $V \subset Y$ be the open neighborhood of $\Phi$ contactomorphic to $J^1(\Phi^+ \times (-\varepsilon, \varepsilon))$, considered in the proof of Proposition \ref{prop-resowrink}. Consider the open set $O = U_i \cup V \cup W_j \subset Y$. Shrinking $O$ slightly if necessary, we may assume $f_t^{-1}(O) = g_t^{-1}(O) = \widetilde{g}_t^{-1}(O)$. We shall demonstrate that $f_t$ and $\widetilde{g}_t$ are formally Legendrian isotopic in $O$. See Figure \ref{fig-loosewrinkle} for an illustration of this formal isotopy.

Indeed, note that resolving $g_t$ along the disk marking of the inside-out wrinkle in $U_i$ (Example \ref{eg-iowresol}) returns $f_t$, consisting only of a loose chart in $U_i$ and a disjoint isolated wrinkle in $W_j$. As a model wrinkle $\Bbb R^n \to \Bbb R^{2n+1} = J^1 \Bbb R^n$ is formally Legendrian isotopic to the zero section, $f_t$ is formally Legendrian isotopic to the loose chart $U_i$ within $O$. On the other hand, resolving $g_t$ along $\Phi_t^i$ returns $\widetilde{g}_t$. Once again, this consists of a single loose chart in $O$, with the same boundary conditions as before. Thus, $f_t$ and $\widetilde{g}_t$ are both formally Legendrian isotopic in $O$ to a fixed loose chart. 

This proves that $f_t$ and $\widetilde{g}_t$ are formally Legendrian isotopic. If $t \in I^d$ is a time for which $\Phi^i_t \cong D^{n-1}$ or $\Phi^i_t$ contains an embryo singularity, a completely analogous argument shows $f_t$ and $\widetilde{g}_t$ are still formally Legendrian isotopic. As this formal Legendrian isotopy is well-defined upto a contractible choice, we may use it to define a formal Legendrian isotopy between the families $\{f_t : t \in I^d\}$ and $\{\widetilde{g}_t : t \in I^d\}$, as desired. \end{proof}

\begin{figure}
\centering
\includegraphics[scale=0.18]{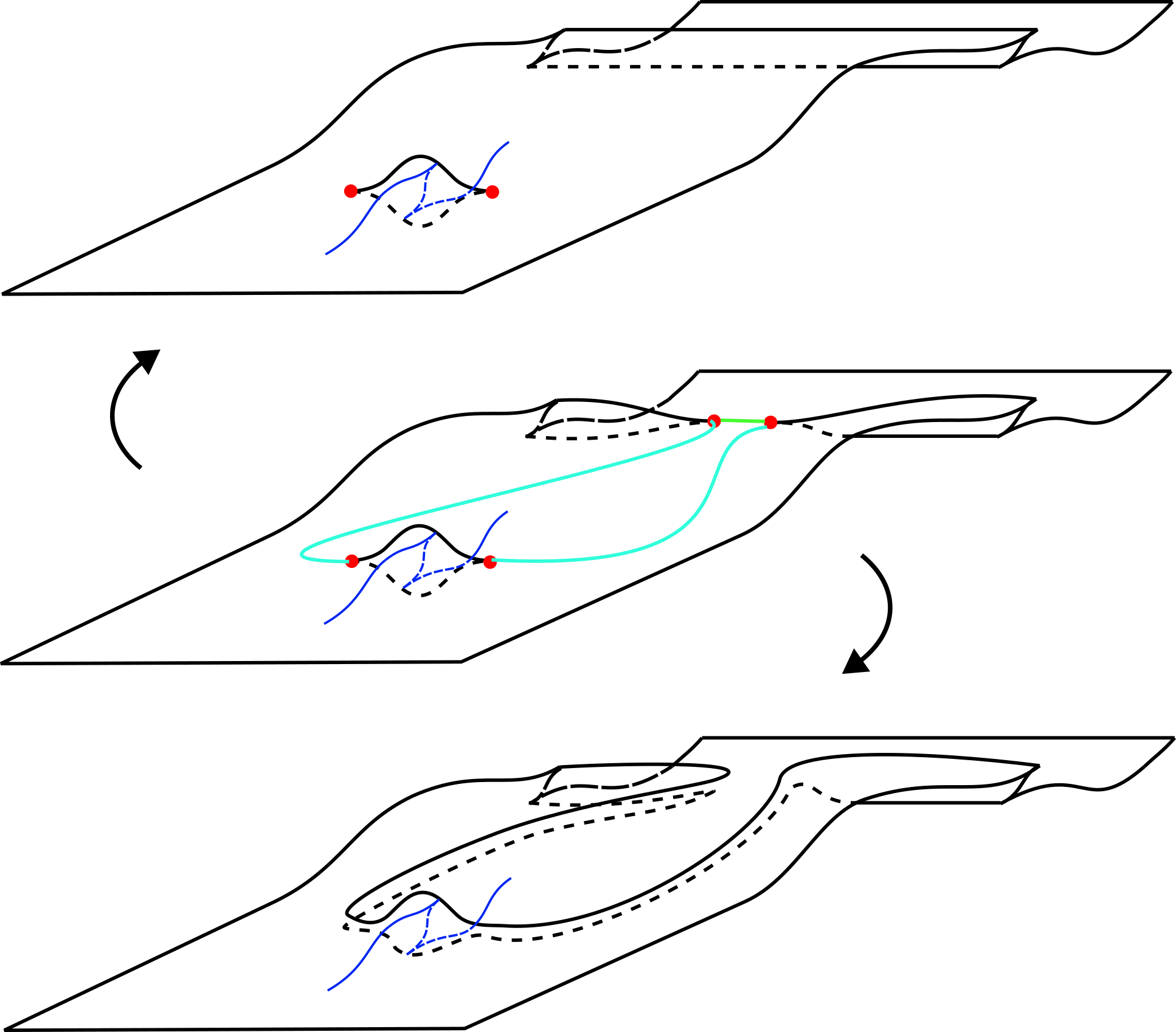}
\caption{A loose chart and a disjoint wrinkle (top), an inside-out wrinkle and a disjoint wrinkle (middle), a loose chart and no wrinkles (bottom). The middle front has a marking joining the inside-out wrinkle with the disjoint wrinkle (cyan) and a disk marking for the inside-out wrinkle (green). Resolving the middle front along the green marking produces the top front, whereas resolving along the cyan marking produces the bottom front.}
\label{fig-loosewrinkle}
\end{figure}

The following is an important consequence of Theorem \ref{thm-hloose} (see, \cite[Theorem 1.2]{mur}).

\begin{corollary}\label{cor-hloosepi0}Let $(Y, \xi)$ be a contact manifold with $\dim Y \geq 5$ and $f_0, f_1 : \Lambda \to Y$ be a pair of loose Legendrian embeddings. If $f_0, f_1$ are isotopic through formal Legendrian embeddings, then $f_0, f_1$ are Legendrian isotopic.\end{corollary}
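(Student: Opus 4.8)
The plan is to deduce Corollary \ref{cor-hloosepi0} from Theorem \ref{thm-hloose} by reducing the statement about two loose Legendrian embeddings to a statement about a single relative homotopy class, and then applying the $h$--principle after having arranged that both embeddings share a common fixed loose chart. First I would recall the setup: we have $f_0, f_1 : \Lambda \to (Y,\xi)$ loose Legendrian embeddings, each containing its own loose chart, and a path $\{(F_{s,t}, f_t) : t \in I\}$ of formal Legendrian embeddings connecting $(df_0, f_0)$ to $(df_1, f_1)$. The obstacle is that Theorem \ref{thm-hloose} requires a \emph{fixed} loose chart $\{U, \phi, D^n, \varphi\}$ that is held constant throughout, and restricted to which the family is already holonomic; but a priori the loose charts of $f_0$ and $f_1$ need not coincide, nor need the connecting formal homotopy preserve any loose chart.

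The key step is therefore to manufacture a common fixed loose chart. Using Proposition \ref{prop-manyloose}, the loose chart in $f_0(\Lambda)$ contains another loose chart of some prescribed size parameter; the point is that we can shrink the loose chart of $f_0$ (and likewise of $f_1$) so that both embeddings are modified, by a \emph{Legendrian} isotopy, to embeddings $f_0', f_1'$ which agree on a common chart $D^n \subset \Lambda$, with $f_i'|_{D^n} = \varphi$ for a single fixed parametrized loose chart $(U, \phi, D^n, \varphi)$. (Here one uses that any two loose charts of a given combinatorial type are contactomorphic via a compactly supported contactomorphism, so after an ambient contact isotopy one can move the loose chart of $f_1$ onto that of $f_0$; this uses $\dim Y \ge 5$ and the local model in Definition \ref{def-loose}.) Since $f_0 \simeq f_0'$ and $f_1 \simeq f_1'$ through Legendrian isotopies, and Legendrian isotopies are in particular formal Legendrian isotopies, it suffices to show $f_0'$ and $f_1'$ are Legendrian isotopic. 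Concatenating the formal Legendrian homotopies $f_0' \rightsquigarrow f_0 \rightsquigarrow f_1 \rightsquigarrow f_1'$ produces a formal Legendrian homotopy from $f_0'$ to $f_1'$; it remains to make this homotopy fixed on $D^n$, which one arranges by a further deformation supported near $D^n$ using the fact that the formal data over a Darboux chart containing a loose chart is contractible onto the holonomic data — or more directly, by absorbing the discrepancy using Proposition \ref{prop-manyloose} to split off yet another auxiliary loose chart not touched by the homotopy.

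Having done this, the pair $(f_0', f_1')$ together with the relative formal homotopy between them defines an element of the relative homotopy set $\pi_0$ of the pair $(\mathrm{Emb}^f_{\rm{Leg}, \ell}(\Lambda, Y; U), \mathrm{Emb}_{\rm{Leg}, \ell}(\Lambda, Y; U))$ — more precisely, applying Theorem \ref{thm-hloose} with $d = 1$, where $\partial I^1 = \{0,1\}$ and the two endpoint maps $f_0', f_1'$ are holonomic Legendrian embeddings with the fixed loose chart, we conclude that the formal Legendrian homotopy between them is homotopic rel $\partial I^1$ to a genuine Legendrian isotopy. This is exactly the statement that $f_0'$ and $f_1'$ are Legendrian isotopic, hence so are $f_0$ and $f_1$.

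The main obstacle I expect is the bookkeeping in the reduction step: ensuring that after shrinking and relocating loose charts one genuinely lands in the space $\mathrm{Emb}^f_{\rm{Leg}, \ell}(\Lambda, Y; U)$ with \emph{all three} conditions of Definition \ref{def-fLegloose} satisfied simultaneously ($f^{-1}(U) = D^n$, holonomic on $D^n$, and $f|_{D^n} = \varphi$), and that the connecting formal homotopy can be taken constant on $D^n$. The cleanest way to handle this is to observe that the space of formal Legendrian embeddings restricting to a fixed holonomic germ on a small Darboux ball is homotopy equivalent to the space of all formal Legendrian embeddings (the restriction-to-a-point fibration has contractible-enough fibers), so the $\pi_0$-level statement is insensitive to fixing the loose chart once one knows each $f_i$ is isotopic to one adapted to the fixed chart; everything else is an application of Theorem \ref{thm-hloose} and the homotopy long exact sequence of the pair.
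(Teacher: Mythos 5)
Your overall route is the same as the paper's: match the two loose charts by an ambient contact isotopy (together with a reparametrization of the domain of $f_1$), arrange a formal Legendrian isotopy between the adjusted embeddings that is constant on the fixed chart $D^n$, and apply Theorem \ref{thm-hloose} with $d=1$, $\partial I^1=\{0,1\}$. The first and last steps agree with the paper (which uses the contact isotopy extension theorem for the Darboux charts; your extra appeal to Proposition \ref{prop-manyloose} to normalize the charts is harmless), and the deduction from the $d=1$ case of Theorem \ref{thm-hloose} is exactly the paper's.

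The soft spot is the middle step, where the concatenated formal homotopy must be made constant on $D^n$, and neither justification you offer works as stated. The space of formal Legendrian embeddings restricting to a fixed holonomic germ on a small Darboux ball is \emph{not} homotopy equivalent to the space of all formal Legendrian embeddings: it is a fiber of the restriction fibration, and for the fiber inclusion to be an equivalence the base (germs of formal Legendrian embeddings of $D^n$, which fibers over $Y$ with a Legendrian--Grassmannian type fiber) would have to be contractible, which it is not; a loop of formal Legendrian embeddings based at two chart-adapted ones can restrict to an essential loop over $D^n$. Likewise, ``splitting off an auxiliary loose chart not touched by the homotopy'' is not meaningful, since the homotopy moves the whole embedding and no sub-chart of $\Lambda$ is a priori left fixed. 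What is actually needed is weaker: one only requires \emph{some} path, inside the space with fixed loose chart, joining the two holonomic endpoints. The paper produces it by first replacing the underlying smooth family $\{g_t\}$ by a family of immersions $\{h_t\}$ with the same endpoints and with $h_t|_{D^n}=g_0|_{D^n}$ for all $t$, and then lifting $\{h_t\}$ through the Serre fibration $(F_s,f)\mapsto f$ from $\mathrm{Imm}^f_{\mathrm{Leg},\ell}(\Lambda,Y;U_0)$ to the space of immersions fixed on $D^n$; this uses only path lifting, not a homotopy equivalence of total spaces. If you substitute such an argument (or any construction of a chart-preserving formal isotopy between the two holonomic endpoints) for your contractibility claim, the rest of your proof goes through.
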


\begin{proof}Let us suppose that the formal Legendrian isotopy between $f_0$ and $f_1$ is,
$$\{(F_{t, s}, f_t) : (T\Lambda, \Lambda) \to (TY, Y) : 0 \leq t \leq 1, 0 \leq s \leq 1\},$$
where $F_{0, s} = df_0$ and $F_{1, s} = df_1$ for all $0 \leq s \leq 1$. Let us choose loose charts $(U_0, U_0 \cap f_0(\Lambda))$ and $(U_1, U_1 \cap f_1(\Lambda))$ for $f_0$ and $f_1$, respectively. We may assume without loss of generality that $f_0^{-1}(U_0) = f_1^{-1}(U_1) = D^n \subset \Lambda$ is a fixed open disk, by reparametrizing the domain of $f_1$ by a diffeotopy if necessary. As $U_0, U_1 \subset Y$ are Darboux charts, by the contact isotopy extension theorem, there exists an ambient isotopy of contactomorphisms $\{\varphi_t : Y \to Y : 0 \leq t \leq 1\}$ such that $\varphi_0 = \mathrm{id}$, and $\varphi_1(U_1) = U_0$. Let us denote $g_t :=\varphi_t \circ f_t : \Lambda \to Y$. Then, $g_0^{-1}(U_0) = g_1^{-1}(U_1) = D^n$. Note that $g_0 = f_0$ and $g_1 = \varphi_1 \circ g_0$ is Legendrian isotopic to $f_1$. 

In other words, $g_0, g_1 \in \mathrm{Imm}_{\rm{Leg},\ell}(\Lambda, Y; U_0)$ are Legendrian embeddings with a fixed loose chart in $U_0$. We may \emph{smoothly} isotope the family $\{g_t : \Lambda \to Y : 0 \leq t \leq 1\}$ to a family of immersions $\{h_t : \Lambda \to Y : 0 \leq t \leq 1\}$ such that $h_0 = g_0, h_1 = g_1$ and $h_t|D^n = g_0|D^n$ for all $0 \leq t \leq 1$. Consider the projection map,
$$\mathrm{Imm}_{\rm{Leg}, \ell}^f(\Lambda, Y; U_0) \to \mathrm{Imm}(\Lambda, Y; U_0), \;\; (F_s, f) \mapsto f,$$
where $\mathrm{Imm}(\Lambda, Y; U_0)$ denotes the space of immersions of $\Lambda$ in $Y$ which restrict to $g_0|D^n$ on $D^n \subset \Lambda$. This is a Serre fibration, hence $\{h_t : 0 \leq t \leq 1\}$ lifts to a formal Legendrian isotopy with fixed loose chart $U_0$ between $g_0$ and $g_1$. By Theorem \ref{thm-hloose}, $g_0$ and $g_1$ are Legendrian isotopic. Consequently, $f_0$ and $f_1$ are also Legendrian isotopic, as required.
\end{proof} 

\section{Existence and detection of non-loose Legendrians}\label{sec-microsheaf}

The purpose of this section is to give examples of non-loose Legendrian submanifolds. A number of Legendrian isotopy invariants in high-dimensional contact manifolds exist in literature which are able to detect non-isotopic Legendrian embeddings in the same formal Legendrian isotopy class. Some instances of such invariants are existence of a generating function \cite{egbook}, Legendrian contact homology \cite{ees} and microlocal sheaf theory \cite{stz}. All of these invariants are known to vanish for loose Legendrian submanifolds. 

In this section, however, we focus on the invariants coming from microlocal sheaf theory. The observation that a loose Legendrian submanifold in a cosphere bundle does not occur as microsupport of a constructible sheaf is made by Murphy \cite[p.~17]{mur}: ``It is also easy to see that a loose Legendrian in any co-sphere bundle is never the singular support of a constructable sheaf". We shall explicate this comment in detail. Our proof will use some of the foundational results in this field developed by Kashiwara-Schapira \cite{ksbook} and Guillermou \cite{guipaper}, as well as a key result of Shende-Treumann-Zaslow \cite{stz} proving that the Legendrian zig-zag does not occur as microsupport of a constructible sheaf.

\subsection{Derived category and hypercohomology}

We recall some basic notions from sheaf theory and homological algebra. Let us fix a smooth manifold $M$ and a base field $\mathbf{k}$. Let $Sh(M)$ denote the category of sheaves of $\mathbf{k}$-vector spaces over $M$. 

\begin{definition}[Derived category of sheaves]\label{def-derivedcat}The \emph{bounded derived category} of sheaves of $\mathbf{k}$-vector spaces over $M$, denoted as $D^b(M)$, is a category defined as follows:
\begin{enumerate}[leftmargin=*]
\item An object of $D^b(M)$ is a cochain complex $\mathcal{F}^\bullet$ of sheaves over $M$ valued in $\mathbf{k}$-vector spaces, such that $H^n(\mathcal{F}^\bullet) = 0$ for all but finitely many $n \in \Bbb Z$, 
\item Let $\mathcal{F}^\bullet_i \in \mathrm{Ob}(D^b(M))$, $1 \leq i \leq 4$. A \emph{roof} is a diagram of the form:
\[
\begin{tikzcd}[row sep=large]
	& {\mathcal{F}^\bullet_3} \\
	{\mathcal{F}^\bullet_1} && {\mathcal{F}^\bullet_2}
	\arrow["s" ', from=1-2, to=2-1]
	\arrow["p", from=1-2, to=2-3]
\end{tikzcd}
\]
where $s$ and $p$ are chain homotopy-classes of chain maps, and $s$ is a \emph{quasi-isomorphism}, i.e. an isomorphism on homology in all degrees. Two roofs $\mathcal{F}_1^\bullet {\leftarrow} \mathcal{F}_i^\bullet {\rightarrow} \mathcal{F}_2^\bullet$, $ i = 3, 4,$ are called \emph{equivalent} if there exists $\mathcal{F}^\bullet_5 \in \mathrm{Ob}(D^b(M))$ and a third roof $\mathcal{F}_1^\bullet {\leftarrow} \mathcal{F}_5^\bullet {\rightarrow} \mathcal{F}_2^\bullet$ such that the following diagram commutes:
\[
\begin{tikzcd}[row sep = large]
	& {\mathcal{F}^\bullet_2} \\
	{\mathcal{F}^\bullet_1} & {\mathcal{F}^\bullet_5} & {\mathcal{F}^\bullet_2} \\
	& {\mathcal{F}^\bullet_4}
	\arrow[from=2-2, to=2-1]
	\arrow[from=2-2, to=2-3]
	\arrow[from=2-2, to=1-2]
	\arrow[from=2-2, to=3-2]
	\arrow[from=1-2, to=2-1]
	\arrow[from=1-2, to=2-3]
	\arrow[from=3-2, to=2-1]
	\arrow[from=3-2, to=2-3]
\end{tikzcd}
\]
For a pair of objects $\mathcal{F}^\bullet, \mathcal{G}^\bullet \in \mathrm{Ob}(D^b(M))$, we define morphisms in $\mathrm{Hom}(\mathcal{F}^\bullet, \mathcal{G}^\bullet)$ to be equivalence classes of roofs $\mathcal{F}^\bullet \leftarrow \mathcal{F}'^\bullet \to \mathcal{G}^\bullet$.
\end{enumerate}
\end{definition}

It is not immediately clear how to compose two morphisms in $D^b(M)$ as, naively speaking, composition of two roofs should give rise to a zig-zag of roofs. However, such a diagram can always be completed to a roof; we refer the reader to \cite[Chapter 1.6]{ksbook} for details. In categorical language, $D^b(M)$ is obtained from the naive homotopy category of bounded complexes of sheaves of $\mathbf{k}$-vector spaces over $M$, by localizing at (or formally inverting) the quasi-isomorphisms. In particular, quasi-isomorphisms are genuine isomorphisms in $D^b(M)$. 

\begin{example}\label{eg-degzerosheaf}A single sheaf $\mathcal{F} \in \mathrm{Ob}(Sh(M))$ can be considered as an element of $D^b(M)$ by treating it as a cochain complex concentrated in degree $0$:
\begin{equation}\label{eq-degzerosheaf}\cdots \to 0[-1] \to \mathcal{F}[0] \to 0[1] \to 0[2] \to \cdots\end{equation}
Suppose $\mathcal{I}^\bullet$ is an injective resolution of $\mathcal{F}$. That is, $\mathcal{I}^\bullet$ is a cochain complex of injective sheaves, $\mathcal{I}^n = 0$ for $n < 0$, and there is a morphism $\mathcal{F} \to \mathcal{I}^0$, such that
\begin{equation}\label{eq-injressheaf}0 \to \mathcal{F} \to \mathcal{I}^0 \to \mathcal{I}^1 \to \mathcal{I}^2 \to \cdots\end{equation}
is an exact sequence. Then, the morphism $\mathcal{F} \to \mathcal{I}^0$ gives rise to a chain map from the cochain complex in Equation \ref{eq-degzerosheaf} to the cochain complex $\mathcal{I}^\bullet$. Since Equation \ref{eq-injressheaf} is an exact sequence and hence acyclic, this chain map must be a quasi-isomorphism. Therefore, as objects of the derived category $D^b(M)$, the sheaf $\mathcal{F}$ is isomorphic to any injective resolution $\mathcal{I}^\bullet$ of it.
\end{example}

For any open set $U \subset M$ and a sheaf $\mathcal{F} \in \mathrm{Ob}(Sh(M))$, let us denote $\Gamma(U; \mathcal{F})$ to be the $\mathbf{k}$-vector space of sections of $\mathcal{F}$ over $U$. Then, $\Gamma(U; -) : Sh(M) \to \mathrm{Vect}_{\mathbf{k}}$ is a left-exact functor, whose derived functors are the \emph{sheaf cohomology} vector spaces $H^*(U; \mathcal{F})$. We extend the notion of sheaf cohomology from a single sheaf to a complex of sheaves in the following definition.

\begin{definition}[Hypercohomology]\label{def-hypercoh}Let $\mathcal{F}^\bullet \in \mathrm{Ob}(D^b(M))$ be a complex of sheaves. Let $\mathcal{I}^\bullet$ be an injective resolution of $\mathcal{F}^\bullet$, i.e., a complex of injective sheaves along with a quasi-isomorphism $\mathcal{F}^\bullet \to \mathcal{I}^\bullet$. Then, we define the \emph{sheaf hypercohomology} of $\mathcal{F}^\bullet$ over $U$ by,
$$H^k(U; \mathcal{F}^\bullet) := H^k(\Gamma(U; \mathcal{I}^\bullet)).$$ \end{definition}

\begin{remark}Even though Definition \ref{def-hypercoh} is completely identical to the definition of sheaf cohomology of a single sheaf, one confusion that might arise in practice is as follows: given a complex of sheaves $\mathcal{F}^\bullet$, every degree consists of a single sheaf $\mathcal{F}^i$, which has its own sheaf cohomology vector spaces $H^j(U; \mathcal{F}^i)$ over $U$. Hence, it might seem that the ``sheaf cohomology of $\mathcal{F}^\bullet$" ought to be a doubly-graded object indexed by $(i, j)$, whilst the hypercohomology of $\mathcal{F}^\bullet$ has a single grade. We pause here to address this potential confusion.

We shall construct an injective resolution for the complex of sheaves $\mathcal{F}^\bullet$ by constructing compatible injective resolutions $\mathcal{F}^i \to \mathcal{J}^{i, \bullet}$. Thus, one obtains a double complex $\{\mathcal{J}^{i, j}\}$ of injective sheaves. One then obtains a genuine cochain complex $\mathcal{I}^\bullet$ of injective sheaves by the \emph{totalization construction}, setting:
$$\mathcal{I}^n := \bigoplus_{i + j = n} \mathcal{J}^{i, j},$$
where the differentials are a (signed) sum of the horizontal and vertical differentials of $\mathcal{J}^{i, j}$. The chain maps $\mathcal{F}^i \to \mathcal{J}^{i, j}$ give rise to a chain map $\mathcal{F}^\bullet \to \mathcal{I}^\bullet$, which is the desired injective resolution. 

Thus, the hypercohomology $H^*(U; \mathcal{F}^\bullet)$ is a more refined object than the doubly graded sheaf cohomology $H^j(U; \mathcal{F}^i)$. Indeed, by the spectral sequence for cohomology of a double complex, we obtain from above that there is a spectral sequence with $E^2$ page consisting of $H^j(U; \mathcal{F}^i)$, converging to $H^{i+j}(U; \mathcal{F}^\bullet)$ in the $E^\infty$ page.\end{remark}

\subsection{Microsupport and its properties} In this section we introduce the key notion of microsupport of a complex of sheaves over a smooth manifold. Let us begin with the following definition, introducing the coarser notion of support:

\begin{definition}[Support]Let $M$ be a smooth manifold, and $\mathcal{F}^\bullet \in \mathrm{Ob}(D^b(M))$ be a complex of sheaves. The \emph{support} of $\mathcal{F}^\bullet$ is a subset $\mathrm{supp}(\mathcal{F}^\bullet) \subset M$ defined to be the closure of the set of points $x \in M$ such that the stalk $\mathcal{F}^\bullet_x$ (which is a complex of $\mathbf{k}$-vector spaces) is not quasi-isomorphic to the zero complex.
\end{definition}

Next, we introduce the microsupport of a complex of sheaves. It is a subset of the cotangent bundle of the underlying manifold, consisting of the codirections along which the hypercohomology of the complex of sheaves does not ``propagate" or ``parallel transport". This notion was first introduced in the setup of sheaves over manifolds by Kashiwara and Schapira \cite{ksbook}. We follow the exposition in Guillermou \cite{guipaper}.

\begin{definition}[Microsupport]\label{def-microsupp}Let $M$ be a smooth manifold, and $\mathcal{F}^\bullet \in \mathrm{Ob}(D^b(M))$ be a complex of sheaves. The \emph{microsupport} or \emph{singular support} of $\mathcal{F}^\bullet$ is a subset $\mathrm{SS}(\mathcal{F}^\bullet) \subset T^*M$ of the cotangent bundle defined to be the closure of the set of points $(x_0, \xi_0) \in T^*M$ such that there exists a smooth function $\varphi : M \to \Bbb R$ with $\varphi(x_0) = 0$ and $d\varphi(x_0) = \xi_0$ so that the restriction map,
\begin{equation}\label{eq-microstalk}\varinjlim_{U \ni x_0} H^k(U; \mathcal{F}^\bullet) \to \varinjlim_{U \ni x_0} H^k(U \cap \{\varphi < 0\}; \mathcal{F}^\bullet),\end{equation}
is \emph{not} an isomorphism for \emph{some} $k \in \Bbb Z$.
\end{definition}

\begin{example}Let $\mathcal{F} \in Sh(M)$ be a single sheaf, treated as a complex of sheaves concentrated in degree $0$ as in Example \ref{eg-degzerosheaf}. Suppose $(x_0, \xi_0) \notin \mathrm{SS}(\mathcal{F})$ is a covector not contained in the microsupport. Then, for every $(x, \xi) \in T^*M$ contained in a neighborhood around $(x_0, \xi_0)$, for any smooth function $\varphi : M \to \Bbb R$ with $\varphi(x) = 0$ and $d\varphi(x) = \xi$, the restriction map in sheaf cohomology,
$$\varinjlim_{U \ni x} H^k(U; \mathcal{F}) \to \varinjlim_{U \ni x} H^k(U \cap \{\varphi < 0\}; \mathcal{F}),$$
is an isomorphism for all $k$. Taking $k = 0$, we obtain an isomorphism
$$\varinjlim_{U \ni x} \Gamma(U; \mathcal{F}) \stackrel{\cong}{\to} \varinjlim_{U \ni x} \Gamma(U \cap \{\varphi < 0\}; \mathcal{F}).$$ 
Let us choose a ball $B$ in a chart around $x \in M$. The hypersurface $\{\varphi = 0\}$ cuts $B$ into two regions, $B_{\pm} = \{\pm \varphi > 0\} \cap B$. The isomorphism above implies that every section of $\mathcal{F}^\bullet$ defined on $B_{-}$ uniquely ``propagates" to the other side $B_+$ of the hypersurface, after possibly shrinking the ball $B$. For $k > 0$, one can offer similar interpretations in terms of ``propagation" of higher Cech cocycles.\end{example}

\begin{definition}\label{def-conic}There is a natural $\Bbb R_{>0}$-action on $T^*M$ given by fiberwise scaling $c \cdot (x, \xi) = (x, c \xi)$. We shall call a subset $Z \subset T^*M$ \emph{conic} if it is invariant under this action.\end{definition}

\begin{proposition}For a complex of sheaves $\mathcal{F}^\bullet \in \mathrm{Ob}(D^b(M))$, $\mathrm{SS}(\mathcal{F}^\bullet) \subset T^*M$ is a closed conic subset. Moreover, $\mathrm{SS}(\mathcal{F}^\bullet) \cap 0_M = \mathrm{supp}(\mathcal{F}^\bullet)$.
\end{proposition}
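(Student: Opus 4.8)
The statement has two parts: closedness and conicity of $\mathrm{SS}(\mathcal{F}^\bullet)$, and the identification of its intersection with the zero section. Closedness is immediate since $\mathrm{SS}(\mathcal{F}^\bullet)$ was \emph{defined} in Definition \ref{def-microsupp} as a closure. For conicity, I would argue as follows: the defining condition at $(x_0, \xi_0)$ quantifies over smooth functions $\varphi$ with $\varphi(x_0) = 0$, $d\varphi(x_0) = \xi_0$, and asks whether the restriction map \eqref{eq-microstalk} fails to be an isomorphism for some $k$. Given $c > 0$, if $\varphi$ realizes the covector $(x_0, \xi_0)$ then $c\varphi$ realizes $(x_0, c\xi_0)$, and crucially the open sets $\{\varphi < 0\}$ and $\{c\varphi < 0\}$ coincide. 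Hence the restriction map in \eqref{eq-microstalk} for $(x_0, c\xi_0)$ computed with $c\varphi$ is literally the same map as for $(x_0,\xi_0)$ computed with $\varphi$. Therefore the complement of $\mathrm{SS}(\mathcal{F}^\bullet)$ before taking closure is already $\Bbb R_{>0}$-invariant; since the $\Bbb R_{>0}$-action on $T^*M$ is by homeomorphisms, the closure is invariant as well, so $\mathrm{SS}(\mathcal{F}^\bullet)$ is conic.

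For the identification $\mathrm{SS}(\mathcal{F}^\bullet) \cap 0_M = \mathrm{supp}(\mathcal{F}^\bullet)$, I would first handle the point $\xi_0 = 0$ in the definition: for a covector of the form $(x_0, 0)$, one may take $\varphi \equiv 0$ (or more carefully, $\varphi$ with $\varphi(x_0)=0$, $d\varphi(x_0)=0$ and, say, $\varphi \le 0$ near $x_0$, so that $\{\varphi < 0\}$ is an arbitrarily small punctured-type neighborhood — but the cleanest choice is $\varphi \equiv 0$, giving $\{\varphi < 0\} = \emptyset$). With $\varphi \equiv 0$, the map \eqref{eq-microstalk} becomes
$$\varinjlim_{U \ni x_0} H^k(U; \mathcal{F}^\bullet) \to \varinjlim_{U \ni x_0} H^k(\emptyset; \mathcal{F}^\bullet) = 0,$$
and the left-hand side is by definition the stalk cohomology $H^k(\mathcal{F}^\bullet_{x_0})$. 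So $(x_0, 0) \notin \mathrm{SS}(\mathcal{F}^\bullet)$ before closure precisely when $H^k(\mathcal{F}^\bullet_{x_0}) = 0$ for all $k$, i.e. when $\mathcal{F}^\bullet_{x_0}$ is quasi-isomorphic to zero, i.e. when $x_0 \notin \mathrm{supp}(\mathcal{F}^\bullet)$ (before closure). Taking closures of both the set $\{x_0 : \mathcal{F}^\bullet_{x_0} \not\simeq 0\}$ in $0_M \cong M$ and of $\mathrm{SS}(\mathcal{F}^\bullet)$ in $T^*M$, and using that $0_M$ is closed in $T^*M$, gives $\mathrm{SS}(\mathcal{F}^\bullet) \cap 0_M = \overline{\{x : \mathcal{F}^\bullet_x \not\simeq 0\}} = \mathrm{supp}(\mathcal{F}^\bullet)$. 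One subtlety to check: I should verify that a point $(x_0,0)$ not eliminated by $\varphi \equiv 0$ genuinely lies in $\mathrm{SS}(\mathcal{F}^\bullet)$ rather than being killed by some \emph{other} $\varphi$ with $d\varphi(x_0)=0$; but the definition only requires \emph{some} $\varphi$ to witness non-isomorphism for $(x_0,0)$ to be a pre-closure microsupport point, so if $\varphi\equiv 0$ already witnesses it, the point is in. Conversely if $x_0 \notin \mathrm{supp}$ then all stalk cohomology vanishes on a neighborhood, forcing \eqref{eq-microstalk} to be an isomorphism for \emph{every} $\varphi$ (both sides vanish on small enough $U$), so no covector over that neighborhood is in the pre-closure microsupport.

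\textbf{Main obstacle.} I expect the only real subtlety is bookkeeping around the closure operations: $\mathrm{SS}(\mathcal{F}^\bullet)$ is defined as a closure in $T^*M$, while $\mathrm{supp}(\mathcal{F}^\bullet)$ is a closure in $M \cong 0_M$, and one must be careful that $\overline{A} \cap 0_M = \overline{A \cap 0_M}$ holds for the specific set $A = \{(x_0,\xi_0) : \text{restriction map fails}\}$ — this is not automatic for arbitrary $A$, but works here because the pre-closure set $A$ is itself conic (by the conicity argument above), and a conic set whose fiberwise structure over each $x$ is either all of $T^*_xM\setminus 0$ plus possibly $0$, or empty, behaves well: more directly, I will argue that $A \cap 0_M$ is already closed in $0_M$ (its complement is open, since vanishing of stalk cohomology is a local condition), and that points of $\overline{A} \cap 0_M$ not in $A \cap 0_M$ would be limits of pre-closure microsupport points $(x_n, \xi_n) \to (x_0, 0)$ with $\xi_n \ne 0$; one then shows $x_0 \in \mathrm{supp}(\mathcal{F}^\bullet)$ by noting that each $x_n$ near such a covector must lie in $\mathrm{supp}(\mathcal{F}^\bullet)$ (else all covectors over a neighborhood of $x_n$ escape the microsupport), hence $x_0 \in \overline{\mathrm{supp}(\mathcal{F}^\bullet)} = \mathrm{supp}(\mathcal{F}^\bullet)$. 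This is the step I would write out most carefully; everything else is a direct unwinding of Definition \ref{def-microsupp}.
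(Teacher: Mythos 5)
Your argument is correct and follows essentially the same route as the paper: identify the colimit on the left of \eqref{eq-microstalk} with the stalk cohomology $H^k(\mathcal{F}^\bullet_{x_0})$, test with $\varphi \equiv 0$ over the zero section, and get conicity by rescaling $\varphi$ (which leaves $\{\varphi<0\}$ unchanged); you are in fact more careful than the paper about the closure bookkeeping and about witnesses $\varphi$ with $d\varphi(x_0)=0$ other than the zero function. The only blemish is the parenthetical claim that vanishing of stalk cohomology is an open condition (it is not --- a stalk may vanish at a point of $\mathrm{supp}(\mathcal{F}^\bullet)$), but your limit-point argument together with the observation that $\mathcal{F}^\bullet$ is acyclic on a neighborhood of any point outside $\mathrm{supp}(\mathcal{F}^\bullet)$ already carries the proof without that claim.
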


\begin{proof}
It is straightforward from the definition that $\mathrm{SS}(\mathcal{F}^\bullet) \subset T^*M$ is a closed conic subset. We begin to prove the second half of the proposition by writing a simplified expression for the domain of the restriction map in Equation \ref{eq-microstalk}:
$$\varinjlim_{U \ni x_0} H^k(U; \mathcal{F}^\bullet) \cong \varinjlim_{U \ni x_0} H^k(\Gamma(U; \mathcal{I}^\bullet)) \cong H^k(\mathcal{I}^\bullet_{x_0}) \cong H^k (\mathcal{F}^\bullet_{x_0})$$
Here, we use an injective resolution $\mathcal{I}^\bullet$ of $\mathcal{F}^\bullet$ to compute the hypercohomology in the first isomorphism. The second isomorphism follows from the fact that cohomology commutes with direct limits. Finally, as $\mathcal{F}^\bullet \to \mathcal{I}^\bullet$ is a quasi-isomorphism, it is so at the level of stalks as well. The final isomorphism also follows.

Suppose $(x_0, 0) \in 0_M \subset T^*M$. Let $\varphi$ be the zero function. As $U \cap \{\varphi < 0\} = \emptyset$, the target of the restriction map in Equation \ref{eq-microstalk} is zero. From the computation of the domain of the map above, we obtain that the restriction map is not an isomorphism if and only if $H^k(\mathcal{F}^\bullet_{x_0}) \ncong 0$ for some $k$. This holds if and only if $\mathcal{F}^\bullet_{x_0}$ is not quasi-isomorphic to the zero complex. Thus, $\mathrm{SS}(\mathcal{F}^\bullet) \cap 0_M$ is precisely $\mathrm{supp}(\mathcal{F}^\bullet)$.
\end{proof}

An important property of the microsupport is that it is a co-isotropic (cf. discussion before Lemma \ref{lem-lagcomp}) subset of $(T^*M, \omega)$, where $\omega$ is the canonical symplectic form in Example \ref{eg-phase}. Since the microsupport is in general not a submanifold of $T^*M$ but rather a singular subset, we need to define what we mean by this. Let us start with the following definition. These are both special cases of \emph{normal cones}, defined in \cite[Definition 4.1.1]{ksbook}.

\begin{definition}[Whitney tangent and secant cones] Let $M$ be a manifold, $X, X_1, X_2 \subset M$ be subsets of $M$ and $x \in M$ be a point.
\begin{enumerate}
\item The \emph{tangent cone} $C_x(X) \subset TM$ is defined as follows: we say $(x, v) \in C_x(X)$ if and only if there exists a sequence of points $\{x_n\} \subset X$ and a sequence of real numbers $\{c_n\}$ such that $x_n \to x$ and $c_n (x_n - x) \to v$ in some local coordinate system around $x$. 
\item The \emph{secant cone} $C_x(X_1, X_2) \subset TM$ is defined as follows: we say $(x, v) \in C_x(X_1, X_2)$ if and only if there exists sequences of points $\{x_n\}, \{y_n\} \subset X$ and a sequence of real numbers $\{c_n\}$ such that $x_n \to x, y_n \to x$ and $c_n(x_n - y_n) \to v$ in some local coordinate system around $x$.
\end{enumerate}
\end{definition}

\begin{definition}\cite[Definition 6.5.1]{ksbook}\label{def-coisoset} Let $Z \subset T^*M$ be a subset, and $\omega$ be the canonical symplectic form on $T^*M$ as defined in Example \ref{eg-phase}. We say $Z$ is \emph{co-isotropic at $p \in Z$} if for all vectors $(p, v) \in T_p(T^*M)$ such that $C_p(Z, Z) \subset \ker i_v \omega$, we have $(p, v) \in C_p(Z)$. We shall say $Z$ is \emph{co-isotropic} if it is co-isotropic at all points.\end{definition}

If $Z$ is a submanifold, then $C_p(Z) = C_p(Z, Z) = T_p Z$. Thus, in this case the above definition reduces to the usual notion of co-isotropic submanifolds. The following fundamental result of Kashiwara and Schapira states that the microsupport of any bounded complex of sheaves is a conic co-isotropic subset of $T^*M$.

\begin{theorem}\cite[Theorem 6.5.4]{ksbook}\label{thm-kscoiso} Let $\mathcal{F}^\bullet \in \mathrm{Ob}(D^b(M))$ be a complex of sheaves. Then $\mathrm{SS}(\mathcal{F}^\bullet) \subset T^*M$ is a co-isotropic subset.
\end{theorem}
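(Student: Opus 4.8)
The plan is to reduce the statement to a local, coordinate-wise analysis near a point $p \in \mathrm{SS}(\mathcal{F}^\bullet)$, following Kashiwara--Schapira \cite[\S 6.5]{ksbook}. The co-isotropy condition in Definition \ref{def-coisoset} is purely about the Whitney tangent and secant cones of the closed subset $Z := \mathrm{SS}(\mathcal{F}^\bullet)$ at $p$, so the key input is a propagation/microlocal cut-off result: if $\varphi_1, \varphi_2 : M \to \Bbb R$ are two smooth functions with $\varphi_i(p_0) = 0$ and the two codirections $d\varphi_i(p_0)$ both avoiding $Z$ over a neighborhood, then the restriction maps in hypercohomology between the sublevel sets $\{\varphi_1 < 0\}$, $\{\varphi_1 < 0\} \cap \{\varphi_2 < 0\}$, etc., are isomorphisms. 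This is the microlocal analogue of the fact that sheaf hypercohomology ``propagates'' across a hypersurface along codirections outside the microsupport, and it is exactly the content of the non-characteristic deformation lemma in \cite{ksbook}. I would invoke this as a black box (it is the technical heart of the theory and is not reproved here).

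First I would set up the contrapositive framework: fix $p = (x_0, \xi_0) \in T^*M$ and a vector $(p, v) \in T_p(T^*M)$ with $C_p(Z,Z) \subset \ker\, i_v\omega$, and suppose for contradiction that $(p, v) \notin C_p(Z)$. Since $Z$ is closed and $(p,v) \notin C_p(Z)$, there is a conic neighborhood $\Gamma$ of the ray $\Bbb R_{>0} v$ in $T_p(T^*M)$ and $\epsilon > 0$ such that $(p + \Gamma) \cap B_\epsilon(p) \cap Z = \emptyset$; that is, $Z$ locally misses a whole cone of directions emanating from $p$ transverse to $v$. Using $\xi_0 \neq 0$ (the case $\xi_0 = 0$ being handled by the support statement already proved, $\mathrm{SS} \cap 0_M = \mathrm{supp}$, which is trivially co-isotropic where it is a closed set treated as a Lagrangian-degenerate locus), I would choose Darboux-type coordinates $(x, \xi)$ on $T^*M$ near $p$ and pick two families of test functions $\varphi_s^{(1)}, \varphi_s^{(2)}$ parametrized by a deformation parameter $s$, whose differentials sweep out codirections staying in the complement of $Z$; the hypothesis $C_p(Z,Z) \subset \ker i_v\omega$ is precisely what guarantees that the relevant sublevel-set ``corners'' can be rounded without ever hitting $Z$.

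Next I would run the non-characteristic deformation argument: along the $s$-family, each infinitesimal step changes the sublevel region by an amount whose boundary codirection lies outside $\mathrm{SS}(\mathcal{F}^\bullet)$, hence induces an isomorphism on the inductive-limit hypercohomology groups $\varinjlim_{U} H^k(U \cap \{\varphi_s < 0\}; \mathcal{F}^\bullet)$. Composing these isomorphisms along the deformation, I obtain that the restriction map associated to a test function realizing the codirection $\xi_0$ is also an isomorphism, contradicting $(x_0, \xi_0) \in \mathrm{SS}(\mathcal{F}^\bullet)$. (Here one uses that $\mathrm{SS}$ is closed, so membership is detected by an actual, not merely limiting, failure of the restriction map; this lets the deformation argument close up.) This forces $(p, v) \in C_p(Z)$, which is exactly co-isotropy of $Z$ at $p$; since $p$ was arbitrary, $\mathrm{SS}(\mathcal{F}^\bullet)$ is co-isotropic.

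\textbf{Main obstacle.} The genuinely hard step is the non-characteristic deformation lemma itself --- keeping precise track, as the cut-off functions are deformed, that every intermediate region-difference has boundary codirections confined to the complement of $\mathrm{SS}(\mathcal{F}^\bullet)$, and that the resulting maps on inductive limits of hypercohomology are isomorphisms uniformly along the deformation. This is a substantial piece of microlocal analysis (\cite[Proposition 5.2.3 and Corollary 5.4.19]{ksbook}) that I would cite rather than reprove; the remaining work --- translating the cone-theoretic hypothesis $C_p(Z,Z)\subset\ker i_v\omega$ into the existence of an admissible family of test functions whose differentials avoid $Z$ --- is a linear-algebra-plus-transversality argument about cones in $T^*M$ and is routine once the coordinates are fixed. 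Accordingly, the exposition here should present the cone bookkeeping in detail and treat the deformation lemma as an imported tool, consistent with the expository aims of the article.
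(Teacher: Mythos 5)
The paper does not prove this statement at all: it is imported verbatim from Kashiwara--Schapira, with Definition \ref{def-coisoset} quoted precisely so that the theorem can be cited and then used (in Corollary \ref{cor-sslageq}). So there is no internal proof to compare with, and a citation is the expected treatment here. Judging your sketch on its own merits, it has a genuine gap: the step you declare routine --- translating the hypothesis $C_p(Z,Z)\subset \ker i_v\omega$, $v\notin C_p(Z)$ into an admissible family of test functions on $M$ to which the non-characteristic deformation lemma applies --- is the actual mathematical content of the involutivity theorem, and it cannot be done by ``linear algebra plus transversality''. The cone condition lives in $T_p(T^*M)$ and involves arbitrary directions, including vertical ones tangent to the fiber $T^*_{x_0}M$; a deformation of sublevel sets $\{\varphi<0\}$ in the base only probes $\mathrm{SS}(\mathcal{F}^\bullet)$ along Lagrangian graphs $\{(x,d\varphi(x))\}$, and there is no direct way to realize motion of $p$ in a vertical (or general symplectically constrained) direction by rounding corners of sublevel regions in $M$. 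Knowing that $Z$ misses an open cone around $v$ near $p$ does not produce a one-parameter family of hypersurfaces in $M$ whose relevant conormals stay off $Z$ while the covector at the marked point sweeps onto $\xi_0$; that is exactly where a naive propagation argument breaks down.

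The proof in Kashiwara--Schapira is structurally different from your outline: one passes to the object $\mu\mathrm{hom}(\mathcal{F}^\bullet,\mathcal{F}^\bullet)$, viewed as a complex of sheaves on the manifold $T^*M$ itself. Then $\mathrm{SS}(\mathcal{F}^\bullet)$ becomes a \emph{support} (one uses $\mathrm{supp}\,\mu\mathrm{hom}(\mathcal{F}^\bullet,\mathcal{F}^\bullet)=\mathrm{SS}(\mathcal{F}^\bullet)$ away from the zero section), the normal cone $C(\mathrm{SS}(\mathcal{F}^\bullet),\mathrm{SS}(\mathcal{F}^\bullet))$ appears through the microsupport estimate for $\mu\mathrm{hom}$ via the Hamiltonian identification of $T(T^*M)$ with $T^*(T^*M)$, and the conclusion is extracted from a propagation-of-support lemma applied on $T^*M$, not on $M$. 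So the ``deformation'' happens upstairs, after the specialization/microlocalization machinery has converted the pointwise symplectic hypothesis into a microsupport condition; invoking only the microlocal cut-off/non-characteristic deformation lemma downstairs, as you propose, does not reach the statement. Two smaller points: your dismissal of the zero-section case is not valid --- $\mathrm{SS}(\mathcal{F}^\bullet)\cap 0_M=\mathrm{supp}(\mathcal{F}^\bullet)$ says nothing by itself about co-isotropy at such points, which is part of the theorem's content --- and the results you cite as the black box are cut-off lemmas rather than the Chapter~6 $\mu\mathrm{hom}$ estimates that actually drive the proof. If you want more than the citation the paper gives, the honest options are to reproduce the $\mu\mathrm{hom}$ argument or to cite it as a whole, not to present the theorem as a corollary of the deformation lemma plus bookkeeping.
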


\subsection{Constructible sheaves and conic Lagrangians} 

In this section we define an important class of complexes of sheaves on a manifold $M$ called constructible sheaves. They are characterized by the property that the cohomology of such complexes restricts to local systems on each stratum of a sufficiently regular stratification of $M$. We begin with the definition of a stratification, and a summary of the important flavours of regularity for stratifications that we will encounter.

\begin{definition}[Stratification] A \emph{stratification} $\Sigma = \{\Sigma_i : i \in I\}$ of a smooth manifold $M$ is a locally finite partition $M = \bigsqcup_{i \in I} \Sigma_i$ by locally closed smooth submanifolds or \emph{strata}, such that closure of each stratum is a union of strata. If $M$ is real analytic, we say a stratification $\Sigma$ is \emph{subanalytic} if $\Sigma_i \subset M$ are subanalytic sets.
\end{definition}

\begin{definition}[Conormal set]\label{def-conset}For a submanifold $S \subset M$, the \emph{conormal covectors} of $S$ in $M$ are covectors $(x, \xi) \in T^*M$ such that $x \in S$, and $\xi|_{T_x S} = 0$. We denote the set of all conormal covectors of $S$ as $T_S^*M$, which is naturally a bundle over $S$, called the \emph{conormal bundle} of $S$ in $M$. For a stratification $\Sigma = \{\Sigma_i : i \in I\}$ of $M$, we define the \emph{conormal set} of $\Sigma$ to be the union $T^*_{\Sigma} M := \bigsqcup_{i \in I} T^*_{\Sigma_i} M$ of the conormal bundles of each stratum.
\end{definition}

\begin{definition}[Regularity of stratification]\label{def-stratreg} Let $\Sigma = \{\Sigma_i : i \in I\}$ be a stratification of a manifold $M$. $\Sigma$ is said to be a \emph{Whitney stratification} if it satisfies Condition $(a)$ and Condition $(b)$ below. Moreover, if $M$ is real analytic and $\Sigma$ is subanalytic, it is said to be a \emph{$\mu$-stratification} (see, \cite[Definition 8.3.19]{ksbook}) if it satisfies Condition $(\mu)$ below.
\begin{enumerate}
\item[$(a)$] For any pair of strata $\Sigma_i, \Sigma_j$ such that $\Sigma_i \subset \overline{\Sigma_j}$ and for any sequence $\{x_k\} \subset \Sigma_j$ converging to $x \in \Sigma_i$, if the sequence of tangent planes $\{T_{x_k} \Sigma_j\}$ converges to a plane $\tau \subset T_x M$, then $T_x \Sigma_i \subset \tau$. Equivalently, $T^*_{\Sigma} M \subset T^*M$ is a closed subset.
\item[$(b)$] For any pair of strata $\Sigma_i, \Sigma_j$ such that $\Sigma_i \subset \overline{\Sigma_j}$ and for any pair of sequences $\{x_k\} \subset \Sigma_j$, $\{y_k\} \subset \Sigma_i$ both converging to $x \in \Sigma_i$, if the sequence of tangent planes $\{T_{x_k} \Sigma_j\}$ converges to a plane $\tau \subset T_x M$ and the sequence of secant lines $\{\overline{x_k y_k}\}$ converges to a line $\ell \subset T_x M$, then $\ell \subset \tau$.
\item[$(\mu)$] For any pair of sequences $\{(x_k, \xi_k)\}, \{(x_k, \eta_k)\} \subset T^*_{\Sigma} M$ such that $\{x_k\}, \{y_k\} \subset M$ converges to $x \in M$, $\{\xi_k + \eta_k\}$ converges to $\xi \in T^*_x M$ and $|x_k - y_k||\xi_k|$ converges to $0$, $(x, \xi) \in T^*_{\Sigma} M$.
\end{enumerate}
\end{definition}

\begin{remark}The notion of convergence used in the statement of the conditions in Definition \ref{def-stratreg} are Euclidean, in a local coordinate system around $x \in M$. Nonetheless, the conditions do not depend on the choice of the coordinate chart, see \cite[Section 4]{matnotes}.
\end{remark}

\begin{remark}\label{rmk-subanastrat}
The relative dependence between the conditions in Definition \ref{def-stratreg} is:
$$(\mu) \implies (b) \implies (a).$$
In particular, a $\mu$-stratification is a Whitney stratification, see \cite[Exercise VIII.12]{ksbook}. Any locally finite covering of a real analytic manifold by subanalytic subsets admits a refinement to a $\mu$-stratification (see, \cite[Theorem 8.3.20]{ksbook}) and hence to a Whitney stratification.
\end{remark}

\begin{definition}[Constructible sheaf]Let $M$ be a manifold, $\Sigma = \{\Sigma_i : i \in I\}$ be a stratification of $M$ and $\mathcal{F}^\bullet \in \mathrm{Ob}(D^b(M))$ be a complex of sheaves. 
\begin{enumerate}
\item $\mathcal{F}^\bullet$ is \emph{weakly constructible} with respect to $\Sigma$ if for all $i \in I$, $k \in \Bbb Z$, $H^k(\mathcal{F}^\bullet)|_{\Sigma_i}$ is a locally constant sheaf on $\Sigma_i$. 
\item $\mathcal{F}^\bullet$ is \emph{constructible} with respect to $\Sigma$ if $\mathcal{F}^\bullet$ is weakly constructible with respect to $\Sigma$ and moreover, for all $x \in M$, $i \in \Bbb Z$, $H^k(\mathcal{F}^\bullet)_x$ is a finite dimensional $\mathbf{k}$-vector space.
\end{enumerate}
\end{definition}

We shall now mention a result which describes the microsupport of constructible sheaves. We start with the following definition:

\begin{definition}\cite[Definition 3.8.9]{ksbook}\label{def-lagset} Let $M$ be a real analytic manifold, and let $(T^*M, \omega)$ denote the cotangent bundle of $M$ equipped with the canonical symplectic structure. A subanalytic subset $Z \subset T^*M$ is \emph{isotropic} if $\omega$ restricts to zero on the nonsingular locus $Z_{\emph{reg}} \subset Z$. If $Z$ is both co-isotropic and isotropic, we say $Z$ is a Lagrangian subset.\end{definition}

\begin{example}\label{eg-lagset}For a submanifold $S \subset M$, the conormal bundle $T^*_S M$ (Definition \ref{def-conset}) is an isotropic submanifold of $(T^*M, \omega)$. Indeed, for $(x, \xi) \in T^*_S M$ and $v \in T_{(x, \xi)}T^*_SM$, 
$$\theta(v) = \xi(\pi_*(v)) = 0,$$
as $\xi$ is conormal to $S$. Thus, $\theta$ restricts to $0$ on $T^*_S M \subset T^*M$, and likewise for $\omega = -d\theta$. As a consequence, we deduce that for any subanalytic stratification $\Sigma$ of $M$, the conormal set $T^*_{\Sigma} M \subset (T^*M, \omega)$ is a conic isotropic subset. 
\end{example}

\begin{theorem}\cite[Theorem 8.4.2]{ksbook}\label{thm-ksiso} Let $M$ be a real analytic manifold, and $\mathcal{F}^\bullet \in \mathrm{Ob}(D^b(M))$ be a complex of sheaves. $\mathcal{F}^\bullet$ is weakly constructible with respect to a subanalytic stratification if and only if $\mathrm{SS}(\mathcal{F}^\bullet) \subset T^*M$ is a closed conic subanalytic Lagrangian subset.

Furthermore, if $\Sigma$ is a subanalytic $\mu$-stratification of $M$, then $\mathcal{F}^\bullet$ is weakly constructible with respect to $\Sigma$ if and only if $\mathrm{SS}(\mathcal{F}^\bullet) \subset T^*_{\Sigma} M$.\end{theorem}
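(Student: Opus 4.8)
The plan is to prove the theorem in two halves, reducing throughout to the refined $\mu$--stratification statement. Note that the general Lagrangian characterization follows once we recall that every subanalytic stratification refines to a subanalytic $\mu$--stratification (Remark~\ref{rmk-subanastrat}), and that the conormal set $T^*_\Sigma M$ of such a stratification is a closed conic subanalytic isotropic subset (Example~\ref{eg-lagset}); thus weak constructibility with respect to \emph{some} subanalytic stratification is equivalent to weak constructibility with respect to some $\mu$--stratification, and it is enough to track the inclusion $\mathrm{SS}(\mathcal{F}^\bullet)\subset T^*_\Sigma M$.

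For the forward direction, suppose $\mathcal{F}^\bullet$ is weakly constructible with respect to a subanalytic $\mu$--stratification $\Sigma=\{\Sigma_i\}$. I would show $\mathrm{SS}(\mathcal{F}^\bullet)\subset T^*_\Sigma M$ by \emph{d\'evissage}. Ordering the strata so that $\overline{\Sigma_i}$ is a union of strata of index $\le i$, one exhibits $\mathcal{F}^\bullet$ as built from a finite tower of distinguished triangles whose successive cones are shifts of $j_{i!}L_i$, where $j_i\colon\Sigma_i\hookrightarrow M$ is the inclusion and $L_i$ is a locally constant sheaf on the smooth manifold $\Sigma_i$. Since the microsupport is subadditive on distinguished triangles, namely $\mathrm{SS}(\mathcal{F})\subset\mathrm{SS}(\mathcal{F}')\cup\mathrm{SS}(\mathcal{F}'')$ for $\mathcal{F}'\to\mathcal{F}\to\mathcal{F}''$, it suffices to prove the single estimate $\mathrm{SS}(j_{i!}L_i)\subset\bigcup_{\Sigma_j\subset\overline{\Sigma_i}}T^*_{\Sigma_j}M$. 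Over $\Sigma_i$ this is immediate from the definition of microsupport (a locally constant sheaf propagates, so its microsupport is the zero section, and extension by zero across the boundary contributes only conormal directions to $\partial\Sigma_i$); the content lies in the behaviour of $\mathrm{SS}$ over the frontier $\overline{\Sigma_i}\setminus\Sigma_i$, which is exactly what Whitney condition $(a)$ (closedness of $T^*_\Sigma M$) and condition $(\mu)$ of Definition~\ref{def-stratreg} are designed to control. Granting this, $\mathrm{SS}(\mathcal{F}^\bullet)$ is a closed conic (as shown earlier) subanalytic subset contained in the isotropic set $T^*_\Sigma M$, hence itself isotropic; being also co-isotropic by Theorem~\ref{thm-kscoiso}, it is a closed conic subanalytic Lagrangian.

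For the converse, put $\Lambda:=\mathrm{SS}(\mathcal{F}^\bullet)$, a closed conic subanalytic Lagrangian. I would first invoke subanalytic geometry (Hironaka-type stratification theory together with Remark~\ref{rmk-subanastrat}) to produce a subanalytic $\mu$--stratification $\Sigma$ of $M$ with $\Lambda\subset T^*_\Sigma M$: over a generic point of its projection $\pi(\Lambda)\subset M$ a conic subanalytic Lagrangian coincides with a conormal bundle, so stratifying $M$ by $\pi(\Lambda)$ and by the subanalytic loci where this ``front'' structure degenerates, and then refining to a $\mu$--stratification, yields the desired $\Sigma$. Now fix a stratum $\Sigma_i$ and $x\in\Sigma_i$: since each $T^*_{\Sigma_j}M$ lies over $\Sigma_j$, the part of $T^*_\Sigma M$ based at points of $\Sigma_i$ is exactly $T^*_{\Sigma_i}M$, so every covector of $\mathrm{SS}(\mathcal{F}^\bullet)$ over $\Sigma_i$ annihilates $T\Sigma_i$. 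Hence motion along $\Sigma_i$ is non-characteristic for $\mathcal{F}^\bullet$, and the Kashiwara--Schapira non-characteristic deformation (propagation) lemma, which is a direct consequence of the half-space description of microsupport in Definition~\ref{def-microsupp}, identifies the stalks $\mathcal{F}^\bullet_x$ compatibly along paths in $\Sigma_i$; this shows $H^k(\mathcal{F}^\bullet)|_{\Sigma_i}$ is locally constant for every $k$, i.e.\ $\mathcal{F}^\bullet$ is weakly constructible with respect to $\Sigma$. The same two arguments give the sharpened statement for a fixed $\mu$--stratification $\Sigma$: the d\'evissage places $\mathrm{SS}(\mathcal{F}^\bullet)$ inside $T^*_\Sigma M$ when $\mathcal{F}^\bullet$ is weakly constructible with respect to $\Sigma$, and the propagation argument shows this inclusion is already sufficient.

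The main obstacle is that both halves rest on substantial external machinery that I would cite rather than reprove: on the sheaf-theoretic side, the functorial microsupport estimates controlling $\mathrm{SS}$ of an extension by zero across a Whitney frontier (the step $\mathrm{SS}(j_{i!}L_i)\subset\bigcup T^*_{\Sigma_j}M$), and on the geometric side, the subanalytic stratification results guaranteeing that a conic subanalytic Lagrangian is subordinate to some $\mu$--stratification. By contrast the propagation lemma, though technical, is reasonably self-contained given the definition of microsupport, so the genuine difficulty is concentrated in these two geometric/functorial inputs.
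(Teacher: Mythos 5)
The paper itself does not prove this statement: it is imported verbatim from Kashiwara--Schapira \cite[Theorem 8.4.2]{ksbook}, so there is no in-paper argument to compare against. Your outline is, in broad strokes, the standard proof from that source: d\'evissage along the strata plus triangle-subadditivity of $\mathrm{SS}$ and the estimate for extensions by zero in one direction, and the non-characteristic deformation (propagation) lemma plus the fact that a closed conic subanalytic isotropic set is contained in $T^*_\Sigma M$ for some $\mu$--stratification in the other. As a plan that delegates the two heavy inputs to citations, this is the right shape.

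There is, however, one genuine thin spot, and it is exactly where the $\mu$--condition does its work in the converse. You argue: every covector of $\mathrm{SS}(\mathcal{F}^\bullet)$ lying over $\Sigma_i$ annihilates $T\Sigma_i$, hence ``motion along $\Sigma_i$ is non-characteristic,'' and the propagation lemma identifies stalks along paths in $\Sigma_i$. But the propagation lemma is applied to sublevel sets of functions on an \emph{ambient} neighborhood of a point $x \in \Sigma_i$, and such a neighborhood meets the neighbouring strata $\Sigma_j$ with $\Sigma_i \subset \overline{\Sigma_j}$, over which $\mathrm{SS}(\mathcal{F}^\bullet)$ may contain the full conormal directions $T^*_{\Sigma_j}M$. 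Non-characteristicity over $\Sigma_i$ alone does not guarantee that the differentials of the contracting functions avoid $\mathrm{SS}(\mathcal{F}^\bullet)$ at those nearby points; condition $(\mu)$ of Definition \ref{def-stratreg} is precisely the hypothesis that lets one build cone-like neighborhoods and deformation functions whose covectors stay clear of $\bigcup_j T^*_{\Sigma_j}M$ near $x$ (this is the content of \cite[Proposition 8.3.12]{ksbook}, which feeds into the proof of 8.4.2). Your sketch assigns the $\mu$--condition a role only in the forward d\'evissage direction and declares the propagation step ``reasonably self-contained,'' which misplaces where the real difficulty sits; as written, the implication $\mathrm{SS}(\mathcal{F}^\bullet) \subset T^*_\Sigma M \Rightarrow$ weak constructibility with respect to $\Sigma$ is not established without invoking that additional geometric input.
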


We give the following relevant corollary of Theorem \ref{thm-kscoiso}. Note that we do not require $M$ to be analytic for this corollary.

\begin{corollary}\cite[Corollary 1.3.9]{guipaper}\label{cor-sslageq} Let $M$ be a smooth manifold, and $L \subset T^*M \setminus 0_M$ be a conic connected Lagrangian submanifold. If $\mathcal{F}^\bullet \in \mathrm{Ob}(D^b(M))$ is a complex of sheaves such that $\mathrm{SS}(\mathcal{F}^\bullet) \setminus 0_M \subseteq L$ is a nonempty subset, then $\mathrm{SS}(\mathcal{F}^\bullet) \setminus 0_M = L$.\end{corollary}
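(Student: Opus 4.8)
The plan is to combine the co-isotropy theorem (Theorem \ref{thm-kscoiso}) with the dimensional rigidity of a Lagrangian submanifold inside $T^*M$, exploiting connectedness of $L$ and the conic structure. First I would set $Z := \mathrm{SS}(\mathcal{F}^\bullet) \setminus 0_M$, so by hypothesis $\emptyset \neq Z \subseteq L$, and $Z$ is closed in $T^*M \setminus 0_M$ (the microsupport is a closed conic subset, and removing the zero section preserves closedness in the open set $T^*M\setminus 0_M$). The goal is to show $Z$ is also open in $L$; since $L$ is connected, openness plus nonemptiness plus closedness forces $Z = L$.

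To prove openness of $Z$ in $L$, I would fix $p \in Z$ and use Theorem \ref{thm-kscoiso}: $\mathrm{SS}(\mathcal{F}^\bullet)$ is co-isotropic at $p$ in the sense of Definition \ref{def-coisoset}. The key local computation is to show that co-isotropy of $Z$ at $p$, combined with $Z \subseteq L$ and $\dim L = \dim M$ (half-dimensional, being Lagrangian), forces $C_p(Z) = T_p L$. Indeed, $C_p(Z,Z) \subseteq C_p(L,L) = T_p L$ since $L$ is a submanifold; because $L$ is Lagrangian, $T_p L = (T_p L)^{\perp\omega}$, so every $v$ with $C_p(Z,Z) \subseteq \ker i_v\omega$ that lies in $T_pL$... more precisely, $(T_pL)^{\perp\omega} = T_pL \subseteq \ker i_v \omega$ for $v \in T_p L$, hence co-isotropy gives $T_p L \subseteq C_p(Z)$. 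But $C_p(Z) \subseteq C_p(L) = T_p L$, so $C_p(Z) = T_p L$, which has dimension $\dim M$. A set whose Whitney tangent cone at a point is a full $d$-plane, sitting inside a $d$-dimensional manifold $L$, must contain a neighborhood of $p$ in $L$ — this is the step where I would either cite a standard fact about subsets with full-dimensional tangent cones, or argue directly: if $Z$ omitted points arbitrarily close to $p$ in $L$, one could extract a secant-or-tangent sequence from the complement witnessing that $C_p(Z)$ misses a direction, contradicting $C_p(Z) = T_pL$. (Here closedness of $Z$ in $T^*M \setminus 0_M$ is used to know the complement $L \setminus Z$ stays away from $0_M$ and consists of honest nearby points.)

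The main obstacle I expect is precisely this last inference: passing from "$C_p(Z) = T_pL$ with $\dim T_pL = \dim L$" to "$Z$ contains an $L$-neighborhood of $p$." The Whitney tangent cone is a somewhat coarse invariant, and one must be careful that a closed subset of a manifold with full tangent cone at a point is actually a neighborhood of that point — this is true but deserves a clean argument (e.g. working in a chart where $L$ is linear, and noting that a closed subset $Z \subseteq \mathbb{R}^d$ with $C_0(Z) = \mathbb{R}^d$ whose complement is open cannot have $0$ as a boundary point, since a boundary sequence $x_k \to 0$, $x_k \notin Z$ with $x_k/|x_k| \to w$ would still be compatible with $w \in C_0(Z)$ — so one instead argues via the \emph{complement's} tangent cone or uses that $Z$ being locally a union of strata of a Lagrangian, by Theorem \ref{thm-ksiso} applied after a subanalytic reduction, is locally a finite union of submanifolds, the top-dimensional ones being open in $L$). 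A cleaner route avoiding analyticity: invoke that $\mathrm{SS}(\mathcal{F}^\bullet)$ is co-isotropic and $L$ is Lagrangian of the same "dimension", so near any $p \in Z$ the co-isotropy estimate rules out $Z$ having a lower-dimensional tangent cone, hence $p$ is interior. I would present the argument in the chart model, reduce to the closed-subset-of-$\mathbb{R}^d$ statement, and dispatch it with the tangent-cone characterization, citing \cite{ksbook} for the co-isotropy input and for the elementary cone lemma if available. Finally, connectedness of $L$ closes the argument: $Z$ is open, closed, and nonempty in $L$, so $Z = L$, i.e. $\mathrm{SS}(\mathcal{F}^\bullet) \setminus 0_M = L$.
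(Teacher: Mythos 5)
Your use of Theorem \ref{thm-kscoiso} is exactly the right computation, and it coincides with the paper's: for $p \in Z := \mathrm{SS}(\mathcal{F}^\bullet)\setminus 0_M \subseteq L$ one has $C_p(Z,Z) \subseteq C_p(L,L) = T_pL$, and since $L$ is Lagrangian every $v \in T_pL$ satisfies $T_pL \subseteq \ker i_v\omega$, so co-isotropy forces $T_pL \subseteq C_p(Z)$. The genuine gap is the step you yourself flag and then wave through: ``a closed subset with full Whitney tangent cone at a point contains a neighborhood of that point'' is false. Take $Z = \mathbb{R}^2 \setminus \{(x,y): x^2 < y < 2x^2\}$: it is closed, $C_0(Z) = \mathbb{R}^2$ (every ray from the origin eventually lies in $Z$), yet $0$ is not an interior point. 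So knowing $C_p(Z)=T_pL$ at the boundary point you happen to be looking at proves nothing, and none of your proposed repairs closes this. The ``complement's tangent cone'' remark is undeveloped (for a cuspidal complement that cone is just a line, and no contradiction results); the ``cleaner route'' merely restates the desired conclusion; and the subanalytic fallback through Theorem \ref{thm-ksiso} is unavailable, because the corollary assumes only that $M$ is smooth and $\mathcal{F}^\bullet$ is an arbitrary object of $D^b(M)$ (the paper stresses that no analyticity is required), and $\mathrm{SS}(\mathcal{F}^\bullet)$ is not known to be subanalytic, so no stratification of it is at hand.

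What is missing is a device that lets you apply the co-isotropy computation at a well-chosen point rather than pointwise everywhere, and this is precisely the paper's supporting-ball trick: if $U := L \setminus \mathrm{SS}(\mathcal{F}^\bullet)$ were nonempty, pick $p_0 \in \partial U$ (nonempty since $L$ is connected and $Z\neq\emptyset$), a chart $V \ni p_0$ in $L$, a point $p_1 \in U \cap V$, and the maximal open ball $B \subseteq V$ centered at $p_1$ with $B \cap \mathrm{SS}(\mathcal{F}^\bullet) = \emptyset$; there is then a touching point $p \in \partial B \cap \mathrm{SS}(\mathcal{F}^\bullet)$, and at this particular $p$ one has $C_p(\mathrm{SS}(\mathcal{F}^\bullet)) \subseteq C_p(L\setminus B)$, which is a closed half-space of $T_pL$, while the co-isotropy computation gives $T_pL \subseteq C_p(\mathrm{SS}(\mathcal{F}^\bullet))$ --- a contradiction. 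Equivalently: the global statement ``closed in $L$ with full tangent cone at \emph{every} point implies open in $L$'' is true, but its proof is exactly this ball argument, which your write-up never reaches. With that lemma supplied, your open--closed--nonempty argument on the connected manifold $L$ does go through and is essentially a repackaging of the paper's contradiction.
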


\begin{proof}Suppose $U := L \setminus \mathrm{SS}(\mathcal{F}^\bullet)$ is nonempty. Then the boundary $\partial U$ of $U$ in $L$ is nonempty by the connectedness hypothesis on $L$. Thus, we may choose a point $p_0 \in \partial U$. Let $V$ be a chart around $p_0$ in $L$. We choose another point $p_1 \in U \cap V$. Let $B \subset V$ denote the open ball of maximal radius centered at $p_1$ such that $B \cap \mathrm{SS}(\mathcal{F}^\bullet) = \emptyset$. Thus, $\partial B \cap \mathrm{SS}(\mathcal{F}^\bullet)$ is nonempty. Hence, we may select yet another point $p \in \partial B \cap \mathrm{SS}(\mathcal{F}^\bullet)$. 

By Theorem \ref{thm-kscoiso}, $\mathrm{SS}(\mathcal{F}^\bullet) \subset T^*M$ is a co-isotropic subset. Observe that,
$$C_p(\mathrm{SS}(\mathcal{F}^\bullet)) \subset C_p(L \setminus B),$$
which is a half-space in $T_p L$. Meanwhile,
$$C_p(\mathrm{SS}(\mathcal{F}^\bullet), \mathrm{SS}(\mathcal{F}^\bullet)) \subset C_p(L, L) = T_p L.$$
By hypothesis, $L$ is a Lagrangian submanifold of $T^*M$. Therefore, for any $(p, v) \in T_pM$ such that $T_p L \subset \ker i_v \omega$, we have $(p, v) \in T_p L$ as well. Since $\mathrm{SS}(\mathcal{F}^\bullet)$ is co-isotropic, we must have $T_p L \subset C_p(\mathrm{SS}(\mathcal{F}^\bullet))$, by Definition \ref{def-coisoset}. But this is absurd, as we demonstrated $C_p(\mathrm{SS}(\mathcal{F}^\bullet))$ is a half-space in $T_p L$. This leads to the desired contradiction. \end{proof}

\subsection{Sheaf-theoretic Legendrian isotopy invariants}

We shall now define a category of sheaves over a manifold $M$, associated to a Legendrian submanifold in the cosphere bundle $T^\infty M$ which is an isotopy invariant of the Legendrian submanifold. Throughout this section we shall implicitly assume $M$ is a real analytic manifold. 

\begin{definition}[Cosphere bundle]\label{def-cosphere}The \emph{cosphere bundle} of a manifold $M$ is defined as,
$$T^\infty M := (T^*M \setminus 0_M)/\Bbb R_{>0},$$
where $0_M \subset T^*M$ is the zero section, and the $\Bbb R_{>0}$-action is one by scaling $c \cdot (p, \xi) = (p, c \xi)$. We equip $T^\infty M$ with the canonical contact structure given by kernel of the tautological $1$-form (cf. Example \ref{eg-phase}). We also denote the cotangent bundle projection as,
\begin{gather*}\pi : T^\infty M \to M, \ \pi(x, [\xi]) = x.\end{gather*} 
\end{definition}

\begin{remark}\label{rmk-cosphere}The projection $\pi : T^\infty M \to M$ is a Legendrian submersion (Definition \ref{def-legsub}). By Lemma \ref{lem-loctrivlegs}, $\pi$ is locally contactomorphic to the Euclidean front projection by a fiber-preserving fashion. Thus, we shall often refer to $M$ as the \emph{front} for $T^\infty M$, and $\pi$ as the \emph{front projection}. 
\end{remark}

\noindent
Let $\Lambda \subset T^\infty M$ be a compact subanalytic Legendrian submanifold. Also, let
$$\Bbb R_{>0}\Lambda = \{(x, c\xi) \in T^*M : (x, [\xi]) \in \Lambda, c > 0\}$$
denote the \emph{positive cone} on $\Lambda$. Then, 
$$L(\Lambda) := 0_M \cup \Bbb R_{>0} \Lambda \subset (T^* M, \omega),$$
defines a conic subanalytic Lagrangian subset in the sense of Definition \ref{def-lagset}. Since $\Lambda$ is a Legendrian submanifold, for any $(x, [\xi]) \in \Lambda$ such that $y = \pi(x)$ is a smooth point of the front $\pi(\Lambda) \subset M$, $\xi$ restricts to the zero functional on $T_y \pi(\Lambda) \subset T_y M$. Therefore, $\xi$ is a conormal covector (Definition \ref{def-conset}) to $\pi(\Lambda)$. Thus, $L(\Lambda)$ is the closure of the set of all positive conormal covectors to $\pi(\Lambda) \subset M$ at the smooth points. We shall simply call $L(\Lambda)$ the set of all positive conormal covectors to $\pi(\Lambda) \subset M$.

As $\pi(\Lambda) \subset M$ is subanalytic, we may choose a subanalytic Whitney stratification $\Sigma$ of $M$ such that $\pi(\Lambda)$ is a union of strata of $\Sigma$ by Remark \ref{rmk-subanastrat}. We may also ensure that the smooth locus and the cuspidal locus of $\pi(\Lambda)$ each constitute a single stratum of $\Sigma$. As a Whitney stratification satisfies Condition $(a)$ in Definition \ref{def-stratreg}, we have $L(\Lambda) \subset T^*_{\Sigma} M$.

\begin{definition}[Microlocal sheaf category of a Legendrian]\label{def-microcat}We define $D^b_{\Lambda}(M)$ to be the full subcategory of $D^b(M)$ consisting of complexes of sheaves $\mathcal{F}^\bullet \in \mathrm{Ob}(D^b(M))$ such that $\mathcal{F}^\bullet$ is constructible with respect to some subanalytic stratification of $M$, $\mathrm{SS}(\mathcal{F}^\bullet) \subseteq L(\Lambda)$ and $\mathrm{supp}(\mathcal{F}^\bullet) \subset M$ is compact.
\end{definition}

The following crucial theorem of Guillermou, Kashiwara and Schapira \cite{gks} ensures that $D^b_{\Lambda}(M)$ is an isotopy-invariant of the Legendrian submanifold $\Lambda$.

\begin{theorem}\label{thm-gks} Let $M$ be a manifold, $\Lambda_0, \Lambda_1 \subset T^\infty M$ be a pair of compact subanalytic Legendrian submanifolds which differ by a Legendrian isotopy. Then there is an equivalence of categories,
$$K : D^b_{\Lambda_0}(M) \stackrel{\simeq}{\longrightarrow} D^b_{\Lambda_1}(M).$$
Moreover, $K$ preserves the microsupport, i.e., $\mathrm{SS}(K(\mathcal{F}^\bullet)) = K(\mathrm{SS}(\mathcal{F}^\bullet))$.\end{theorem}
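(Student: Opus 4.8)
The statement to prove is Theorem \ref{thm-gks}, the Guillermou--Kashiwara--Schapira invariance: a Legendrian isotopy $\{\Lambda_t \subset T^\infty M : t \in [0,1]\}$ of compact subanalytic Legendrians induces an equivalence $K : D^b_{\Lambda_0}(M) \xrightarrow{\simeq} D^b_{\Lambda_1}(M)$ intertwining the microsupport. Since this is an exposition, I would cite \cite{gks} for the construction and explain the mechanism rather than reprove it from scratch; but let me describe the architecture of the argument, which is the \emph{quantization of contact isotopies}. The plan is to first lift the Legendrian isotopy to a homogeneous Hamiltonian isotopy of $\dot T^*M := T^*M \setminus 0_M$, then to quantize that isotopy as a sheaf kernel on $M \times M \times [0,1]$, and finally to use convolution with that kernel to transport objects of the derived category.

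\emph{Step 1: Extend the Legendrian isotopy to an ambient contact isotopy.} By the contact isotopy extension theorem (the version discussed in Remark after Example \ref{eg-isoconrel}, i.e.\ \cite[Theorem 2.6.12]{geigbook}), the Legendrian isotopy $\{\Lambda_t\}$ extends to an ambient contact isotopy $\{\varphi_t : T^\infty M \to T^\infty M : t \in [0,1]\}$ with $\varphi_0 = \mathrm{id}$ and $\varphi_t(\Lambda_0) = \Lambda_t$. Passing to the symplectization $\dot T^* M$, this corresponds to a homogeneous Hamiltonian isotopy $\{\Phi_t\}$ of $\dot T^* M$ generated by a homogeneous (degree one) Hamiltonian $H_t : \dot T^*M \to \Bbb R$; homogeneity of $H_t$ is exactly what is needed for the sheaf-theoretic quantization to make sense. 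Compactness of the $\Lambda_t$ lets one cut off $H_t$ away from a compact set of directions without changing the isotopy near $\bigcup_t \Bbb R_{>0}\Lambda_t$, which is harmless because $\mathrm{supp}(\mathcal{F}^\bullet)$ is compact for objects of $D^b_{\Lambda_t}(M)$.

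\emph{Step 2: Quantize $\{\Phi_t\}$ as a sheaf kernel.} This is the heart of \cite{gks}: there is a unique (up to a shift and a local system on $[0,1]$) object $K^\bullet \in \mathrm{Ob}(D^b(M \times M \times [0,1]))$, with $K^\bullet_0$ (restriction to $t=0$) isomorphic to the constant sheaf $\mathbf{k}_{\Delta_M}$ on the diagonal, such that the microsupport of $K^\bullet_t$ over $M \times M$ is precisely the graph (Lagrangian correspondence) of $\Phi_t$, suitably homogenized. One then defines $K := K^\bullet_1 \circ (-)$, convolution along the middle factor: for $\mathcal{F}^\bullet \in \mathrm{Ob}(D^b(M))$ put $K(\mathcal{F}^\bullet) = Rp_{2!}(K^\bullet_1 \otimes p_1^{-1}\mathcal{F}^\bullet)$ where $p_1, p_2 : M \times M \to M$ are the two projections (up to the standard conventions for composition of kernels). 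Convolution with $K^\bullet_0 = \mathbf{k}_{\Delta_M}$ is the identity functor, so $K$ is invertible with inverse given by convolution with the quantization of $\{\Phi_t^{-1}\}$; hence $K$ is an equivalence of $D^b(M)$ with itself.

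\emph{Step 3: Track microsupport and constructibility.} The microsupport estimate for composition of kernels (\cite[Prop.\ 3.4.4]{ksbook}, or the analogous statement in \cite{gks}) gives $\mathrm{SS}(K(\mathcal{F}^\bullet)) \setminus 0_M = \Phi_1(\mathrm{SS}(\mathcal{F}^\bullet) \setminus 0_M)$, which is the microsupport-preservation claim once we observe $\Phi_1$ restricts on the zero section to $\mathrm{supp}$. In particular if $\mathrm{SS}(\mathcal{F}^\bullet) \setminus 0_M \subseteq \Bbb R_{>0}\Lambda_0$ then $\mathrm{SS}(K(\mathcal{F}^\bullet)) \setminus 0_M \subseteq \Bbb R_{>0}\Lambda_1$, so $K$ sends $D^b_{\Lambda_0}(M)$ into $D^b_{\Lambda_1}(M)$; symmetrically the inverse sends $D^b_{\Lambda_1}(M)$ into $D^b_{\Lambda_0}(M)$, giving the claimed equivalence of subcategories. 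One must also check that $K$ preserves constructibility (not merely weak constructibility with a Lagrangian microsupport): this follows from Theorem \ref{thm-ksiso} together with finiteness of stalk cohomology, which is preserved because $Rp_{2!}$ of a constructible sheaf along a proper-on-supports map is constructible (\cite[Ch.\ 8]{ksbook}); compactness of supports, preserved by $\Phi_1$, is what makes the relevant map proper.

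\emph{Main obstacle.} The genuinely hard part — and the reason this is quoted rather than proved — is \emph{Step 2}, the existence and uniqueness of the quantization kernel $K^\bullet$. It rests on the microlocal theory of simple sheaves along a smooth conic Lagrangian, on solving a ``sheaf-theoretic transport equation'' along the Hamiltonian flow, and on a uniqueness argument using the fact that the microsupport of $K^\bullet_t \boxtimes \mathbf{k}_{[0,1]}$ is constrained to be the graph Lagrangian plus a controlled piece in the $[0,1]$-direction. Everything else (Steps 1 and 3) is formal manipulation with contact isotopy extension, composition of kernels, and the microsupport estimates already catalogued above. For the purposes of this exposition I would state Theorem \ref{thm-gks} with the reference to \cite{gks} and include only the outline of Steps 1 and 3 in detail, indicating Step 2 as the black box.
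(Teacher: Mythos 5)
The paper offers no proof of this statement at all: it is quoted directly from Guillermou--Kashiwara--Schapira \cite{gks}, which is exactly what you propose to do, so your treatment coincides with the paper's, and your sketch of the GKS mechanism (extend the Legendrian isotopy to an ambient contact isotopy of $T^\infty M$, lift it to a homogeneous Hamiltonian isotopy of $T^*M \setminus 0_M$, quantize that isotopy as a kernel on $M \times M \times [0,1]$ restricting to $\mathbf{k}_{\Delta_M}$ at $t=0$, convolve, and read off the microsupport from the composition estimate) is an accurate account of the cited argument. One caution about the piece of Step 3 you do claim to spell out: the kernel $K^\bullet_1$ is in general not constructible (the graph of $\Phi_1$ need not be subanalytic), so constructibility of $K(\mathcal{F}^\bullet)$ does not follow from ``proper pushforward of a constructible sheaf applied to $K^\bullet_1 \otimes p_1^{-1}\mathcal{F}^\bullet$''; weak constructibility instead comes from Theorem \ref{thm-ksiso} because $\mathrm{SS}(K(\mathcal{F}^\bullet))$ lies in the subanalytic conic Lagrangian determined by $\Lambda_1$, while finite-dimensionality of stalks requires a separate (standard, but not purely formal) argument or must simply be absorbed into the citation of \cite{gks}.
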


We shall use Theorem \ref{thm-gks} to prove existence of Legendrian submanifolds of the cosphere bundle which are not loose. Let us start by setting the background for a crucial lemma. Suppose $\mathcal{Z} \subset (\Bbb R^3, \xi_{\mathrm{std}})$ is a Legendrian zig-zag (Definition \ref{def-legzigzag}). We may embed $(\Bbb R^3, \xi_{\mathrm{std}})$ isocontactly as a subset of $T^\infty \Bbb R^2$, defined by
$$\{(x, [\xi]) \in T^\infty \Bbb R^2 : \xi = \xi_x dx + \xi_y dy, \xi_y > 0\}.$$
Under this identification, the front projection $\pi : T^\infty \Bbb R^2 \to \Bbb R^2$ (Remark \ref{rmk-cosphere}) restricts to the standard front projection $\pi : \Bbb R^3 \to \Bbb R^2$ of the contact Euclidean space. Let $Z \subset \Bbb R^2$ denote the front projection $Z = \pi(\mathcal{Z})$ of the Legendrian zig-zag. Then $L(\mathcal{Z}) \subset T^*\Bbb R^2$ consists of the positive conormal covectors to $Z \subset \Bbb R^2$. The following result due to Shende, Treumann and Zaslow \cite[Proposition 5.8]{stz} essentially states that $L(\mathcal{Z})$ cannot appear as microsupport of a constructible sheaf on $\Bbb R^2$. We reformulate it in slightly more concrete terms via the conormal interpretation.

\begin{lemma}[Legendrian zig-zags are not microsupports]\label{lem-zigsupp}Let $Z \subset \Bbb R^2$ be the front projection of a Legendrian zig-zag. Let $\Sigma_{\rm{zig}} = \{c_1, c_2, e_1, e_2, e_3, f_1, f_2\}$ be the stratification of $\Bbb R^2$ defined by declaring the cusps $c_1, c_2$ of $Z$ as two $0$-strata, the arcs $e_1, e_2, e_3$ of $Z$ as three $1$-strata and the complementary domains $f_1, f_2$ of $\Bbb R^2 \setminus Z$ as two $2$-strata. Let $\mathcal{F}^\bullet \in \mathrm{Ob}(D^b(\Bbb R^2))$ be a complex of sheaves constructible with respect to $\Sigma$ such that $\mathrm{SS}(\mathcal{F}^\bullet)$ is contained in the set of positive conormal covectors to $Z$. See, Figure \ref{fig-zigsupp}. Then, $\mathrm{SS}(\mathcal{F}^\bullet) = 0_{\Bbb R^2} \subset T^*\Bbb R^2$.\end{lemma}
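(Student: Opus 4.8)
The plan is to compute explicitly the possible microsupports of a complex of sheaves $\mathcal{F}^\bullet$ constructible with respect to $\Sigma_{\rm{zig}}$, using the combinatorics of the stratification and the co-isotropy result (Theorem \ref{thm-kscoiso}). First I would set up coordinates adapted to the zig-zag front $Z$: near each cusp $c_i$ choose local coordinates in which $Z$ is the standard semicubical cusp, and along each smooth arc $e_j$ choose coordinates in which $Z$ is a line. The data of $\mathcal{F}^\bullet$ constructible with respect to $\Sigma_{\rm{zig}}$ is equivalent, by the theory of constructible sheaves, to a representation of the exit-path category of $\Sigma_{\rm{zig}}$ in chain complexes: to each of the seven strata a complex of finite-dimensional $\mathbf{k}$-vector spaces, and to each pair of strata with one in the closure of the other a generization map, subject to commutativity. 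Since all strata here are contractible, the local systems are trivial, so this is a genuinely finite linear-algebra datum.

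Next I would translate the hypothesis $\mathrm{SS}(\mathcal{F}^\bullet) \subseteq L(Z)$ (positive conormals to $Z$) into constraints on these generization maps. The key local computation is at the smooth arcs and at the cusps. Along a smooth arc $e_j$ separating faces $f$ and $f'$, the condition that $\mathrm{SS}(\mathcal{F}^\bullet)$ contains only the \emph{positive} conormal codirection (say the one pointing from $f'$ into $f$) forces, by the microlocal characterization of Definition \ref{def-microsupp} applied to a linear function $\varphi$ cutting across $e_j$, that the generization map from the stalk on $e_j$ to one of the two adjacent faces is a quasi-isomorphism (propagation in the non-characteristic direction) while the other may fail to be. At a cusp $c_i$, where three $1$-strata and two $2$-strata meet, one does the analogous computation with test functions $\varphi$ whose differential is a conormal covector to $Z$ at $c_i$: the positive-conormal constraint pins down which generization maps at the cusp are isomorphisms. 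I expect the upshot of assembling these local constraints around the whole zig-zag — going once around through $c_1$, the three arcs, $c_2$, and back — to be a cyclic composition of maps which, by the co-isotropy/Lagrangian structure and the orientation of the conormals forced by the zig-zag shape, must be simultaneously injective and surjective at each stage, forcing all the relevant stalks to be quasi-isomorphic to one another and hence all generization maps to be quasi-isomorphisms. That means $\mathcal{F}^\bullet$ is locally constant on all of $\Bbb R^2$, i.e. $\mathrm{SS}(\mathcal{F}^\bullet)$ is disjoint from $T^*\Bbb R^2 \setminus 0_{\Bbb R^2}$; combined with $\mathrm{SS}(\mathcal{F}^\bullet)\cap 0_{\Bbb R^2} = \mathrm{supp}(\mathcal{F}^\bullet)$ and the fact that a locally constant sheaf on connected $\Bbb R^2$ has support either empty or everything, and that $L(Z)$ has no $0$-section component beyond what $\mathrm{supp}$ forces, one concludes $\mathrm{SS}(\mathcal{F}^\bullet) = 0_{\Bbb R^2}$. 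Alternatively, and more cheaply, I would invoke Corollary \ref{cor-sslageq}: $L(Z) \setminus 0_{\Bbb R^2}$ breaks into connected conic Lagrangian pieces, and the local analysis above shows $\mathrm{SS}(\mathcal{F}^\bullet)$ cannot contain any entire such piece, so by the corollary it contains none of $L(Z)\setminus 0_{\Bbb R^2}$ at all.

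The main obstacle I anticipate is the cusp computation: unlike the smooth-arc case, at a semicubical cusp the front is not a submanifold, the conormal cone $L(Z)$ is singular there, and one must carefully track how the two conormal rays limiting into the cusp point interact with the generization maps among the five strata meeting at $c_i$. This is exactly the content of \cite[Proposition 5.8]{stz}, so the cleanest route is to reduce to their statement rather than redo it: I would verify that the stratification $\Sigma_{\rm{zig}}$ and the constructibility/support hypotheses match theirs (possibly after refining $\Sigma_{\rm{zig}}$ to a $\mu$-stratification using Remark \ref{rmk-subanastrat}, which does not change the category of sheaves constructible with respect to it up to the statement we need), translate their ``microsupport avoids the zig-zag'' conclusion through the conormal dictionary established just before the lemma, and record that $\mathrm{SS}(\mathcal{F}^\bullet)\subseteq L(Z)$ together with their conclusion leaves only the zero section. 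I would also remark that real-analyticity of $\Bbb R^2$ and subanalyticity of $Z$ (which is piecewise polynomial by the explicit model $\psi_\delta$) are in force, so Theorems \ref{thm-ksiso} and \ref{thm-kscoiso} apply without further ado.
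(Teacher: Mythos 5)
Your framework is the same as the paper's: regard $\mathcal{F}^\bullet$ as a representation of the quiver (exit-path category) of $\Sigma_{\rm{zig}}$ in complexes of $\mathbf{k}$-vector spaces, using contractibility of the strata and of their stars, and read the hypothesis $\mathrm{SS}(\mathcal{F}^\bullet)\subseteq L(Z)$ as saying that every generization/restriction map is a quasi-isomorphism except the handful that point in the direction of an upward conormal. Up to that point you agree with the paper. The gap is in the assembly step, which is the entire content of the lemma. The paper's mechanism is purely categorical: with $V_i^\bullet=\mathcal{F}^\bullet(f_i)$ and $p,q,r$ the morphisms given by the roofs through $e_1,e_2,e_3$, the composites $q\circ p$ and $r\circ q$ can be computed by refining through the cusp strata $c_1$ and $c_2$ (restriction maps commute with inclusions of stars), and there both legs of the refined roof are the \emph{same} quasi-isomorphic restriction map out of $\mathcal{F}^\bullet(c_i)$; hence $q\circ p$ and $r\circ q$ are identities, so $q$ has one-sided inverses on both sides, all of $p,q,r$ are quasi-isomorphisms, and the remaining cusp maps follow by commutativity. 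Your sketch instead asserts that the cyclic composition ``must be simultaneously injective and surjective at each stage \dots by the co-isotropy/Lagrangian structure and the orientation of the conormals'' --- but co-isotropy (Theorem \ref{thm-kscoiso}) plays no role in this lemma, and the orientation of the conormals alone cannot force invertibility: for a front with no cusps (e.g. a smooth hypersurface, or the half-open region of Proposition \ref{prop-saucernloose}) the constant sheaf on a suitable half-open domain realizes the full set of positive conormals as microsupport. So any correct argument must use the cusps, exactly via the identity-composition observation above. You do flag the cusp computation as the main obstacle, but then propose to outsource it to \cite[Proposition 5.8]{stz}; that is consistent with the paper's attribution, but it substitutes a citation for the proof, since this lemma is precisely a reformulation of that statement and the paper's point is to prove it directly.

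A smaller remark: the shortcut via Corollary \ref{cor-sslageq} is legitimate in principle (it is how the paper argues one step later, in Proposition \ref{prop-suploose}), but it buys you nothing here: to conclude $\mathrm{SS}(\mathcal{F}^\bullet)\setminus 0_{\Bbb R^2}=\emptyset$ you must first exhibit at least one positive conormal covector of $Z$ that is \emph{not} in $\mathrm{SS}(\mathcal{F}^\bullet)$, and producing even that single covector already requires the cusp/composition argument you left unspecified.
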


\begin{proof}For any stratum $S \in \Sigma_{\rm{zig}}$, let us define the \emph{star} of $S$ as
$$\mathrm{star}(S) := \bigcup_{\{L \in \Sigma_{\rm{zig}} : S \subset \overline{L}\}} L.$$
Observe, each stratum in $\Sigma_{\rm{zig}}$ as well as their stars are contractible. Thus, there is a quasi-isomorphism $\mathcal{F}^\bullet(\mathrm{star}(S)) \simeq \mathcal{F}^\bullet(S)$. On the other hand, $\mathcal{F}^\bullet(S) \simeq \mathcal{F}^\bullet_x$, for any $x \in S$, as $\mathcal{F}^\bullet$ is constructible. We shall fix notation by identifying these three chain complexes by fixed quasi-isomorphisms, for every stratum $S \in \Sigma_{\rm{zig}}$, once and for all. Let us define a directed graph (or \emph{quiver}) by assigning vertices corresponding to the strata of $\Sigma_{\mathrm{zig}}$, and placing a directed edge from the vertex corresponding to a stratum $S \in \Sigma_{\mathrm{zig}}$ to the vertex corresponding to a stratum $L \in \Sigma_{\mathrm{zig}}$ if $S \subset \overline{L}$. The resulting graph is drawn in Figure \ref{fig-zigrep}. 

\begin{figure}[h]
\centering
\includegraphics[scale=0.5]{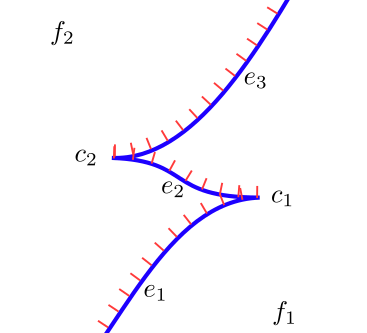}
\caption{Front projection of a Legendrian zig-zag $Z \subset \Bbb R^2$ (in blue), together with the positive conormal covectors to $Z$ (in red), and labels indicating the induced stratification by cusps, smooth arcs and the complementary regions of $Z$.}
\label{fig-zigsupp}
\end{figure}

Then, $\mathcal{F}^\bullet$ defines a \emph{representation} of this quiver in the derived category of complexes of $\mathbf{k}$-vector spaces. More concretely, to each vertex corresponding to a stratum $S$, we assign the complex $\mathcal{F}^\bullet(S)$, and for each edge corresponding to the containment $S \subset \overline{L}$, we assign the restriction map $\mathrm{res}(S, L) : \mathcal{F}^\bullet(S) \to \mathcal{F}^\bullet(L)$ given by,
$$\mathcal{F}^\bullet(S) \simeq \mathcal{F}^\bullet(\mathrm{star}(S)) \to \mathcal{F}^\bullet(\mathrm{star}(L)) \simeq \mathcal{F}^\bullet(L),$$
where the middle map is the restriction of sections along the natural inclusion $\mathrm{star}(S) \subset \mathrm{star}(L)$. This produces a diagram of complexes of $\mathbf{k}$-vector spaces.

By the condition that $\mathrm{SS}(\mathcal{F}^\bullet)$ is contained in the set of positive conormal covector to $Z \subset \Bbb R^2$, we obtain that all the restriction maps in this diagram are quasi-isomorphisms except perhaps for the five maps $\mathrm{res}({e_1, f_2}), \mathrm{res}({c_1, e_2}), \mathrm{res}({e_2, f_1}), \mathrm{res}({c_2, e_3})$ and $\mathrm{res}({e_3, f_2})$, which point towards the direction of an upward conormal covector to a stratum. These are indicated by red arrows in Figure \ref{fig-zigrep}. Let $V_i^\bullet := \mathcal{F}^\bullet(f_i)$, $i = 1, 2$. Since the restriction maps corresponding to the black arrows in Figure \ref{fig-zigrep} are quasi-isomorphisms, we obtain $\mathcal{F}^\bullet(e_1),\mathcal{F}^\bullet(c_1), \mathcal{F}^\bullet(e_3)$ are quasi-isomorphic to $V_1^\bullet$, and $\mathcal{F}^\bullet(e_2), \mathcal{F}^\bullet(c_2)$ are quasi-isomorphic to $V_2^\bullet$. We have three morphisms $p \in \mathrm{Hom}(V_1^\bullet, V_2^\bullet), q \in \mathrm{Hom}(V_2^\bullet, V_1^\bullet)$ and $r \in \mathrm{Hom}(V_1^\bullet, V_2^\bullet)$ in the derived category of complexes of $\mathbf{k}$-vector spaces, corresponding to the roofs:
\begin{gather*}
\mathcal{F}^\bullet(f_1) \stackrel{\mathrm{res}({e_1, f_1})}{\longleftarrow} \mathcal{F}^\bullet(e_1) \stackrel{\mathrm{res}({e_1, f_2})}{\longrightarrow} \mathcal{F}^\bullet(f_2),\\
\mathcal{F}^\bullet(f_1) \stackrel{\mathrm{res}({e_2, f_1})}{\longleftarrow} \mathcal{F}^\bullet(e_2) \stackrel{\mathrm{res}({e_2, f_1})}{\longrightarrow} \mathcal{F}^\bullet(f_1),\\
\mathcal{F}^\bullet(f_1) \stackrel{\mathrm{res}({e_3, f_1})}{\longleftarrow} \mathcal{F}^\bullet(e_3) \stackrel{\mathrm{res}({e_3, f_2})}{\longrightarrow} \mathcal{F}^\bullet(f_2).
\end{gather*}
Notice that $q \circ p \in \mathrm{Hom}(V_1^\bullet, V_1^\bullet)$ and $r \circ q \in \mathrm{Hom}(V_2^\bullet, V_2^\bullet)$ are equivalent to the morphism corresponding to the roofs:
\begin{gather*}
\mathcal{F}^\bullet(f_1) \stackrel{\mathrm{res}(c_1, f_1)}{\longleftarrow} \mathcal{F}^\bullet(c_1) \stackrel{\mathrm{res}(c_1, f_1)}{\longrightarrow} \mathcal{F}^\bullet(f_1), \\
\mathcal{F}^\bullet(f_2) \stackrel{\mathrm{res}(c_2, f_1)}{\longleftarrow} \mathcal{F}^\bullet(c_2) \stackrel{\mathrm{res}(c_1, f_2)}{\longrightarrow} \mathcal{F}^\bullet(f_2).
\end{gather*}
These are equivalent to identity morphisms $1_{V_1^\bullet}, 1_{V_2^\bullet}$, respectively. Thus, $q \circ p \simeq 1_{V_1^\bullet}$ and $r \circ q \simeq 1_{V_2^\bullet}$, which implies $V_1^\bullet \simeq V_2^\bullet$ and $p, q, r$ are all quasi-isomorphisms. Therefore, $\mathrm{res}(e_1, f_2), \mathrm{res}(e_2, f_1)$ and $\mathrm{res}(e_3, f_3)$ are quasi-isomorphisms. By commutativity of the diagram, we conclude $\mathrm{res}(c_1, e_2)$ and $\mathrm{res}(c_2, e_3)$ are quasi-isomorphisms as well. Thus, all the restriction maps of $\mathcal{F}^\bullet$ are quasi-isomorphisms. Consequently, $\mathrm{SS}(\mathcal{F}^\bullet) \subset T^*\Bbb R^2$ is the zero section.
\end{proof}

\begin{figure}[t]
\centering
\includegraphics[scale=0.65]{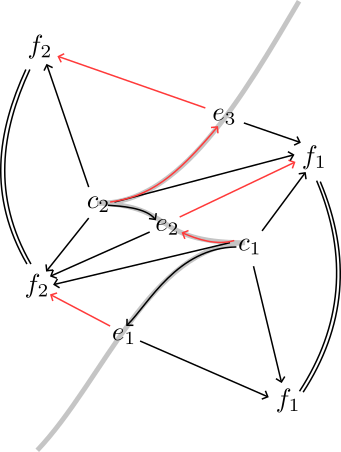}
\caption{Directed graph corresponding to the stratification of $\Bbb R^2$ by the cusps, smooth arcs and complementary regions of $Z$ (in grey). The edges corresponding to the positive conormal covectors are emphasized (in red).}
\label{fig-zigrep}
\end{figure}

\begin{proposition}[Loose Legendrians are not microsupports]\label{prop-suploose}Let $\Lambda \subset T^\infty M$ be  a subanalytic Legendrian submanifold which is loose, and $\mathcal{F}^\bullet \in \mathrm{Ob}(D^b_{\Lambda}(M))$. Then, $\mathrm{SS}(\mathcal{F}^\bullet) = 0_M \subset T^*M$. \end{proposition}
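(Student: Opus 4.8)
The plan is to reduce the general statement to the local model handled by Lemma \ref{lem-zigsupp}, using the looseness hypothesis together with the isotopy-invariance of the sheaf category (Theorem \ref{thm-gks}) and the Lagrangian-rigidity corollary (Corollary \ref{cor-sslageq}). First I would assume, aiming for a contradiction, that $\mathrm{SS}(\mathcal{F}^\bullet) \neq 0_M$, so that $\mathrm{SS}(\mathcal{F}^\bullet) \setminus 0_M$ is a nonempty subanalytic conic Lagrangian contained in $L(\Lambda) \setminus 0_M = \Bbb R_{>0}\Lambda$. By Corollary \ref{cor-sslageq} applied on each connected component of $\Lambda$ (which we may assume is connected, or argue component-by-component), this forces $\mathrm{SS}(\mathcal{F}^\bullet) \setminus 0_M = \Bbb R_{>0}\Lambda$; in particular $\mathcal{F}^\bullet$ ``sees'' the entire Legendrian $\Lambda$.

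Next I would exploit looseness. Since $\Lambda$ is loose, it contains a loose chart: there is a Darboux chart $U \subset T^\infty M$, contactomorphic to a loose chart $(C \times B_\rho, \mathcal{Z}_a \times J_\rho)$, with $U \cap \Lambda$ identified with $\mathcal{Z}_a \times J_\rho$. The key point is that inside such a loose chart sits a contactly embedded copy of the one-dimensional Legendrian zig-zag, stabilized trivially in the normal $J_\rho \cong \Bbb R^{n-1}$ directions; concretely, restricting to a slice $\{q = q_0, p = 0\}$ one recovers $\mathcal{Z}_a \subset (\Bbb R^3, \xi_{\rm{std}})$. I would use the fact (Remark \ref{rmk-cosphere}, Lemma \ref{lem-loctrivlegs}) that $\pi : T^\infty M \to M$ is locally the Euclidean front projection in a fiber-preserving way, so that over the image of this chart the front $\pi(\Lambda)$ looks like $Z \times \Bbb R^{n-1}$ where $Z$ is the front of a zig-zag, with the corresponding subanalytic Whitney stratification being the product of $\Sigma_{\rm{zig}}$ with $\Bbb R^{n-1}$. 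Restricting $\mathcal{F}^\bullet$ to a small transverse $2$-disk $D^2 \times \{q_0\}$ (or applying a non-characteristic restriction/pullback to the slice $\Bbb R^2 \hookrightarrow M$ transverse to the stratification, using \cite[Ch.~5--6]{ksbook}-type estimates for behavior of $\mathrm{SS}$ under pullback), I obtain a complex $\mathcal{G}^\bullet \in \mathrm{Ob}(D^b(\Bbb R^2))$ constructible with respect to $\Sigma_{\rm{zig}}$ whose microsupport is contained in the positive conormal covectors to $Z$, and which is \emph{non-trivial near the zig-zag} precisely because $\mathrm{SS}(\mathcal{F}^\bullet)$ contained all of $\Bbb R_{>0}\Lambda$ and hence the conormal directions over $Z \times \{q_0\}$. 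By Lemma \ref{lem-zigsupp}, $\mathrm{SS}(\mathcal{G}^\bullet) = 0_{\Bbb R^2}$, contradicting the non-triviality. Therefore $\mathrm{SS}(\mathcal{F}^\bullet) = 0_M$.

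The main obstacle I anticipate is the reduction from the ambient manifold $M$ to the transverse $2$-disk in a way that legitimately transfers both the constructibility and the microsupport condition, and in particular \emph{certifies that the restricted sheaf remains non-trivial along the zig-zag}. Two technical subtleties must be handled: (i) the restriction/pullback of a complex of sheaves along $\Bbb R^2 \hookrightarrow M$ only controls $\mathrm{SS}$ under a non-characteristic (transversality) hypothesis, so I must choose the slice transverse to $L(\Lambda)$ away from the bad codirections — possible because the loose chart is a genuine product, so $L(\Lambda)$ over the chart is $(\text{conormals of }Z) \times 0_{\Bbb R^{n-1}}$ and the slice $\{q = q_0\}$ is non-characteristic; (ii) translating ``$\mathrm{SS}(\mathcal{F}^\bullet) \supseteq \Bbb R_{>0}\Lambda$'' into ``$\mathrm{SS}(\mathcal{G}^\bullet) \neq 0$'' requires knowing the microstalk (the mapping cone in \eqref{eq-microstalk}) at a generic point of the zig-zag component of $\Lambda$ is non-zero and survives restriction; this is where the product structure of the loose chart is essential, since it makes the microlocalization along the $\Bbb R^3$-factor literally independent of the $B_\rho$-factor. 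Once these points are in place the argument is a direct concatenation of Corollary \ref{cor-sslageq}, the product description of the loose chart, and Lemma \ref{lem-zigsupp}. A cleaner alternative, if the restriction bookkeeping proves delicate, is to instead invoke Theorem \ref{thm-gks}: use Proposition \ref{prop-manyloose} and the stabilization picture to Legendrian-isotope $\Lambda$ so that the zig-zag sits in a standard position, then quote \cite{stz} directly for the product $\mathcal{Z}_a \times J_\rho$; but I expect the transverse-slice argument above to be the most self-contained route given what has been developed in the excerpt.
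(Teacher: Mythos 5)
There is a genuine gap in your main argument: you assume that because $\Lambda$ has a loose chart, the front $\pi(\Lambda) \subset M$ looks like $Z \times \Bbb R^{n-1}$ over that chart, so that a transverse $2$--disk slice exhibits the zig-zag front. But the loose chart of Definition \ref{def-looseLegsub} is an \emph{abstract} contactomorphism $\varphi : U \to C \times B_\rho$ taking $U \cap \Lambda$ to $\mathcal{Z}_a \times J_\rho$; nothing requires $\varphi$ to intertwine the front projection $\pi : T^\infty M \to M$ with the standard front projection of $C \times B_\rho$. Lemma \ref{lem-loctrivlegs} normalizes a given Legendrian \emph{submersion}, but the loose chart does not come equipped with one compatible with $\pi$, so the zig-zag may be completely invisible in $\pi(\Lambda)$ and your product description of $L(\Lambda)$ over the chart, the non-characteristic slice, and the application of Lemma \ref{lem-zigsupp} to the restricted sheaf all rest on an unjustified hypothesis. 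This is precisely the difficulty the paper's proof is designed to avoid: it first performs an explicit \emph{front-level} stabilization (Example \ref{eg-stabloose}) near a point where $\pi|_\Lambda$ is an immersion, producing a subanalytic Legendrian $\Lambda_0$ whose front genuinely contains a zig-zag isolated by a transverse disk, then invokes Corollary \ref{cor-hloosepi0} — this is where looseness of $\Lambda$ is actually used — to upgrade the formal Legendrian isotopy between $\Lambda$ and $\Lambda_0$ to a genuine Legendrian isotopy, proves $\mathrm{SS}(\mathcal{F}^\bullet) = 0_M$ for $\Lambda_0$ via Lemma \ref{lem-zigsupp} and Corollary \ref{cor-sslageq} (much as you do at the end), and finally transfers the statement back to $\Lambda$ using the microsupport-preserving equivalence of Theorem \ref{thm-gks}.

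Your proposed ``cleaner alternative'' is in fact the paper's route, but as stated it has the same missing ingredient: you say one should ``Legendrian-isotope $\Lambda$ so that the zig-zag sits in a standard position,'' yet the stabilization only gives a \emph{formal} Legendrian isotopy (Lemma \ref{lem-forlegev}), and the existence of a genuine Legendrian isotopy is exactly Murphy's $h$--principle, Corollary \ref{cor-hloosepi0}, which you never cite. Without that step (or some other argument certifying that the zig-zag can be made visible in the front by an honest Legendrian isotopy), neither version of your argument closes. The remaining pieces — Corollary \ref{cor-sslageq} to rule out a proper nonempty conic Lagrangian microsupport, and Lemma \ref{lem-zigsupp} applied to the restriction to a transverse disk — are used correctly and match the paper.
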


\begin{proof}Choose a point $x_0 \in \Lambda$ and, if necessary, isotope $\Lambda$ slightly to a subanalytic Legendrian so that $\pi|_{\Lambda}$ is a local immersion near $x_0$. Let $p_0 = \pi(x_0)$. Thus, $\pi(\Lambda) \subset M$ is subanalytic subset which is smooth around a neighborhood of $p_0$. Let $U \subset M$ be an adapted subanalytic chart for $\pi(\Lambda)$ around $p$. Thus, $(U, U \cap \pi(\Lambda)) \cong (\Bbb R^n, \Bbb R^{n-1})$. By Example \ref{eg-stabloose}, we may stabilize $\Lambda \subset T^\infty M$, by a formal Legendrian isotopy lifting a smooth isotopy of fronts compactly supported in $U$, to produce a new (subanalytic) loose Legendrian submanifold $\Lambda_0 \subset T^\infty M$. By Corollary \ref{cor-hloosepi0}, $\Lambda$ and $\Lambda_0$ are in fact Legendrian isotopic.

We choose a Whitney stratification $\Sigma$ of the front projection $\pi(\Lambda_0) \subset M$ as in the discussion following Remark \ref{rmk-cosphere}. Recall the front projection of a Legendrian zig-zag $Z \subset \Bbb R^2$ (Definition \ref{def-legzigzag}). By construction, there is a diffeomorphism onto image,
$$(D^2 \times D^{n-2}, Z \times D^{n-2}) \hookrightarrow (U, \pi(\Lambda_0) \cap U),$$
which lifts to the loose chart of $\Lambda_0$ contained in $U$. Let $D \subset U$ be the image of the interior of $D^2 \times \{0\}$ under this diffeomorphism. Then $D$ is a smoothly embedded open disk in $M$, which intersects $\pi(\Lambda_0)$ transversely (i.e. it intersects the smooth stratum and the cuspidal stratum of $\pi(\Lambda_0)$ transversely and is disjoint from all the strata of higher codimension), and there is a diffeomorphism,
$$(D, D \cap \pi(\Lambda_0)) \cong (D, Z).$$
Moreover, the stratification $\Sigma$ on $M$ induces the stratification $\Sigma_{\rm{zig}}$ on $D$, as defined in Lemma \ref{lem-zigsupp}. Recall that $0$- and $1$-strata of $\Sigma_{\rm{zig}}$ are cusps and smooth arcs of $Z \subset D$, and the $2$-strata are the two connected components of $D \setminus Z$.

Let $\mathcal{F}^\bullet \in \mathrm{Ob}(D^b_{\Lambda_0}(M))$ be a complex of sheaves. Let $i : D \to M$ be the inclusion map. Recall $\mathrm{SS}(\mathcal{F}^\bullet) \subset L(\Lambda_0)$ by definition, and $L(\Lambda_0) \subset T^*_{\Sigma} M$ by the discussion following Remark \ref{rmk-cosphere}. Hence, $\mathrm{SS}(\mathcal{F}^\bullet) \subset T^*_{\Sigma} M$, which implies by the second part of Theorem \ref{thm-ksiso} that $\mathcal{F}^\bullet$ is weakly constructible with respect to the chosen stratification $\Sigma$. Consequently, $i^*\mathcal{F}^\bullet$ is also weakly constructible with respect to the aforementioned stratification on $D$ induced by $\Sigma$. By Theorem \ref{thm-ksiso} again, we have $\mathrm{SS}(i^*\mathcal{F}^\bullet) \subset T^*_{\Sigma_{\rm{zig}}} D$. In fact, $\mathrm{SS}(i^*\mathcal{F}^\bullet)$ must be contained in the subset of $L(\Lambda_0) \cap T^*_{\Sigma_{\rm{zig}}} D$, since the restriction maps of $i^*\mathcal{F}^\bullet$ are the same as that of $\mathcal{F}^\bullet$ by constructibility. Since $D$ is transverse to $\pi(\Lambda_0)$, $L(\Lambda_0) \cap T^*_{\Sigma_{\rm{zig}}} D$ consists of precisely the positive conormal covectors to $Z \subset D$.

Therefore, by Lemma \ref{lem-zigsupp}, the $\mathrm{SS}(\mathcal{F}^\bullet) = 0_D \subset T^*D$. By once again identifying the restriction maps of $i^*\mathcal{F}^\bullet$ with that of $\mathcal{F}^\bullet$, we conclude $\mathrm{SS}(\mathcal{F}^\bullet)$ is a proper subset of $L(\Lambda_0)$. Indeed, the restriction maps of $\mathcal{F}^\bullet$ along the covectors $(x, \xi) \in T^*M$ are isomorphisms, where $x \in D \subset M$ and $\xi$ restricts to a positive conormal covector of $Z \subset D$. By applying Corollary \ref{cor-sslageq}, we conclude $\mathrm{SS}(\mathcal{F}^\bullet) = 0_M \subset T^*M$. As this is true for any complex of sheaves in $D^b_{\Lambda_0}(M)$, and $\Lambda$ is Legendrian isotopic to $\Lambda_0$, we may invoke Theorem \ref{thm-gks} to conclude the same about $\Lambda$ as the equivalence in Theorem \ref{thm-gks} preserves microsupports.\end{proof}

\subsection{Application: example of a non-loose Legendrian} In this section, we shall use the technology developed in the previous sections to demonstrate the existence of Legendrian embeddings which are not loose.

\begin{example}[Legendrian flying saucer]\label{eg-saucer} Let $(r, \theta)$ be spherical coordinates on $\Bbb R^n$. Let
$$F := \{(r, \theta, z) \in \Bbb R^n \times \Bbb R : -1 \leq r \leq 1, z^2 = (1-r^2)^3\}.$$
$F \subset \Bbb R^{n+1}$ is a topologically embedded sphere, which is smooth everywhere except the cuspidal locus along the equator $\{z = 0\} \cap F$. Thus, $F$ defines a valid front diagram. The Legendrian lift of $F$ to $\Lambda \subset (\Bbb R^{2n+1}, \xi_{\rm{std}})$ is called the \emph{Legendrian flying saucer}.
\end{example}

\begin{figure}[h]
\centering
\includegraphics[scale=0.6]{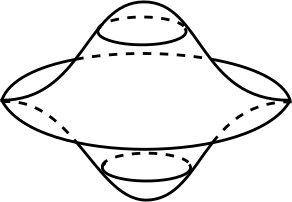}
\caption{The $2$-dimensional Legendrian flying saucer.}
\label{fig-legsaucer}
\end{figure}

\begin{proposition}\label{prop-saucernloose}The Legendrian flying saucer $\Lambda \subset (\Bbb R^{2n+1}, \xi_{\rm{std}})$ is not loose.\end{proposition}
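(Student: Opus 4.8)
The strategy is to exhibit a constructible sheaf on $\Bbb R^{n+1}$ whose microsupport is exactly the positive conormal lift $L(\Lambda) \subset T^*\Bbb R^{n+1}$ of the flying saucer front $F$, and then invoke Proposition \ref{prop-suploose}: if $\Lambda$ were loose, every object of $D^b_{\Lambda}(\Bbb R^{n+1})$ would have microsupport equal to the zero section, contradicting the existence of such a sheaf with $\mathrm{SS}$ equal to the (nonzero) conic Lagrangian $L(\Lambda)$. So the whole proof reduces to constructing one nonzero constructible sheaf microsupported on $L(\Lambda)$.

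\textbf{Construction of the witness sheaf.} The front $F$ bounds a compact region: writing $B^{n+1} \subset \Bbb R^{n+1}$ for the closed topological ball $\{-1 \le r \le 1,\ z^2 \le (1-r^2)^3\}$ (the ``inside'' of the flying saucer), I would take $\mathcal{F}^\bullet := \mathbf{k}_{U}$, the constant sheaf $\mathbf{k}$ extended by zero, where $U$ is the open interior of $B^{n+1}$ (or, depending on sign conventions for which side the conormals point, $\mathbf{k}_{B^{n+1}}$). This is constructible with respect to the subanalytic stratification of $\Bbb R^{n+1}$ by the interior of $B^{n+1}$, the smooth part of $F$, the cuspidal equator, and the exterior; it has compact support; and it is manifestly nonzero. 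The content to verify is that $\mathrm{SS}(\mathbf{k}_U) \subseteq L(\Lambda)$, so that $\mathbf{k}_U \in \mathrm{Ob}(D^b_{\Lambda}(\Bbb R^{n+1}))$. Away from the cuspidal equator this is the standard fact that $\mathrm{SS}(\mathbf{k}_U)$ over the smooth locus of $\partial U$ is the outward (or inward) conormal ray, which is exactly $L(\Lambda)$ there. Over the cuspidal equator one argues that the conic Lagrangian $L(\Lambda)$ closes up precisely along the limiting conormal directions of the cuspidal edge --- which is why $\Lambda$ is a genuine (smooth, embedded) Legendrian and $L(\Lambda)$ its conic cone --- and that $\mathrm{SS}(\mathbf{k}_U)$ cannot escape $L(\Lambda)$ there because $\mathrm{SS}$ is closed and conic and over the open complement of the equator it already lies in $L(\Lambda)$, whose closure over the equator is contained in $L(\Lambda)$ by definition of the positive cone. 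One may also simply invoke Theorem \ref{thm-ksiso}: $\mathbf{k}_U$ is weakly constructible with respect to a $\mu$-stratification adapted to $F$, hence $\mathrm{SS}(\mathbf{k}_U)$ lies in the conormal set of that stratification, and then one checks it is supported on the $L(\Lambda)$-part using that $\mathbf{k}_U$ is locally constant (indeed constant) on the interior and exterior strata, so no conormal directions to the equator-as-a-stratum-of-$\Bbb R^{n+1}$ can appear beyond those already in $L(\Lambda)$.

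\textbf{Assembling the contradiction.} Suppose for contradiction $\Lambda$ is loose. Note $\Lambda$ is a compact subanalytic Legendrian and $(\Bbb R^{2n+1},\xi_{\mathrm{std}})$ embeds isocontactly in $T^\infty \Bbb R^{n+1}$ as in the discussion preceding Lemma \ref{lem-zigsupp}; under this the front projection $\Bbb R^{2n+1} \to \Bbb R^{n+1}$ matches $\pi: T^\infty\Bbb R^{n+1}\to\Bbb R^{n+1}$, so $\Lambda$ may be regarded as a loose subanalytic Legendrian in a cosphere bundle. By Proposition \ref{prop-suploose}, $\mathrm{SS}(\mathcal{G}^\bullet) = 0_{\Bbb R^{n+1}}$ for every $\mathcal{G}^\bullet \in \mathrm{Ob}(D^b_{\Lambda}(\Bbb R^{n+1}))$. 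But $\mathbf{k}_U \in \mathrm{Ob}(D^b_{\Lambda}(\Bbb R^{n+1}))$ by the previous paragraph, and $\mathrm{SS}(\mathbf{k}_U) \supsetneq 0_{\Bbb R^{n+1}}$: indeed $\mathbf{k}_U$ fails to propagate across the smooth part of the boundary hypersurface $F$ (the restriction map in Definition \ref{def-microsupp} from a two-sided neighborhood of a smooth boundary point to its intersection with the half-space on the complementary side is not an isomorphism, since the stalk jumps from $\mathbf{k}$ to $0$), so $L(\Lambda)\setminus 0_{\Bbb R^{n+1}} \subseteq \mathrm{SS}(\mathbf{k}_U)$ is nonempty. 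This contradicts Proposition \ref{prop-suploose}, so $\Lambda$ is not loose.

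\textbf{Main obstacle.} The essential subtlety is the behaviour of $\mathrm{SS}(\mathbf{k}_U)$ --- equivalently of $L(\Lambda)$ --- along the cuspidal equator, i.e.\ making rigorous that $L(\Lambda)$ really is a closed conic Lagrangian equal to the microsupport there and that no extraneous conormal covectors to the lower-dimensional equatorial stratum of the ambient $\Bbb R^{n+1}$ sneak into $\mathrm{SS}(\mathbf{k}_U)$. One handles this by working in the standard local model of a cuspidal edge (the front of $\{(t^2/2, t^3/3)\}$ crossed with $\Bbb R^{n-1}$, as in Example \ref{eg-cusp}), where the constant sheaf on the region cut out by a semicubical cusp has microsupport computed directly to be exactly the conormal lift of the cusp --- this is precisely the local picture underlying Lemma \ref{lem-zigsupp} --- and then globalizing by the conic, closed, and co-isotropic properties of microsupport (Theorem \ref{thm-kscoiso}). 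An alternative, cleaner route that sidesteps the equatorial analysis entirely: choose a small smoothly embedded $2$-disk $D \subset \Bbb R^{n+1}$ transverse to $F$ near a point of $F$, slice $\mathbf{k}_U$ to $i^*\mathbf{k}_U = \mathbf{k}_{U \cap D}$, observe $U \cap D$ is one of the two complementary half-disks, so $\mathrm{SS}(i^*\mathbf{k}_U)$ over $D$ contains the conormal ray to the arc $F \cap D$ --- this is a one-dimensional front with a single smooth arc, no zig-zag needed --- and this nonzero microsupport already contradicts the conclusion $\mathrm{SS}(\mathcal{F}^\bullet)=0_M$ of Proposition \ref{prop-suploose} after restriction (using that restriction to a transverse submanifold is compatible with microsupport in the transverse directions). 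I would present the slicing argument as the primary one and relegate the equatorial model computation to a remark.
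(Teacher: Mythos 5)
Your overall strategy is exactly the paper's: exhibit one constructible sheaf in $D^b_{\Lambda}(\Bbb R^{n+1})$ whose microsupport is the nonzero conic Lagrangian $L(\Lambda)$, and contradict Proposition \ref{prop-suploose}. The gap is in the witness sheaf. Neither $\mathbf{k}_U$ (open interior of the saucer) nor $\mathbf{k}_{B^{n+1}}$ (closed ball) lies in $D^b_{\Lambda}(\Bbb R^{n+1})$, and this is not a matter of a global sign convention: $L(\Lambda)$ consists of the conormal covectors to the front $F$ with a \emph{fixed sign of the $dz$-component} (the lift sits in the region $\{\xi_n>0\}$ of $T^\infty\Bbb R^{n+1}$), so over \emph{both} hemispheres of $F$ the covectors of $L(\Lambda)$ point to the same side in the $z$-direction. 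By contrast, for the constant sheaf on an open (resp.\ closed) region with smooth boundary, the boundary part of the microsupport is the conormal ray consistently co-oriented \emph{relative to the region} (cf.\ the standard computation $\mathrm{SS}(\mathbf{k}_{\{\varphi>0\}})\setminus 0=\{\lambda\, d\varphi:\lambda\le 0\}$, $\mathrm{SS}(\mathbf{k}_{\{\varphi\ge0\}})\setminus 0=\{\lambda\, d\varphi:\lambda\ge 0\}$ over $\{\varphi=0\}$). Hence for $\mathbf{k}_U$ or $\mathbf{k}_{B^{n+1}}$ the microsupport covectors point upward along one hemisphere and downward along the other, so $\mathrm{SS}\not\subseteq L(\Lambda)$ no matter which of your two options you pick, the membership hypothesis of Definition \ref{def-microcat} fails, and the contradiction never gets started. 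Your transverse-disk ``cleaner route'' does not repair this: it only re-verifies that the microsupport is nonzero, which was never in doubt; the issue is membership in $D^b_{\Lambda}(\Bbb R^{n+1})$.

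The paper's fix is precisely the point you hedged over: take the constant sheaf on the \emph{half-open} region bounded by the two hemispheres, closed along one hemisphere and open along the other (the domain $\Omega$ with $-(1-r^2)^{3/2} < z \le (1-r^2)^{3/2}$ in Proposition \ref{prop-saucernloose}). Mixing the closed and open boundary conditions flips the co-orientation of the boundary conormal on one hemisphere relative to the region, so both hemispheres contribute conormals on the \emph{same} $z$-side, matching $L(\Lambda)$ for the appropriate co-orientation of the lift; one then checks the cuspidal equator by hand (stalk $0$ there, restriction maps isomorphisms for every covector except the single limiting $\partial_z$-direction), rather than by the closure argument you sketch, which by itself does not rule out extra covectors conormal to the equatorial stratum. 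With that corrected witness sheaf your assembling of the contradiction via Proposition \ref{prop-suploose} and the isocontact embedding of $(\Bbb R^{2n+1},\xi_{\rm std})$ into $T^\infty\Bbb R^{n+1}$ goes through as in the paper.
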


\begin{proof}$(\Bbb R^{2n+1}, \xi_{\rm{std}})$ isocontactly embeds as a subset of $T^\infty \Bbb R^{n+1}$, defined by
$$\{(x, [\xi]) \in T^\infty \Bbb R^{n+1} : \xi = \xi_0 dx_0 + \cdots + \xi_n dx_n, \xi_n > 0\}.$$
In view of Proposition \ref{prop-suploose}, it suffices to construct a complex of sheaves $\mathcal{F}^\bullet \in D^b_{\Lambda}(\Bbb R^{n+1})$ with $\mathrm{SS}(\mathcal{F}^\bullet) = L(\Lambda)$. Consider the locally closed domain,
$$\Omega := \{(r, \theta, z) \in \Bbb R^n \times \Bbb R : -1 \leq r \leq 1, -(1-r^2)^{3/2} < z \leq (1-r^2)^{3/2}\}$$
Let $\underline{\mathbf{k}}$ denote the constant $\mathbf{k}$-valued sheaf on $\Bbb R^{n+1}$. Let $\underline{\mathbf{k}}_\Omega$ be the sheaf on $\Bbb R^{n+1}$ defined by,
$$\underline{\mathbf{k}}_{\Omega}(U) := \{s \in \mathbf{k}(U \cap \Omega) : \mathrm{supp}(s) \subset U \text{ is closed in } U\},$$
for any open subset $U \subset \Bbb R^{n+1}$, together with the obvious restriction maps. Let us consider the stratification of $\Bbb R^{n+1}$ given by $\Sigma = \{\Sigma_0, \Sigma_+, \Sigma_{-}, \Omega, \Bbb R^{n+1} \setminus \overline{\Omega}\}$, where $\Sigma_0 \subset F$ is the cuspidal equator and $\Sigma_{\pm} \subset F$ are the two open hemispheres. The stalks of $\underline{\mathbf{k}}_\Omega$ are as follows:
\[
(\underline{\mathbf{k}}_\Omega)_x =
\begin{cases} 
0, & x \in \Sigma_0\\
\mathbf{k}, & x \in \Sigma_+\\
0, & x \in \Sigma_-\\
\mathbf{k}, & x \in \Omega\\
0, & x \in \Bbb R^{n+1} \setminus \overline{\Omega}
\end{cases}
\]
Thus, $\underline{\mathbf{k}}_\Omega$ is constructible with respect to the stratification $\Sigma$. The restriction map in Definition \ref{def-microsupp} is an isomorphism for all $(x, \xi)$ such that $x \in \Omega \cup (\Bbb R^{n+1} \setminus \overline{\Omega})$. For $x \in \Sigma_+$ (resp. $x \in \Sigma_-$) and $\xi \in T^*_x\Bbb R^{n+1}$ a positive (i.e., pointing towards $\partial_z$) conormal to $F$, the restriction map in $0$-th cohomology in Definition \ref{def-microsupp} is equivalent to the map $\mathbf{k} \to 0$ (resp. $0 \to \mathbf{k}$), which is not an isomorphism. For any other $\xi$, the restriction map on a small enough open ball around $x$ is equivalent to the map in cohomology $H^k(\Bbb R^{n+1}; \mathbf{k}) \to H^k(Q; \mathbf{k})$ for some sector $Q \subset \Bbb R^{n+1}$ spanned by two hyperplanes, which is an isomorphism. Finally, for $x \in \Sigma_0$, and $\xi = \partial_z$, the restriction map in $0$-th cohomology is equivalent to the map $0 \to \mathbf{k}$, which is not an isomorphism. For any other $\xi \in T^* \Bbb R^{n+1}$, it is an isomorphism by analogous reasoning as before. 

Thus, $\mathrm{SS}(\underline{\mathbf{k}}_\Omega)$ is the set of positive conormal covector to $F \subset \Bbb R^{n+1}$, which is equal to $L(\Lambda) \subset T^*\Bbb R^{n+1}$. In conclusion, $\underline{\mathbf{k}}_\Omega \in D^b_\Lambda(\Bbb R^{n+1})$ and $\mathrm{SS}(\underline{\mathbf{k}}_\Omega) = L(\Lambda)$, as desired.
\end{proof}

\begin{remark}The proof above shows that for a Legendrian flying saucer $\Lambda \subset (\Bbb R^{2n+1}, \xi_{\rm{std}})$, the microlocal sheaf category $D^b_{\Lambda}(\Bbb R^{n+1})$ is isomorphic to the bounded derived category of complexes of $\mathbf{k}$-vector spaces, as the objects are of the form $\underline{\mathbf{k}}_{\Omega} \otimes V^{\bullet}$ for some cochain complex of vector spaces $V^\bullet$ with bounded cohomology. On the other hand, for a loose Legendrian $\Lambda' \subset (\Bbb R^{2n+1}, \xi_{\rm{std}})$, the microlocal sheaf category $D^b_{\Lambda'}(\Bbb R^{n+1}) \cong 0$ is trivial.\end{remark}

\subsection{Application: contact non-squeezing for loose charts} 

In symplectic geometry, the celebrated non-squeezing theorem of Gromov \cite[Section 0.3]{gropaper} shows that an open Euclidean ball cannot be symplectically embedded in an open Euclidean cylinder of smaller radius. More precisely, let $(\Bbb R^{2n}, \omega_{\rm{std}})$ be the Euclidean space with coordinates $(q_1, \cdots, q_n, p_1, \cdots, p_n)$ equipped with the standard symplectic form $\omega_{\rm{std}} = \sum_{i = 1}^n dq_i \wedge dp_i$. Let $D^{2n}(0; R) := \{|q|^2 + |p|^2 < R\}$ and $C_r := \{|q| < r\}$. The theorem states that if $R > r$, then there is no symplectic embedding of $D^{2n}(0; R)$ inside $C_r$. In this section, we prove a contact analogue of Gromov's non-squeezing theorem for loose charts. 

Let us recall some terminology from Definition \ref{def-loose}. Let $n \geq 2$, and $\Bbb R^{2n+1} = \Bbb R^3 \times \Bbb R^{2n-2}$ be equipped with coordinates $(x, y, z, p, q)$ and contact form $\mathrm{\alpha}_{\rm{std}} = \alpha_0 - \lambda$, where $\alpha_0 = dz - y dx$ and $\lambda = \sum_{i = 1}^n p_i dq_i$. Recall the following notation:
\begin{enumerate}
\item Let $\mathcal{Z}_a \subset (\Bbb R^3, \alpha_0)$ be a Legendrian zig-zag of action $a$,
\item Let $C \subset \Bbb R^3$ be interior of a compact cube containing $\mathcal{Z}_a$,
\item Let $B_\rho = \{(p, q) : |p| < \rho, |q| < \rho\} \subset \Bbb R^{2n-2}$,
\item Let $J_\rho := \{p = 0\} \subset B_\rho$. 
\end{enumerate}
Recall that the size parameter associated to the pair $(C \times B_\rho, \mathcal{Z}_a \times J_\rho)$ is defined to be $\rho^2/a$, and we say the pair is a loose chart if the quantitative condition $\rho^2/a > 1/2$ is satisfied. We introduce the following terminology for the sake of convenience during subsequent discussion.

\begin{definition}$(C \times B_{\rho}, \mathcal{Z}_a \times J_{\rho})$ will be called a \emph{pseudo-loose chart} if $\rho^2/a < 1/2$.\end{definition}

\begin{proposition}\label{prop-loosenonsq}A loose chart does not admit a contact relative embedding in a pseudo-loose chart. More precisely, let $(C' \times B_{\rho'}, \mathcal{Z}_{a'} \times J_{\rho'})$ with size parameter $\rho'^2/a' > 1/2$ be a loose chart and $(C \times B_{\rho}, \mathcal{Z}_a \times J_{\rho})$ with size parameter $\rho^2/a < 1/2$ be a pseudo-loose chart. Then, there does not exist a contact embedding 
$$\varphi : C' \times B_{\rho'} \to C \times B_\rho,$$
such that $\varphi(\mathcal{Z}_{a'} \times J_{\rho'}) \subset \mathcal{Z}_a \times J_\rho$.\end{proposition}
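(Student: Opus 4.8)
The strategy is to argue by contradiction using the microlocal sheaf-theoretic obstruction developed in this section, combined with the size-parameter rigidity. Suppose such a contact embedding $\varphi : C' \times B_{\rho'} \to C \times B_\rho$ exists with $\varphi(\mathcal{Z}_{a'} \times J_{\rho'}) \subset \mathcal{Z}_a \times J_\rho$. The key point is that $\mathcal{Z}_{a'} \times J_{\rho'}$, being a loose chart (size parameter $> 1/2$), is a \emph{loose Legendrian submanifold} of $C' \times B_{\rho'}$, whereas one should be able to realize $\mathcal{Z}_a \times J_\rho$ — a pseudo-loose chart — as the microsupport of a nontrivial constructible sheaf after a suitable completion to a compact subanalytic Legendrian in a cosphere bundle. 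The contradiction will come from Proposition \ref{prop-suploose}: loose Legendrians are never the microsupport of a (nonzero) constructible sheaf.

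\textbf{Step 1: Complete the pseudo-loose chart to a cosphere-bundle Legendrian carrying a nontrivial sheaf.} The front projection of $\mathcal{Z}_a \times J_\rho$ inside $C \times B_\rho$ is $Z \times J_\rho$, sitting inside $\Bbb R^2_{xz} \times \Bbb R^{n-1}_q$. I would embed this front as a subanalytic hypersurface in some $\Bbb R^{n+1}$ (or a compact real analytic manifold $M$), arranging that the complementary region bounded by the pseudo-loose chart supports a locally closed domain $\Omega$ exactly as in the proof of Proposition \ref{prop-saucernloose}. Concretely, $Z \times J_\rho$ is a product of the zig-zag front with a disk, so the constant sheaf $\underline{\mathbf{k}}_\Omega$ on the region $\Omega$ lying ``above'' the zig-zag in the $z$-direction (times the $q$-disk) will be constructible with respect to the evident stratification, and its microsupport will be the set of positive conormal covectors to the front, i.e.\ $L(\mathcal{Z}_a \times J_\rho)$. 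Here it is essential that the pseudo-loose condition $\rho^2/a < 1/2$ is what \emph{prevents} the chart from being loose; indeed the whole content of Murphy's quantitative threshold is that the inequality $\rho^2/a > 1/2$ is precisely what is needed to squeeze in zig-zags, and below the threshold no such squeezing can be performed, so the zig-zag-times-disk is genuinely non-loose and genuinely supports a sheaf. (Strictly, what I need is just the positive statement: the pseudo-loose chart's conormal Lagrangian \emph{is} realized as a microsupport; the ``non-loose'' interpretation is heuristic.)

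\textbf{Step 2: Pull back along $\varphi$ and transport the sheaf.} Using Remark \ref{rmk-cosphere}, the front projections $C' \times B_{\rho'} \hookrightarrow T^\infty(\text{front}')$ and $C \times B_\rho \hookrightarrow T^\infty(\text{front})$ identify the pseudo-loose and loose charts with pieces of cosphere bundles. A contact embedding $\varphi$ taking one Legendrian into the other, being (locally) fiber-preserving after the contactomorphisms of Lemma \ref{lem-loctrivlegs}, induces on the front side a map of stratified spaces. Pulling back $\underline{\mathbf{k}}_\Omega$ (or rather restricting it to the image of $\varphi$ and pulling back, exactly as the inclusion $i^*\mathcal{F}^\bullet$ is handled in the proof of Proposition \ref{prop-suploose}) produces a nonzero constructible complex of sheaves on the front of $C' \times B_{\rho'}$ whose microsupport is contained in — and, by Corollary \ref{cor-sslageq} together with connectedness, equals — the conormal Lagrangian $L(\mathcal{Z}_{a'} \times J_{\rho'})$ away from the zero section. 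In particular its microsupport is strictly larger than the zero section.

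\textbf{Step 3: Derive the contradiction.} But $\mathcal{Z}_{a'} \times J_{\rho'}$ has size parameter $\rho'^2/a' > 1/2$, hence by Definition \ref{def-loose} it is a loose chart, so (after completing to a compact subanalytic Legendrian in a cosphere bundle, stabilizing if necessary as in Example \ref{eg-stabloose}, and using Corollary \ref{cor-hloosepi0} to stay in the same isotopy class) it is a loose Legendrian submanifold. By Proposition \ref{prop-suploose}, every object of $D^b_{\mathcal{Z}_{a'} \times J_{\rho'}}(M')$ has microsupport equal to the zero section. This contradicts the conclusion of Step 2, so no such $\varphi$ exists.

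\textbf{Main obstacle.} The delicate part is Step 1--2: making the ``pull back the sheaf along $\varphi$'' argument rigorous, since $\varphi$ is only a contact embedding of the \emph{total space} $C' \times B_{\rho'}$ into $C \times B_\rho$, not a priori a front-preserving map, so one cannot literally push forward fronts. The fix is that looseness/microsupport statements are intrinsic to the Legendrian submanifold of a contact manifold, not to a choice of front: one should reformulate Proposition \ref{prop-suploose} and Lemma \ref{lem-zigsupp} so as to say that the contact manifold $C \times B_\rho$, containing the Legendrian $\mathcal{Z}_a \times J_\rho$, admits a microlocal sheaf quantization supported along that Legendrian (via the Weinstein neighborhood identification with a piece of $J^1$), and then restrict this quantization to the contact subdomain $\varphi(C' \times B_{\rho'})$. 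The restriction is again a microlocal sheaf quantization of $\mathcal{Z}_{a'} \times J_{\rho'}$ with nonzero microsupport, which is exactly what is forbidden for a loose chart. Carrying out this ``restriction of a sheaf quantization to a contact subdomain'' carefully — ensuring constructibility, compact support, and that the microsupport does not collapse — is where the real work lies; everything else is bookkeeping with results already established above.
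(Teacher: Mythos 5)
Your Step 1 is where the argument breaks, and it breaks for a reason internal to this very section: Lemma \ref{lem-zigsupp} says that \emph{no} constructible complex on $\Bbb R^2$ whose microsupport lies in the positive conormals of a zig-zag front $Z$ can have microsupport larger than the zero section, and the same quiver argument applied slice-wise (restrict to a transverse $2$--disk $D \times \{q_0\}$, exactly as in the proof of Proposition \ref{prop-suploose}) kills any would-be quantization of $Z \times J_\rho$. There is no region $\Omega$ "above" a zig-zag playing the role it plays for the flying saucer: the three strands of $Z$ lying over the same $x$--interval, all with upward conormals, are precisely the configuration the lemma forbids. So the object you want to pull back along $\varphi$ does not exist; the size condition $\rho^2/a<1/2$ does not rescue it, because "not loose" does not imply "is a microsupport" (Proposition \ref{prop-suploose} is a one-way implication), and for the bare chart $\mathcal{Z}_a\times J_\rho$ the converse is outright false. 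Whether a pseudo-loose chart is "quantizable" is not a property of the chart alone but of a closed Legendrian containing it, which is exactly the pivot your plan misses. Separately, your Step 2 fix — restricting a "microlocal sheaf quantization" to an arbitrary contact subdomain along a contact embedding that need not respect any front projection — is machinery (microsheaves localized on the contact manifold) that the paper does not develop, so even with a quantization in hand the transport step is not available from the results you are allowed to cite.

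The paper's proof avoids both problems by keeping all sheaf theory confined to the already-proved Proposition \ref{prop-saucernloose} and doing new work only on the front-geometry side: one builds a front $F$ consisting of the zig-zag $\mathcal{Z}_a$ together with a Reeb push-off, radially parametrized over $\{|q|<2\rho\}$, so that the companion sheet has $|p| = a/(2\rho) > \rho$ and hence exits the box $C\times B_\rho$; this exhibits $(C\times B_\rho, \mathcal{Z}_a\times J_\rho)$, with any prescribed size parameter $<1/2$, as an \emph{adapted} chart of the two-component lift $\Lambda$. Two applications of the toroidal Legendrian Reidemeister move $\rm I$ then embed this configuration into a Legendrian isotopic to the flying saucer. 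Consequently the flying saucer contains a pseudo-loose chart of the given size parameter, and a relative contact embedding of a loose chart into that pseudo-loose chart would make the saucer loose, contradicting Proposition \ref{prop-saucernloose}. Note how the companion sheet is what reconciles the picture with Lemma \ref{lem-zigsupp}: any transverse disk in the base near the chart meets \emph{both} sheets of the front, so the saucer's quantization is not obstructed even though the contact chart $C\times B_\rho$ meets $\Lambda$ only in the zig-zag times disk. If you want to salvage your outline, you would have to replace Step 1 by such a completion of the pseudo-loose chart to a globally quantizable closed Legendrian — which is essentially the paper's construction — rather than quantizing the chart in isolation.
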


\begin{proof}Let $\varepsilon > 0$. Consider the front diagram in $\Bbb R^{n+1} = \Bbb R^2_{xz} \times \Bbb R^{n-1}_q$ defined by the following:
$$F := \left \{(x, z, q) \in \Bbb R^{n+1} : 0 < |q| < 2\rho, \left (x, z - \frac{a}{2\rho}|q| \right ) \in \pi(\mathcal{Z}_a) \right \}.$$
We may imagine $F$ as a parametric family of front projections in $\Bbb R^2_{xz}$ of a doubled link of $\mathcal{Z}_a \subset (\Bbb R^3, \alpha_0)$ consisting of $\mathcal{Z}_a$ as well as a pushoff of it given by a vector field proportional to the Reeb vector field $\partial/\partial z$. The family is radially parametrized by $D^{n-1}(0; 2\rho) \subset \Bbb R^{n-1}_q$, with magnitude of the aforementioned normal vector field being precisely $|q|$. See, Figure \ref{fig-pseudoloose}.

\begin{figure}[h]
\centering
\includegraphics[scale=0.55]{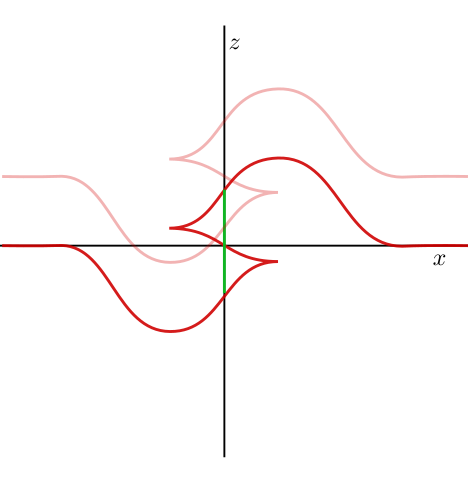}
\caption{Front projection of a Legendrian zig-zag $\mathcal{Z}_a$ (red) of action $a$, along with its Reeb chord (green), and a $z$-translated copy of $\pi(\mathcal{Z}_a)$ (pink).}
\label{fig-pseudoloose}
\end{figure}

Let $\sigma < 1/2$. We claim that the Legendrian lift $\Lambda \subset (\Bbb R^{2n+1}, \alpha_{\rm{std}})$ of $F$ contains a pseudo-loose chart of size parameter $\sigma$. To see this, first observe that $\Lambda \subset \Bbb R^{2n+1}$ is a two-component Legendrian link $\Bbb R^n \times \{0, 1\} \hookrightarrow \Bbb R^{2n+1}$, see Figure \ref{fig-interweave}.

\begin{figure}[h]
\centering
\includegraphics[scale=0.2]{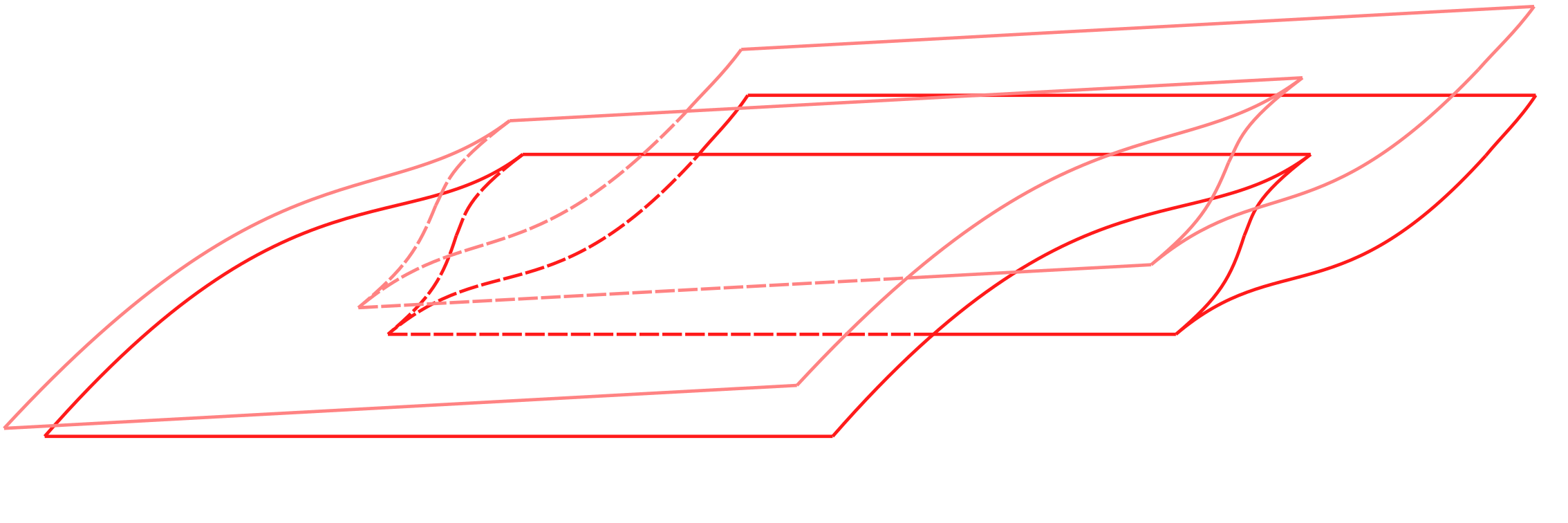}
\caption{Front projection $F \subset \Bbb R^{n+1}$ of the two-component Legendrian link $\Lambda \subset \Bbb R^{2n+1}$. Along the $q$--direction, the red component has $0$ slope while the pink component has slope $a/(2\rho)$.}
\label{fig-interweave}
\end{figure}

Choose $\rho := (a \sigma)^{1/2}$. Let $C \subset \Bbb R^3$ be a cube containing the lift of $\mathcal{Z}_a$, indicated as the red zig-zag in Figure \ref{fig-pseudoloose}. Also, let $B_\rho := \{|q| < \rho, |p| < \rho\} \subset \Bbb R^{2n-2}$. Notice that along the component of $\Lambda$ indicated by the pink component of the front in Figure \ref{fig-interweave}, we have 
$$|p| = \left |\frac{\partial z}{\partial q} \right | = \frac{a}{2\rho} > \rho,$$
since $\sigma < 1/2$. Therefore, $C \times B_\rho \subset \Bbb R^{2n+1}$ does not intersect the pink component of $\Lambda$. Consequently, $(C \times B_{\rho}, \mathcal{Z}_a \times J_{\rho})$ is an adapted chart containing the component $\mathcal{Z}_a \times J_{\rho}$ of $\Lambda$ indicated by the red component of the front in Figure \ref{fig-interweave}. This is our required pseudo-loose chart with size parameter $\sigma < 1/2$.

Next, we show that $F$ can be smoothly embedded in a front projection of a Legendrian submanifold of $\Bbb R^{2n+1}$ which is Legendrian isotopic to the flying saucer (Example \ref{eg-saucer}). To this end, we apply the toroidal Legendrian Reidemeister move $\rm{I}$ (Definition \ref{def-reid1torus}) twice to the smooth front $\{z = 0\} \subset \Bbb R^{n+1}_{xz}$, see Figure \ref{fig-r12surface}.

\begin{figure}[h]
\centering
\includegraphics[scale=0.4]{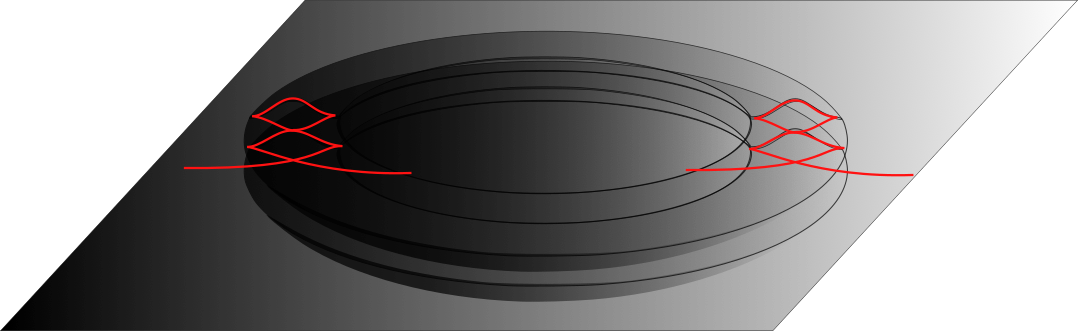}
\caption{Toroidal Legendrian Reidemeister move $\rm{I}$ applied twice.}
\label{fig-r12surface}
\end{figure}

We may smoothly isotope this front to contain a copy of $F$ as indicated in a radial slice in Figure \ref{fig-r1weave}. 
By applying the toroidal Legendrian Reidemister move $\rm{I}$ twice on a smooth portion of the front for the Legendrian flying saucer illustrated in Figure \ref{fig-legsaucer}, we thereby obtain the desired embedding. Thus, the Legendrian flying saucer admits a pseudo-loose chart with given any given size parameter $\rho^2/a < 1/2$. 

If a pseudo-loose chart with a given size parameter $\rho^2/a < 1/2$ contained a contactly embedded copy of a loose chart, it would thus imply that the Legendrian flying saucer is a loose Legendrian. This is a contradiction, by Proposition \ref{prop-saucernloose}. Therefore, a loose chart does not contactly embed in a pseudo-loose chart, as required.\end{proof}

\begin{figure}[h]
\centering
\includegraphics[scale=0.5]{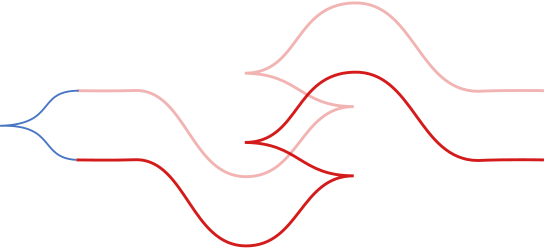}
\caption{Legendrian Reidemeister move $\rm{I}$ containing a copy of the doubling link of a zig-zag.}
\label{fig-r1weave}
\end{figure}

\begin{remark}The proof of Proposition \ref{prop-loosenonsq} shows that any Legendrian submanifold admits a pseudo-loose chart. Thus, admitting a pseudo-loose chart \emph{a posteriori} does not impose any restriction on the Legendrian submanifold, unlike admitting a loose chart.\end{remark}

\begin{remark}The proof of Proposition \ref{prop-loosenonsq} shows that a Legendrian link in a contact manifold whose components are loose Legendrian submanifolds, need not be a loose Legendrian embedding itself. Indeed, the submanifold $\Lambda \subset (\Bbb R^{2n+1}, \alpha_{\rm{std}})$ defined in the proof of Proposition \ref{prop-loosenonsq} consists of a loose embedded $\Bbb R^n$ in $(\Bbb R^{2n+1}, \alpha_{\rm{std}})$ as one component and a push-off it by the Reeb vector field as another component. Both of these components are individually loose, but nonetheless $\Lambda$ is non-loose. Intuitively, this is because the loose charts for one of the components may intersect the other component, which prevents it from being an adapted chart for the entire link. In general, there may be no contact isotopy making these charts disjoint from the other components, as is the case for $\Lambda$.
\end{remark}

\begin{remark}Eliashberg-Kim-Polterovich \cite[Theorem 1.2]{ekp} prove a version of the contact nonsqueezing theorem for the domains $D^{2n}(0; r_1) \times S^1$ and $C_{r_2} \times S^1$, in the contact manifold $\Bbb R^{2n} \times S^1$ equipped with the contact form $d\theta - \lambda$, where $\omega_{\rm{std}} = -d\lambda$. Moreover, in \cite[Theorem 1.3]{ekp} they prove a squeezing theorem for $D^{2n}(0; r_1) \times S^1$ in $D^{2n}(0; r_2) \times S^1$ for all $r_1, r_2 < 1$. These results perhaps appear analogous to Proposition \ref{prop-loosenonsq} and Propositions \ref{prop-manyloose}, respectively. Note, however, that a loose chart is a pair of spaces, consisting of an open chart of dimension $2n+1$ and a Legendrian chart of dimension $n$. Thus, the nonsqueezing and squeezing results in \cite{ekp} are absolute, whereas the corresponding results in the aforementioned propositions are relative.
\end{remark}

\end{document}